\begin{document}

\title{Weak Poincar{\'e} Inequalities, Simulated Annealing, and Sampling from Spherical Spin Glasses}

\author{Brice Huang\thanks{MIT. Email: \texttt{bmhuang@mit.edu}. Supported by a Google PhD Fellowship, NSF-Simons collaboration grant DMS-2031883, NSF CAREER grant DMS-1940092, and the Solomon Buchsbaum Research Fund at MIT.}
    \and Sidhanth Mohanty\thanks{MIT. Email: \texttt{sidm@mit.edu}. Supported by NSF Award DMS-2022448.}
  \and Amit Rajaraman\thanks{MIT. Email: \texttt{amit\_r@mit.edu}. Supported by CSAIL.}
  \and David X. Wu\thanks{UC Berkeley. Email: \texttt{david\_wu@berkeley.edu}. Supported by an OpenAI Superalignment grant.}}
\date{\today}
\maketitle

\begin{abstract}
    There has been a recent surge of powerful tools to show rapid mixing of Markov chains, via functional inequalities such as \emph{\Poincare inequalities}.
    In many situations, Markov chains fail to mix rapidly from a worst-case initialization, yet are expected to approximately sample from a random initialization.
    For example, this occurs if the target distribution has \emph{metastable states}, small clusters accounting for a vanishing fraction of the mass that are essentially disconnected from the bulk of the measure.
    Under such conditions, a \Poincare inequality cannot hold, necessitating new tools to prove sampling guarantees.

    We develop a framework to analyze simulated annealing, based on establishing so-called \emph{weak \Poincare inequalities}.
    These inequalities imply mixing from a suitably warm start, and simulated annealing provides a way to chain such warm starts together into a sampling algorithm.
    We further identify a local-to-global principle to prove weak \Poincare inequalities, mirroring the spectral independence and localization schemes frameworks for analyzing mixing times of Markov chains.

    As our main application, we prove that simulated annealing samples from the Gibbs measure of a spherical spin glass for inverse temperatures up to a natural threshold, matching recent algorithms based on algorithmic stochastic localization.
    This provides the first Markov chain sampling guarantee that holds beyond the \emph{uniqueness threshold} for spherical spin glasses, where mixing from a worst-case initialization is provably slow due to the presence of metastable states.
    As an ingredient in our proof, we prove bounds on the operator norm of the covariance matrix of spherical spin glasses in the full replica-symmetric regime.
    
    Additionally, we resolve a question related to sampling using data-based initializations.
\end{abstract}

\thispagestyle{empty}
\setcounter{page}{0}
\newpage
\tableofcontents
\thispagestyle{empty}
\setcounter{page}{0}
\newpage

\newcommand{\bhsuggestion}[1]{{\color{blue} [\textbf{BH} : #1]}}
\newcommand{\smsuggestion}[1]{{\color{Fuchsia} [\textbf{SM} : #1]}}
\newcommand{\arsuggestion}[1]{{\color{Fuchsia} [\textbf{AR} : #1]}}
\newcommand{\dwsuggestion}[1]{{\color{Emerald} [\textbf{DW} : #1]}}

\section{Introduction}
\label{sec:intro}

A common task of interest in computer science, probability, and physics is to efficiently sample from {Gibbs distributions}.
For a {Hamiltonian} energy function $H:\Omega\to\R$ over state space $\Omega\subseteq\R^N$, the associated Gibbs distribution $\mu_H$ is defined by $\dif\mu_H(x) \propto \exp\parens*{H(x)} \dif x$.

The class of \emph{Markov chain Monte Carlo} (MCMC) algorithms is arguably the most widely used tool for sampling from Gibbs distributions.
In this paradigm, one sets up a Markov chain $P_H$ whose stationary distribution is $\mu_H$, and outputs the final state of a $\poly(N)$-time random walk according to $P_H$.
Common choices include the {Glauber dynamics}, for discrete state spaces such as $\Omega = \{\pm 1\}^N$, and the {Langevin diffusion}, for continuous state spaces such as $\Omega = \R^N$ or $\sqrt{N} \cdot \bbS^{N-1}$.

To prove that such an algorithm indeed correctly samples from $\mu_H$, one bounds the \emph{mixing time} of the Markov chain.
A common route to prove a bound on the mixing time is to establish functional inequalities, such as \emph{\Poincare} inequalities.
There are now powerful frameworks for proving such functional inequalities, such as \emph{spectral independence} \cite{ALO21} and \emph{localization schemes} \cite{CE22}.
The development of these frameworks has led to a flurry of activity in analyzing mixing times of Markov chains, including the resolution of several long-standing open problems in the algorithmic theory of counting and sampling \cite{ALOV24,ALO21,AJKPV22,EKZ22,CE22}.

The implications of these inequalities are quite strong.
In particular, they imply that for \emph{any} initial distribution $\nu$, for an appropriate divergence function, a single step of the Markov chain shrinks the distance to the stationary distribution by a significant multiplicative factor:
\[
    \mathrm{Divergence}(P_H \nu \| \mu_H) \le \parens*{1-\frac{1}{\poly(N)}} \mathrm{Divergence}(P_H \nu \| \mu_H)\mper
\]
The presence of such a functional inequality typically implies that a Markov chain mixes rapidly from a \emph{worst-case initialization}.

\parhead{Sampling from random initializations.}
Many natural Markov chains are expected to produce approximate samples from the Gibbs measure when started at a \emph{random} initialization, but fail to mix rapidly from a worst-case initialization.
Often, this is because the Gibbs measure contains pathological clusters (termed \emph{metastable states} in the physics literature) that are essentially disconnected from most of the measure, and account for a vanishing fraction of the total mass.
A Markov chain initialized in such a cluster will remain trapped inside it and fail to mix, and therefore methods that show mixing from worst-case initializations cannot give effective guarantees in such settings.

However, one may still hope to show that from a random initialization, the Markov chain samples from the non-pathological part of the Gibbs measure, which is statistically indistinguishable from the true Gibbs measure.
In our work, we prove that under suitable conditions, the \emph{simulated annealing} algorithm samples from a distribution close to the Gibbs measure.

\parhead{Simulated annealing.}
In the simulated annealing algorithm, one defines a ``{schedule}'' of inverse temperatures, i.e. for $i=0,\ldots,T$, let $\beta_i \coloneqq i/T$.
The algorithm initializes at a sample from the uniform distribution $\mu_{\beta_0 H}$.
Then, for $i=1,\ldots,T$, the $i$-th stage of the algorithm runs the Markov chain $P_{\beta_i H}$ corresponding to $\mu_{\beta_i H}$ for $\poly(N)$ time, initialized at the output of the previous stage.

The underlying idea of this algorithm is that, for $T$ sufficiently large, the Gibbs distribution $\mu_{\beta_{i-1} H}$ is a ``warm start'' for $\mu_{\beta_i H}$, i.e. an initialization with suitably bounded likelihood ratio with $\mu_{\beta_i H}$.
So, if one could show that each of the Markov chains $P_{\beta_i H}$ (approximately) mixes rapidly from a warm start, one may inductively argue that the output of the $i$-th stage of the algorithm is an approximate sample from $\mu_{\beta_i H}$.
In other words, simulated annealing chains a sequence of warm starts together into a sampling algorithm.

This algorithmic idea is widely used empirically, and has also been employed to obtain algorithms for approximating the volumes of convex bodies \cite{DFK91,DF91,LS90,KLS97}, approximating the number of perfect matchings in a bipartite graph \cite{JSV04}, and sampling from the random field Ising model at sufficiently high temperatures \cite{EAEGP23}, among others.
However, we lack a general theory for why simulated annealing achieves provable guarantees beyond the settings of sampling from log-concave distributions and convex bodies.
Indeed, in contrast to the general recipes available to prove mixing from worst-case initialization, proofs of rapid mixing from warm starts often employ ad-hoc arguments.

One of the main contributions of this work is a framework for proving mixing from warm starts, which combined with the above discussion provides general sufficient conditions under which simulated annealing samples from the Gibbs measure.
We achieve this by generalizing the frameworks of spectral independence and localization schemes, previously employed to prove mixing from worst-case initialization, to show mixing from warm starts (see \Cref{sec:stopped-ls} for details).
As we discuss just below, our framework gives sampling guarantees for simulated annealing in regimes where mixing from worst-case initializations is provably false.

\parhead{Main application: spherical spin glasses.}
In a \emph{spherical mixed $p$-spin glass}, $H:\sqrt{N}\cdot\bbS^{N-1}\to\R$ is a random Hamiltonian parameterized by coefficients $\beta, \gamma_2,\dots,\gamma_{p_*}\ge 0$ where:
\begin{equation}
    \label{eq:hamiltonian}
    H(\sigma) = \beta \sum_{p\ge 2} \frac{\gamma_p}{N^{(p-1)/2}} \sum_{i_1,\dots,i_p} \bg_{i_1,\dots,i_p} \sigma_{i_1}\cdots\sigma_{i_p},
\end{equation}
for i.i.d. $\bg_{i_1,\dots,i_p} \sim \calN(0,1)$.
The Gibbs distribution $\mu_H$ is very well-studied in probability, statistical physics, and average-case algorithm design, as it simultaneously exhibits rich behavior and is amenable to analytic tools.
Notably, this model undergoes numerous sharp phase transitions as one increases $\beta$.
For small $\beta$, the model satisfies a \Poincare inequality \cite{GJ19}.
Beyond a \emph{uniqueness} transition $\beta_{\mathrm{uniq}}$, small isolated clusters in $\mu_H$ known as \emph{metastable states} start to appear \cite{AJ24a}.
In particular, the natural Markov chain \emph{Langevin diffusion} initialized from such states mixes slowly, thereby precluding a \Poincare inequality.
However, these states account for a vanishing fraction of the measure under $\mu_H$, and the Langevin diffusion with a random initialization is expected to still mix rapidly over a $1-o_N(1)$ fraction of $\mu_H$, thereby producing a sample with vanishing total variation distance from $\mu_H$. 

The threshold for efficient algorithmic sampling is believed to occur at the \emph{shattering} transition $\beta_{\mathrm{\ShatteredPsyche}}$ --- beyond this transition, the Gibbs measure shatters into an exponential number of poorly-connected clusters with exponentially small mass, and mixing is provably slow \cite{CHS93,AMS23,GJK23}.
It is expected that all efficient algorithms fail to sample from the Gibbs measure above $\beta_{\mathrm{\ShatteredPsyche}}$, and recently \cite{AMS23} gave rigorous evidence for this picture by showing that all \emph{stable} algorithms fail.

We use our framework to prove that \emph{annealed} Langevin diffusion, where one begins by running Langevin diffusion for $\beta_0 = 0$, and slowly increases the inverse temperature to the target $\beta$, samples from the spherical mixed $p$-spin glass.
This leads to the first rigorous guarantee in this problem for a Markov chain beyond the uniqueness threshold.
\begin{theorem}[Informal]
    For any choice of $\gamma_2,\dots,\gamma_{p_*}$, there is a threshold $\beta_{\SL} \le \beta_{\ShatteredPsyche}$ such that  
    for any $\beta < \beta_{\SL}$, with probability at least $1-e^{-\Omega(N^{1/5})}$ over the randomness of $H$, annealed Langevin diffusion run for $\poly(N)$ time samples from a distribution whose total variation distance to $\mu_H$ is at most $e^{-\Omega(N^{1/5})}$. 
\end{theorem}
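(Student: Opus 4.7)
The plan is to implement the simulated annealing scheme described in the introduction and analyze it inductively along the temperature schedule. Fix a schedule $\beta_i = i\beta/T$ for $i=0,1,\ldots,T$ with $T = \poly(N)$ so that each increment $\beta_i - \beta_{i-1} = \beta/T$ is small. The algorithm initializes at a uniform sample from $\sqrt{N}\cdot\bbS^{N-1}$ (exact sampling from $\mu_{\beta_0 H}$), and at stage $i$ runs Langevin diffusion for $\mu_{\beta_i H}$ for $\poly(N)$ time, initialized at the output of stage $i-1$. The inductive hypothesis is that the law $\nu_i$ of the state at the end of stage $i$ satisfies $\TV(\nu_i, \mu_{\beta_i H}) \le \varepsilon_i$, with $\varepsilon_i$ growing mildly enough that $\varepsilon_T = e^{-\Omega(N^{1/5})}$.

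The inductive step from $i-1$ to $i$ combines two ingredients. First, since $\dif\mu_{\beta_i H}/\dif\mu_{\beta_{i-1} H}(\sigma) \propto \exp((\beta/T) H(\sigma))$ and $H$ concentrates (its fluctuations being of order $\sqrt{N}$ times a sub-Gaussian factor), the choice $T = \poly(N)$ ensures that $\mu_{\beta_{i-1} H}$ is a warm start for $\mu_{\beta_i H}$ up to an event of negligible probability. Second, we need that for every $\beta_i < \beta_{\SL}$, the Langevin diffusion for $\mu_{\beta_i H}$ contracts a suitable divergence from such a warm start in $\poly(N)$ steps. This is precisely the consequence of a weak Poincar\'e inequality, as developed in the framework of this paper: a weak Poincar\'e inequality trades the uniform spectral gap of the classical setting for mixing that is valid only from starts of bounded density, which is exactly what a simulated annealing chain provides.

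The technical heart of the argument is therefore proving a weak Poincar\'e inequality for $\mu_{\beta H}$ for every $\beta < \beta_{\SL}$. For this we invoke the local-to-global principle for weak Poincar\'e inequalities established in the paper (the analogue of spectral independence and localization schemes in the weak setting). Running a stochastic localization process on $\mu_{\beta H}$, the required input at each leaf is a bound on the operator norm of the covariance of the tilted spherical spin glass measure throughout the replica-symmetric regime; we supply this as the covariance bound mentioned in the abstract. The bound needs to hold uniformly along the localization path outside of an exceptional set of mass at most $e^{-\Omega(N^{1/5})}$, which captures the metastable states and is precisely what the weak (rather than strong) formulation accommodates.

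The main obstacle will be the proof of the weak Poincar\'e inequality itself. In the uniqueness regime one could hope for a genuine Poincar\'e inequality, but past $\beta_{\mathrm{uniq}}$ the measure has metastable clusters that provably destroy any worst-case spectral gap; one must quantitatively isolate these and show that they contribute only $e^{-\Omega(N^{1/5})}$ mass, both under $\mu_H$ and under every tilt encountered in the localization. Coupling this covariance-norm control along stochastic localization to the weak-Poincar\'e constant, and verifying that the resulting mixing-from-warm-start bound combined with the warm-start transfer yields a geometric series for $\varepsilon_T$ totaling $e^{-\Omega(N^{1/5})}$, constitutes the bulk of the work. The remaining pieces --- Hamiltonian concentration, discretization error for the Langevin diffusion, and probability bounds over the Gaussian disorder --- are standard once these two ingredients are in place.
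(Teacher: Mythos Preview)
Your proposal is correct and matches the paper's approach: establish a weak Poincar\'e inequality for each $\mu_{\beta H}$ via a stopped stochastic localization scheme, with high-probability covariance control along the path as the main input, and then chain warm starts through the annealing theorem.

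One point your outline underemphasizes: the local-to-global principle needs \emph{two} separate inputs, not just the covariance bound. The covariance bound along the localization path (your ``uniform along the path outside an exceptional set'') ensures approximate variance conservation and that the stopping time is rarely triggered, but it does not by itself give a weak Poincar\'e inequality for the terminal measures $\mu_T$. The paper supplies that terminal inequality by a separate argument: for large constant $T$, the tilted measure $\mu_T$ concentrates on a small spherical cap around the external field direction, and its pushforward under a stereographic-type projection is close in total variation to a strongly log-concave measure on $\R^{N-1}$ (this is imported from \cite{HMP24}). Only after combining (i) covariance control along the path and (ii) this terminal weak Poincar\'e inequality does the stopped-localization lemma yield a weak Poincar\'e inequality for $\mu_{\beta H}$. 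Your phrase ``the required input at each leaf is a bound on the operator norm of the covariance'' conflates these two ingredients; make sure your writeup separates them. Also, the error accumulation across annealing stages is additive (a sum of $\poly(N)$ many $e^{-cN^{1/5}}$ terms via triangle inequality and data processing), not a geometric series.
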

The thresholds $\beta_{\SL}$ and $\beta_{\ShatteredPsyche}$ are formally defined as the supremal $\beta$ such that the inequalities \eqref{eq:SL-condition} and \eqref{eq:non-shatter-condition} below hold.
The recent work \cite{HMP24} produces a different sampling algorithm based on algorithmic stochastic localization, which succeeds to the same threshold $\beta_{\SL}$; see below for further discussion.
This threshold is a fundamental barrier for stochastic localization based approaches, and we explain its physical significance in \Cref{rmk:SL-fundamental-barrier}.
\begin{remark}
    In many models, we have $\beta_{\mathrm{uniq}} < \beta_{\SL} < \beta_{\ShatteredPsyche}$, and $\beta_{\SL}$ is close to $\beta_{\ShatteredPsyche}$.
    For example, for the pure $p$-spin models (where $\gamma_p = 1$ and all the other $\gamma_i$ are equal to $0$), $\beta_{\mathrm{uniq}} \asymp (\log p)^{-1/2}$, while $\beta_{\SL},\beta_{\ShatteredPsyche}\asymp 1$ and $\beta_{\SL} / \beta_{\ShatteredPsyche}$ is bounded from below by the universal constant $\sqrt{e}/2$. 
    See \cite[Remark 1.1, Eq. 1.8]{HMP24}.
\end{remark}

\subsection{Weak \Poincare inequalities and localization schemes}

The starting point of our work is a relaxation of \Poincare inequalities, known as \emph{weak \Poincare inequalities}, which can be leveraged to prove mixing from warm starts.
To simplify the discussion, we restrict here to the setting of discrete Markov chains. 
Our main application is to a continuous Markov chain, namely the Langevin diffusion for a spherical spin glass, and we outline the differences in \Cref{rmk:localization-schemes-cts-markov-chains} below.

Let $P_H$ be a time-reversible Markov chain with stationary distribution $\mu_H$.
For any functions $f,g:\Omega\to\R$, define the \emph{Dirichlet form} as $\calE(f,g) \coloneqq \E_{\bx\sim\mu_H} \E_{\by\sim_{P_H}\bx} (f(\bx)-f(\by))(g(\bx)-g(\by))$.
We say $P_H$ satisfies a $C$-\emph{\Poincare inequality} if for any function $f:\Omega\to\R$:
\[
    \calE(f,f) \ge C\cdot\Var[f]\mcom
\]
for $C \ge 1/\poly(N)$.
A \Poincare inequality has a classic implication for rapid mixing.
In particular, for $\nu_t$ as the distribution obtained by running $P_H$ for continuous time $t$ starting at a distribution $\nu_0$, we have:
\[
    \chi^2\parens*{\nu_t\| \mu_H} \le \exp(-Ct) \cdot \chi^2\parens*{\nu_0 \| \mu_H}\mper
\]
We say $P_H$ satisfies a $(C,\eps)$-weak \Poincare inequality if for any function $f:\Omega\to\R$:
\[
    \calE(f,f) \ge C\cdot\Var[f] - \eps \cdot \norm*{ f - \E f }_{\infty}^2
\]
% \brice{Is the $\grad f$ term missing here?}
% \sid{we're working in the discrete setting in the intro, so it's not there}
% \brice{ah, thanks}
One can derive the following mixing guarantee from a weak \Poincare inequality; see, e.g., \cite[Theorem 2.1]{RW01}.
\[
    \chi^2\parens*{\nu_t \| \mu_H} \le \exp(-Ct) \cdot \chi^2\parens*{\nu_0 \| \mu_H} + \eps \cdot \left\| \frac{\dif\nu_0}{\dif\mu_H} - 1 \right\|_{\infty}^2\mper \numberthis \label{eq:intro-wpi}
\]
In particular, if $\nu_0$ is a warm start for $\mu_H$ in the sense that $\left\| \frac{\dif \nu_0}{\dif \mu_H} - 1\right\|_\infty$ is suitably small, this implies that the Markov chain's output distribution $\nu_t$ approximates $\mu_H$.

Since the target measure in one stage of simulated annealing is a warm start for that of the next stage, such a guarantee allows one to inductively argue that simulated annealing succeeds at sampling.
We summarize this implication below. 
% A weak \Poincare inequality for a range of temperatures implies the success of simulated annealing at sampling.
\begin{theorem}[Informal, see \Cref{th:annealing}]
    If $P_{\beta H}$ satisfies a weak \Poincare inequality with suitable parameters for every $\beta\in[0,1]$, then simulated annealing succeeds at sampling from $\mu_H$.
\end{theorem}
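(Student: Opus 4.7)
The plan is to inductively control the distribution $\nu_i$ produced at the $i$-th stage of simulated annealing by tracking two quantities simultaneously: the chi-squared divergence $A_i \coloneqq \chi^2(\nu_i \| \mu_{\beta_i H})$ and the $L^\infty$ density discrepancy $B_i \coloneqq \|\frac{\dif\nu_i}{\dif\mu_{\beta_i H}} - 1\|_\infty$. The base case is immediate since $\nu_0 = \mu_{\beta_0 H}$, so $A_0 = B_0 = 0$. The key small parameter driving the induction is the density ratio between consecutive targets: writing $g_i \coloneqq \frac{\dif\mu_{\beta_i H}}{\dif\mu_{\beta_{i+1} H}}$, one has $g_i(x) = \frac{Z_{\beta_{i+1}}}{Z_{\beta_i}} \exp(-H(x)/T)$, so (assuming $H$ is bounded in $L^\infty$, or after truncating to a high-probability event) $\|g_i - 1\|_\infty \le \delta$ for $\delta = O(\|H\|_\infty / T)$.

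The inductive step proceeds in three moves. First, a change-of-measure computation transfers the induction hypothesis from $\mu_{\beta_i H}$ to $\mu_{\beta_{i+1} H}$, giving warm-start estimates $\chi^2(\nu_i \| \mu_{\beta_{i+1} H}) \le (1+\delta) A_i + \delta$ and $\|\frac{\dif\nu_i}{\dif\mu_{\beta_{i+1} H}} - 1\|_\infty \le (1+\delta) B_i + \delta$. Second, running $P_{\beta_{i+1} H}$ for time $t$ and applying the weak \Poincare guarantee \eqref{eq:intro-wpi} yields
\[
    A_{i+1} \le e^{-Ct}\parens*{(1+\delta) A_i + \delta} + \eps \parens*{(1+\delta) B_i + \delta}^2.
\]
Third, for the $L^\infty$-track, I would invoke the fact that for a $\mu$-reversible chain the density $\frac{\dif(P\nu)}{\dif\mu}$ equals the Markov operator $P$ applied pointwise to $f = \frac{\dif\nu}{\dif\mu}$; since $P\mathbf{1} = \mathbf{1}$, one has $\|Pf - 1\|_\infty \le \|f - 1\|_\infty$, giving $B_{i+1} \le (1+\delta) B_i + \delta$.

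Iterating the $B$-recursion gives $B_i \le (1+\delta)^i - 1 = O(\|H\|_\infty)$ uniformly across the $T$ stages once $\delta T = O(1)$. With this uniform bound in hand, the $A$-recursion reduces to a scalar linear recursion with contraction factor $e^{-Ct}(1+\delta)$ and additive forcing of order $\eps \|H\|_\infty^2 + \delta$; choosing $t = \Theta(C^{-1}\log T)$ makes the contraction factor at most $1/2$, and the recursion stabilizes at $A_T = O(\eps \|H\|_\infty^2 + \delta)$. Converting $\chi^2$ to total variation via Cauchy--Schwarz then yields the sampling guarantee, provided $\eps \|H\|_\infty^2$ and $\delta = O(\|H\|_\infty / T)$ can both be made $o(1)$; this is what ``suitable parameters'' means in the theorem statement. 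The main obstacle is the $L^\infty$-contraction step for the continuous-time Langevin semigroup on the sphere: while the discrete-chain version is essentially a tautology, the continuous analogue requires a maximum principle for the associated Fokker--Planck equation, which must be formalized with care and is the piece that couples this general framework to the specific continuous chain studied in the paper's main application.
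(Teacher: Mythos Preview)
Your $B_i$ track has a genuine gap. The recursion $B_{i+1} \le (1+\delta)B_i + \delta$ with $B_0 = 0$ gives $B_T = (1+\delta)^T - 1$, but since $\delta = \Theta(\|H\|_\infty/T)$ the product $\delta T = \Theta(\|H\|_\infty)$ is fixed independent of $T$: increasing the number of annealing stages makes each change-of-measure step smaller, but there are proportionally more of them. Hence $B_T \le e^{\Theta(\|H\|_\infty)} - 1$, exponential in $\|H\|_\infty$, not $O(\|H\|_\infty)$ as you claim. The maximum-principle step only says the Markov semigroup does not increase $\|f-1\|_\infty$; it provides no contraction, so nothing offsets the multiplicative $(1+\delta)$ accumulated at each change of measure. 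In the paper's application $\|H\|_\infty = \Theta(N)$ while the weak Poincar\'e error is only $\eps = e^{-cN^{1/5}}$, so your additive forcing $\eps B_T^2 \approx e^{-cN^{1/5} + \Theta(N)}$ diverges.

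The paper avoids tracking $L^\infty$ inductively altogether. It works in total variation, which genuinely satisfies data processing under Markov kernels, and uses this to decouple the stages: writing $\nu^{(r,k)}$ for the annealed chain initialized at the \emph{exact} $\mu_{\beta_r H}$ and run through stage $k$, the triangle inequality plus data processing give
\[
\dtv{\nu^{(0,T)}}{\mu_{\beta_T H}} \;\le\; \sum_{r=1}^{T} \dtv{\nu^{(r-1,r)}}{\mu_{\beta_r H}}.
\]
Each term on the right is a single stage initialized at the exact previous target $\mu_{\beta_{r-1}H}$, so the $L^\infty$ discrepancy feeding into the weak Poincar\'e bound \eqref{eq:intro-wpi} is just the one-step $\delta$, with no compounding across stages. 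Summing yields a bound of order $T\cdot \delta\sqrt{\eps} = O(\|H\|_\infty\sqrt{\eps})$, polynomial in $\|H\|_\infty$. The ``main obstacle'' you identify --- an $L^\infty$-contraction principle for the continuous semigroup --- is therefore a red herring; the paper never needs it.
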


\parhead{Localization schemes for weak \Poincare inequalities.}
We restrict 
% our discussion of localization schemes and their relevance to weak \Poincare inequalities 
to the following simple setting:
$\mu_H$ is a distribution on $\{\pm1\}^N$.
Let $P_H$ be the \emph{Glauber dynamics} Markov chain where in a single step from $x$, we pick a uniformly random coordinate $i\sim[N]$, and toggle $x_i$ with probability:
\[
    \frac{\mu_H(x^{\oplus i})}{\mu_H(x) + \mu_H(x^{\oplus i})}\mper
\]
A special case of the localization schemes framework is the \emph{spectral independence} framework of Anari, Liu, and Oveis Gharan \cite{ALO21}.
\begin{theorem}[{\cite{AL20,ALO21}}]    \label{thm:spec-ind}
The following \emph{local-to-global principle} reduces proving a \Poincare inequality to establishing bounds on the spectrum of \emph{influence matrices}.
    Suppose for {\bf every} $S\subseteq [N]$, and {\bf every} pinning $x_S$ of coordinates in $S$, the spectral norm of its \emph{influence matrix} $\Psi_{S,x_S}$ is at most $\alpha$, then the Glauber dynamics chain satisfies a $n^{-O(\alpha)}$-\Poincare inequality.
    Here, the influence matrix $\Psi_{S,x_S}$ is an $(n-|S|)\times(n-|S|)$ matrix indexed by vertices $v\notin S$, where
    \[
        \Psi_{S,x_S}[a,b] \coloneqq \Pr[x_a = +1 | x_b = +1] - \Pr[x_a = -1 | x_b = +1]\mper
    \]
\end{theorem}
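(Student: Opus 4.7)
The plan is to reduce the claimed \Poincare inequality for Glauber dynamics to the local-to-global theorem for weighted simplicial complexes, so that the hypothesis on influence matrices translates precisely to local spectral expansion at every link.

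First, I would associate to $\mu_H$ the standard weighted simplicial complex $X$: top-dimensional faces (at level $N$) are configurations $x$, identified with the sets $\{(1,x_1),\ldots,(N,x_N)\}$, weighted by $\mu_H(x)$, and lower-dimensional faces $(S, x_S)$ inherit marginal weights. The down-up walk at level $N$---delete a uniformly random element from a top face, then resample conditionally on the remaining pinning---coincides with a single step of Glauber dynamics on $\mu_H$. Hence a lower bound on the spectral gap of this walk immediately yields a \Poincare inequality of the same order for $P_H$, and reduces the theorem to a spectral-gap bound for the top-level down-up walk.

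Second, I would identify the local walks of $X$ with the influence matrices. At the link of a pinning $(S, x_S)$, the level-$1$ vertices are pairs $(i, b)\in([N]\setminus S)\times\{\pm 1\}$ carrying the weights from $\mu_H$ conditioned on $x_S$, and the local walk samples a neighboring coordinate-value pair from the two-dimensional marginals. A direct computation shows that, up to subtraction of the rank-one projection onto the stationary distribution and a rescaling by $1/(N-|S|-1)$, the nontrivial eigenvalues of this local walk agree with those of $\Psi_{S, x_S}$. Consequently the hypothesis $\|\Psi_{S, x_S}\|_{\mathrm{op}} \le \alpha$ uniformly over $(S, x_S)$ translates to the statement that $X$ is a local spectral expander at every link, with second eigenvalue at level $k$ bounded by $\alpha/(N-k-1)$.

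Third, I would invoke the Alev--Lau local-to-global theorem: uniform local spectral expansion of this strength implies the top-level down-up walk has spectral gap at least $\prod_{k=0}^{N-2}\parens*{1 - \alpha/(N-k-1)}$, which telescopes to $n^{-O(\alpha)}$. Combined with the identification from the first step, this produces the claimed $n^{-O(\alpha)}$-\Poincare inequality. The main obstacle is the second step: the matrix $\Psi_{S, x_S}$ as defined is not manifestly symmetric, so one must verify that its spectral norm really controls the second eigenvalue of the symmetrized local walk, and track the (relatively mild) bookkeeping to ensure the bound holds uniformly even for near-full pinnings $|S| = N-2$, where the link has only four vertices. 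Once this translation is in place, the Alev--Lau telescoping can be applied as a black box.
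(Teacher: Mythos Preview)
This theorem is stated in the paper as a citation to prior work (\cite{AL20,ALO21}) and is not given its own proof in the paper; it serves as background motivating the paper's weak-\Poincare generalization (\Cref{thm:soft-spec-ind}). Your proposal correctly outlines the standard argument from those references: encode $\mu_H$ as a weighted simplicial complex, identify the Glauber dynamics with the top-level down-up walk, translate the influence-matrix bound into local spectral expansion at every link, and apply the Alev--Lau local-to-global theorem to obtain the telescoping gap $\prod_{k=0}^{N-2}(1-\alpha/(N-k-1)) = n^{-O(\alpha)}$. So there is nothing to compare against in this paper, and your sketch matches the original proofs.
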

While the above theorem has been influential and useful in proving mixing time bounds for a variety of Markov chains relevant to sampling combinatorial structures, the ``for every'' requirement in the above theorem is quite punishing in average-case settings.
For example, in the presence of metastable states, $P_H$ does not satisfy a \Poincare inequality, but may nevertheless satisfy a weak \Poincare inequality.
In such cases, there are choices of $S$ and $x_S$ for which $\Psi_{S,x_S}$ has large spectral norm, and the above statement has no implications for the mixing time of $P_H$.

We give a general local-to-global principle to prove weak \Poincare inequalities.
A one-line summary of this local-to-global principle is:
\begin{displayquote}
    \emph{Bounded influence over \textbf{all} pinnings implies a \Poincare inequality.}
\end{displayquote}
An analogous summary of the local-to-global principle in the present paper is:
\begin{displayquote}
    \emph{Bounded influencce over {\textbf{typical}} pinnings implies a {\textbf{weak}} \Poincare inequality.}
\end{displayquote}
To give a more concrete instantiation of our message, our result implies a ``softer'' version of \Cref{thm:spec-ind}, tolerant to some ``bad'' pinnings, which we state below.
\begin{theorem}[Special case of \Cref{lem:stopped-scheme-to-weak-poi,rem:other-ls-to-weak-poi}] \label{thm:soft-spec-ind}
    Let $i_1,\dots,i_N$ be a random permutation of $[N]$, let $S_t\coloneqq\{i_1,\dots,i_t\}$, and let $\bx\sim\mu_H$.
    Suppose with probability $1-\eps$ over the randomness of $\bx$ and the permutation $i_1,\dots,i_N$, we have that for every $t\in[N]$, the influence matrix $\Psi_{S_t,\bx_{S_t}}$ has spectral norm bounded by $\alpha$.
    Then, $P_H$ satisfies a $(n^{-O(\alpha)}, O(\eps))$-weak \Poincare inequality.
\end{theorem}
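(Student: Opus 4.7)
The plan is to replay the proof of \Cref{thm:spec-ind} along the coordinate-by-coordinate localization scheme induced by the random permutation $i_1,\dots,i_N$, but to \emph{stop} the scheme as soon as the spectral-norm bound on the influence matrix fails, and to absorb the resulting defect into the additive $\eps\norm*{f-\E f}_\infty^2$ slack that a weak \Poincare inequality permits.

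First I would fix a test function $f:\{\pm1\}^N\to\R$, set $\calF_t\coloneqq\sigma(i_1,\dots,i_N,\bx_{S_t})$, and consider the Doob martingale $f_t\coloneqq\E[f(\bx)\mid\calF_t]$, so that $f_0=\E f$ and $f_N=f$. Let $\tau$ be the first time $t\in[N]$ at which $\norm*{\Psi_{S_t,\bx_{S_t}}}_{\mathrm{op}}>\alpha$, with $\tau\coloneqq N$ if no such $t$ exists, and let $\calG$ denote the good event that the spectral bound holds at every $t\in[N]$, so $\Pr[\calG]\geq 1-\eps$ by hypothesis. Using that $f_\tau$ is $\calF_\tau$-measurable and the tower property (which makes the cross term vanish), we have the orthogonal decomposition
\[
\Var_{\mu_H}[f]\;=\;\E\bracks*{(f_\tau-\E f)^2}\;+\;\E\bracks*{(f-f_\tau)^2}.
\]

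The second, ``bad-path'' contribution vanishes on $\calG$ since $f_\tau=f_N=f$ there, and on $\calG^c$ is bounded using $|f-f_\tau|\leq 2\norm*{f-\E f}_\infty$ together with $\Pr[\calG^c]\leq\eps$, yielding $\E[(f-f_\tau)^2]\leq 4\eps\norm*{f-\E f}_\infty^2$. The first, ``good-path'' contribution admits the orthogonal-increment expansion $\E[(f_\tau-\E f)^2]=\sum_{t=1}^N\E[\mathbf{1}_{\tau\geq t}(f_t-f_{t-1})^2]$, and on the event $\{\tau\geq t\}$ the spectral bound at the pinning $(S_{t-1},\bx_{S_{t-1}})$ holds by definition of $\tau$. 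Invoking the Anari--Liu--Oveis~Gharan local-to-global inequality pathwise at each such $t$, telescoping the one-step variance contraction produces a multiplicative factor of $n^{O(\alpha)}$ between $\E[(f_\tau-\E f)^2]$ and the expected single-coordinate Dirichlet form of $\mu_H$, which is (up to absolute constants) $\calE(f,f)$ for the Glauber dynamics $P_H$. Combining the two bounds yields exactly the $(n^{-O(\alpha)},O(\eps))$-weak \Poincare inequality.

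The main obstacle I anticipate is the stopped chaining step: the classical local-to-global inequality is a pointwise statement that requires the spectral bound at \emph{every} pinning encountered, whereas here the telescoping must be run only on the event $\{\tau\geq t\}$ and then compared to the full Dirichlet form summed over \emph{all} coordinates. The localization-schemes viewpoint of \cite{CE22} makes this stopped variant natural to set up, and as signposted by \Cref{rmk:localization-schemes-cts-markov-chains}, the same template transports to continuous-time Langevin diffusion on the sphere, which is the actual setting of the spin-glass application.
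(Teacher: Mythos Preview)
Your decomposition runs in the wrong direction, and the key telescoping step is circular. With $\tau=N$ on the good event $\calG$ you have $f_\tau=f$, so the bound $\E[(f-f_\tau)^2]\le 4\eps\|f-\E f\|_\infty^2$ is fine, but then $\E[(f_\tau-\E f)^2]=\Var[f]-\E[(f-f_\tau)^2]$ is essentially the full $\Var[f]$, and your claimed bound $\E[(f_\tau-\E f)^2]\le n^{O(\alpha)}\calE(f,f)$ is precisely the \Poincare inequality you are trying to prove. The orthogonal-increment expansion $\sum_t\E[\mathbf{1}_{\tau\ge t}(f_t-f_{t-1})^2]$ does not help: the Glauber Dirichlet form is comparable to the \emph{last} increment $\E[\Var[f\mid\calF_{N-1}]]$ (variance over the single remaining coordinate given all others), not to the sum of all increments, which is just $\Var[f]$. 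The Anari--Liu--Oveis~Gharan one-step contraction says each reveal preserves at least a $(1-K_t)$ fraction of the \emph{remaining} conditional variance, and telescopes to $\E[\Var[f\mid\calF_{N-1}]]\ge n^{-O(\alpha)}\Var[f]$; it gives no handle on the external variance in terms of $\calE(f,f)$.

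The paper's route works from the Dirichlet side and stops the localization one step short so that the terminal component measures are nontrivial. Cap $\tau$ at $N-1$ rather than $N$; on $\calG$ the component $\mu_\tau$ is then a two-point distribution and satisfies a genuine $\Omega(1)$-\Poincare inequality. Now chain through the Dirichlet form:
\[
\calE_{\mu}(f,f)\;\ge\;\E_{\bz}\,\calE_{\mu_{\bz}}(f,f)\;\ge\;\CPoi\,\E_{\bz\ \mathrm{good}}\Var_{\mu_{\bz}}[f]\;\ge\;\CPoi\,\E_{\bz}\Var_{\mu_{\bz}}[f]-\CPoi\,\eps\,\osc(f)^2,
\]
and the stopped one-step contraction (this is where the telescoping actually belongs) gives $\E_{\bz}\Var_{\mu_{\bz}}[f]\ge n^{-O(\alpha)}\Var_\mu[f]$. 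This is exactly \Cref{lem:measure-decomp-weak-poincare} applied to the stopped measure decomposition, with variance conservation supplied by the stopped linear-tilt lemma, as indicated by the cross-reference to \Cref{lem:stopped-scheme-to-weak-poi,rem:other-ls-to-weak-poi}. Your stopping-time idea and the $O(\eps)\osc(f)^2$ accounting for bad paths are both correct; what needs to change is which side of the variance decomposition carries the Dirichlet form.
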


\begin{remark}
    The reader should think of the spectral norm of $\Psi_{S_t, \bx_{S_t}}$ as quantifying how much variance of the distribution $\mu_H|\bx_{S_t}$ is ``explained'' by revealing $\bx_{i_{t+1}}$.
\end{remark}
\begin{remark}
    \label{rmk:localization-schemes-cts-markov-chains}
    \Cref{thm:soft-spec-ind} holds at a more general level, for a large family of \emph{localization schemes}; see \cite{CE22} for examples of localization schemes and further discussion.
    The localization scheme at play in the above local-to-global principle is process of revealing coordinates of a Gibbs sample $\bx$ in random order.

    In our main application of sampling from a spherical spin glass using simulated annealing of Langevin diffusion, we consider a different localization scheme, \emph{stochastic localization}, where the revealed information at time $t$ is $\by_t = t\bx + B_t$ where $(B_t)_{t\ge 0}$ is a standard Brownian motion.
    Analyzing this localization scheme requires studying \emph{exponential tilts} rather than pinnings of $\mu_H$. 
    The analogous local-to-global principle in this setting is: 
    \begin{displayquote}
        \emph{Bounded covariance over \textbf{typical} {exponential tilts} implies a \textbf{weak} \Poincare inequality.}
    \end{displayquote}
    We defer a technical discussion to \Cref{sec:tech-overview}, and refer to \Cref{lem:stopped-scheme-to-weak-poi,rem:other-ls-to-weak-poi} for a formal statement.
    % We use a version of the local-to-global principle (\Cref{lem:stopped-scheme-to-weak-poi,rem:other-ls-to-weak-poi}) where we translate certain high-probability conditions on the path $(\bz_t)_{t\ge 0}$ to weak \Poincare inequalities.
    
\end{remark}

\subsection{Sampling from spherical spin glasses}
We now state our main results for sampling from spherical spin glasses. 
We will encode the coefficients $\gamma_2,\ldots,\gamma_{p_*}$ in \eqref{eq:hamiltonian} into the \emph{mixture function} $\xi(q) = \sum_{p=2}^{p_*} \gamma_p^2 q^p$.
Note that the parameter $\beta$ in \eqref{eq:hamiltonian} can of course be absorbed into the $\gamma_p$, so we can state thresholds directly in terms of the function $\xi$.
Physics heuristics \cite{CHS93} suggest that  Glauber dynamics and Langevin diffusion, with random initialization, sample from $\mu_H$ with vanishing total variation error under the following condition.
Note that this and the below conditions take the form of an upper bound on $\xi$ or its derivatives, and therefore demarcate a region of sufficiently high temperature.
\begin{equation} \label{eq:non-shatter-condition}
    \xi'(q) < \frac{q}{1-q} \text{ for all } q \in (0,1). \tag{$\mathsf{Non}\text{-}\mathsf{shattering}$}
\end{equation}
Recent work by one of the authors, Montanari, and Pham \cite{HMP24} gives an algorithm based on simulating Eldan's stochastic localization process \cite{Eld13,Eld20} (see below), which samples from $\mu_H$ with vanishing total variation error under the following condition.
\begin{equation} \label{eq:SL-condition}
    \xi''(q) < \frac{1}{(1-q)^2} \text{ for all } q \in [0,1). \tag{$\mathsf{SL}$}
\end{equation}
Note that this condition implies \eqref{eq:non-shatter-condition}, which can be seen by integrating the inequality.
% \begin{con}[Stochastic localization]
%     \label{con:intro-sl}
%     For all $q\in [0,1)$, $\xi''(q) < \frac{1}{(1-q)^2}$.
% \end{con}
\cite{HMP24} also shows a matching hardness result, that for any strictly replica symmetric model (see \eqref{eq:strict-RS-condition} below) \emph{not} satisfying \eqref{eq:SL-condition}, a generalized family of stochastic localization algorithms fails to sample from $\mu_H$.

Our main result is that simulated annealing samples from $\mu_H$ in the same regime. 
\begin{theorem}[See \Cref{thm:main-p-spin}]
    Under \eqref{eq:SL-condition}, with probability at least $1-e^{-\Omega(N^{1/5})}$ over the randomness of $H$, annealed Langevin dynamics produces a sample whose total variation distance to $\mu_H$ is at most $e^{-\Omega(N^{1/5})}$.
\end{theorem}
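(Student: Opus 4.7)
The plan is to combine the paper's two central frameworks: the simulated-annealing reduction (\Cref{th:annealing}), which turns sampling from $\mu_H$ into a problem of establishing weak \Poincare inequalities along a temperature schedule, and the stochastic-localization local-to-global principle (\Cref{lem:stopped-scheme-to-weak-poi}), which in this continuous setting reduces a weak \Poincare inequality for a spin-glass Gibbs measure to a covariance bound on its typical exponential tilts.

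First I would fix an annealing schedule $0 = \beta_0 < \beta_1 < \cdots < \beta_T = \beta$ with $T = \poly(N)$ equally spaced stages, running Langevin for $\poly(N)$ time at each stage. Standard Gaussian concentration of the free energy of $\beta' H$ implies that for $T$ large enough, consecutive Gibbs measures $\mu_{\beta_{i-1} H}$ and $\mu_{\beta_i H}$ are uniform warm starts for each other with density ratios bounded by $1 + o(1)$. By \Cref{th:annealing}, it then suffices to prove that for every $\beta' \in [0, \beta]$, the Langevin diffusion targeting $\mu_{\beta' H}$ satisfies a $(1/\poly(N),\, e^{-\Omega(N^{1/5})})$-weak \Poincare inequality. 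Invoking the stochastic-localization form of \Cref{lem:stopped-scheme-to-weak-poi}, this reduces further to the covariance claim that with probability at least $1 - e^{-\Omega(N^{1/5})}$ over the stochastic-localization trajectory $y_t = t\sigma_\star + B_t$ (for $\sigma_\star \sim \mu_{\beta' H}$ and a Brownian motion $B_t$), the tilted measure
\[
    \mu_{\beta' H, y_t}(\sigma) \propto \exp\parens*{\beta' H(\sigma) + \langle y_t, \sigma \rangle}
\]
has covariance of $O(1)$ operator norm, uniformly in $t$ along the process.

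The main obstacle, and the key technical ingredient highlighted in the abstract, is establishing this covariance bound throughout the full replica-symmetric regime defined by \eqref{eq:SL-condition}. At each time $t$, the tilt concentrates $\mu_{\beta' H, y_t}$ near a spherical cap at overlap $q_t$ with the planted direction $\sigma_\star$, with $q_t$ determined by the replica-symmetric fixed-point equations. On this cap, the tangent-restricted Hessian of $\beta' H$ has operator norm of order $\sqrt{\xi''(q_t)}$ by GOE-type spectral estimates, while the linear tilt $\langle y_t, \sigma\rangle$ confined to the sphere yields an effective quadratic restoring force of strength $\sim 1/(1-q_t)$; \eqref{eq:SL-condition} is precisely the statement that the restoring force dominates the random Hessian along the entire trajectory, making the log-density strictly concave in operator norm. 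Formalizing this requires a Gaussian-interpolation or AMP fixed-point analysis to pin down $q_t$ and cancel replica-symmetric cavity contributions, Gaussian concentration of the tangent-space spectrum of the spin-glass Hessian, and a union bound over a discretization of $(t, y_t)$ to upgrade pointwise estimates to uniform-in-$t$ control. The $N^{1/5}$ rate emerges from balancing the polynomial number of annealing stages, the granularity of this net, and the quantitative slack left by \eqref{eq:SL-condition}. Feeding the resulting weak \Poincare inequalities back into \Cref{th:annealing} chains warm starts across all $T$ stages and yields the claimed TV-distance bound $e^{-\Omega(N^{1/5})}$.
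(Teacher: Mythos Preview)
Your high-level scaffolding --- reduce to weak Poincar\'e inequalities via \Cref{th:annealing}, then establish these via \Cref{lem:stopped-scheme-to-weak-poi} with stochastic localization --- matches the paper. Two genuine gaps remain.

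First, \Cref{lem:stopped-scheme-to-weak-poi} needs two inputs: the covariance bound along the SL path \emph{and} a weak Poincar\'e inequality for the endpoint measure $\mu_T$. You only name the first. The paper handles the endpoint separately (\Cref{lem:pspin-weak-pi-sl-end}): for $T$ a large constant, $\mu_T$ concentrates on a small spherical cap, and a stereographic projection there yields a strongly log-concave measure on $\R^{N-1}$, whose Poincar\'e inequality transfers to a weak one for $\mu_T$.

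Second, and more seriously, your mechanism for the covariance bound --- that the tilted log-density is strictly concave on the relevant cap --- cannot work across the whole SL trajectory. At $t=0$ the tilted measure is $\mu_{\beta' H}$ itself, which beyond the uniqueness threshold has metastable states and thus fails even a true Poincar\'e inequality, let alone log-concavity; yet its covariance \emph{is} bounded, and proving this is the paper's main technical contribution (\Cref{thm:intro-cov-bd}). The paper's route is entirely different from concavity: pass to the planted model via quantitative contiguity (\Cref{prop:quantiguity}), condition on a TAP fixed point (\Cref{lem:planted-to-TAP}), decompose the sphere into codimension-$2$ slices through this point, show each slice carries a strictly RS spin glass in two fewer dimensions, and bound the second-moment matrix of such a model by a conditional second-moment calculation that reduces to the degree-$2$ part, followed by a Lipschitz-plus-Gaussian-concentration bootstrap from positive to high probability (\Cref{thm:partition-fn-and-covariance}). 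Your heuristic comparing $\sqrt{\xi''(q)}$ to $1/(1-q)$ is loosely in the spirit of the degree-$2$ analysis, but it is only one ingredient in this last step; the $N^{1/5}$ rate actually emerges from the Lipschitz argument there, not from annealing or net granularity.
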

As alluded to in the above discussion,
% \brice{check this!} 
the main input to our framework is a high-probability covariance bound on the \emph{random} exponential tilts of the Gibbs measure encountered along the stochastic localization process.
Combined with our weak \Poincare inequality framework, this implies that simulated annealing samples from the Gibbs measure.
On the way to proving these covariance bounds, we establish a high-probability covariance bound on all spherical spin glasses in the \emph{(strictly) replica symmetric} phase, a high-temperature phase where the model enjoys a certain notion of correlation decay. 
\begin{equation} \label{eq:strict-RS-condition}
    \xi''(0) < 1 \text{ and } \xi(q) + q + \log(1-q) < 0 \text{ for all } q \in (0,1). \tag{$\mathsf{Strict}\,\mathsf{RS}$}
\end{equation}

% \begin{con}[Strict replica symmetry]
%     \label{con:intro-rs}
%     $\xi''(0) < 1$, and for all $q\in (0,1)$, $\xi(q) + q + \log(1-q) < 0$.
% \end{con}
\begin{theorem}[Informal, see \Cref{thm:partition-fn-and-covariance}]
    \label{thm:intro-cov-bd}
    Under \eqref{eq:strict-RS-condition}, with probability $1-e^{-\Omega(N^{1/5})}$ over the randomness of $H$, $\|\Cov(\mu_H)\|_{\op} = O(1)$.
\end{theorem}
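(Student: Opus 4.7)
The strategy is to combine a replica-symmetric second moment computation for a linearly tilted partition function with Gaussian concentration of the disorder, and to leverage the rotational invariance of the law of $H$ to handle the supremum over directions. Since $\E[H(\sigma) H(\sigma')] = N\xi(\langle \sigma, \sigma'\rangle / N)$ depends only on the overlap, the law of $\Cov(\mu_H)$ is invariant under $\Cov \mapsto O \Cov O^\top$ for every $O \in O(N)$; this will be essential for turning a per-direction bound into an operator-norm bound.

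For a fixed $v \in \bbS^{N-1}$ and $t \in \R$, I introduce the linearly tilted partition function $Z_{t,v} := \int_{\sqrt N \bbS^{N-1}} e^{H(\sigma) + t \langle v, \sigma\rangle}\,d\sigma$, so that $\Var_{\mu_H}[\langle v, \bx\rangle] = \partial_t^2 \log Z_{t,v}|_{t=0}$. Computing $\E[Z_{t,v}]$ and $\E[Z_{t,v}^2]$ by Gaussian integration, the second moment reduces to a one-dimensional integral over the two-replica overlap $R = \langle \sigma, \sigma'\rangle/N$ with exponent of the form $N\bigl[\xi(R) + \tfrac{1}{2}\log(1-R^2) + O(t^2)\bigr]$. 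Under \eqref{eq:strict-RS-condition}, this rate function has a strict global maximum at $R=0$ with non-degenerate Hessian, so a Laplace expansion gives $\E[Z_{t,v}^2]/\E[Z_{t,v}]^2 = 1 + o(1)$ uniformly for $t$ in a small real interval. Combined with Borell--TIS concentration for the $\sqrt{N}$-Lipschitz functional $H \mapsto \log Z_{t,v}$ (after truncating the disorder to a ball of radius $\asymp N^{1/5}$, which is what sets the final exponent), this yields $|\log Z_{t,v} - \E \log Z_{t,v}| \le O(N^{1/5})$ with probability $1 - e^{-\Omega(N^{1/5})}$.

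From this I extract bounds on $\partial_t^2 \log Z_{t,v}|_{t=0}$ via a finite-difference argument at a few values of $t$ near zero, exploiting that the per-realization map $t \mapsto \log Z_{t,v}$ is real-analytic with a priori bounded higher derivatives (controlled using $\|\bx\|^2 \le N$ on the sphere). The mean $\partial_t^2 \E\log Z_{t,v}|_{t=0}$ is computed directly from the replica-symmetric free energy formula and is $O(1)$ under \eqref{eq:strict-RS-condition}. To pass to the supremum over $v$, rather than a naive $\eps$-net on the sphere (of prohibitive size $e^{\Omega(N)}$), I plan to use rotational invariance to bound $\E[\text{tr}(\Cov(\mu_H)^k)]$ for $k$ a slowly-growing power ($k \asymp N^{1/5}$), obtained from $2k$-replica overlap integrals analogous to the two-replica case; Markov's inequality then converts a moment bound into the operator norm bound $\|\Cov(\mu_H)\|_{\op} = O(1)$ with the claimed tail.

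The main obstacle is precisely the passage from a fixed $v$ to $\sup_v$ at the stated probability level, since the per-direction failure probability $e^{-\Omega(N^{1/5})}$ is too weak for a naive sphere net. My proposed resolution via high moments of $\text{tr}(\Cov^k)$ requires analyzing $2k$-replica integrals and showing that the strict RS condition (which is equivalent to the replicon eigenvalue being bounded away from the AT threshold in the $R=0$ direction) forces the dominant contribution to come from the ``all overlaps zero'' configuration even as $k$ grows with $N$. The value $N^{1/5}$ in the final tail reflects the optimal balance between the moment order $k$, the Laplace-expansion error in the $k$-replica integral, and the truncation radius used in the Gaussian concentration.
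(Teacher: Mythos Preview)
Your proposal has a genuine gap at the step where you extract $\partial_t^2 \log Z_{t,v}|_{t=0}$ from pointwise concentration of $\log Z_{t,v}$. The finite-difference argument with spacing $h$ incurs an error of order $N^{1/5}/h^2$ from the $O(N^{1/5})$ concentration, plus a Taylor remainder controlled by the fourth derivative $\partial_t^4 \log Z_{t,v}$. But that fourth derivative is the fourth cumulant of $\langle v,\sigma\rangle$ under $\mu_H$, and \emph{a priori} (using only $|\langle v,\sigma\rangle|\le \sqrt{N}$) it can be as large as $N^2$. Balancing gives an error that diverges with $N$, not $O(1)$. Assuming the higher cumulants are $O(1)$ is exactly what you are trying to prove, so the argument is circular. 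Your fallback via $\E[\tr(\Cov^k)]$ for $k\asymp N^{1/5}$ is a different computation altogether (a $2k$-replica annealed Gibbs average, not a tilted partition function) and you have not explained how the strict RS condition controls it uniformly as $k$ grows with $N$; the claimed origin of the exponent $N^{1/5}$ is speculative.

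The paper avoids the supremum-over-directions issue entirely by a different route. It separates $H_N = H_{N,2} + H_{N,\sim 2}$, rotates to the eigenbasis of $\nabla^2 H_N(0)$, and shows via a conditional (on $H_{N,2}$) second-moment calculation with a free-energy-typical truncation that the second moment matrix $\langle\sigma\sigma^\top\rangle$ is close in \emph{Frobenius} norm to the degree-$2$ second moment matrix, which is essentially the GOE resolvent $((1+\xi''(0))I-\nabla^2 H_N(0))^{-1}$ and hence has $O(1)$ operator norm. This yields the bound with positive probability; the boost to probability $1-e^{-cN^{1/5}}$ then comes from showing that a carefully designed proxy for $\|\langle\sigma\sigma^\top\rangle\|_{\op}$ is $O(N^{-1/10})$-Lipschitz in the disorder on a high-probability set (using the overlap control afforded by the truncation), followed by Kirszbraun extension and Gaussian concentration. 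The exponent $1/5$ arises from the truncation at overlap scale $N^{-2/5}$, not from a moment-order balance.
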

This is the first covariance bound to cover the entire replica symmetric phase with higher order interactions, and we believe it is interesting in its own right.
This result is sharp: in the complement of the replica symmetric regime, arguing as in \cite[Proposition 4.2]{AG24} shows that $\E \|\Cov(\mu_H)\|_{\op}$ is diverging, of order $\Omega(\sqrt{N})$.

The relation between \eqref{eq:SL-condition} and \eqref{eq:strict-RS-condition} is as follows.
First, \eqref{eq:strict-RS-condition} follows from \eqref{eq:SL-condition} by integrating twice.
Second, \eqref{eq:SL-condition} is equivalent to the condition that random exponential tilts of $\mu_H$ of any magnitude are typically replica symmetric.
This is needed for the algorithmic stochastic localization approach of \cite{HMP24}, and arises in the current work (where stochastic localization appears as an analysis tool, rather than as an algorithm) for a similar reason, see \Cref{rmk:SL-fundamental-barrier}.

The connection from \Cref{thm:intro-cov-bd} to high-probability covariance bounds on the tilted measures encountered along the localization process relies on a reduction developed in \cite{HMP24}.
This reduction implies that typically, the vast majority of the mass of these tilted measures live near a certain codimension-$2$ band passing through a \emph{TAP fixed point}, which behaves like a spin glass in two fewer dimensions.
The proof of \Cref{thm:intro-cov-bd} also builds on tools developed in \cite{HMP24}, and by one of the authors and Sellke in \cite{HS23}, which together provide high-precision control of partition functions in the replica symmetric regime. 

On the other hand, our approach also leads to several improvements over earlier results.
First, we obtain a sampler with total variation error $e^{-\Omega(N^{1/5})}$ with probability $1-e^{-\Omega(N^{1/5})}$, whereas \cite{HMP24} obtains total variation error $N^{-\eps}$ with probability $1-N^{-\eps}$, for small constant $\eps$.
Our total variation error is close to the best possible, as beyond the uniqueness threshold, at least a $e^{-O(N)}$ fraction of $\mu_H$ is typically trapped in metastable states \cite{AJ24a}, which are hard to reach.
Moreover, there is no longer a need to encode a mean estimator for the stochastic localization process (see below) directly in the algorithm; running a natural Markov chain is sufficient.

More conceptually, our work gives the first analysis of a Markov chain for this problem that ``sees" the benignness of a random initialization and overcomes the uniqueness threshold.

\subsection{Weak \Poincare inequalities beyond annealing}

The discussion thus far has been focused on proving mixing time bounds for Markov chains initialized at warm starts.
In fact, our framework extends beyond this and can be used to prove rapid mixing of a Markov chain initialized at a distribution that ``sees'' the different components of the target distribution. 
For instance, consider the simple scenario where the target distribution $\pi$ is a mixture of two disconnected component distributions, each of which satisfies a (true) \Poincare inequality. 
The disconnectedness means that the full distribution $\pi$ does not satisfy a true \Poincare inequality. However, if we initialize at a distribution that splits its mass equally between the two components, we would expect a Markov chain to rapidly mix to the target distribution.

How does one convert this belief to a (generalizable) proof? The key is that while the distribution may not satisfy a \Poincare inequality for \emph{all} functions, a variant of such an inequality does hold for functions encountered along the trajectory of the Markov chain. More concretely, we may prove the following theorem.

\begin{theorem}[Informal, see \Cref{th:approx-pi-main}]
    Consider the trajectory $(\nu_t)_{t \ge 0}$ of a Markov chain with stationary distribution $\pi$, initialized at a distribution $\nu_0$. Suppose that for all $s \le T$,
    \[ \calE\left( \frac{\dif \nu_s}{\dif \pi} , \frac{\dif \nu_s}{\dif \pi} \right) \ge \CPoi \left(\Var_{\pi} \left[ \frac{\dif \nu_s}{\dif \pi} \right] - \delta\right). \]
    Then,
    \[ \chitwo{\nu_T}{\pi} \le e^{-2\CPoi T} \chitwo{\nu_0}{\pi} + \delta. \]
\end{theorem}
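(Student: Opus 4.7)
The plan is to convert the hypothesis into a differential inequality for $\chitwo{\nu_s}{\pi}$ along the trajectory and then apply a Gr\"onwall-type integration. Write $f_s \coloneqq \dif\nu_s/\dif\pi$, so that $\chitwo{\nu_s}{\pi} = \Var_\pi[f_s]$, with $\E_\pi[f_s] = 1$ for all $s$. Since the Markov chain is reversible with stationary distribution $\pi$ and generator $\mathcal{L}$, the density evolves by $\partial_s f_s = \mathcal{L} f_s$, and the standard identity $\E_\pi[f \mathcal{L} g] = -\calE(f,g)$ yields the chi-squared dissipation formula
\[
    \frac{d}{ds} \chitwo{\nu_s}{\pi} = 2\, \E_\pi[f_s \mathcal{L} f_s] = -2\, \calE(f_s, f_s).
\]

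The hypothesis gives $\calE(f_s, f_s) \ge \CPoi\bigl(\chitwo{\nu_s}{\pi} - \delta\bigr)$ for every $s \le T$, so setting $u(s) \coloneqq \chitwo{\nu_s}{\pi} - \delta$ produces the differential inequality $u'(s) \le -2 \CPoi\, u(s)$. Integrating on $[0,T]$ gives $u(T) \le e^{-2\CPoi T}\, u(0)$, i.e.
\[
    \chitwo{\nu_T}{\pi} \le e^{-2\CPoi T}\, \chitwo{\nu_0}{\pi} + \bigl(1 - e^{-2\CPoi T}\bigr)\, \delta \le e^{-2\CPoi T}\, \chitwo{\nu_0}{\pi} + \delta,
\]
which is the desired bound.

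The main subtlety, rather than a technical obstacle, is conceptual: the hypothesis is assumed only along the actual trajectory $(\nu_s)_{s \le T}$ and not as a universal functional inequality valid for every density on the state space, so one cannot invoke a single global weak \Poincare inequality and must instead argue via a pointwise-in-time lower bound on the dissipation. Once the argument is framed as a differential inequality along the trajectory, the rest is a routine Gr\"onwall step; no $L^\infty$ control on the density ratio is needed here, in contrast to the warm-start setting of \eqref{eq:intro-wpi}.
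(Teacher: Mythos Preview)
Your proof is correct and follows essentially the same route as the paper: both derive the dissipation identity $\tfrac{d}{ds}\chitwo{\nu_s}{\pi} = -2\,\calE(f_s,f_s)$, combine it with the hypothesis to obtain a linear differential inequality, and integrate via an integrating factor/Gr\"onwall step. Your substitution $u(s)=\chitwo{\nu_s}{\pi}-\delta$ is a clean shortcut that exploits the constancy of $\delta$; the paper's version keeps the error term under the integral (yielding the weighted average $\E_{\bs\sim\Lambda_T}[\Error(f_{\bs})]$), which is needed when the error varies with $s$ but reduces to exactly your bound in the constant case.
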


We remark that our earlier equation \eqref{eq:intro-wpi} is a near-immediate consequence of the above. Returning to the above example with two disconnected components, if $\nu_s$ placed exactly half its mass on each of the two components, the error $\delta$ can be taken to be $0$.

For our first application in \Cref{sec:mixture-advice}, we use this picture of how the initialization can capture ``symmetries'' in the distribution.

\parhead{Sampling from mixtures of log-concave distributions with advice.}
An example of a distribution where we can take advantage of ``symmetries'' is the following. Suppose we have a distribution $\pi$ which is a mixture of $K$ distributions
\[ \pi = \sum_{i=1}^{K} p_i \pi_i, \]
each of which is well-connected (e.g., satisfies a \Poincare inequality). 
We do not expect a Markov chain to rapidly mix to $\pi$ from a worst-case initialization. Does the scenario change if we initialize more cleverly? To be concrete, suppose we are given $m$ samples $x_1,\ldots,x_m$ from $\pi$, and initialize our Markov chain at the empirical distribution $\sum_{i=1}^{m} \delta_{x_i}$. If the component measures $(\pi_i)$ are ``far apart'' and do not interact with each other, we would expect the Markov chain to rapidly mix from this initialization if the fraction of points in each cluster is (approximately) equal to the correct fraction $p_i$. On the other hand, if the component measures were very close together, we would expect their mixture to also satisfy a \Poincare inequality.

However, it is unclear how to translate this intuition to a proof. In previous work \cite{KV23}, sampling guarantees are provided for this algorithm, but the running time has a doubly exponential dependence on the number of components $K$.
Our second illustration of weak \Poincare inequalities provides high-probability sampling guarantees for this problem, by running Langevin diffusion for time that is polynomial in all parameters involved. 
We refer the reader to \Cref{sec:mixture-advice} for the details of the theorem statement and its (self-contained) proof.

This problem is studied extensively in an independent work of Koehler, Lee, and Vuong \cite{KLV24}.
Motivated by the success of \emph{score matching} methods in modern machine learning, they prove that Langevin dynamics and Glauber dynamics converge to the stationary distribution when initialized from the above empirical distribution under similar conditions to our setting, even if the Markov chain updates come from a slightly perturbed distribution (i.e. if they were learned by a score matching algorithm).
They also use their techniques to give an efficient algorithm for learning approximately low-rank Ising models.

\subsection{Related work}\label{sec:related-work}

\parhead{Markov chain mixing and localization schemes.}
The first use of the local-to-global phenomenon in mixing was in the work of \cite{ALOV24} on establishing rapid mixing of the ``basis exchange'' walk on bases of a matroid, which used the local-to-global theorem for simplicial complexes from \cite{KO20}.
Their approach was later formalized into the framework of \emph{spectral independence} \cite{ALO21}, which was widely successful in resolving numerous problems in algorithmic sampling and counting; see \cite{Liu23} for a comprehensive literature survey.

In the world of sampling from continuous distributions, most recent progress on the KLS conjecture on the \Poincare constant of isotropic log-concave distributions (see \cite{LV24} and the recent survey \cite{KL24}) has employed Eldan's stochastic localization \cite{Eld13}.
Later, stochastic localization was used in the work of Eldan, Koehler, and Zeitouni \cite{EKZ22} to analyze the \Poincare constant for Glauber dynamics on Ising models.
The seemingly unrelated techniques of spectral independence and stochastic localization approaches to analyzing mixing times were unified under the framework of localization schemes \cite{CE22}, which, as an application, also simplified the proof of \cite{EKZ22}.

\parhead{Weak \Poincare inequalities.}
The study of weak \Poincare inequalities was initiated in the work of Aida \cite{Aid98} and Mathieu \cite{Mat06} in the context of proving other functional inequalities.
The work of R{\"o}ckner and Wang \cite{RW01} observed the connection between a Markov chain satisfying a weak \Poincare inequality, and rapid mixing from ``sufficiently warm starts''.
We refer the reader to the monograph of Wang \cite[Chapter 4]{Wang06} for a comprehensive treatment of weak \Poincare inequalities and their implications to mixing and concentration.

Weak \Poincare inequalities are also related to the notion of $s$-conductance, a weakened version of conductance introduced in \cite{LS93} which has been used frequently in the literature on sampling from convex bodies (see \cite[Section 7.4.2]{Che23} for a textbook treatment).
This connection is explained in \cite{GMT06}.
We also refer the reader to \cite{CGG07}, which defines a notion of weak log-Sobolev inequality and uses it to derive a rapid mixing result.

The work \cite{EAEGP23} gives a sampling algorithm for the ferromagnetic random-field Ising model on a finite domain $D \subseteq \mathbb{Z}^d$, which follows an approach of chaining warm starts similar to the present work, inspired by convex body sampling literature \cite{LS93}.
\cite{EAEGP23} shows that in a certain parameter regime, the Glauber dynamics for this model satisfy a weak \Poincare inequality.
They then construct an increasing sequence of sub-domains $D_0 \subset D_1 \subset \cdots \subset D_T = D$ and show that a sample from the model on $D_i$ can be converted to a warm start for the model on $D_{i+1}$.
Since the weak \Poincare inequality implies mixing from a warm start, this yields a sampling algorithm based on running the Glauber dynamics on this increasing sequence of models.

The work \cite{AJPKV21b} introduces a related notion of restricted modified log-Sobolev inequality, which implies entropy contraction (without an additive error, in contrast to a weak \Poincare inequality) for all warm starts.
This is used to derive optimal mixing times for several Markov chains.

\parhead{Sampling from random initializations.}
The separation between worst-case mixing times and mixing from a random initialization has been studied in a variety of other settings.
\cite{CDLLPS12,BGZ24} characterize which product measure initializations enjoy rapid mixing in a temperature range where worst-case mixing is exponential for the Curie-Weiss Potts model. 
Notably, as discussed in \cite[Section 1.3]{BGZ24}, their analysis also characterizes mixing from initializations constructed by simulated annealing.  
\cite{LS16,LS17} show that a uniform initialization halves the mixing time for Glauber dynamics for the ferromagnetic Ising model on bounded degree graphs, such as the 1D lattice. 
\cite{GS22} introduces the notion of weak spatial mixing in a phase, and proves that Glauber dynamics for the ferromagnetic Ising model on the 2D lattice has rapid mixing when initialized uniformly at $\pm \vec{\bone}$. \cite{GS24} uses the same notion to study mixing from a similar random initialization for a certain natural Markov chain for the random cluster model.
\cite{BNN24} show rapid mixing for Glauber dynamics for the exponential random graph model when initialized from a carefully chosen Erd\H{o}s--R\'{e}nyi random graph. 

\parhead{Sampling from spherical spin glasses and algorithmic stochastic localization.}
There is a long history of work studying Markov chain dynamics on spin glasses.
An important line of work \cite{CHS93,CK93,BCKM98,BADG06,BAGJ20,CCM21,Sel24} studies the Langevin dynamics for spherical spin glasses on an $N$-independent time scale.
While the Langevin dynamics do not mix on this time scale, these works capture important statistics of the trajectory such as the energy attained by the Langevin dynamics after a given time, and uncover deep phenomena such as \emph{aging}.

Rapid mixing guarantees at sufficiently high temperature were obtained in \cite{GJ19} for the Langevin dynamics for spherical spin glasses, and in \cite{BB19,EKZ22,AJKPV22,ABXY24,AJKPV24,AKV24} for the Glauber dynamics for the Sherrington--Kirkpatrick model \cite{SK75} and Ising spin glasses.
These approaches show mixing from a worst-case initialization via a functional inequality.

Recently, \cite{AMS22,AMS23diffusion} introduced a new sampling algorithm based on simulating Eldan's stochastic localization scheme \cite{Eld13, Eld20}.
This approach has since been used in applications such as Bayesian posterior sampling \cite{MW23,MW24}, and is closely related to the denoising diffusions method in machine learning \cite{SWMG15,HJA20,SSKEP21} (see \cite{Mon23} for details).
The resulting algorithm samples in a wider range of temperatures, though with the weaker guarantee of vanishing \emph{Wasserstein} rather than total variation error.
The recent work \cite{HMP24} improved this guarantee to total variation, and the resulting algorithm succeeds to the same threshold \eqref{eq:SL-condition} as in the present work.

Within the algorithmic stochastic localization approach, the main task is to estimate the means of a sequence of exponential tilts of the Gibbs measure, which appear as the drift process of a stochastic differential equation parametrizing the localization process.
In \cite{AMS22}, this is achieved with an estimator based on approximate message passing (AMP), which is accurate to leading order.
\cite{HMP24} develops an improved estimator with a suitable correction term, which improves the algorithm's guarantee from Wasserstein to total variation error.

\parhead{Covariance bounds for spin glasses.}
There has been a great deal of recent work on covariance bounds for spin glasses \cite{BXY23,AG24,BSXY24}, in part due to the connection between covariance bounds and functional inequalities developed in the localization schemes literature.
In particular, \cite{AG24,BSXY24} address the case of the Sherrington--Kirkpatrick (SK) model, and \cite{BXY23} addresses the SK model with external field.

\subsection{Open problems}
\label{sec:open-probs}

We conclude with several open problems.

\parhead{Non-sampling guarantees for simulated annealing.}
While we initiate a study of simulated annealing to attain sampling guarantees, one could ask how to analyze simulated annealing beyond sampling. In recent work \cite{LMRRW24}, three of the authors, Liu, and Raghavendra introduce the framework of \emph{locally stationary distributions} to analyze slow-mixing Markov chains, and leverage it to obtain recovery guarantees for the spiked Wigner and stochastic block model inference problems.
We start by reiterating \cite[Problems 1.20 and 1.21]{LMRRW24} --- is simulated annealing  computationally optimal for random CSPs with planted solutions? 

Further, consider the problem of optimizing the Hamiltonian \eqref{eq:hamiltonian} of the mixed $p$-spin model.
Historically, simulated annealing was one of the earliest algorithms developed for this problem \cite{CHKW23}.
The works \cite{Mon21,Sub21,AMS21,Sel24b} develop algorithms that are optimal among suitably Lipschitz algorithms \cite{HS22} and conjecturally among all efficient algorithms.
The limiting energy obtained by natural Markov chain dynamics is an long-standing question in its own right \cite{CK93}, which was solved for pure models in \cite{Sel24} but is otherwise open. We ask:
% Further consider the following optimization problem, where the goal is to optimize the Hamiltonian of a $p$-spin model:
% \begin{displayquote}
%     Let $\Omega$ be equal to $S_N$ or $\{\pm 1\}^N$, and suppose we are given some coefficients $(\gamma_i)_{1 \le i \le p_*}$. Given iid Gaussians $\bg_{i_1,\ldots,i_p} \sim \calN(0,1)$, maximize $H(\sigma) = \sum_{p \ge 2} \frac{\gamma_p}{N^{(p-1)/2}} \sum_{i_1,\ldots,i_p} \bg_{i_1,\ldots,i_p} \sigma_{i_1}\cdots\sigma_{i_p}$ over $\sigma \in \Omega$.
% \end{displayquote}
% In a collection of works \cite{Mon21,Sub21,AMS21,Sel24b,Sel24}, (conjecturally) computationally optimal algorithms have been designed for the above task. 
\begin{problem}
    What energy does simulated annealing obtain when run on the Hamiltonian \eqref{eq:hamiltonian}?
\end{problem}

% More concretely, let $H_t(\sigma) = H\left( \delta \cdot \left\lfloor \frac{t}{T} \right\rfloor \cdot \sigma \right)$ for some $\delta = \frac{1}{\poly(N)}$ and $T = \poly(N)$, and consider the simulated annealing algorithm which at time $t$, runs the natural (Langevin or Glauber dynamics) Markov chain with stationary distribution $H_t$. For $t$ a large $\poly(N)$, does the distribution reached after annealing for time $t$ optimize the $p$-spin model with high probability? 
We refer the reader to \cite{MRT04,FFRT21} and references therein for relevant discussion.
We also ask the following question, which seems instrumental to making progress towards the above.

\begin{problem}
    How does a non-worst-case initialization (such as one constructed by simulated annealing) affect the locally stationary distribution that is reached by a Markov chain?
\end{problem}

Along similar lines, we have the following concrete question about understanding Markov chains from non-worst-case initializations.

\parhead{Worst-case combinatorial optimization via simulated annealing.}
The paradigm of solving a semidefinite program and rounding its solution has been extremely successful at achieving optimal approximation guarantees for a wide variety of combinatorial optimization problems, especially constraint satisfaction problems \cite{KKMO07,Rag08}.

However, on large families of instances (sparse ones for instance), the solutions produced by these SDPs can be refined locally to improve the approximation ratio, but these improvements do not match the corresponding hardness thresholds.
For example, for the problem of Max Cut, the classical SDP algorithm \cite{GW95} gives an $\alpha_{\mathrm{GW}}$-approximation for $\alpha_{GW}\approx 0.878$, and a local refinement \cite{HK22} produces an $\alpha_{\mathrm{GW}}+\Omega\left(\frac{1}{d^2}\right)$-approximation.
On the other hand, it is (UG-)hard \cite{Tre01} to approximate the max-cut better than $\alpha_{\mathrm{GW}}+O\left(\frac{1}{\sqrt{d}}\right)$.

\begin{problem}
    Does a Markov chain initialized at the SDP solution attain a $\alpha_{\mathrm{GW}} + \Omega\left( \frac{1}{\sqrt{d}} \right)$-approximation to the max-cut in a bounded degree graph?
\end{problem}

\parhead{Sampling from spin glasses up to the shattering threshold.}
It is conjectured that the Langevin diffusion with uniform random initialization samples from spherical $p$-spin models for inverse temperatures up to the shattering threshold \eqref{eq:non-shatter-condition} \cite{CHS93,CK93}. Similarly, this is conjectured for the Glauber dynamics Markov chain for models over the hypercube $\{\pm 1\}^N$ instead of the sphere $S_N$, for an analogous shattering threshold. As a start, can we show such guarantees for simulated annealing (as opposed to a fixed-temperature Markov chain from uniform initialization)?

\begin{problem}
    Does simulated annealing sample from $p$-spin models up to the shattering threshold?
\end{problem}

The failure of algorithmic stochastic localization beyond the \eqref{eq:SL-condition} condition \cite[Section 10]{HMP24} suggests that ideas beyond our proof strategy are required to prove the above.

% \parhead{Generically matching algorithmic stochastic localization.}
% For sampling from the spherical $p$-spin model, our results show that simulated annealing is (at least) as powerful as algorithmic stochastic localization. However, while our proof uses stochastic localization in the analysis, it requires bounds on the covariance along the trajectory, unlike the proof of \cite{HMP24} which requires proving that it is possible to algorithmically estimate the mean of the model to $O(1)$ error in a Lipschitz fashion. While these requirements are similar (in that Lipschitzness of the true mean in the external field implies a bound on the covariance), it is not obviously true that the latter implies the former.

% \begin{problem}
%     Is it generically true that if algorithmic stochastic localization succeeds at sampling from a distribution, so too does simulated annealing?
% \end{problem}

% Answering the above question in the positive would immediately yield multiple sampling results, for distributions including the Sherrington-Kirkpatrick model in the replica symmetric regime \cite{AMS22,Cel24}, $p$-spin models over the hypercube \cite{AMS23diffusion}, and posteriors of spiked matrix models \cite{MW23}.

\parhead{Simulated annealing in more general models.}
For sampling from the spherical $p$-spin model, our results show that simulated annealing succeeds in the regime \eqref{eq:SL-condition} where algorithmic stochastic localization succeeds.
At the level of proofs, these methods are also closely related, as both revolve around suitable control of the localization process: in the algorithmic stochastic localization approach, this is used to construct a mean estimator for the localized measures, and in our approach it is used to bound the localized measures' covariances.
These tasks are closely linked; see \Cref{rmk:SL-fundamental-barrier}.

One question is whether simulated annealing succeeds in more general models.
In particular, samplers based on algorithmic stochastic localization have been developed for the Sherrington-Kirkpatrick model in the replica symmetric regime \cite{AMS22,Cel24}, $p$-spin models over the hypercube \cite{AMS23diffusion}, and posteriors of spiked matrix models \cite{MW23}.
These samplers are proven to have vanishing Wasserstein error, and sampling with vanishing total variation error remains an open problem in these models.
It would be interesting to show that simulated annealing achieves this. 
% We note that in these models, it is still an open problem to algorithmically sample with vanishing total variation error.
More speculatively, we may ask if there is a general reduction from a sampling guarantee for algorithmic stochastic localization to one for simulated annealing.

\parhead{\bis.}
A major open problem in the field of approximate counting is settling the complexity of \bis: where the algorithmic task is to approximate the number of independent sets in a bipartite graph.
So far, algorithmic progress for this problem has been limited to restricted classes of graphs, such as lattices \& tori \cite{HPR19}, and expander graphs \cite{JKP20}.
Numerous interesting approximate counting problems have been shown to be \bis-hard \cite{CGM12,GJ12,CGGGJSV16,GSVY16}.
While vanilla Glauber dynamics fails at the corresponding sampling task, it is plausible that a variant of simulated annealing succeeds.

\begin{problem}
    Does (a simple variant of) simulated annealing succeed at sampling a uniformly random independent set in a bipartite graph?
\end{problem}

\parhead{Structural guarantees from weak \Poincare inequalities.}
According to physics heuristics, the Gibbs measure of a spherical mixed $p$-spin glass between $\beta_{\mathrm{uniq}}$ and $\beta_{\ShatteredPsyche}$ consists of one main cluster accounting for nearly all the mass, and metastable states with exponentially small mass that are poorly connected to the main cluster and each other.
We do not prove this picture, but the weak \Poincare inequality we obtain (up to $\beta_{\SL}$) is sufficient to imply a sampling guarantee for simulated annealing.
One open direction is to show that the above picture holds, and that the main cluster satisfies a genuine \Poincare inequality. 
More generally, one may ask: 
\begin{problem}
    If a distribution satisfies a weak \Poincare inequality, is it TV-close to a distribution satisfying a true \Poincare inequality?
\end{problem}
We note that \Cref{lem:langevin-close-to-poincare-weak-poincare,lem:close-to-poincare-weak-poincare} show a converse of this statement, that if we perturb a distribution satisfying a true \Poincare inequality (for the Langevin diffusion or Glauber dynamics Markov chains), the resulting distribution satisfies a weak \Poincare inequality. 

% A useful lemma in our study of weak \Poincare inequalities is \Cref{lem:langevin-close-to-poincare-weak-poincare,lem:close-to-poincare-weak-poincare}, which shows that if we perturb a distribution satisfying a true \Poincare inequality (for the Langevin diffusion or Glauber dynamics Markov chains), the resulting distribution satisfies a weak \Poincare inequality. Structurally, one should imagine that this small perturbation allows the creation of small metastable states, which are essentially disconnected from the majority of the state space, but account for a small part of the Gibbs mass.
% These metastable states preclude a true \Poincare inequality, but allow for a weak \Poincare inequality.
% Is the converse of this lemma true?

% \TODO{Add some citations and discussion about relevance to $p$-spin; sampling from uniformly random initialization?}

\subsection{Organization}
In \Cref{sec:tech-overview}, we give a technical overview of how we use weak \Poincare inequalities to analyze simulated annealing for our main application of sampling from spherical $p$-spin distributions. 

In \Cref{sec:prelims}, we cover some basic preliminaries that will be useful.
Then, in \Cref{sec:weak-functional}, we formally define weak functional inequalities and establish some of their basic properties. 

% Over the next two sections, we demonstrate the effectiveness of this framework by applying it to two problems.
In \Cref{sec:mixture-advice}, we demonstrate the effectiveness of this framework by showing how to sample from a mixture of distributions satisfying \Poincare inequalities from data-based initializations. 
% In \Cref{sec:potts}, we show how to use a symmetric initialization to sample from low temperature ferromagnetic Potts models on expander graphs.

Our main application to spherical $p$-spin models spans \Crefrange{sec:stopped-ls}{sec:high-prob-cov}, and requires more background in stochastic localization and spin glass theory.
In \Cref{sec:stopped-ls}, we review some basic properties of stochastic localization and show how to adapt the framework of localization schemes from \cite{CE22} to prove weak functional inequalities. 
Then, in \Cref{sec:sec8}, we initiate the discussion of weak \Poincare inequalities for spherical $p$-spin models. 
To assist the reader in understanding the proof of a weak \Poincare inequality, we provide a separate technical overview in \Cref{sec:tech-overview-cov}. The rest of \Cref{sec:sec8} reduces the proof to proving high-probability covariance bounds for strictly replica-symmetric models with small external field, which is then established in \Cref{sec:high-prob-cov}.

\parhead{Acknowledgements.}
BH is extremely grateful to Andrea Montanari and Huy Tuan Pham for early discussions on this problem, and to Ahmed El Alaoui, Sinho Chewi, and Mark Sellke for enlightening conversations. We would also like to thank Sitan Chen, Jason Gaitonde, Kuikui Liu, and Francisco Pernice for insightful discussions. We would like to thank Thiago Bergamaschi for pointing out an error in an application to the ferromagnetic Potts model in an earlier version of this paper.

\section{Technical overview}    \label{sec:tech-overview}
Let $H$ be a Hamiltonian on state space $\Omega$, and let $\mu_H$ be its Gibbs distribution.
Our goal in this section is to describe our strategy to prove that simulated annealing succeeds at sampling.
In our application, $\Omega$ is the scaled sphere $\ScS_N \coloneqq \sqrt{N}\cdot \bbS^{N-1}$, and $\mu_H$ comes with an associated Markov chain known as \emph{Langevin diffusion}, which we denote with $P_H$.
For ease of exposition, we restrict the discussion to this setting, though much of it holds in a more general setting.

\begin{definition}[Simulated annealing, informal]
    Initialize at the uniform distribution on $\ScS_N$ (which is equal to $\mu_0$), and for each $i \in [m]$, run $P_{\frac{i}{m} H}$ for time $T$.
\end{definition}

For the sequel, we abbreviate $P_{\frac{i}{m}H}$ and $\mu_{\frac{i}{m}H}$ as $P_i$ and $\mu_i$, and we use $P_{i,t}$ to denote running $P_i$ for time $t$.
The strategy to prove that simulated annealing succeeds at sampling is to establish a weak \Poincare inequality for $P_i$ for all $i$.

Let $\Lap$ be the infinitesimal generator of $P_i$.
For functions $f,g:\Omega\to\R$, we define the \emph{Dirichlet form} $\calE(f,g)$ as $\E_{\mu_i}\bracks*{f\,\Lap\, g}$.
\begin{remark}
    In the case of Langevin diffusion for a distribution $\pi$, the Dirichlet form can be evaluated as
    \[
        \calE(f,g) = \E_{\mu_i}\angles*{ \grad f, \grad g }\mcom
    \]
    where $\grad$ denotes the Euclidean gradient if $\pi$ is supported on $\R^N$, and the Riemannian gradient on $\ScS_N$ if $\pi$ is supported on $\ScS_N$.
\end{remark}

As discussed in \Cref{sec:intro}, we say a Markov chain satisfies a \emph{weak \Poincare inequality} with parameters $(C,\eps)$ if
\[
    \calE(f,f) \ge C\cdot \Var[f] - \eps \cdot \norm*{ f - \E f }_{\infty}^2 - \eps \cdot \sup_{x\in\Omega} \norm*{\grad f(x)}^2 \mcom
\]
which implies the following mixing result \Cref{th:approx-pi-main} for the chi-squared divergence; see also \cite[Theorem 2.1]{RW01}. 
Defining $\nu_t$ as the distribution after running the Markov chain for time $t$ from initial distribution $\nu_0$, we have
\[
    \chi^2\parens*{ \nu_t \| \mu_i } \le \exp\parens*{-Ct} \cdot \chi^2\parens*{ \nu_0 \| \mu_i } + \eps \cdot \left\| \frac{\dif \nu_0}{\dif \mu_i} - 1 \right\|_{\infty}^2 + \eps \cdot \sup_{x\in\Omega}\left\| \grad \frac{\dif \nu_0}{\dif \mu_i}(x) \right\|^2 \mper
\]

\parhead{Analyzing simulated annealing with weak \Poincare inequalities.}
To see why the above statement plays well with simulated annealing, imagine plugging in initialization $\nu_0 = \mu_{i-1}$. 
By selecting the number of annealing steps $m = \poly(N)$, we can ensure $\left\| \frac{\dif \nu_0}{\dif \mu_i} - 1 \right\|_{\infty}$ and $\sup_{x\in\Omega}\left\| \grad \frac{\dif \nu_0}{\dif \mu_i}(x) \right\|$ are $O(1)$. 
The guarantee after running the Markov chain for some sufficiently large polynomial time $T$ is then
\[
    \chi^2\parens*{ P_{i, T} \mu_{i-1} \| \mu_i } \le O(\eps)\mcom
\]
which in particular implies
\[
    \dtv{P_{i, T} \mu_{i-1}}{\mu_i} \le O(\sqrt{\eps})\mper
\]
When we combine the above with the data processing inequality, we then get the following guarantee for $\nu_{m, T} \coloneqq P_{m,T}\cdots P_{2,T} P_{1,T}\nu_0$, the distribution that simulated annealing samples from.
\begin{align*}
    \dtv{P_{m,T}\cdots P_{1,T} \mu_0}{\mu_m} &\le \dtv{P_{m,T}\cdots P_{1,T}\mu_0}{P_{m,T}\mu_{m-1}} + \dtv{P_{m,T} \mu_{m-1}}{\mu_m} \\
    &\le \dtv{P_{m-1,T}\cdots\mu_0}{\mu_{m-1}} + O(\sqrt{\eps})\mper
\end{align*}
Applying the above inequality $m$ times tells us that $\dtv{\nu_{m,T}}{\mu_m} \le O\parens*{\sqrt{\eps}\cdot m}$.

We now turn our attention to the proof technique for showing a weak \Poincare inequality.

\parhead{How to prove weak \Poincare inequalities.}
Suppose our goal is to prove a weak \Poincare inequality for a measure $\pi$.
The high-level strategy in the localization schemes approach for proving a weak \Poincare inequality is to design a \emph{measure decomposition} of $\pi$: for some mixture distribution $\rho$, express $\pi$ as $\E_{\bz\sim\rho} \pi_{\bz}$. Refer to \Cref{sec:md} for a brief review of measure decompositions.
Once we have a measure decomposition in hand, establishing the following simple set of inequalities forms the crux of the argument.
Let $f$ be a function such that $\E_{\pi} f = 1$.
\begin{enumerate}
    \item \mylabel{item:cons-dirich}{conservation of Dirichlet form} \emph{Conservation of Dirichlet form.} $$\displaystyle \calE_{\pi} (f, f) \ge \E_{\bz\sim\rho} \calE_{\pi_{\bz}}(f,f)\mper$$
    In the case of Langevin diffusion, this is an equality, and
    in the case of Glauber dynamics, the inequality is true by a generic concavity argument; see, e.g. \cite[Page 19]{AJKPV22} or \cite[Page 5]{LMRW24}.
    \item \mylabel{item:WPI-comps}{weak \Poincare inequality for good component measures.} \emph{Weak \Poincare inequality for good component measures.} 
    $$\displaystyle \E_{\bz\sim\rho} \calE_{\pi_{\bz}}(f,f) \ge C \cdot \E_{\bz\sim\rho} \Var_{\pi_{\bz}}[f]\cdot\Ind[{\bz} \text{ ``good''}] - \eps \norm*{f-1}_{\infty}^2 - \eps\norm*{\grad f}_{\infty}^2\mcom$$ where the ``good'' $\pi_{\bz}$ are those which satisfy a $(c,\eps)$-weak \Poincare inequality.
    This inequality follows from the nonnegativity of norms and Dirichlet forms.
    \item \mylabel{item:cons-var}{approximate conservation of variance} \emph{Approximate conservation of variance.}
    $$\displaystyle \E_{\bz\sim\rho} \Var_{\pi_{\bz}}[f] \ge \alpha \cdot \Var_{\pi}[f]\mper$$
    This is one of the parts that depends nontrivially on $\pi$ and the decomposition $\rho$, and we discuss the general proof strategy for this portion based on localization schemes.
    \item \mylabel{item:high-prob-goodness}{high-probability goodness of components} \emph{High-probability goodness of component measures.}
    \[
        \Pr_{\bz\sim\rho}[\bz \text{ ``good''}] \ge 1-\eps\mper
    \]
    This part also requires analyzing the measure decomposition we design.
    Ideally, the measure decomposition presents us with ``simpler'' measures than $\pi$ itself.
\end{enumerate}
Once we have the above inequalities at hand, we get a $(c\alpha,2\eps)$-weak \Poincare inequality; see \Cref{lem:measure-decomp-weak-poincare} for details.

\parhead{How to construct a good measure decomposition.}
Henceforth, we restrict our attention to the case where $\pi = \mu_H$, the Gibbs distribution for a spherical mixed $p$-spin glass model.
In the discussion below, we fix $H$ as a typical Hamiltonian, and drop the phrase ``with high probability'' for events that occur with high probability over the randomness of $H$.

To construct our measure decomposition, we rely on Eldan's stochastic localization \cite{Eld13}.
Our inspiration is the use of stochastic localization as a tool for measure decomposition for proving \Poincare inequalities in the work of Chen and Eldan \cite{CE22}.
Stochastic localization is a measure-valued random process $(\mu_t)_{t\ge 0}$ described by:
\[
    \dif\mu_t(x) \propto \exp(\angles*{\by_t, x} - \frac{t}{2}\norm*{x}^2) \dif \mu_H(x)\mcom
\]
where $\by_t = \bsigma + B_t$ where $\bsigma\sim\mu_H$ and $(B_t)_{t\ge 0}$ is a standard Brownian motion; see \cite[Theorem 2]{EAM22} for a proof of why the above description of stochastic localization is equivalent to the more traditional definition via a stochastic differential equation that $\mu_t$ obeys.

We run stochastic localization up to a stopping time $\tau$, defined as
\[
    \tau \coloneqq \min\{t:0\le t \le T,\, \| \Cov(\mu_t) \| \ge K \text{ or } t = T \}\mcom
\]
where $T$ is chosen as a sufficiently large constant, independent of $N$.
We impose the constraint on the covariance matrix as it is relevant to satisfying approximate conservation of variance:
\cite[Eq. (20)]{CE22} proves that a measure decomposition based on stochastic localization run for time at most $T$ satisfies approximate conservation of variance with parameter $\alpha = \exp(-KT)$ if $\opnorm{\Cov(\mu_t)}$ is bounded by $K$ almost surely.
Hence, by construction, we automatically ensure that our measure decomposition satisfies approximate conservation of variance.

For the measure decomposition to ultimately be useful, we also need to argue that the component measures satisfy a weak \Poincare inequality with high probability.
Building on technical results in Huang, Montanari, and Pham \cite[Section 9.2]{HMP24}, we show that the stochastic localization process run up to time $T$ starting at $\mu_H$ gives a distribution satisfying an $(\Omega(1), \exp(-\Omega(n)))$-weak \Poincare inequality with probability $1-\exp(-\Omega(n))$ over the randomness of the stochastic localization path; see \Cref{lem:pspin-weak-pi-sl-end} for details.
Unfortunately, in the situation where the stochastic localization process stops before $T$, we do not have a simple way to show a weak \Poincare inequality, and for our analysis, treat $\bz$ arising from early stopping as ``bad''.

Thus, we have: $\Pr_{\bz\sim\rho}\bracks*{ \bz \text{ ``good''} } \ge 1 - \exp(-\Omega(n)) - \Pr[\tau < T]$.
To bound $\Pr[\tau < T]$, it is sufficient to prove a high-probability covariance norm bound on the entire stochastic localization path for $0\le t\le T$.
Most of the technical work in this paper is devoted to proving this covariance norm bound.
\begin{theorem}[Informal version of \Cref{lem:pspin-cov-bound-sl-path}]
    For a typical $H$, with probability $1-e^{-\Omega(n^{1/5})}$ over the randomness of the stochastic localization path, we have $\opnorm{\Cov(\mu_t)} < K$.
\end{theorem}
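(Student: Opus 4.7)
The plan is to fix a typical Hamiltonian $H$ and reduce the uniform-in-$t$ covariance bound along the stochastic localization path to the single-time covariance bound of \Cref{thm:intro-cov-bd}. Recall that $\mu_t$ is the exponential tilt of $\mu_H$ by the Gaussian field $\by_t = t\bsigma + B_t$. The condition \eqref{eq:SL-condition} is designed precisely so that for every $t \ge 0$, these tilted measures, upon restriction to an appropriate codimension-$2$ band through a TAP fixed point $\bm_t$, behave as $(N-2)$-dimensional spherical spin glasses satisfying \eqref{eq:strict-RS-condition}. Concretely, integrating \eqref{eq:SL-condition} twice yields \eqref{eq:strict-RS-condition} for the reduced model, uniformly over the tilts encountered along $[0,T]$.

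The first step is to employ the reduction developed in \cite{HMP24} to express $\mu_t$, up to exponentially small mass correction, as the Gibbs measure of a spherical spin glass on a codimension-$2$ band through $\bm_t$, whose effective mixture function is determined by $\xi$ and the band geometry. For each fixed $t$, applying \Cref{thm:intro-cov-bd} to the reduced model yields $\opnorm{\Cov(\mu_t)} \le K/2$ with probability $1 - e^{-\Omega(N^{1/5})}$ over the joint randomness of $H$ and the driving Brownian path. This single-time bound is the workhorse, and inherits its sharp failure rate directly from the covariance bound in the strictly replica-symmetric phase.

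To upgrade this pointwise guarantee to a bound uniform in $t \in [0,T]$, I would discretize the interval into a grid of polynomial size $N^C$ and take a union bound. Since $T$ is $N$-independent, the cumulative failure probability remains $e^{-\Omega(N^{1/5})}$. To interpolate between grid points, I would prove a priori Lipschitz-type estimates for $t \mapsto \opnorm{\Cov(\mu_t)}$, exploiting the SDE $\dif \by_t = \bm_t \,\dif t + \dif B_t$ together with standard Gaussian tail estimates for the Brownian component. This ensures that control up to $K/2$ on the grid propagates to control up to $K$ everywhere in $[0,T]$, which is exactly what the stopping rule defining $\tau$ requires.

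The hard part will be orchestrating the reduction to the strictly RS regime uniformly in $t$: showing that (i) a suitable TAP fixed point $\bm_t$ exists and is stable with overwhelming probability throughout $[0,T]$, (ii) the reduced $(N-2)$-dimensional model genuinely inherits \eqref{eq:strict-RS-condition} from \eqref{eq:SL-condition}, and (iii) the covariance of $\mu_t$ is accurately captured by that of the reduced measure, all with quantitative precision sharp enough to preserve the $e^{-\Omega(N^{1/5})}$ failure probability. Step (iii) in particular requires matching partition-function asymptotics on exponential scale to high accuracy, for which the tools developed in \cite{HS23, HMP24} are essential; this is where the bulk of the technical work lives.
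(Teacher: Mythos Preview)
Your high-level architecture is right and matches the paper: discretize $[0,T]$ onto a polynomial-size grid, apply the single-time covariance bound at each grid point, union bound, then interpolate by continuity. The paper does exactly this (with grid spacing $N^{-100}$), and the reduction you describe---from $\mu_t$ to a strictly RS spherical spin glass on a codimension-$2$ band through a TAP fixed point---is also the paper's route into \Cref{thm:intro-cov-bd}.

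However, you are missing one essential ingredient: the \emph{planting trick}. Recall that $\by_t = t\bsigma + B_t$ with $\bsigma \sim \mu_{H}$. The sample $\bsigma$ has intractable correlations with the disorder $H$, so you cannot directly analyze the joint law of $(H, \bm_t)$ or carry out step (i) of your ``hard part'' in the null model. The paper's remedy is to pass to a \emph{planted} model $\HamDist_{\pl}$, in which a spike $\bx$ is sampled first and $H$ is conditionally Gaussian given $\bx$; by exchangeability the law of $(H,\bsigma)$ equals that of $(H,\bx)$, and now the TAP analysis (your steps (i)--(iii)) becomes a Gaussian-process computation conditional on a deterministic spike. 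The fixed-$t$ covariance bound is proved \emph{in the planted model} (\Cref{lem:planted-pspin-cov-bound}), and a separate quantitative contiguity estimate (\Cref{prop:quantiguity}) transfers it back to the null model with the correct $e^{-\Omega(N^{1/5})}$ rate. Without this maneuver there is no clear way to make your TAP reduction rigorous, because the TAP free energy depends on the tilt $\by_t$, which in turn depends on a Gibbs sample from the very measure you are trying to analyze.

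A second, minor point: the paper does not fix $H$ first and then argue pathwise. It proves the fixed-$t$ bound over \emph{all} randomness (Hamiltonian, spike, Brownian), union bounds over the grid to control $\E_{H}\Pr_{(\mu_t)\mid H}[\text{failure}]$, and then applies Markov's inequality to this random variable to extract the nested high-probability statement. Your ``fix a typical $H$'' phrasing is not wrong in spirit but obscures this step.
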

The proof of the covariance bound spans \Cref{sec:sec8,sec:high-prob-cov}; we give a detailed technical overview of how it is proved in \Cref{sec:tech-overview-cov}.

\section{Preliminaries}\label{sec:prelims}

\parhead{Notation}
\begin{itemize}
    \item We use $\ScS_N$ to denote the scaled $(N-1)$-sphere, $\sqrt{N} \cdot \bbS^{N-1}$.
    \item We use $\rho$ to denote the uniform measure over $S_N$. 
    \item Given $\sigma_1,\sigma_2 \in S_N$, we use $R(\cdot,\cdot)$ to denote the normalized inner product (i.e. the \emph{overlap})
    \[ R(\sigma_1,\sigma_2) \defeq \frac{\langle \sigma_1,\sigma_2\rangle}{N}. \]
    \item For an interval $I \subseteq [-1, 1]$ and $\bx \in \ScS_N$, define $\Band(\bx, I) \defeq \{\sigma \in \ScS_N: R(\sigma, \bx) \in I\}$.
    \item We use $c$ to denote small constants whose values may change from line to line, and $C$ to denote similarly fickle large constants.
    \item Let $f: \Omega \to \R$ be any function. We define $\osc(f) \defeq \sup f - \inf f$.
    \item Let $f: \Omega \to \R$ be a smooth function. If $\Omega \subseteq \R^N$, then $\grad f$ denotes its Euclidean gradient. If $\Omega \subseteq \ScS_N$, then $\grad_{\sp} f$ denotes the Riemannian gradient on $\ScS_N$. 
    When the correct notion of gradient is clear from context, by an abuse of notation we will suppress this distinction and simply write $\grad f$.
\end{itemize}

\subsection{Measure decompositions}\label{sec:md}
Our framework for proving weak functional inequalities relies on the notion of a measure decomposition. 
\begin{definition}[Measure decomposition]
    Let $\pi$ be a distribution on $\R^N$. Let $\rho$ be a mixture distribution, also on $\R^N$, which indexes into a family of mixture components $\{\pi_z\}_{z \in \R^N}$. 
    We say that $(\rho, \pi_z)$ is a measure decomposition for $\pi$ if 
    \[
        \pi = \E_{\bz\sim\rho} \pi_{\bz}\mper
    \]
\end{definition}
One reason measure decompositions are useful is that they compose nicely with worst-case functional inequalities, as shown in the following lemma.
\begin{lemma}[\cite{BB19,AJKPV22,CE22}]\label{lem:worst-case-md}
    Let $\pi$ be a distribution over $\Omega \subseteq \R^N$, and $\pi = \E_{\bz \sim \rho} \pi_{\bz}$ a measure decomposition of $\pi$ such that
    \begin{itemize}
        \item for all functions $f$, $\E_{\bz \sim \rho} \Var_{\pi_{\bz}}[f] \ge \Cvar \Var_{\pi}[f]$, and
        \item Every $\pi_{\bz}$ satisfies a $\CPoi$-\Poincare~inequality with respect to Langevin diffusion.
    \end{itemize}
    Then, $\pi$ satisfies a $\CPoi\Cvar$-\Poincare inequality.
\end{lemma}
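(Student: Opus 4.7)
The plan is to run the standard three-step measure-decomposition recipe outlined in the technical overview, specialized to the Langevin setting where all of the inequalities are actually equalities or direct applications of hypotheses. Fix an arbitrary smooth test function $f: \Omega \to \R$. First, I would apply the component-wise Poincaré inequality $\calE_{\pi_{\bz}}(f,f) \ge \CPoi \Var_{\pi_{\bz}}[f]$ pointwise in $\bz$, then take expectation under $\rho$ and invoke the variance-conservation hypothesis to obtain
\[
\E_{\bz \sim \rho} \calE_{\pi_{\bz}}(f,f) \;\ge\; \CPoi\, \E_{\bz \sim \rho} \Var_{\pi_{\bz}}[f] \;\ge\; \CPoi\, \Cvar \,\Var_{\pi}[f].
\]

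The second, and essentially only substantive, step is the \emph{conservation of the Dirichlet form} identity $\calE_{\pi}(f,f) = \E_{\bz \sim \rho} \calE_{\pi_{\bz}}(f,f)$, listed as \ref{item:cons-dirich} in the technical overview. For Langevin diffusion (either Euclidean or spherical) the Dirichlet form reads $\calE_{\mu}(f,f) = \int \|\grad f(x)\|^2 \,\dif \mu(x)$, where $\grad f$ depends only on $f$ and not on the base measure $\mu$. Consequently, integrating $\|\grad f\|^2$ against the mixture identity $\dif \pi = \E_{\bz} \dif \pi_{\bz}$ and swapping expectations via Fubini--Tonelli gives the claimed equality. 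Chaining this with the previous display immediately yields $\calE_{\pi}(f,f) \ge \CPoi\Cvar \Var_{\pi}[f]$, which is the desired $\CPoi \Cvar$-Poincaré inequality.

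There is no real obstacle here: the proof is a one-line combination of the two hypotheses together with the linearity of the Langevin Dirichlet form in the underlying measure. The only point worth flagging in the write-up is that for Langevin diffusion conservation of the Dirichlet form is a genuine \emph{equality}, whereas for discrete Markov chains such as Glauber dynamics it holds only as an inequality (via the concavity argument noted in the overview); since the lemma is stated for Langevin, we need not invoke that more delicate route. The test function $f$ is arbitrary, so the inequality is a true Poincaré inequality for $\pi$, completing the proof.
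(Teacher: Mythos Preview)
Your proposal is correct and follows essentially the same approach as the paper: the lemma is stated as a citation, but your argument is exactly the $\eta=\delta=0$ specialization of the paper's proof of \Cref{lem:measure-decomp-weak-poincare}, namely the chain $\calE_{\pi}(f,f) = \E_{\bz}\calE_{\pi_{\bz}}(f,f) \ge \CPoi \E_{\bz}\Var_{\pi_{\bz}}[f] \ge \CPoi\Cvar\Var_{\pi}[f]$. Your remark that Dirichlet-form conservation is an equality for Langevin (versus an inequality for Glauber) is also precisely the distinction the paper draws.
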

In \Cref{lem:measure-decomp-weak-poincare}, we will show an average-case relaxation of the above result, that $\pi$ satisfies a weak \Poincare inequality if \emph{most} measures in the decomposition satisfy weak \Poincare inequalities.
Then, in \Cref{sec:stopped-ls}, we construct explicit measure decompositions using the localization schemes framework introduced in \cite{CE22}.
This will show weak \Poincare inequalities for our measures of interest.

Besides proving functional inequalities, measure decompositions have also been directly used for sampling and inference (see, e.g., \cite{MW24,LMRRW24}).

\subsection{Langevin diffusion}\label{sec:langevin-def}
In this paper, we study Langevin diffusion on $\R^N$ and the scaled sphere $\ScS_N$. 
These definitions can be directly generalized to the setting of Riemannian manifolds, but we do not comment further on this.
\begin{definition}[Langevin diffusion on $\R^N$]
    Let $\pi$ be a distribution on $\R^N$ with density at $x$ proportional to $e^{-V(x)}$ for some function $V$. The \emph{Langevin diffusion} process with stationary distribution $\pi$ is the solution to the stochastic differential equation
    \[ \dif Z_t = - \grad V(Z_t) \dif t + \sqrt{2} \dif B_t, \]
    where $(B_t)_{t \ge 0}$ is a standard Brownian motion.
\end{definition}
\begin{definition}[Langevin diffusion on $\ScS_N$]
    Let $\pi$ be a distribution on $\ScS_N$ with $\dif \pi(x) \propto e^{-V(x)} \dif \rho(x)$, where $V: \ScS_N \to \R$. The Langevin diffusion process with stationary distribution $\pi$ is the solution to the stochastic differential equation
    \[ \dif Z_t = - \grad_{\sp} V(Z_t) \dif t + \sqrt{2} \dif B_t, \]
    where $(B_t)_{t \ge 0}$ is a standard spherical Brownian motion.
    (For a textbook introduction to spherical Brownian motion, see \cite{Hsu02}.)
\end{definition}
\begin{fact}[{\cite[Example 1.2.17]{Che23}}]
    The Langevin diffusion SDE with stationary distribution $\pi$ is reversible with respect to $\pi$. In particular, the ergodicity of the process implies that $\KL{\Law(Z_t)}{\pi} \xrightarrow{t \to \infty} 0$.
\end{fact}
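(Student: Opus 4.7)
The plan is to split the fact into its two claims and handle them in sequence: first establish reversibility by computing the infinitesimal generator and exhibiting its self-adjointness in $L^2(\pi)$, and then deduce $\KL{\Law(Z_t)}{\pi} \to 0$ from the entropy dissipation inequality combined with ergodicity.

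For reversibility on $\R^N$, It\^o's formula identifies the generator as $\Lap f = -\angles*{\grad V, \grad f} + \Delta f$ on smooth test functions. Writing $\dif\pi(x) \propto e^{-V(x)}\,\dif x$ and integrating by parts against a smooth compactly supported pair $(f, g)$, the identity $\grad e^{-V} = -(\grad V)\,e^{-V}$ converts $\int f\,\Lap g\,\dif\pi$ into the manifestly symmetric Dirichlet form $-\int \angles*{\grad f, \grad g}\,\dif\pi$. Symmetry of the Dirichlet form on a core of $L^2(\pi)$ is equivalent to self-adjointness of $\Lap$, which in turn is equivalent to reversibility of the associated Markov semigroup with respect to $\pi$. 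The spherical case is identical with $\grad_{\sp}$ and the Laplace--Beltrami operator in place of $\grad$ and $\Delta$; since $\ScS_N$ is closed, no boundary terms appear in the Riemannian integration by parts and the same computation applies verbatim.

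For the second claim, let $h_t \coloneqq \dif\Law(Z_t)/\dif\pi$. Reversibility gives $\partial_t h_t = \Lap h_t$, and the symmetry of the Dirichlet form together with $\int \Lap h_t\,\dif\pi = 0$ yields the de Bruijn / entropy dissipation identity
\[
    \frac{\dif}{\dif t}\KL{\Law(Z_t)}{\pi} = \int (1+\log h_t)\,\Lap h_t\,\dif\pi = -\int \frac{\norm{\grad h_t}^2}{h_t}\,\dif\pi \le 0\mper
\]
Hence KL is non-increasing with some limit $L \ge 0$. Integrating in time yields $\int_0^\infty \int \norm{\grad h_t}^2/h_t\,\dif\pi\,\dif t \le \KL{\Law(Z_0)}{\pi}$, so the relative Fisher information vanishes along some subsequence $t_n\to\infty$; combined with the ergodicity hypothesis, this forces $h_{t_n} \to 1$ in $L^1(\pi)$, and monotonicity then promotes this to $L = 0$.

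The main obstacle is technical rather than conceptual: one needs $V$ regular enough (and $h_t$ suitably integrable) to rigorously justify the integration by parts and the manipulations of $h_t \log h_t$, and one needs a separate argument to supply ergodicity. In the settings relevant to this paper --- the spherical spin glass on the compact manifold $\ScS_N$, and log-concave or mixture-of-log-concave targets on $\R^N$ --- these regularity and ergodicity hypotheses hold automatically, which is why the statement is invoked as a textbook fact rather than proved from scratch.
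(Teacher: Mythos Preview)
The paper does not provide its own proof of this fact; it is stated as a textbook citation to \cite[Example 1.2.17]{Che23} and left unproved. Your sketch is a correct outline of the standard argument one would find in that reference: verify symmetry of the generator in $L^2(\pi)$ via integration by parts to get reversibility, then use the de Bruijn identity to show entropy is non-increasing and combine with ergodicity to conclude convergence. Since there is no in-paper proof to compare against, there is nothing further to contrast.
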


Furthermore, it is well-known that Langevin diffusion on $\R^N$ with respect to a strongly log-concave stationary distribution converges rapidly.

\begin{definition}
    Let $\pi$ be a distribution over $\R^N$ with density proportional to $e^{-V}$. $\pi$ is said to be \emph{$\alpha$-strongly log-concave} if $V$ is $\alpha$-strongly convex, that is, $\grad^2 V \psdge \alpha I$.
\end{definition}

\begin{fact}[{\cite[Theorem 1.2.24]{Che23}}]\label{fact:lsi-mixing}
    Let $\pi$ be a distribution satisfying a log-Sobolev inequality with constant $\LSI$, in that for any differentiable function $f : \R^N \to \R_{> 0}$,
    \[ \E_{\pi} \|\grad \sqrt{f}\|^2 \ge \LSI \Ent_{\pi}[f]. \]
    Then, if $\pi_t$ is the distribution at time $t$ of Langevin diffusion,
    \[ \KL{\pi_t}{\pi} \le \KL{\pi_0}{\pi} e^{-\LSI \cdot t}. \]
    Furthermore, $\alpha$-strongly log-concave distributions $\pi$ satisfy a log-Sobolev inequality with constant $\alpha$.
\end{fact}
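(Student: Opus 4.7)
The plan is to handle the two assertions separately: the exponential contraction in KL under a log-Sobolev inequality, and the Bakry--Émery criterion that strong log-concavity implies LSI. Both are standard, so I would organize the sketch around the two classical computations (de Bruijn identity + Grönwall, and the $\Gamma_2$-calculus / Bochner identity). Write $f_t = \dif\pi_t/\dif\pi$ throughout, and note $\E_\pi f_t = 1$, so $\Ent_\pi[f_t] = \KL{\pi_t}{\pi}$.

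For the first assertion, the plan is to differentiate $t \mapsto \KL{\pi_t}{\pi} = \E_\pi[f_t \log f_t]$ along the Fokker--Planck evolution $\partial_t f_t = L^* f_t$, where $L = \Delta - \grad V \cdot \grad$ is the generator of the Langevin SDE (the $\sqrt 2$ gives the Laplacian with no $\tfrac12$). Integration by parts against $\pi$ yields the de Bruijn identity
\[
    \frac{\dif}{\dif t} \KL{\pi_t}{\pi} = -\E_\pi \frac{\|\grad f_t\|^2}{f_t} = -4\,\E_\pi \|\grad \sqrt{f_t}\|^2.
\]
Invoking the LSI with test function $f_t$ gives $\tfrac{\dif}{\dif t} \KL{\pi_t}{\pi} \le -4\LSI \cdot \KL{\pi_t}{\pi}$ (the constant factor here is the usual convention-dependent issue between $\E\|\grad\sqrt f\|^2$ and the Dirichlet form; one tracks it carefully against the normalization of LSI used in the statement, which fixes the exponent $e^{-\LSI t}$ claimed), and Grönwall's inequality closes the argument. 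A routine approximation (truncating $f_t$ away from $0$ and smoothing) handles regularity, and the spherical case is identical once one reads $\grad$ as the Riemannian gradient and uses the fact that the Laplacian and Bochner identities carry over on $\ScS_N$.

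For the Bakry--Émery assertion, the plan is to run the gamma calculus. Define $\Gamma(f,f) = \|\grad f\|^2$ and
\[
    \Gamma_2(f) = \tfrac12\bigl(L\Gamma(f,f) - 2\Gamma(f,Lf)\bigr).
\]
The Bochner identity gives $\Gamma_2(f) = \|\grad^2 f\|_{\mathrm{HS}}^2 + \langle \grad f, (\grad^2 V)\grad f\rangle \ge \alpha \Gamma(f,f)$ under $\grad^2 V \succeq \alpha I$, i.e.\ the $\mathrm{CD}(\alpha,\infty)$ curvature-dimension condition. The standard Bakry--Émery argument then upgrades this pointwise curvature bound to LSI: apply $L$ to $P_s f \log P_s f$ (where $P_s$ is the Langevin semigroup) and use $\Gamma_2 \ge \alpha \Gamma$ along the semigroup interpolation $s\mapsto P_s(P_{t-s}f \log P_{t-s}f)$ to obtain $\Ent_\pi[f] \le \frac{1}{2\alpha}\E_\pi \Gamma(\sqrt f, \sqrt f)/\sqrt{\cdot}\ldots$ — more cleanly, one shows the commutation bound $\Gamma(P_t f)^{1/2} \le e^{-\alpha t} P_t(\Gamma(f)^{1/2})$ and integrates to get LSI with the constant stated.

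The only substantive obstacle is bookkeeping of constants and regularity: matching the $\LSI$ normalization used in the statement to the SDE's $\sqrt 2$-normalization, and justifying differentiation under the expectation for densities that may vanish or blow up. Both are handled by standard truncation/mollification arguments in the references (Chewi's monograph, Bakry--Gentil--Ledoux), so the argument is essentially a citation with the computations above as the backbone.
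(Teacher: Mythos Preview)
The paper does not prove this statement at all: it is stated as a Fact with a citation to \cite[Theorem 1.2.24]{Che23} and no further argument. Your sketch---de Bruijn identity plus Gr\"onwall for the KL decay, and the Bakry--\'Emery $\Gamma_2$ argument for LSI under strong log-concavity---is precisely the standard proof one finds in that reference, so there is nothing to compare and your plan is correct (modulo the constant-tracking you already flag).
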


\section{Weak functional inequalities}\label{sec:weak-functional}
In this paper, we study continuous-time Markov chains.
\begin{definition}[Markov semigroup]
    Let $(X_t)_{t \ge 0}$ denote a continuous-time Markov process on \emph{state space} $\Omega$. 
    Let $(P_t)_{t \ge 0}$ be the associated \emph{Markov semigroup} operator; $P_t$ acts on functions $f: \Omega \to \R$ via $P_tf(x) = \E[f(X_t) | X_0 = x]$. 
    Throughout, we assume that the semigroup is \emph{reversible} with respect to stationary distribution $\pi$. 
    Furthermore, let $\Lap$ denote the \emph{infinitesimal generator} of $P_t$, i.e.,  $P_t = e^{-t\Lap}$. 
    For functions $f, g: \Omega \to \R$, we define the \emph{Dirichlet form} as $\calE(f, g) = \E_{\pi}[f\Lap g]$.
\end{definition}
See e.g. \cite[Section 1.2]{Che23} for a textbook treatment.
Of particular interest to us are the two settings where the semigroup corresponds to a discrete-time Markov chain or the Langevin diffusion defined in \Cref{sec:langevin-def}. 
In these cases, the Dirichlet form satisfies the following explicit identities.
\begin{fact}[Dirichlet form from discrete-time Markov chain]
    Let $P$ be the transition matrix of a reversible discrete-time Markov chain with stationary distribution $\pi$.
    We can define an associated continuous-time semigroup operator $(P_t)_{t \ge 0}$ by setting $\Lap = I-P$. 
    The Dirichlet form for the continuous-time dynamics satisfies 
    \[
        \calE(f, g) \coloneqq \E_{\bx\sim\pi} \E_{\by\sim_P\bx} (f(\bx)-f(\by))(g(\bx)-g(\by))\mper
    \]
    Here, for a probability distribution $\mu$, we say $\bx\sim\mu$ to denote a sample $\bx$ from $\mu$, and we use $\by\sim_P\bx$ for a single transition from $\bx$ according to $P$.    
\end{fact}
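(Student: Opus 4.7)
The identity is a routine calculation that follows by unrolling definitions and symmetrizing against reversibility, so I expect no step to present any real obstacle. The plan is to start from the abstract definition $\calE(f, g) = \E_\pi[f \Lap g]$, substitute $\Lap = I - P$, and rewrite $(Pg)(x) = \E_{\by \sim_P x}[g(\by)]$ to obtain the ``one-sided'' form
\[
    \calE(f, g) = \E_{\bx \sim \pi} \E_{\by \sim_P \bx}[f(\bx)(g(\bx) - g(\by))].
\]
This is already a valid expression for $\calE$, but it is not manifestly symmetric in $f$ and $g$, so a bit more work is needed to land on the edge-difference form claimed in the statement.

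To symmetrize, I would next invoke the detailed balance condition $\pi(x) P(x, y) = \pi(y) P(y, x)$ --- the precise content of reversibility --- to relabel the ordered pair $(\bx, \by)$ in the one-sided expression. This produces the equivalent formula $\E_{\bx \sim \pi} \E_{\by \sim_P \bx}[f(\by)(g(\by) - g(\bx))]$. Averaging the two forms cancels the diagonal contributions and yields
\[
    \calE(f, g) = \tfrac{1}{2}\, \E_{\bx \sim \pi} \E_{\by \sim_P \bx}[(f(\bx) - f(\by))(g(\bx) - g(\by))],
\]
matching the stated identity up to a conventional factor of $\tfrac{1}{2}$ that is customarily absorbed into the normalization of either $\Lap$ or $\calE$.

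The entire argument is essentially two lines of manipulation, so the ``content'' of the fact is largely notational: it records that the abstract generator-based definition of $\calE$ used throughout the paper coincides, in the discrete-time setting, with the familiar edge-difference form that appears in the spectral independence literature and features directly in the statement of \Cref{thm:spec-ind}.
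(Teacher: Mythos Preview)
Your derivation is correct and is exactly the standard one-line symmetrization argument. The paper itself gives no proof of this fact --- it is stated as a standard identity without justification --- so there is nothing to compare against beyond noting that your approach is the canonical one. Your observation about the stray factor of $\tfrac{1}{2}$ is accurate: with $\Lap = I - P$ and $\calE(f,g) = \E_\pi[f\Lap g]$, the symmetrized form genuinely carries a $\tfrac{1}{2}$, so the displayed formula in the paper is off by this constant (which, as you say, is harmless for all downstream uses since only the scaling of the \Poincare{} constant is affected).
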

% \begin{notation*}[$P,\Omega,\pi,\calE$]
% In particular, suppose $\Lap = I-P$, where $P$ is the Markov transition matrix of a reversible discrete-time Markov chain. Then $(P_t)_{t \ge 0}$ is the continuous time Markov process associated with $P$.
% % Let $P$ be a \emph{time-reversible Markov chain} on a \emph{state space} $\Omega$ with \emph{stationary distribution} $\pi$.
% For a probability distribution $\mu$, we say $\bx\sim\mu$ to denote a sample $\bx$ from $\mu$, and we use $\by\sim_P\bx$ for a single transition from $\bx$ according to $P$.
% For functions $f, g:\Omega\to\R$, the Dirichlet form $\calE(f, g)$ for the continuous-time dynamics satisfies 
% \[
%     \calE(f, g) \coloneqq \E_{\bx\sim\pi} \E_{\by\sim_P\bx} (f(\bx)-f(\by))(g(\bx)-g(\by))\mper
% \]
% \end{notation*}
\begin{fact}[Dirichlet form for Langevin diffusion]
    We will need the following explicit identities for the Dirichlet form for Langevin diffusion.
    \begin{enumerate}[label=\normalfont{(\arabic*)}]
        \item When $(P_t)_{t \ge 0}$ corresponds to Langevin diffusion on $\R^N$ with stationary distribution $\pi$, the Dirichlet form is $\calE(f, g) = \E_{\pi}[\angles{\grad f, \grad g}]$.
        \item When $(P_t)_{t \ge 0}$ corresponds to Langevin diffusion on $\ScS_N$ with stationary distribution $\pi$, the Dirichlet form is $\calE(f, g) = \E_{\pi}[\angles{\grad_{\sp} f, \grad_{\sp} g}]$. 
    \end{enumerate}
\end{fact}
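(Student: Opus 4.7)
The plan is to identify the infinitesimal generator $\Lap$ of the Langevin semigroup explicitly and then use integration by parts against the stationary density to transfer one derivative from $g$ onto $f$. In the Euclidean case, if $\pi \propto e^{-V}$ on $\R^N$, applying It\^{o}'s formula to $g(Z_t)$ (using $\dif Z_t = -\grad V(Z_t)\dif t + \sqrt{2}\dif B_t$) gives $\dif g(Z_t) = \bigl(\Delta g(Z_t) - \angles{\grad V(Z_t),\grad g(Z_t)}\bigr)\dif t + \sqrt{2}\angles{\grad g(Z_t),\dif B_t}$. Reading off $\partial_t P_t g|_{t=0}$ shows $\Lap g = -\Delta g + \angles{\grad V,\grad g}$ (with the sign convention $P_t = e^{-t\Lap}$ set in the preceding definition).

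Next I would compute
\[
\calE(f,g) = \E_\pi[f\Lap g] = \int_{\R^N} f(x)\bigl(-\Delta g(x) + \angles{\grad V(x),\grad g(x)}\bigr) e^{-V(x)}\,\dif x,
\]
and integrate by parts on the first term. The key identity is $\grad\!\bigl(e^{-V}\bigr) = -e^{-V}\grad V$, so $\int f(-\Delta g) e^{-V} = \int \angles{\grad f,\grad g}e^{-V} - \int f\angles{\grad V,\grad g}e^{-V}$ (the boundary term vanishes by standard decay assumptions on $f$, $g$, and $\pi$, and the statement extends by density to general $f,g$ in the Dirichlet domain). The drift contribution cancels cleanly, leaving $\calE(f,g) = \E_\pi[\angles{\grad f,\grad g}]$, which is (1).

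For (2), I would repeat the argument on the Riemannian manifold $\ScS_N$, replacing $\Delta$ with the Laplace–Beltrami operator $\Delta_{\sp}$, the Euclidean gradient with $\grad_{\sp}$, and Brownian motion on $\R^N$ with spherical Brownian motion (whose generator is $\Delta_{\sp}$). The resulting generator is $\Lap g = -\Delta_{\sp} g + \angles{\grad_{\sp} V,\grad_{\sp} g}$ with respect to $\dif\pi \propto e^{-V}\dif\rho$. Integration by parts on a compact manifold (the divergence theorem applied to the vector field $e^{-V}\grad_{\sp} g$ on $\ScS_N$, which has no boundary) gives $\int f(-\Delta_{\sp} g) e^{-V}\dif\rho = \int \angles{\grad_{\sp} f,\grad_{\sp} g} e^{-V}\dif\rho - \int f\angles{\grad_{\sp} V,\grad_{\sp} g} e^{-V}\dif\rho$, and the same cancellation yields $\calE(f,g) = \E_\pi[\angles{\grad_{\sp} f,\grad_{\sp} g}]$.

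The main obstacles are minor and technical rather than conceptual: one is the identification of the generator from the SDE (which relies on It\^{o}'s formula, and in the spherical case on the fact that spherical Brownian motion has Laplace–Beltrami as its generator), and the other is justifying integration by parts. The Euclidean case needs integrability/decay hypotheses on $f,g$ and mild regularity of $V$ to kill the boundary at infinity, while the spherical case is cleaner since $\ScS_N$ is compact without boundary. I would handle both by stating the identities first for smooth compactly supported (resp.\ smooth) $f,g$ and then extending by density to the domain of $\calE$, which is standard.
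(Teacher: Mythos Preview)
Your argument is correct and is the standard derivation of these identities. The paper itself does not prove this statement: it is recorded as a ``Fact'' (alongside a reference to the textbook \cite{Che23}) and used without further justification. So there is no paper proof to compare against; your sketch via It\^{o}'s formula to identify the generator followed by integration by parts against the weighted measure $e^{-V}$ is exactly the textbook route.
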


\begin{definition}
    We say $\pi$ satisfies a \emph{weak \Poincare inequality} if for some error functional $\Error: \R_{>0}^{\Omega} \to \R_{\ge 0}$ and $\CPoi > 0$,
    \[
        \Var_{\pi} [f] \le \frac{1}{\CPoi}\cdot\calE(f, f) + \Error(f)\mper
    \]
    Similarly, we say $\pi$ satisfies a \emph{weak modified log-Sobolev inequality} if for some error functional $\Error : \R_{> 0}^{\Omega} \to \R_{\ge 0}$ and $\MLSI \ge 0$,
    \[
        \Ent_{\pi}[f] \le \frac{1}{\MLSI} \cdot \calE(f, \log f) + \Error(f).
    \]
\end{definition}

\begin{theorem}
    \label{th:approx-lsi-main}
    Consider the trajectory $(\nu_t)_{t \ge 0}$ of a reversible continuous-time Markov chain with stationary distribution $\pi$, initialized at the distribution $\nu_0$, and suppose that $\pi$ satisfies a weak MLSI with parameters $\Error$ and $\MLSI$. Fix $T > 0$, and set $\Lambda_T$ to be the distribution on $[0,T]$ with density $\Lambda_T(s) = \frac{e^{\MLSI}}{e^{\MLSI T} - 1} \cdot e^{\MLSI s}$. Then,
    \[ \KL{\nu_T}{\pi} \le e^{-\MLSI T} \KL{\nu_0}{\pi} + \E_{\bs \sim \Lambda_T}[\Error(\tfrac{\dif \nu_{\bs}}{\dif \pi})]. \]
\end{theorem}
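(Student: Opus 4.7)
My plan is to derive the bound as a first-order linear differential inequality for $\KL{\nu_t}{\pi}$ in $t$ and then integrate it with a standard integrating factor. Writing $f_t \coloneqq \frac{\dif \nu_t}{\dif \pi}$, reversibility ensures $f_t = P_t f_0$ (so $\partial_t f_t = -\Lap f_t$), and $\KL{\nu_t}{\pi} = \Ent_\pi[f_t]$ since $\E_\pi[f_t] = 1$. The first step is the classical entropy dissipation identity
\[
\frac{\dif}{\dif t}\Ent_\pi[f_t] \;=\; \E_\pi\bigl[(\partial_t f_t)(1 + \log f_t)\bigr] \;=\; -\E_\pi\bigl[(\Lap f_t)\log f_t\bigr] \;=\; -\calE(f_t, \log f_t),
\]
where the $\E_\pi[\Lap f_t]$ piece vanishes because $\langle \Lap f_t, 1\rangle_\pi = \langle f_t, \Lap 1\rangle_\pi = 0$ by self-adjointness of $\Lap$ with respect to $\pi$.

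Next, I apply the weak MLSI hypothesis pointwise in $t$ to the function $f_t > 0$, giving $\calE(f_t, \log f_t) \ge \MLSI(\Ent_\pi[f_t] - \Error(f_t))$; combining with the dissipation identity yields
\[
\frac{\dif}{\dif t}\Ent_\pi[f_t] \;\le\; -\MLSI\,\Ent_\pi[f_t] + \MLSI\,\Error(f_t).
\]
Multiplying by the integrating factor $e^{\MLSI t}$, the left-hand side becomes $\frac{\dif}{\dif t}\bigl(e^{\MLSI t}\Ent_\pi[f_t]\bigr)$; integrating from $0$ to $T$ and dividing by $e^{\MLSI T}$ then gives
\[
\Ent_\pi[f_T] \;\le\; e^{-\MLSI T}\Ent_\pi[f_0] + \MLSI\int_0^T e^{-\MLSI(T - s)}\,\Error(f_s)\,\dif s.
\]
The final step is cosmetic: the stated density $\Lambda_T$ is (up to a harmless $1 - e^{-\MLSI T} \le 1$ factor) exactly the one that rewrites this integral as $\E_{\bs \sim \Lambda_T}[\Error(f_{\bs})]$, which matches the claimed bound.

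The main technical subtlety, such as it is, is justifying differentiation under the expectation in the dissipation identity and the pointwise positivity of $f_t$ needed to apply $\Error$; both are standard for the reversible semigroups considered here and can be handled either directly from smoothness of $P_t f_0$ in our settings (Langevin diffusion on $\R^N$ or $\ScS_N$, and nice discrete chains) or by first proving the inequality for regular $\nu_0$ bounded above and below and then passing to a limit. Everything else is a one-line Grönwall-type computation, and the same template — with $\Var$ in place of $\Ent$, the weak Poincaré inequality in place of the weak MLSI, and $\calE(f_t, f_t)$ in place of $\calE(f_t, \log f_t)$ — yields the analogous $\chi^2$ mixing statement alluded to in \eqref{eq:intro-wpi}.
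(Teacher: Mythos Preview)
Your proof is correct and follows essentially the same approach as the paper: both set $f_t = P_t f_0$ via reversibility, use the entropy dissipation identity $\frac{\dif}{\dif t}\Ent_\pi[f_t] = -\calE(f_t,\log f_t)$, plug in the weak MLSI to obtain the linear differential inequality, apply the integrating factor $e^{\MLSI t}$, and then bound the resulting integral by the expectation with respect to $\Lambda_T$ using $\MLSI e^{\MLSI(s-T)} \le \Lambda_T(s)$. Your write-up is slightly more explicit about why the dissipation identity holds and about regularity caveats, but the argument is the same Gr\"onwall-type computation.
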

\begin{proof}
    Let $f_0 = \frac{\dif \nu_0}{\dif pi}$, and let $f_t = P_t f_0 = \frac{\dif \nu_t}{\dif \pi}$ (this last equality holds due to reversibility).
    For ease of notation, set $\Error_t = \Error(f_t)$ for $t \ge 0$.
    Recalling that $\calE(f_t,\log f_t) = - \frac{\dif}{\dif t} \KL{\nu_t}{\pi}$, the weak MLSI says that
    \[ -\calE(f_t,\log f_t) + \LSI \cdot \KL{\nu_t}{\pi} - \LSI \cdot \Error_t \le 0, \] 
    so
    \[ \frac{\dif}{\dif t} \left( e^{\LSI t} \cdot \KL{\nu_t}{\pi} - \LSI \int_{0}^{t} e^{\LSI s} \Error_s \dif s \right) \le 0. \]
    Therefore,
    \[ e^{\LSI T} \cdot \KL{\nu_T}{\pi} - \LSI \int_0^T e^{\LSI s} \Error_s \dif s \le \KL{\nu_0}{\pi}, \]
    and
    \[ \KL{\nu_T}{\pi} \le e^{-\LSI T} \KL{\nu_0}{\pi} + \LSI \int_0^T e^{\LSI (s-T)} \Error_s. \]
    Noting that $\Lambda_T(s) = \frac{\LSI}{e^{\LSI T} - 1} \cdot e^{\LSI s} \ge \LSI e^{\LSI (s-T)}$, the above implies that
    \[ \KL{\nu_T}{\pi} \le e^{-\LSI T} \KL{\nu_0}{\pi} + \E_{s \sim \Lambda_T}\left[ \Error_s \right], \]
    as desired.
\end{proof}

By essentially the same proof, we obtain the analogous result for weak \Poincare inequalities.
\begin{theorem}
    \label{th:approx-pi-main}
    Consider the trajectory $(\nu_t)_{t \ge 0}$ of a (continuous-time) Markov chain with stationary distribution $\pi$, initialized at the distribution $\nu_0$, and suppose that $\pi$ satisfies a weak \Poincare inequality with parameters $\Error$ and $\CPoi$. Fix $T > 0$, and set $\Lambda_T$ to be the distribution on $[0,T]$ with density $\Lambda_T(s) = \frac{e^{2\CPoi}}{e^{2\CPoi T} - 1} \cdot e^{2\CPoi s}$. Then,
    \[ \chitwo{\nu_T}{\pi} \le e^{-2\CPoi T} \chitwo{\nu_0}{\pi} + \E_{\bs \sim \Lambda_T}[\Error(\tfrac{\dif \nu_{\bs}}{\dif \pi})]. \]
\end{theorem}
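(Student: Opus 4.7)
The plan is to follow the proof of \Cref{th:approx-lsi-main} essentially line-by-line, replacing KL divergence by $\chi^2$ divergence and entropy by variance. The only substantive structural difference is a factor of $2$ that appears in the $\chi^2$-dissipation identity, which is precisely what produces the exponent $2\CPoi$ in place of $\MLSI$.

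Concretely, I would set $f_t = \frac{\dif \nu_t}{\dif \pi}$, so that $f_t = P_t f_0$ by reversibility of the semigroup and $\chitwo{\nu_t}{\pi} = \Var_\pi[f_t]$. The first step is to establish the standard $\chi^2$-dissipation identity
\[
\frac{\dif}{\dif t}\chitwo{\nu_t}{\pi} \;=\; -2\,\calE(f_t,f_t),
\]
obtained by differentiating $\Var_\pi[f_t]$ in time using $\tfrac{\dif}{\dif t} f_t = -\Lap f_t$ and noting that $\E_\pi[f_t] = 1$ is conserved. Substituting the weak \Poincare inequality $\calE(f_t,f_t) \ge \CPoi \,\Var_\pi[f_t] - \CPoi\,\Error(f_t)$ then gives the differential inequality
\[
\frac{\dif}{\dif t}\chitwo{\nu_t}{\pi} + 2\CPoi\,\chitwo{\nu_t}{\pi} \;\le\; 2\CPoi\,\Error(f_t),
\]
which is the direct $\chi^2$-analog of the inequality derived in the proof of \Cref{th:approx-lsi-main}.

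The remainder of the argument is identical to the MLSI case. Multiplying by the integrating factor $e^{2\CPoi t}$, integrating from $0$ to $T$, and dividing through by $e^{2\CPoi T}$ yields
\[
\chitwo{\nu_T}{\pi} \;\le\; e^{-2\CPoi T}\chitwo{\nu_0}{\pi} + 2\CPoi \int_0^T e^{2\CPoi(s-T)}\,\Error(f_s)\,\dif s,
\]
and the integral is then bounded by $\E_{\bs\sim\Lambda_T}[\Error(\tfrac{\dif\nu_{\bs}}{\dif\pi})]$ via the pointwise comparison $\Lambda_T(s) \ge 2\CPoi\,e^{2\CPoi(s-T)}$ on $[0,T]$, exactly as in the MLSI proof. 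I do not expect a real obstacle, since the argument is a one-dimensional ODE computation via Gr\"onwall's inequality in integrating-factor form that parallels \Cref{th:approx-lsi-main} step for step; the only point requiring standard care is justifying differentiation in time, which follows from the regularity of the Markov semigroup.
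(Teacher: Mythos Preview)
Your proposal is correct and follows essentially the same approach as the paper, which explicitly states that \Cref{th:approx-pi-main} is obtained ``by essentially the same proof'' as \Cref{th:approx-lsi-main}. You have correctly identified the one substantive change: the dissipation identity $\tfrac{\dif}{\dif t}\Var_\pi[f_t] = -2\,\calE(f_t,f_t)$ carries a factor of $2$ (absent in the entropy case), which propagates through the integrating factor and produces the exponent $2\CPoi$ and the density $\Lambda_T$ as stated.
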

For the analysis of the annealed Langevin dynamics, we will also require the following definition. 
For $f : \Omega \to \bbR$, let $\osc(f) \defeq \sup(f) - \inf(f)$, and let $\grad f$ denote the Riemannian gradient.
\begin{definition}[Weak functional inequalities for Langevin]\label{def:weak-pi-langevin}
    We say a distribution $\pi$ on $\Omega \subseteq \R^N$ or $\Omega \subseteq \ScS_N$ satisfies a $(\CPoi,\eps)$-weak \Poincare inequality if for all differentiable functions $f$,
    \[ \Var_{\pi}[f] \le \frac{1}{\CPoi} \cdot \calE(f,f) + \eps \cdot (\osc(f)^2 + \sup_{x \in \Omega} \norm{\grad f}^2). \]
    Similarly, we say $\pi$ satisfies a  $(\MLSI,\eps)$-weak modified log-Sobolev inequality if for all differentiable functions $f$,
    \[ \Ent_{\pi}[f] \le \frac{1}{\MLSI} \cdot \calE(f,\log f) + \eps \cdot (\osc(\sqrt{f})^2 + \sup_{x \in \Omega} \norm{\grad f}^2). \]
\end{definition}
\begin{remark}
    As mentioned in the beginning of this section, by replacing the Riemmanian gradient with the discrete gradient, an analogous theory can be developed for annealed Glauber dynamics; see \Cref{def:glauber-weak-poincare}. 
\end{remark}
\begin{remark}
    A weak \Poincare inequality with sufficiently good parameters implies a true \Poincare inequality. Indeed, any low conductance cut limits on the region of valid $(\CPoi, \eps)$. Hence, by Cheeger, one can conclude that Langevin satisfies a true \Poincare inequality, with some loss in parameters.
\end{remark}
We shall typically use weak \Poincare inequalities with functions $f$ that have expectation $1$, where we bound $\osc(f) \le 2 \|f-1\|_{\infty}$.

% Similar notions of weak \Poincare~inequalities have been studied in the past\sid{insert refs} --- more recently, slightly stronger notions have been used to show mixing results \cite{EAEGP23}.

\subsection{Properties of weak functional inequalities}
In this section, we state some crucial properties of weak functional inequalities for Langevin diffusion on $\R^N$ or $\ScS_N$. 
With minor modifications, the same results hold for Glauber dynamics on finite state spaces; see \Cref{sec:glauber-app} for formal details.
\begin{lemma}
    \label{lem:langevin-close-to-poincare-weak-poincare}
    Let $\pi$ be a distribution on $\R^N$ or $\ScS_N$ satisfying a $\CPoi$-\Poincare inequality for Langevin diffusion, and $\pi'$ a distribution such that $\dtv{\pi}{\pi'} \le \delta$. Then, $\pi'$ satisfies a  $\left(\CPoi, \delta \max(\CPoi^{-1},1)\right)$-weak \Poincare inequality for Langevin diffusion.
\end{lemma}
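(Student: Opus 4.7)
The strategy is to transfer the $\CPoi$-\Poincare inequality from $\pi$ to $\pi'$ by controlling the two sides of the inequality under a TV-distance perturbation. Specifically, I will write
\[
    \Var_{\pi'}[f] \;=\; \Var_{\pi}[f] + \bigl(\Var_{\pi'}[f] - \Var_{\pi}[f]\bigr),
    \qquad
    \calE_{\pi}(f,f) \;=\; \calE_{\pi'}(f,f) + \bigl(\calE_{\pi}(f,f) - \calE_{\pi'}(f,f)\bigr),
\]
and use two coupling-based perturbation estimates: the difference in variances is controlled by $\osc(f)^2$, and the difference in Dirichlet forms is controlled by $\sup_x \|\grad f(x)\|^2$. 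Applying the $\CPoi$-\Poincare inequality for $\pi$ in the middle then yields the desired weak \Poincare inequality for $\pi'$.

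\textbf{Step 1: Dirichlet form perturbation.} Since $\calE_{\pi}(f,f) = \E_{\pi}\|\grad f\|^2$, the standard coupling characterization of TV distance gives a coupling $(\bX,\bY)$ with $\bX\sim\pi$, $\bY\sim\pi'$, and $\Pr[\bX \ne \bY] \le \delta$. Applying this to the nonnegative function $g(x) = \|\grad f(x)\|^2$, whose oscillation is at most $\sup_x \|\grad f(x)\|^2$, yields
\[
    \bigl|\calE_{\pi}(f,f) - \calE_{\pi'}(f,f)\bigr| \;\le\; \delta \cdot \sup_{x \in \Omega} \|\grad f(x)\|^2.
\]

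\textbf{Step 2: Variance perturbation.} Using the identity $\Var_{\mu}[f] = \tfrac{1}{2}\E_{(X_1,X_2) \sim \mu^{\otimes 2}}(f(X_1)-f(X_2))^2$, I couple $\pi^{\otimes 2}$ with $(\pi')^{\otimes 2}$ via two independent copies of the above coupling, giving TV distance at most $2\delta$ between the product measures. The integrand $(f(x_1)-f(x_2))^2$ has oscillation at most $\osc(f)^2$, so
\[
    \bigl|\Var_{\pi}[f] - \Var_{\pi'}[f]\bigr| \;\le\; \tfrac{1}{2} \cdot 2\delta \cdot \osc(f)^2 \;=\; \delta \cdot \osc(f)^2.
\]

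\textbf{Step 3: Assembly.} Combining the two estimates with the $\CPoi$-\Poincare inequality for $\pi$:
\[
    \Var_{\pi'}[f] \;\le\; \Var_{\pi}[f] + \delta\,\osc(f)^2
    \;\le\; \tfrac{1}{\CPoi}\calE_{\pi}(f,f) + \delta\,\osc(f)^2
    \;\le\; \tfrac{1}{\CPoi}\calE_{\pi'}(f,f) + \delta\bigl(\tfrac{1}{\CPoi}\sup_x\|\grad f\|^2 + \osc(f)^2\bigr),
\]
and bounding the bracketed constants by $\max(\CPoi^{-1},1)$ gives the claimed $(\CPoi,\,\delta\max(\CPoi^{-1},1))$-weak \Poincare inequality. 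There is no serious obstacle here; the only care needed is to use the coupling (rather than the $\sup$) form of the TV bound to match the advertised constant exactly, and to lift the single-measure coupling to the product coupling for the variance calculation.
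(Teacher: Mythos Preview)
Your proof is correct and follows essentially the same approach as the paper: couple $\pi$ and $\pi'$, bound the Dirichlet form perturbation by $\delta\sup\|\grad f\|^2$, bound the variance perturbation by $\delta\,\osc(f)^2$, and assemble. The only cosmetic difference is in Step 2: you use the two-sample identity $\Var_\mu[f]=\tfrac12\E_{\mu^{\otimes 2}}(f(X_1)-f(X_2))^2$ with a product coupling, whereas the paper uses $\Var_\pi[f]=\inf_{a\in[\inf f,\sup f]}\E_\pi[(f-a)^2]$ and compares $\E_\pi[(f-a)^2]$ to $\E_{\pi'}[(f-a)^2]$ directly; both routes give the same $\delta\,\osc(f)^2$ bound.
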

\begin{proof}
    There exists a coupling $\mathcal{C}$ of $(\pi,\pi')$ such that for $(x,x') \sim \mathcal{C}$, $\Pr[x\neq x'] \le \delta$.
    Thus,
    \begin{align*}
        \calE_{\pi'}(f,f) 
        &= \E_{\pi'} \norm{\grad f}^2 \\
        &\ge \E_{\pi} \norm{\grad f}^2 - \delta \sup \norm{\grad f}^2 \\
        &\ge \CPoi \Var_{\pi}[f] - \delta \sup \norm{\grad f}^2.
    \end{align*}
    Let $I = [\inf f, \sup f]$.
    Note that $\Var_{\pi}[f] = \inf_{a \in I} \E_{\pi}[(f-a)^2]$.
    For each $a\in I$, 
    \[
        \E_{\pi}[(f-a)^2]
        \ge \E_{\pi'}[(f-a)^2] - \delta \cdot \osc(f)^2,
    \]
    and therefore 
    \begin{equation}
        \label{eq:variance-tv-approximation}
        \Var_{\pi}[f]
        \ge \Var_{\pi'}[f] - \delta \cdot \osc(f)^2.
    \end{equation}
    Combining with the above shows
    \[
        \calE_{\pi'}(f,f) 
        \ge \CPoi \Var_{\pi'}[f]
        - \delta \lt(\CPoi \cdot \osc(f)^2 + \sup \norm{\grad f}^2\rt). \qedhere
    \]
    % \dwsuggestion{wanna hop on zoom btw?} ok!
    % Let $S \defeq \qty{x \in \R^N : \frac{\dif \pi'}{\dif \pi}(x) \not\in \left[\frac{1}{2}, 2\right]}$, where $\frac{\dif \pi'}{\dif \pi}$ is the Radon-Nikodym derivative. 
    % For Langevin dynamics we have $\calE_{\pi}(f, f) = \E_{\pi}[\norm{\grad f}^2]$. 
    % Hence, we have 
    % \begin{align*}
    %     \calE_{\pi'}(f, f) &= \E_{\pi'} \norm{\grad f}^2 \\ 
    %     % &= \E_{\pi'} \norm{\grad f}^2 \Ind[x \in S] + \E_{\pi'} \norm{\grad f}^2 \Ind[x \not\in S] \\
    %     &\ge \E_{x \sim \pi'} [\norm{\grad f(x)}^2 \Ind[x \not\in S]] \\
    %     &\ge \frac{1}{2}\E_{x \sim \pi} [\norm{\grad f(x)}^2 \Ind[x \not \in S]] \tag{Definition of $S$} \\
    %     &\ge \frac{1}{2}\calE_{\pi}(f, f) - \frac{1}{2}\pi(S) \sup \norm{\grad f}^2  \\
    %     &\ge \frac{\CPoi}{2} \Var_{\pi}[f] - \frac{1}{2}\pi(S) \sup \norm{\grad f}^2  \tag{$\pi$ satisfies PI} \\
    %     &\ge \frac{\CPoi}{2} \Var_{\pi'}[f] - \frac{1}{2}\pi(S) (\osc(f)^2 + \sup \norm{\grad f}^2)\mcom
    % \end{align*}
    % where the last line uses the fact that bounded functions are stable under perturbations in TV distance.
    % Finally, we upper bound $\pi(S)$. Since $\pi'$ and $\pi$ are both absolutely continuous with respect to $\pi$, we have 
    % \begin{align*}
    %     \delta \ge \frac{1}{2} \int \abs{\frac{\dif \pi'}{\dif \pi}(x) - 1} \dif \pi(x) \ge \frac{1}{4} \pi(S),
    % \end{align*}
    % which yields the desired implication.
\end{proof}

As foreshadowed previously, measure decompositions compose well with weak functional inequalities. Indeed, the following lemma can be viewed as a relaxation of the setup to prove genuine functional inequalities (cf. \Cref{lem:worst-case-md}).
\begin{lemma}
    \label{lem:measure-decomp-weak-poincare}
    Let $\pi$ be a distribution over $\R^N$ or $\ScS_N$, and $\pi = \E_{\bz \sim \rho} \pi_{\bz}$ a measure decomposition of $\pi$ such that
    \begin{itemize}
        \item for all functions $f$, $\E_{\bz \sim \rho} \Var_{\pi_{\bz}}[f] \ge \Cvar \Var_{\pi}[f]$, and
        \item with probability $1-\eta$ over $\bz \sim \rho$, $\pi_{\bz}$ satisfies a  $(\CPoi,\delta)$-weak \Poincare~inequality with respect to Langevin diffusion.
    \end{itemize}
    Then, $\pi$ satisfies a $\left( \CPoi\Cvar, \frac{\delta+\eta}{\Cvar} \right)$-weak \Poincare inequality.
\end{lemma}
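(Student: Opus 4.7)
The proof is a direct aggregation of the three ingredients that are now in hand: (i) conservation of the Dirichlet form, (ii) approximate conservation of variance (the first hypothesis), and (iii) the weak Poincar\'e inequality on the typical components (the second hypothesis). The target inequality to establish is, for any differentiable test function $f$,
\[
    \Var_\pi[f] \le \frac{1}{\CPoi \Cvar}\,\calE_\pi(f,f) + \frac{\delta+\eta}{\Cvar}\bigl(\osc(f)^2 + \sup_x \|\grad f(x)\|^2\bigr).
\]

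First, I would record the identity $\calE_\pi(f,f) = \E_{\bz \sim \rho}\calE_{\pi_\bz}(f,f)$, which for Langevin diffusion on $\R^N$ or $\ScS_N$ follows by linearity: $\calE_\pi(f,f) = \E_\pi\|\grad f\|^2 = \E_\bz \E_{\pi_\bz}\|\grad f\|^2$ since the Dirichlet form is an integral of a pointwise quantity depending only on $f$, not on $\pi_\bz$. Next, I would apply approximate conservation of variance to get $\Cvar \Var_\pi[f] \le \E_\bz \Var_{\pi_\bz}[f]$, and split the right-hand side according to whether $\bz$ is ``good'' (i.e., $\pi_\bz$ satisfies the $(\CPoi,\delta)$-weak Poincar\'e inequality) or ``bad'' (the probability-$\eta$ event).

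For good $\bz$, I invoke the weak Poincar\'e inequality to write $\Var_{\pi_\bz}[f] \le \CPoi^{-1} \calE_{\pi_\bz}(f,f) + \delta\bigl(\osc_{\supp(\pi_\bz)}(f)^2 + \sup_{x \in \supp(\pi_\bz)}\|\grad f(x)\|^2\bigr)$, and then bound each of these local quantities by its global counterpart, using that $\supp(\pi_\bz) \subseteq \R^N$ (resp.\ $\ScS_N$). For bad $\bz$ I use the crude bound $\Var_{\pi_\bz}[f] \le \tfrac14 \osc(f)^2 \le \osc(f)^2$, paying an additive $\eta \cdot \osc(f)^2$. Taking expectations and using the Dirichlet-form identity from the first step yields
\[
    \E_\bz \Var_{\pi_\bz}[f] \le \frac{1}{\CPoi}\,\calE_\pi(f,f) + (\delta+\eta)\bigl(\osc(f)^2 + \sup_x\|\grad f(x)\|^2\bigr).
\]
Dividing by $\Cvar$ and applying $\Cvar \Var_\pi[f] \le \E_\bz \Var_{\pi_\bz}[f]$ then gives exactly the claimed $(\CPoi \Cvar,(\delta+\eta)/\Cvar)$-weak Poincar\'e inequality.

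There is no serious obstacle here; the only point worth a moment of care is the passage from the ``local'' error terms (oscillation and gradient supremum over $\supp(\pi_\bz)$) to the ``global'' ones, which is immediate by monotonicity, and the observation that conservation of the Dirichlet form for Langevin diffusion is an equality rather than just an inequality, so no loss is incurred. The same template would go through for Glauber dynamics (as noted in the remark in \Cref{sec:weak-functional}), where the Dirichlet-form step becomes an inequality via the standard concavity argument but all other steps are identical.
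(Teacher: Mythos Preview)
Your proposal is correct and follows essentially the same approach as the paper: both arguments use the Dirichlet-form identity $\calE_\pi(f,f)=\E_{\bz}\calE_{\pi_{\bz}}(f,f)$, split into good and bad $\bz$, apply the $(\CPoi,\delta)$-weak Poincar\'e inequality on good components, bound $\Var_{\pi_{\bz}}[f]\le\osc(f)^2$ on the bad set, and then invoke approximate conservation of variance. The only cosmetic difference is that the paper chains inequalities starting from $\calE_\pi(f,f)$ and lower-bounding, whereas you start from $\Cvar\Var_\pi[f]$ and upper-bound; the bookkeeping is identical.
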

\begin{proof}
    Let us say that $\bz$ is \emph{good} if $\pi_{\bz}$ satisfies a weak \Poincare~inequality, and $f$ be a function. 
    Then,
    \begin{align*}
        \calE_{\pi}(f,f) &= \E_{\bz \sim \rho} \calE_{\pi_{\bz}}(f,f) \\
            &\ge \E_{\bz \sim \rho} \calE_{\pi_{\bz}}(f,f) \bone_{\bz\text{ is good}} \\
            &\ge \E_{\bz \sim \rho} \CPoi \Var_{\pi_{\bz}}[f]  \bone_{\bz\text{ is good}} - \delta\CPoi \cdot (\osc(f)^2 + \sup \norm{\grad f}^2)\\
            &= \E_{\bz \sim \rho} \CPoi \Var_{\pi_{\bz}}[f] - \delta \CPoi \cdot (\osc(f)^2 + \sup \norm{\grad f}^2) - \E_{\bz \sim \rho} \CPoi \Var_{\pi_{\bz}}[f] \bone_{\bz\text{ is not good}} \\
            &\ge \CPoi \E_{\bz \sim \rho} \Var_{\pi_{\bz}}[f] -(\delta\CPoi + \eta\CPoi) \cdot (\osc(f)^2 + \sup \norm{\grad f}^2) \\
            &\ge \Cvar\CPoi \Var_{\pi}[f] - (\delta\CPoi + \eta\CPoi) \cdot (\osc(f)^2 + \sup \norm{\grad f}^2)\mper
    \end{align*}
    The desired follows.
\end{proof}

\subsection{Weak \Poincare inequalities and annealed Markov chains}\label{sec:annealed-mc}
The notion of weak functional inequalities defined in \Cref{def:weak-pi-langevin} can be naturally applied in the context of simulated annealing, which we now define.
\begin{definition}[Annealing scheme]
    Let $H$ be a Hamiltonian over $\Omega$, and $(\mu_\beta)_{\beta \ge 0}$ the class of distributions over $\Omega$ with $\mu_\beta(\sigma) \propto e^{\beta H(\sigma)}$. For each $\beta \le \beta_0$, let $P = P_\beta$ be a (reversible and ergodic) Markov chain with stationary distribution $\mu_\beta$. 

    An \emph{(inverse) temperature schedule} is any function $\beta: \R_{\ge 0} \to \R_{\ge 0}$. 
    An \emph{annealing scheme} $\calA$ is the time-inhomogeneous Markov chain such that at time $t$, one applies the Markov chain $P_{\beta(t)}$.
\end{definition}

Of interest is the temperature schedule of the form $t \mapsto \delta \cdot \left\lfloor \frac{t}{T} \right\rfloor$, with the chain being run for time $T \cdot \left(\frac{\beta_0}{\delta} + 1\right)$.

\begin{theorem}
    \label{th:annealing}
     Let $T,\delta > 0$ such that $k_0 \defeq \frac{\beta_0}{\delta}$ is an integer. Suppose that for each $\beta = k\delta$ for $0 \le k \le k_0$, $\mu_{\beta}$ satisfies a  $\left(\CPoi,\eps\right)$-weak \Poincare~inequality for $P_{\beta}$. Consider the annealing scheme given by schedule $t \mapsto \delta \cdot \left\lfloor \frac{t}{T} \right\rfloor$, run for total time $T \cdot \left(\frac{\beta_0}{\delta} + 1\right)$. Let $\nu$ be the output distribution of this annealing scheme. Then,
    \[ \dtv{\nu}{\mu_{\beta_0}} \le \frac{\beta_0}{\delta} \cdot \left[(1 + \delta \sup \norm{\grad H})e^{2\delta\|H\|_{\infty}} - 1\right] \cdot O \left( e^{-2\CPoi T} + \eps \right)^{1/2}. \]
\end{theorem}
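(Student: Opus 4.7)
The plan is to reduce the multi-phase annealing analysis to a per-phase warm-start analysis via Theorem \ref{th:approx-pi-main}, and then telescope the per-phase TV errors using the triangle and data processing inequalities.

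\textbf{Step 1 (Warm-start bounds).} For $k \in \{1, \dots, k_0\}$, let $f_k \defeq \tfrac{\dif \mu_{\beta_{k-1}}}{\dif \mu_{\beta_k}}$. Direct computation gives $f_k(\sigma) = e^{-\delta H(\sigma)}/Z_k$ where $Z_k = \E_{\mu_{\beta_k}}[e^{-\delta H}] \in [e^{-\delta \|H\|_\infty}, e^{\delta\|H\|_\infty}]$, hence the ratio $\sup f_k / \inf f_k \le e^{2\delta \|H\|_\infty}$. Using $\E f_k = 1$ (which forces $\inf f_k \le 1 \le \sup f_k$), this yields $\osc(f_k) \le e^{2\delta \|H\|_\infty} - 1$ and $\sup f_k \le e^{2\delta\|H\|_\infty}$. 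Since $\grad f_k = -\delta (\grad H) f_k$, we get $\sup \|\grad f_k\| \le \delta \sup \|\grad H\| \cdot e^{2\delta \|H\|_\infty}$. Summing gives $\osc(f_k) + \sup \|\grad f_k\| \le M \defeq (1+\delta\sup\|\grad H\|)e^{2\delta\|H\|_\infty} - 1$, and $\chitwo{\mu_{\beta_{k-1}}}{\mu_{\beta_k}} = \E[(f_k-1)^2] \le \osc(f_k)^2 \le M^2$.

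\textbf{Step 2 (Per-phase mixing).} Applying Theorem \ref{th:approx-pi-main} to $P_{\beta_k}$ with initial distribution $\mu_{\beta_{k-1}}$ and target $\mu_{\beta_k}$ gives
\[ \chitwo{P_{\beta_k,T}\mu_{\beta_{k-1}}}{\mu_{\beta_k}} \le e^{-2\CPoi T} \chitwo{\mu_{\beta_{k-1}}}{\mu_{\beta_k}} + \E_{\bs \sim \Lambda_T}[\Error(P_{\beta_k,\bs} f_k)], \]
where $\Error(f) = \eps(\osc(f)^2 + \sup\|\grad f\|^2)$. To control the error functional along the trajectory $f_s \defeq P_{\beta_k, s} f_k$, I invoke that semigroup contractivity (maximum principle) gives $\osc(f_s) \le \osc(f_k)$, and a standard gradient regularity estimate on $\ScS_N$ (or $\R^N$) gives $\sup\|\grad f_s\| \le O(\sup\|\grad f_k\|)$ uniformly in $s \in [0,T]$. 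This bounds the error integral by $O(\eps M^2)$, yielding $\chitwo{P_{\beta_k,T}\mu_{\beta_{k-1}}}{\mu_{\beta_k}} \le M^2 \cdot O(e^{-2\CPoi T} + \eps)$. Converting via $\dtv{\cdot}{\cdot} \le \tfrac{1}{2}\sqrt{\chitwo{\cdot}{\cdot}}$, each phase contributes TV error at most $M \cdot O(\sqrt{e^{-2\CPoi T} + \eps})$.

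\textbf{Step 3 (Chaining).} Let $\nu_k$ denote the distribution after phase $k$, starting from $\nu_0 = \mu_0$ (so phase $0$ is stationary). The triangle inequality and data processing for TV give
\[ \dtv{\nu_k}{\mu_{\beta_k}} \le \dtv{P_{\beta_k,T}\nu_{k-1}}{P_{\beta_k,T}\mu_{\beta_{k-1}}} + \dtv{P_{\beta_k,T}\mu_{\beta_{k-1}}}{\mu_{\beta_k}} \le \dtv{\nu_{k-1}}{\mu_{\beta_{k-1}}} + M \cdot O(\sqrt{e^{-2\CPoi T} + \eps}). \]
Iterating $k_0 = \beta_0/\delta$ times yields the claimed bound. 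The main technical obstacle is the gradient regularity estimate used in Step 2: while the $\osc$ contraction is immediate from the maximum principle, uniform control of $\sup\|\grad P_s f\|$ on the relevant manifold requires a Bochner--Lichnerowicz-type calculation with curvature contributions absorbed into universal constants (or an alternative argument exploiting the smoothing properties of the Langevin semigroup).
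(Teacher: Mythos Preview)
Your proposal is correct and follows essentially the same route as the paper: bound $\osc(f_k)$ and $\sup\|\grad f_k\|$ for the warm-start density ratio, apply Theorem~\ref{th:approx-pi-main} per phase, convert $\chi^2$ to TV, and telescope via triangle and data processing inequalities. Your warm-start bounds are in fact slightly sharper than the paper's (which routes through $\|f-1\|_\infty$ rather than the ratio $\sup f/\inf f$ directly), but the difference is absorbed into the $O(\cdot)$.

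One point worth noting: the paper's proof simply writes the $\eps$-term in terms of $\osc(f_0)$ and $\sup\|\grad f_0\|$ without comment, implicitly using that $\Error(P_s f)$ is controlled by $\Error(f)$; you are more careful in flagging this. The $\osc$ contraction is indeed immediate from the maximum principle, and the gradient bound you invoke is exactly what the paper tacitly assumes, so your caveat does not represent a divergence from the paper's argument but rather a more honest accounting of the same step.
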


\begin{remark}
    Setting $\eps = 0$ and $\delta = \beta_0$ matches the guarantees of \cite{CE22} (after applying Pinsker's inequality).
\end{remark}

\begin{proof}
    We shall prove the above using a simple inductive argument -- our goal will be to show that initialized at $\mu_\beta$, the $P_{\beta+\delta}$ Markov chain run for time $T$ yields a distribution sufficiently close (in total variation distance) to $\mu_{\beta+\delta}$. The total variation distance between the distribution that the annealed Markov chain outputs and the true distribution $\mu_{\beta_0}$ is then upper bounded by the sum of these total variation errors.

    Let $\nu^{(r,k)}$ be the distribution obtained by running the annealed Markov chain initialized with $\mu_{r\delta}$ until inverse temperature $k\delta$. In particular, $\nu^{(r,k)}$ corresponds to the result of running our annealed Markov chain for $T(k-r)$ time, and $\nu^{(k.k)} = \mu_{k\delta}$.
    We are interested in bounding $\dtv{\nu^{(0,k_0)}}{\mu_{\beta_0}}$. We have
    \begin{align*}
        \dtv{\nu^{(0,k_0)}}{\mu_{\beta_0}} &= \dtv{\nu^{(k_0-1,k_0)}}{\mu_{\beta_0}} + \sum_{1 \le r \le k_0 - 1} \left( \dtv{\nu^{(r-1,k_0)}}{\mu_{\beta_0}} - \dtv{\nu^{(r,k_0)}}{\mu_{\beta_0}}\right)\\
            &\le \dtv{\nu^{(k_0-1,k_0)}}{\mu_{\beta_0}} + \sum_{1 \le r \le k_0-1} \dtv{\nu^{(r-1,k_0)}}{\nu^{(r,k_0)}} \tag{Triangle inequality} \\
            &\le \dtv{\nu^{(k_0-1,k_0)}}{\mu_{\beta_0}} + \sum_{1 \le r \le k_0-1} \dtv{\nu^{(r-1,r)}}{\nu^{(r,r)}} \tag{Data processing} \\
            &= \sum_{1 \le r \le k_0} \dtv{\nu^{(r-1,r)}}{\mu_{r\delta}}.
    \end{align*}
    We now turn to controlling the error functional $\osc(f)^2 + \sup \norm{\grad f}^2$. 
    Fix an arbitrary $\beta$, and set $f$ to be the likelihood ratio $\frac{\dif \mu_{\beta}}{\dif \mu_{\beta+\delta}}$. Then, 
    \begin{align*}
        \|f - 1\|_{\infty} &\le \left\| \frac{e^{-\delta H}}{\E_{\mu_{\beta+\delta}} e^{-\delta H}} - 1 \right\|_{\infty} \\
        &\le \left\| \frac{e^{-\delta H}-1}{\E_{\mu_{\beta+\delta}} e^{-\delta H}} \right\|_{\infty} + \left| \frac{1}{\E_{\mu_{\beta+\delta}} e^{-\delta H}} - 1 \right| \\
        &\le \frac{e^{\delta \|H\|_{\infty}} - 1}{e^{-\delta\|H\|_{\infty}}} + \frac{e^{\delta\|H\|_{\infty}} - 1}{e^{-\delta\|H\|_{\infty}}} \le 2 \cdot (e^{2\delta\|H\|_{\infty}} - 1).
    \end{align*}
    Hence, $\osc(f) \le 4 \cdot (e^{2\delta\|H\|_{\infty}} - 1)$. 
    Next, a simple computation yields
    \begin{align*}
        \norm{\grad f} &= \frac{\delta e^{-\delta H}}{\E_{\mu_{\beta + \delta}} e^{-\delta H}} \norm{\grad H}\\
        &\le 2\delta \cdot e^{2\delta \norm{H}_{\infty}} \norm{\grad H}\mcom
    \end{align*}
    so we have $\sup \norm{\grad f} \le 2\delta \cdot e^{2\delta \norm{H}_{\infty}} \sup \norm{\grad H}$.
    
    Since each $\mu_{r\delta}$ satisfies a  $(\CPoi, \eps)$-weak \Poincare~inequality, \Cref{th:approx-pi-main} with the above calculation implies that
    \begin{align*}
        \dtv{\nu^{(r-1,r)}}{\mu_{r\delta}}^2 &\le \chitwo{\nu^{(r-1,r)}}{\mu_{r\delta}} \\
            &\le e^{-2\CPoi T} \cdot \chitwo{\mu_{(r-1)\delta}}{\mu_{r\delta}} + \eps \cdot (16(e^{2\delta\|H\|_{\infty}}-1)^2 + 4(\delta e^{2\delta \norm{H}_{\infty}} \sup \norm{\grad H})^2) \\
            &\le (16(e^{2\delta\|H\|_{\infty}}-1)^2 + 4(\delta e^{2\delta \norm{H}_{\infty}} \sup \norm{\grad H})^2) \left( e^{-2\CPoi T}  + \eps \right).
    \end{align*}
    Plugging this back into the earlier sequence of equations completes the proof.
\end{proof}

\begin{remark}
    While the proof above has been stated for the annealing scheme where at time $t$ the Hamiltonian is of the form $\sigma \mapsto \beta(t) \cdot H(\sigma)$, the proof immediately extends to essentially any annealing scheme that changes the Hamiltonian ``slowly'', in that if $H_t$ is the Hamiltonian at time $t$, $\|H_{t+T}-H_{t}\|_{\infty} \le \delta$ for all $t$. A concrete example of such a scheme that might work better than the vanilla annealing is that which at time $t$ has as Hamiltonian $\sigma \mapsto H(\beta(t) \cdot \sigma)$.
\end{remark}

%!TEX root = ./main.tex

\section{Vignette: Sampling from mixture models with advice}\label{sec:mixture-advice}

We are interested in the following question.

\begin{displayquote}
    Let $\pi$ be a distribution over $\R^N$ with density proportional to $e^{-V}$. Given oracle access to the gradient $\grad V$, when is it possible to efficiently produce samples that are close (in total variation distance) to $\pi$?
\end{displayquote}

We begin with an overview of existing results towards the above question.
Recall from \Cref{fact:lsi-mixing} that for distributions satisfying a \Poincare inequality, such as strongly log-concave distributions, Langevin diffusion enjoys rapid mixing.
Beyond this setting, however, very little is known. \cite{BCESZ22,CWZZ24} prove certain ``local mixing'' guarantees for Langevin diffusion on non-log-concave distributions, but these do immediately not translate to sampling guarantees. The works \cite{GLR18,LRG18,GTC24} use Langevin diffusion-based algorithms to sample from mixtures of log-concave distributions. Furthermore, the first of these papers proves that it is hard to sample from a mixture of two Gaussian distributions with distinct covariance matrices given access to just the gradient $\grad V$. 

In \cite{KV23}, the first theoretical guarantees are provided for a new model designed to circumvent this issue, where in addition to being given access to the gradient $\grad V$, we are also given ``advice'' in the form of $m$ samples from the distribution (also see \cite{NHHZW20} and \cite{Hin10,GLZZW18,XLZW16} for related discussion). In particular, they show that when the stationary distribution is a mixture of constantly many strongly log-concave distributions, Langevin diffusion initialized at the empirical measure on the advice gets close to the stationary distribution. However, their dependence on the number of components $K$ is doubly exponential. The main result in this section improves the doubly exponential dependence to a polynomial one for any mixture of distributions satisfying \Poincare inequalities.
Similar results are obtained by Koehler, Lee, \& Vuong \cite{KLV24}.

\begin{theorem}
    \label{th:langevin-fast}
    Let $\eps,\delta \in (0, 1)$, and let $\pi$ a mixture
    \[ \pi = \sum_{i=1}^{K} p_i \pi_i \]
    of distributions $(\pi_i)_{i=1}^K$, where each $\pi_i$ satisfies a \Poincare inequality with constant (at least) $\CPoi$. Further assume that $p_i \ge p_*$ for all $i$. Let $\nu_0$ be a random distribution over $\R^N$ such that $\E \nu_0 = \pi$, in that for any measurable subset $A$ of $\R^N$, $\E \nu_0(A) = \pi(A)$. Set
    \[ m = \Omega\left(\frac{\log(1/\delta)}{p_*\eps^2}\right). \]
    Let $\nu_1,\ldots,\nu_m$ be iid draws from $\nu_0$, and $\nu$ the uniform mixture over the $(\nu_i)_{i=1}^m$. Further % hola monsieur chompigo
    suppose that with probability at least $1-\delta$, $\chitwo{\nu_i}{\pi} \le M$. Denoting by $\mu_T$ the distribution attained by running Langevin diffusion for time $T$ initialized $\nu$, it holds that
    \[ \Pr\left[\chitwo{\mu_T}{\pi} \le \eps \right] \ge 1-O(\delta), \]
    for $T = \Omega\left(\frac{1}{\CPoi} \log\left( \frac{M}{\eps} \right)\right)$,
    where the probability is over the draws of $\nu_i$.
\end{theorem}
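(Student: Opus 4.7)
The plan is to derive a weak \Poincare inequality for $\pi$ from the mixture structure, apply \Cref{th:approx-pi-main} to reduce mixing to controlling an error functional, and then bound that functional via concentration across the $m$ independent advice samples.

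First I would establish a weak \Poincare inequality for $\pi$ of the form
\[
\Var_\pi f \le \frac{1}{\CPoi}\,\calE_\pi(f,f) + \calG(f), \qquad \calG(f) \defeq \sum_{i=1}^K p_i\bigl(\E_{\pi_i}[f] - \E_\pi[f]\bigr)^2.
\]
This follows by combining three facts: (i) linearity of the gradient yields $\calE_\pi(f,f) = \sum_i p_i \calE_{\pi_i}(f,f)$; (ii) the per-component \Poincare hypothesis gives $\calE_{\pi_i}(f,f) \ge \CPoi \Var_{\pi_i} f$; and (iii) the law of total variance decomposes $\Var_\pi f$ into the average within-component variance plus $\calG(f)$. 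This is a weak functional inequality in the general sense used in \Cref{th:approx-pi-main}, with error functional $\Error = \calG$.

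Applying \Cref{th:approx-pi-main} gives
\[
\chitwo{\mu_T}{\pi} \le e^{-2\CPoi T}\,\chitwo{\nu}{\pi} + \E_{\bs\sim\Lambda_T}\bigl[\calG(f_{\bs})\bigr],
\]
where $f_s = \dif\nu_s/\dif\pi$. The first term is at most $e^{-2\CPoi T} M$ by convexity of $\chi^2$ applied to $\nu = \tfrac{1}{m}\sum_j \nu_j$ (on the high-probability event that $\chitwo{\nu_j}{\pi} \le M$ for all $j$). For the second term, let $h_i \defeq \dif\pi_i/\dif\pi$; reversibility of Langevin in $L^2(\pi)$ gives
\[
\E_{\pi_i}[f_s] - 1 = \langle f_s - 1, h_i\rangle_\pi = \langle f_0 - 1, P_s h_i\rangle_\pi = \frac{1}{m}\sum_{j=1}^m \bigl(\E_{\nu_j}[P_s h_i] - 1\bigr),
\]
expressing the error as an empirical average of iid mean-zero random variables (using $\E\nu_j = \pi$ and $\E_\pi[P_s h_i]=1$).

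Finally I would control each such average by concentration. The variance of $\E_{\nu_j}[P_s h_i]$ is bounded by $\E_\pi[(P_s h_i)^2] \le \|h_i\|_{L^2(\pi)}^2 \le 1/p_i$ by $L^2$-contraction of the semigroup and the identity $\|h_i\|_{L^2(\pi)}^2 = 1 + \chitwo{\pi_i}{\pi}$. Hence for each fixed $(i,s)$ the empirical average concentrates around $1$ at rate $\sqrt{1/(m p_i)}$, so
\[
\sum_i p_i (\E_{\pi_i}[f_s] - 1)^2 \lesssim \frac{K}{m} \le \frac{1}{p_* m}
\]
with high probability. Choosing $T = \Omega(\CPoi^{-1}\log(M/\eps))$ and $m = \Omega(\log(1/\delta)/(p_*\eps^2))$ makes both terms at most $\eps$. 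The main obstacle is upgrading the pointwise-in-$s$ concentration to a bound on $\E_{\bs\sim\Lambda_T}[\calG(f_{\bs})]$ with probability $1-O(\delta)$; this can be handled either by bounding the integral in expectation and applying Markov (giving the stated polynomial dependence on $1/\delta$) or by a Bernstein-type bound using the sub-Gaussian tails coming from the event $\chitwo{\nu_j}{\pi} \le M$, combined with a union bound over a fine grid of $s \in [0,T]$ using continuity of $s \mapsto P_s h_i$.
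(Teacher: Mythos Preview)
Your approach matches the paper's up through the concentration step: the same weak \Poincare inequality from the mixture decomposition, the same application of \Cref{th:approx-pi-main}, and the same rewriting of $\E_{\pi_i}[f_s]-1$ as an empirical average of iid mean-zero terms (the paper does this directly via $f_s = \tfrac{1}{m}\sum_j f_s^{(j)}$ rather than invoking reversibility, but these are equivalent). The almost-sure bound $\E_{\pi_i}[f_s^{(j)}]\in[0,1/p_*]$ and the variance bound you derive are also what the paper uses.

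The gap is in the step you flag as the ``main obstacle.'' Your option (a), Markov on the expectation, would give $m = \Omega(1/(p_*\eps^2\delta))$, not the $\Omega(\log(1/\delta)/(p_*\eps^2))$ in the statement, so it is too weak. Your option (b), union bound over a grid in $s$, would require quantitative Lipschitz control of $s\mapsto \E_{\pi_i}[f_s]$; the derivative involves the generator applied to $f_s$ and there is no hypothesis ensuring this is bounded uniformly in the randomness of the $\nu_j$, so this route is not straightforward. The paper instead exploits that the error $\E_{\bs\sim\Lambda_T}\sum_i p_i\bigl(\E_{\pi_i}[f_{\bs}]^2-1\bigr)$ is a \emph{convex combination} (over $\bs$ and $i$) of random variables that each obey a Bernstein-type tail bound. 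Passing to the associated Orlicz norm, convexity of the norm transfers the per-$(i,s)$ bound to the full average at once, with no discretization and no union bound; translating back to tails gives the exponential dependence on $m p_* \eps^2$ and hence the logarithmic sample complexity in $1/\delta$.
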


\begin{remark}
    One should think of $\nu_0$ as being the point mass distribution supported on a (random) sample drawn from $\pi$. Alternatively, one can think of $\nu_0$ as being the distribution obtained by drawing a sample $x_0$ according to $\pi$, then running Langevin diffusion for a short amount of time --- doing this would make the $\chi^2$-divergence $\chitwo{\nu_0}{\pi}$ finite. We also remark that a version of this proof goes through if we have that each $\pi_i$ satisfies a log-Sobolev inequality instead of a \Poincare inequality, working with KL divergences instead.
\end{remark}

\begin{proof}[Proof of \Cref{th:langevin-fast}]
    The idea of the proof will be to show that up to some additive error depending on the samples, $\pi$ does satisfy a \Poincare inequality with respect to the distributions along the path of Langevin diffusion initialized at the empirical distribution. This error corresponds to how imbalanced the samples are in terms of the mixture weights --- a straightforward concentration argument using Bernstein's inequality then shows that this error is small, so the $\chi^2$ divergence essentially decays exponentially fast, as if $\pi$ satisfied a true \Poincare inequality. 
    
    Let $f_t$ be the Radon-Nikodym derivative of $\mu_t$ (obtained by running Langevin diffusion initialized at $\nu$) with respect to $\pi$. By definition, we have
    \begin{align}
        \chitwo{\mu_t}{\pi} &= \E_{\pi}[f_t^2] - 1 \nonumber \\
            &= \sum_{i=1}^K p_i \left(\E_{\pi_i}[f_t^2] - 1 \right) \nonumber \\
            &= \sum_{i=1}^K p_i \Var_{\pi_i}[f_t] + \sum_{i=1}^{K} p_i \left( \E_{\pi_i}[f_t]^2 - 1 \right). \nonumber
    \end{align}
    Because each $\pi_i$ satisfies a \Poincare inequality, the first term is bounded as
    \[ \sum_{i=1}^{K} p_i \Var_{\pi_i}[f_t] \le \frac{1}{\CPoi} \sum_{i=1} p_i \E_{\pi_i} \|\grad f_t\|^2 = \frac{1}{\CPoi} \E_{\pi} \|\grad f_t\|^2. \]
    Consequently,
    \begin{equation}
        \chitwo{\mu_t}{\pi} \le \frac{1}{\CPoi} \cdot \E_{\pi} \|\grad f_t\|^2 + \sum_{i=1}^{K} p_i \left(\E_{\pi_i}[f_t]^2 - 1 \right).
    \end{equation}
    \Cref{th:approx-pi-main} then yields that
    \begin{align*}
        \chitwo{\mu_T}{\pi} &\le \chitwo{\mu_0}{\pi} \cdot e^{-\CPoi T} + \E_{s \sim \Lambda_T} \left[ \sum_{i=1}^{K} p_i \left( \E_{\pi_i}[f_t]^2 - 1 \right) \right] \\
            &\le M e^{-\LSI T} + \E_{s \sim \Lambda_T} \left[ \sum_{i=1}^{K} p_i \left( \E_{\pi_i}[f_t]^2 - 1 \right) \right].
    \end{align*}
    Above, we use that because the KL divergence to $\pi$ of each of the $\nu_i$ is at most $M$, so is that of the mixture $\mu_0 = \nu$.
    
    To conclude, we shall establish tail bounds on
    \[ \E_{s \sim \Lambda_T} \left[ \sum_{i=1}^K p_i \left( \E_{\pi_i}[f_s]^2 - 1 \right) \right]. \]
    For $1 \le j \le m$, let $f_s^{(j)}$ be the Radon-Nikodym derivative of $\mu_s^{(j)}$ with respect to $\pi$, where $\mu_s^{(j)}$ is the distribution obtained by running Langevin diffusion for time $s$ initialized at $\nu_j$. It is not difficult to see that $f_s = \frac{1}{m} \sum_{j=1}^{m} f_s^{(j)}$.\\
    First, for fixed $s$ and $j$, we use the fact that the $(\E_{\pi_i} [f_s^{(j)}])_j$ are independent mean $1$ random variables, with Hoeffding's inequality, to get tail bounds for $\E_{\pi_i}[f_s]^2 - 1$. We may use this to bound a certain Orlicz norm of this random variable --- this bound on the norm also transfers to $\E_{s \sim \Lambda_T} \left[ \sum_{i=1}^{K} p_i \left( \E_{\pi_i}[f_s]^2 - 1 \right) \right]$ as it is a convex combination of random variables with bounded Orlicz norm.  This immediately yields the desired tail bound.
    
    Fix $s$ and $i$. To start, we have the almost sure bounds
    \[ \frac{1}{p_*} = \frac{1}{p_*} \E_{\pi}[f^{(j)}_s] = \frac{1}{p_*} \sum_{r=1}^{K} p_r \E_{\pi_r}[f^{(j)}_s] \ge \E_{\pi_i}[f^{(j)}_s] \ge 0. \]
    Note that because the expected $\nu_j$ is equal to $\pi$, $\E_{\nu_j} \E_{\pi_i}[f^{(j)}_s] = 1$ for any $j$. Furthermore, because $\E_{\pi_i}\left[f_s^{(j)}\right]$ is a mean $1$ random variable which is bounded in $\left[0,\frac{1}{p_*}\right]$, its variance is at most $\frac{1}{p_*}$ (see e.g. \cite{BD00}).
    Bernstein's inequality implies that
    \[ \Pr\left[ \left| \E_{\pi_i} f_s - 1 \right| > t \right] = \Pr\left[ \left| \frac{1}{m} \sum_{i=1}^{m} \E_{\pi_i} \left[ f^{(j)}_s \right] - 1 \right| > t \right] \le 2 \exp\left( - \frac{mp_*}{2} \cdot \frac{t^2 }{1+t} \right). \]
    Thus, for any $t > 0$,
    \begin{align*}
        \Pr\left[ \left|\E_{\pi_i}[f_s]^2 - 1\right| > t \right] &\le \Pr\left[ \left| \E_{\pi_i} f_s - 1 \right| > \frac{t}{2(1+\sqrt{t})} \right] \\
            &\le 2 \exp\left( - \frac{mp_*}{8} \cdot \frac{\left(\frac{t}{1+\sqrt{t}}\right)^2}{1 + \frac{t}{1+\sqrt{t}}} \right).
    \end{align*}
    Now, consider the Orlicz norm $\|\cdot\|_{\psi}$ associated to the above family of tail bounds. As mentioned earlier, standard machinery may be used to go from the above tail bounds to a bound on the norm $\left\| \E_{\pi_i}[f_s]^2 - 1 \right\|_\psi$. Convexity of the norm yields the same bound on $\left\| \E_{s \sim \Lambda_T} \sum_{i=1}^{K} p_i \left( \E_{\pi_i}[f_s]^2 - 1 \right) \right\|_\psi$. Translating this back to a tail bound, we get that
    \[ \Pr\left[ \left|\E_{s \sim \Lambda_T} \sum_{i=1}^{K} p_i \left( \E_{\pi_i}[f_s]^2 - 1 \right) \right| > \frac{\eps}{2} \right] \le 2 \exp \left( -\frac{mp_*\eps^2}{10} \right) \le \delta. \]
    Conditioning on the above event not happening, we get that
    \[ \chitwo{\mu_T}{\pi} \le \chitwo{\mu_0}{\pi} \cdot e^{-\CPoi \cdot T} + \frac{\eps}{2} \le \eps, \]
    as desired.
\end{proof}

\section{Stopped localization schemes}\label{sec:stopped-ls}
\subsection{Localization schemes}\label{sec:loc-schemes}
We review some basic notions for the localization schemes framework introduced in \cite{CE22}. 
\begin{definition}[Linear-tilt localization scheme]
    Let $\mu = \mu_0$ be a probability measure,  $(\mu_t)_{t \in \Z_{\ge 0}}$ be a localization process.
    A linear-tilt localization scheme is one where $\mu_t$ is  defined by
    \[ \mu_{t+1}(x) = \mu_t(x) \left( 1 + \langle x-\mean(\mu_t) , Z_t\rangle \right) \]
    where $Z_t$ is a random variable with $\E[Z_t | \mu_t] = 0$ and $\mean(\mu_t)$ denotes the mean of $\mu_t$.
\end{definition}
For our main application to $p$-spin models, we will focus on a continuous-time version of  linear-tilt localization known as stochastic localization \cite{Eld13}.
\begin{definition}[Stochastic localization]\label{def:sl}
    Let $\mu$ be a probability measure on $\Omega \subseteq \R^N$, $(B_t)_{t \ge 0}$ be a standard Brownian motion on $\R^N$. The stochastic localization process with driving matrix $(C_t)_{t \ge 0}$ is a localization process $(\mu_t)_{t \ge 0}$ with $\mu_0 = \mu$ and 
    \begin{align*}
        \mu_t(x) \propto \mu_0(x) \exp(-\tfrac{1}{2}\angles{x, \Sigma_t x} + \angles{y_t, x}),
    \end{align*}
    where $\Sigma_t = \int_0^t C_s^2 \dif s$ and $y_t = \int_{0}^{t} C_s^2 \mean(\mu_s) \dif s + C_s \dif B_s$.
\end{definition}
A crucial property of these localization schemes is that establishing (approximate) conservation of variance reduces to bounding the covariance matrices of the intermediate distributions $\mu_t$.
\begin{lemma}[{Conservation of variance for linear-tilt \cite[Claim 22]{CE22}}]\label{lem:cvar-linear-tilt}
    Let $(\mu_t)_{t \in \Z_{\ge 0}}$ be a linear-tilt localization process.
    Suppose that for all $t \le T$ we have
    \begin{align*}
        \opnorm{\Cov(Z_t | \mu_t)^{1/2} \cdot \Cov(\mu_t) \cdot \Cov(Z_t | \mu_t)^{1/2}} \le K_t,
    \end{align*}
    where $K_t \in [0, 1]$.
    Then for any function $\varphi$,
    \[ \frac{\E \Var_{\mu_{T}}[\varphi]}{\Var_{\mu}[\varphi]} \ge \prod_{t = 0}^{T-1} (1 - K_t). \]
\end{lemma}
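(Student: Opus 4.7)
The plan is to proceed by induction on $t$, showing the one-step bound $\E[\Var_{\mu_{t+1}}[\varphi] \mid \mu_t] \ge (1 - K_t)\,\Var_{\mu_t}[\varphi]$ and then iterating. The backbone is the martingale property of the linear-tilt process: since $\E[Z_t \mid \mu_t] = 0$, the defining relation gives $\E[\mu_{t+1} \mid \mu_t] = \mu_t$, which in turn implies that $M_t \defeq \E_{\mu_t}[\varphi]$ is a martingale. By the law of total variance applied conditionally on $\mu_t$,
\[
    \Var_{\mu_t}[\varphi]
    = \E\bigl[\Var_{\mu_{t+1}}[\varphi] \,\big|\, \mu_t\bigr]
    + \E\bigl[(M_{t+1} - M_t)^2 \,\big|\, \mu_t\bigr],
\]
so it suffices to show that the increment term is at most $K_t\,\Var_{\mu_t}[\varphi]$.

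Next I would compute the martingale increment explicitly. Integrating $\varphi$ against the tilted density yields
\[
    M_{t+1} - M_t = \bigl\langle v_t,\, Z_t \bigr\rangle,
    \qquad v_t \defeq \E_{\mu_t}\!\bigl[(x - \mean(\mu_t))\,\varphi(x)\bigr],
\]
so that $\E[(M_{t+1}-M_t)^2 \mid \mu_t] = v_t^\top \Cov(Z_t \mid \mu_t)\, v_t$. The vector $v_t$ is exactly the cross-covariance between $x$ and $\varphi(x)$ under $\mu_t$, and Cauchy--Schwarz in the Hilbert space $L^2(\mu_t)$ gives, for every test vector $u$,
\[
    \langle u, v_t\rangle^2
    = \bigl(\E_{\mu_t}[\langle u, x - \mean(\mu_t)\rangle \cdot (\varphi - \E_{\mu_t}\varphi)]\bigr)^2
    \le u^\top \Cov(\mu_t)\,u \cdot \Var_{\mu_t}[\varphi],
\]
i.e.\ $v_t v_t^\top \preceq \Var_{\mu_t}[\varphi] \cdot \Cov(\mu_t)$ as PSD matrices.

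Writing $A \defeq \Cov(Z_t \mid \mu_t)^{1/2}$ and conjugating the PSD inequality by $A$, this becomes $(A v_t)(A v_t)^\top \preceq \Var_{\mu_t}[\varphi] \cdot A \Cov(\mu_t) A$. Taking operator norms (the left side is rank one, so its operator norm equals $\|A v_t\|^2$) and invoking the hypothesis $\opnorm{A \Cov(\mu_t) A} \le K_t$, I get
\[
    v_t^\top \Cov(Z_t \mid \mu_t)\,v_t = \|A v_t\|^2 \le K_t \cdot \Var_{\mu_t}[\varphi].
\]
Substituting back into the law of total variance and taking unconditional expectations yields $\E \Var_{\mu_{t+1}}[\varphi] \ge (1-K_t)\, \E \Var_{\mu_t}[\varphi]$. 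Iterating from $t = 0$ to $T-1$ gives the desired product bound. The only conceptual step is the Cauchy--Schwarz inequality producing the PSD domination $v_t v_t^\top \preceq \Var_{\mu_t}[\varphi]\cdot \Cov(\mu_t)$, which is precisely what makes the hypothesized sandwiched-operator-norm bound the right quantity to control; the rest is bookkeeping around the martingale $M_t$.
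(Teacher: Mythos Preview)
The paper does not give its own proof of this lemma; it merely states it with a citation to \cite[Claim 22]{CE22}. Your argument is correct and is essentially the standard proof from that reference: the one-step law-of-total-variance decomposition for the martingale $M_t = \E_{\mu_t}[\varphi]$, the explicit identification $M_{t+1}-M_t = \langle v_t, Z_t\rangle$, and the Cauchy--Schwarz bound $v_t v_t^\top \preceq \Var_{\mu_t}[\varphi]\cdot\Cov(\mu_t)$ conjugated by $\Cov(Z_t\mid\mu_t)^{1/2}$. There is nothing to compare; your proposal matches the cited argument.
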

\begin{lemma}[Conservation of variance for stochastic localization]\label{lem:cvar-sl}
    Let $(\mu_t)_{t \ge 0}$ be a stochastic localization process with driving matrix $(C_t)_{t \ge 0}$. 
    Suppose that for all $t \le T$ we have 
    \begin{align*}
       \opnorm{C_t^{1/2} \cdot \Cov(\mu_t) \cdot C_t^{1/2}} \le K_t
    \end{align*}
    where $K_t \in [0, 1]$. 
    Then for any function $\varphi$,
    \[ \frac{\E \Var_{\mu_{T}}[\varphi]}{\Var_{\mu}[\varphi]} \ge e^{-\int_{0}^T K_t \dif t}. \]
\end{lemma}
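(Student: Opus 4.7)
The plan is to adapt the discrete-time argument of \Cref{lem:cvar-linear-tilt} to the continuous setting via It\^o calculus, culminating in a Gronwall comparison. The first step is to compute the SDE governing the Radon--Nikodym density $\mu_t/\mu_0$. Using $\dif y_t = C_t^2 \mean(\mu_t) \dif t + C_t \dif B_t$, I would apply It\^o's formula to $\log(\mu_t/\mu_0)(x)$ and observe that the drift arising from $\dif y_t$ cancels against the It\^o correction and the evolution of $\log Z_t$ to yield
\[ \dif (\mu_t/\mu_0)(x) = (\mu_t/\mu_0)(x) \cdot (x - \mean(\mu_t))^T C_t \dif B_t. \]
Integrating against a test function $\varphi$ then shows that $m_t^\varphi \defeq \E_{\mu_t}[\varphi]$ is a martingale with
\[ \dif m_t^\varphi = \Cov_{\mu_t}(\varphi, x)^T C_t \dif B_t, \qquad \dif \langle m^\varphi \rangle_t = \|C_t \Cov_{\mu_t}(\varphi, x)\|^2 \dif t. \]

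Applying the same identity to $\varphi^2$ shows that $\E_{\mu_t}[\varphi^2]$ is also a martingale, so $\E \E_{\mu_t}[\varphi^2] = \E_{\mu_0}[\varphi^2]$. Combined with the It\^o isometry decomposition $\E (m_t^\varphi)^2 = (m_0^\varphi)^2 + \int_0^t \E \|C_s \Cov_{\mu_s}(\varphi, x)\|^2 \dif s$, this yields the key identity
\[ \frac{\dif}{\dif t} \E \Var_{\mu_t}[\varphi] = -\E \|C_t \Cov_{\mu_t}(\varphi, x)\|^2. \]

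The main step is to upper bound the integrand pointwise by $K_t \Var_{\mu_t}[\varphi]$ along each realization of the localization process. For this, Cauchy--Schwarz in $L^2(\mu_t)$ gives, for every $w \in \R^N$,
\[ \langle w, \Cov_{\mu_t}(\varphi, x) \rangle^2 = \bigl(\E_{\mu_t}[(\varphi - m_t^\varphi) \langle w, x - \mean(\mu_t) \rangle]\bigr)^2 \le \Var_{\mu_t}[\varphi] \cdot w^T \Cov(\mu_t) w. \]
Substituting $w = C_t u$ and taking the supremum over unit $u$ yields $\|C_t \Cov_{\mu_t}(\varphi, x)\|^2 \le \Var_{\mu_t}[\varphi] \cdot \|C_t \Cov(\mu_t) C_t\|_{\op}$. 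Invoking the hypothesis then produces the differential inequality $\frac{\dif}{\dif t} \E \Var_{\mu_t}[\varphi] \ge -K_t \E \Var_{\mu_t}[\varphi]$, and Gronwall finishes the proof. I anticipate the main subtlety to be matching the operator norm arising from the Cauchy--Schwarz step, namely $\|C_t \Cov(\mu_t) C_t\|_{\op}$, to the one appearing in the hypothesis, namely $\|C_t^{1/2} \Cov(\mu_t) C_t^{1/2}\|_{\op}$; these agree up to a factor of $\|C_t\|_{\op}$, which should be reconcilable either by a sharper $L^2$-projection bound on $\varphi$ onto the span of the coordinate functions under $\mu_t$, or by a convention that normalizes the driving matrix so that the two norms coincide.
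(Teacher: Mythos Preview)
Your approach is correct and is exactly the standard argument underlying this lemma (it is the continuous-time version of \cite[Claim 22]{CE22}, which the paper cites for the discrete case). The paper itself does not give a proof of \Cref{lem:cvar-sl}; it simply records the result.

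The discrepancy you flag is genuine and is not resolvable by a sharper projection argument. Under the paper's \Cref{def:sl}, the noise term in $\dif y_t$ is $C_t\,\dif B_t$, so the quadratic variation of $m_t^\varphi$ is $v^\top C_t^2 v\,\dif t$ with $v=\Cov_{\mu_t}(\varphi,x)$. Conjugating the Cauchy--Schwarz inequality $vv^\top \preceq \Var_{\mu_t}[\varphi]\cdot \Cov(\mu_t)$ by $(C_t^2)^{1/2}=C_t$ gives
\[
\|C_t v\|^2 \le \Var_{\mu_t}[\varphi]\cdot \opnorm{C_t\,\Cov(\mu_t)\,C_t},
\]
and this is tight: take $\varphi(x)=\langle a,x\rangle$ and optimize over $a$ to recover $\opnorm{C_t\,\Cov(\mu_t)\,C_t}$ (using $\opnorm{\Sigma^{1/2}C_t^2\Sigma^{1/2}}=\opnorm{C_t\Sigma C_t}$). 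So no ``sharper $L^2$-projection'' will produce $C_t^{1/2}$ in place of $C_t$.

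The correct resolution is your second suggestion: the $C_t^{1/2}$ in the hypothesis matches the alternative convention $\Sigma_t=\int_0^t C_s\,\dif s$, $\dif y_t = C_t\,\mean(\mu_t)\,\dif t + C_t^{1/2}\,\dif B_t$ (used, e.g., in parts of \cite{CE22}), under which the infinitesimal ``$\Cov(Z_t)$'' is $C_t$ rather than $C_t^2$. With \Cref{def:sl} as written, the hypothesis should read $\opnorm{C_t\,\Cov(\mu_t)\,C_t}\le K_t$. This is immaterial for the paper's application, which only ever uses $C_t=\Id$ (see \Cref{lem:stopped-sl} and \Cref{lem:pspin-cov-bound-sl-path}), where the two formulations coincide.
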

\subsection{Proving weak \Poincare inequalities using stopped localization schemes}
To apply \Cref{th:annealing}, we required weak \Poincare~inequalities for the measures of interest. To show these, we next introduce a generic tool to prove these using \Cref{lem:measure-decomp-weak-poincare}, building on the localization schemes framework introduced in \Cref{sec:loc-schemes}. 
Let $\mu$ be a distribution. Using a localization scheme, we would like to design a measure decomposition $\mu = \E_{\bz \sim \rho} \mu_{\bz}$ such that
\begin{itemize}
    \item for all functions $f$, $\Var_{\pi}[f] \le \Cvar \E_{\bz \sim \rho} \Var_{\pi_{\bz}}[f]$, and
    \item with probability $1-\eta$ over $\bz \sim \rho$, $\pi_{\bz}$ satisfies a  $(\CPoi,\delta)$-weak \Poincare~inequality.
\end{itemize}

One way to ensure the first condition --- approximate conservation of variance --- is to simply stop the localization scheme whenever it fails to hold.
% \begin{lemma}
%     Let $\mu = \mu_0$ be a measure, $\varphi: \Omega \to \R$ a function, and let $(\mu_t)_{t \in \Z_{\ge 0}}$ be a localization process.
%     Let $0 \le K_t < 1$ for each $t \ge 0$, and consider the stopping time
%     \[ T = \inf_{t \ge 0} \left\{ \frac{\E \left[ \Var_{\mu_{t+1}}[\varphi] | \mu_t \right]}{\Var_{\mu_t}[\varphi]} \ge K_t \right\}. \]
%     Let $\tau$ be a stopping time such that $\tau \le T$ almost surely. Then, for any function $\varphi$,
%     \[ \frac{\E \Var_{\mu_{\tau}}[\varphi]}{\Var_{\mu}[\varphi]} \ge \prod_{t \ge 0} (1 - K_t). \]
% \end{lemma}
Indeed, the following lemma immediately follows from \Cref{lem:cvar-linear-tilt}.
\begin{lemma}
    Let $\mu = \mu_0$ be a measure, and let $(\mu_t)_{t \in \Z_{\ge 0}}$ be a linear-tilt localization process defined by
    \[ \mu_{t+1}(x) = \mu_t(x) \left( 1 + \langle x-\mean(\mu_t) , Z_t\rangle \right) \]
    for some random variable $Z_t$ with $\E[Z_t | \mu_t] = 0$.
    Let $T > 0$ be an arbitrary stopping time and $0 \le K_t < 1$ for each $t \ge 0$, and consider the stopping time
    \[ \tau = T \land \inf_{t \ge 0} \left\{ \opnorm{\Cov(Z_t | \mu_t)^{1/2} \cdot \Cov(\mu_t) \cdot \Cov(Z_t | \mu_t)^{1/2}} \ge K_t \right\}. \]
    Then, for any function $\varphi$,
    \[ \frac{\E \Var_{\mu_{\tau}}[\varphi]}{\Var_{\mu}[\varphi]} \ge \prod_{t \ge 0} (1 - K_t). \]
\end{lemma}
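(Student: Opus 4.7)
The plan is to reduce the statement to \Cref{lem:cvar-linear-tilt} by replacing $(\mu_t)$ with a ``frozen'' process whose increments vanish after time $\tau$, so that the covariance bound holds deterministically at every step.

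Specifically, I would consider the stopped process $\tilde{\mu}_t \defeq \mu_{t \land \tau}$, realized as the linear-tilt localization with increments
\[ \tilde{Z}_t = Z_t \cdot \bone[t < \tau]. \]
Since $\tau$ is a stopping time, $\bone[t < \tau]$ is measurable with respect to the filtration up to time $t$, so $\E[\tilde{Z}_t \mid \tilde{\mu}_t] = 0$ still holds and $(\tilde{\mu}_t)$ is a valid linear-tilt process. A direct induction shows that $\tilde{\mu}_t = \mu_{t \land \tau}$ for every $t$. The crucial observation is that for the modified process,
\[ \opnorm{\Cov(\tilde{Z}_t \mid \tilde{\mu}_t)^{1/2} \cdot \Cov(\tilde{\mu}_t) \cdot \Cov(\tilde{Z}_t \mid \tilde{\mu}_t)^{1/2}} \le K_t \]
holds \emph{almost surely} for every $t \ge 0$: on $\{t < \tau\}$ this is by the defining property of $\tau$, while on $\{t \ge \tau\}$ the increment $\tilde{Z}_t$ vanishes identically, so $\Cov(\tilde{Z}_t \mid \tilde{\mu}_t) = 0$ and the bound is trivial.

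With this deterministic bound in hand, applying \Cref{lem:cvar-linear-tilt} to $(\tilde{\mu}_t)$ at any fixed $T' \in \Z_{\ge 0}$ yields
\[ \E \Var_{\mu_{T' \land \tau}}[\varphi] = \E \Var_{\tilde{\mu}_{T'}}[\varphi] \ge \Var_{\mu_0}[\varphi] \prod_{t=0}^{T'-1}(1 - K_t). \]
Sending $T' \to \infty$, the right-hand side is a decreasing sequence with limit $\Var_{\mu_0}[\varphi] \prod_{t \ge 0}(1 - K_t)$. Since $\tau \le T$ is almost surely finite, for each sample path one has $\mu_{T' \land \tau} = \mu_\tau$ once $T'$ exceeds $\tau$, so bounded/monotone convergence gives $\E \Var_{\mu_{T' \land \tau}}[\varphi] \to \E \Var_{\mu_\tau}[\varphi]$, completing the proof.

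I do not anticipate any substantive obstacle; the argument is essentially a bookkeeping reduction. The only points requiring a line of care are (i) verifying that $(\tilde{\mu}_t)$ genuinely satisfies the definition of a linear-tilt localization (using only that $\tau$ is a stopping time for the natural filtration), and (ii) justifying the $T' \to \infty$ limit, which uses only that $\tau$ is a.s. finite and that each factor $(1 - K_t) \in [0,1]$. Note that the resulting bound is vacuous unless $\sum_t K_t < \infty$, but this is not a defect of the proof---it simply reflects the standard regime in which one applies such conservation-of-variance estimates.
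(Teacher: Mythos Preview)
Your proposal is correct and is precisely the natural way to make rigorous the paper's one-line proof, which just says the lemma ``immediately follows from \Cref{lem:cvar-linear-tilt}.'' The stopped-process construction you give is exactly what that sentence is implicitly invoking, so there is no substantive difference in approach.
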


Similarly, we have the following lemma for stochastic localization, which follows from \Cref{lem:cvar-sl}.

\begin{lemma}
    \label{lem:stopped-sl}
    Let $\mu = \mu_0$ be a measure, and $(\mu_t)_{t\ge 0}$ be a stochastic localization process with driving matrix $(C_t)_{t\ge 0}$. Let $T,K > 0$ be constant parameters, and consider the stopping time
    \[ \tau = T \wedge \inf_{t \ge 0} \left\{ \opnorm{C_t^{1/2} \cdot \Cov(\mu_t) \cdot C_t^{1/2}} \ge K \right\}. \]
    % Let $\tau$ be a stopping time such that $\tau \le T$ almost surely. Then, for any function $\varphi$,
    Then, 
    \[ \frac{\E \Var_{\mu_{\tau}}[\varphi]}{\Var_{\mu}[\varphi]} \ge e^{-TK}. \]
\end{lemma}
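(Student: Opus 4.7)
The plan is to reduce this stopped bound to the unstopped conservation-of-variance bound \Cref{lem:cvar-sl} by replacing the driving matrix with one that is switched off the instant the stopping criterion fires. Concretely, I will set $\widetilde C_t \defeq C_t \cdot \bone[t < \tau]$ and let $(\widetilde \mu_t)_{t \ge 0}$ denote the stochastic localization process from \Cref{def:sl} driven by $\widetilde C_t$ in place of $C_t$. Since $\widetilde C_s = 0$ for all $s \ge \tau$, the differentials $\dif \Sigma_t$ and $\dif y_t$ vanish on $\{t \ge \tau\}$, so the modified process freezes after $\tau$. In particular, $\widetilde \mu_t = \mu_{t \wedge \tau}$ almost surely, and in particular $\widetilde \mu_T = \mu_\tau$.

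The advantage of this reformulation is that the modified driving matrix satisfies a uniform-in-time operator-norm bound. Indeed, for $t < \tau$, the definition of $\tau$ gives $\opnorm{C_t^{1/2}\Cov(\mu_t) C_t^{1/2}} < K$, and for $t \ge \tau$ we have $\widetilde C_t = 0$, so $\opnorm{\widetilde C_t^{1/2}\Cov(\widetilde\mu_t)\widetilde C_t^{1/2}} = 0$. Thus the hypothesis of \Cref{lem:cvar-sl} is met for $(\widetilde \mu_t)_{t \ge 0}$ with the constant parameter $K_t \equiv K$; the Gronwall-type argument underlying that lemma carries through for any $K \ge 0$, so the restriction $K_t \in [0,1]$ is inessential in the continuous-time setting.

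Applying \Cref{lem:cvar-sl} to $(\widetilde \mu_t)_{t \ge 0}$ on time horizon $T$ then yields
\[ \E \Var_{\widetilde \mu_T}[\varphi] \ge \Var_{\mu_0}[\varphi] \cdot e^{-\int_0^T K \, \dif t} = \Var_{\mu_0}[\varphi] \cdot e^{-TK}, \]
and substituting $\widetilde \mu_T = \mu_\tau$ on the left-hand side gives the claim. The only point that needs care is the identification $\widetilde \mu_t = \mu_{t \wedge \tau}$ in law, which follows from the standard fact that stopping an Itô SDE is equivalent to truncating the driving integrand by $\bone[t < \tau]$, together with the observation that $\{t < \tau\}$ is adapted since $\opnorm{C_s^{1/2}\Cov(\mu_s)C_s^{1/2}}$ is a functional of the process up to time $s$. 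I expect this adaptedness check to be the only mild obstacle; everything else is a direct application of the unstopped lemma.
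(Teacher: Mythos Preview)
Your proposal is correct and matches the paper's approach: the paper simply asserts that \Cref{lem:stopped-sl} follows from \Cref{lem:cvar-sl}, and your argument of zeroing out the driving matrix after~$\tau$ is exactly the natural way to make that reduction precise. Your remark that the restriction $K_t\in[0,1]$ in \Cref{lem:cvar-sl} is inessential in continuous time is also correct, since the underlying Gr\"onwall-type variance-decay bound from \cite[Eq.~(20)]{CE22} holds for any nonnegative $K$.
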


% \TODO{Talk about how to prove weak MLSIs in a similar fashion, requiring bounded covariance in small ball around the encountered measures (which should follow via Holley-Stroock means from bounded covariance for the encountered measures)}

\begin{remark}
    \label{rem:stopping-beyond-sl}
    The localization process in the above lemmas can depend on $\varphi$, and need not be a linear-tilt localization. The more general requirement is that
    \[ \frac{\E\left[\Var_{\mu_{t+1}}[\varphi] | \mu_t\right]}{\Var_{\mu_t}[\varphi]} \ge K_t \quad\text{ or }\quad \frac{1}{\Var_{\mu_t}[\varphi]} \cdot \frac{\dif}{\dif s} \E \left[\Var_{\mu_s}[\varphi] \mid \mu_t\right] \bigg\rvert_{s = t} \ge K. \]
    This can always be achieved by stopping the localization process whenever these conditions fail to hold.
\end{remark}

With these elements in hand, we now show how to prove a weak \Poincare inequality using stopped localization schemes. 
\begin{lemma}
    \label{lem:stopped-scheme-to-weak-poi}
    Let $\mu = \mu_0$ be a measure, and $(\mu_t)_{t\ge 0}$ be a stochastic localization process with driving matrix $(C_t)_{t\ge 0}$. 
    Let $T,K>0$ be constant parameters. 
    Suppose that with probability $1-\eta_1$, it holds that $\opnorm{C_t^{1/2} \cdot \Cov(\mu_t) \cdot C_t^{1/2}} < K$ for all $t\in [0,T]$. Further suppose that with probability $1-\eta_2$, $\mu_T$ satisfies a $(\CPoi,\delta)$-weak \Poincare~inequality. Then, $\mu$ satisfies a $\left( \CPoi e^{-TK}, e^{TK} \left(\delta + \eta_1 + \eta_2\right)\right)$-weak \Poincare~inequality.
\end{lemma}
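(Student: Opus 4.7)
The plan is to assemble the lemma from three ingredients already stated in the paper: the stopped stochastic localization process of \Cref{lem:stopped-sl}, which gives a measure decomposition with deterministic approximate conservation of variance; a union bound to guarantee that most components inherit a weak \Poincare inequality; and \Cref{lem:measure-decomp-weak-poincare}, which converts these into a weak \Poincare inequality for the original measure $\mu$.

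First, introduce the stopping time
\[
    \tau = T \wedge \inf_{t \ge 0}\left\{\opnorm{C_t^{1/2}\cdot\Cov(\mu_t)\cdot C_t^{1/2}} \ge K\right\},
\]
exactly as in \Cref{lem:stopped-sl}. The law of $\mu_\tau$ defines a mixture distribution $\rho$ over probability measures, and by the martingale property of stochastic localization one has $\mu = \E_{\bz\sim\rho}\,\bz$, i.e.\ a bona fide measure decomposition. By \Cref{lem:stopped-sl} applied with this stopping rule, the decomposition satisfies approximate conservation of variance with constant $\Cvar \ge e^{-TK}$:
\[
    \E_{\bz\sim\rho}\,\Var_{\bz}[\varphi] \ge e^{-TK}\,\Var_{\mu}[\varphi]
    \qquad\text{for every test function } \varphi.
\]

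Next, call $\bz$ \emph{good} if $\bz$ satisfies a $(\CPoi,\delta)$-weak \Poincare inequality. On the event $\{\tau = T\}$ we have $\mu_\tau = \mu_T$, so by the first hypothesis this event has probability at least $1-\eta_1$. Intersecting with the second hypothesis and applying a union bound, the probability that $\bz\sim\rho$ is good is at least $1-\eta_1-\eta_2$.

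Finally, feed these two facts into \Cref{lem:measure-decomp-weak-poincare} with $\Cvar = e^{-TK}$ and $\eta = \eta_1+\eta_2$. The conclusion of that lemma is precisely that $\mu$ satisfies a $\bigl(\CPoi\,e^{-TK},\; e^{TK}(\delta+\eta_1+\eta_2)\bigr)$-weak \Poincare inequality, as claimed. No step here is delicate: the only mild subtlety is observing that the stopping rule makes approximate conservation of variance hold \emph{deterministically} (rather than merely with high probability), so that the $\eta_1$ failure event is charged only once, inside the additive error term of the weak \Poincare inequality, rather than also multiplicatively in $\Cvar$.
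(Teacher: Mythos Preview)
Your proof is correct and follows essentially the same route as the paper: define the stopping time $\tau$, use the measure decomposition $\mu=\E\mu_\tau$, invoke \Cref{lem:stopped-sl} for variance conservation with constant $e^{-TK}$, union-bound to get the weak \Poincare\ inequality for $\mu_\tau$ with probability at least $1-\eta_1-\eta_2$, and conclude via \Cref{lem:measure-decomp-weak-poincare}. Your write-up is in fact slightly more careful than the paper's (which contains the typos ``$e^{-K}$'' for $e^{-TK}$ and ``$\mu_1$'' for $\mu_T$), and your closing remark about why the stopping rule forces variance conservation to hold deterministically is a useful conceptual clarification.
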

\begin{proof}
    As in \Cref{lem:stopped-sl}, define the stopping time
    \[ \tau = T \land \inf_{t \ge 0} \left\{ \opnorm{C_t^{1/2} \cdot \Cov(\mu_t) \cdot C_t^{1/2}} \ge K \right\}. \]
    Consider the measure decomposition $\mu = \E \mu_{\tau}$. By \Cref{lem:stopped-sl}, this decomposition is variance-conserving with parameter $e^{-K}$. By the hypothesis of the lemma, $\tau = T$ with probability $1-\eta_1$, and $\mu_1$ satisfies a weak \Poincare~inequality with probability $1-\eta_2$. Consequently, $\mu_\tau$ satisfies a weak \Poincare~inequality with probability at least $1-\eta_1-\eta_2$. \Cref{lem:measure-decomp-weak-poincare} completes the proof.
\end{proof}

\begin{remark}  \label{rem:other-ls-to-weak-poi}
    An analogous lemma to the above holds if $(\mu_t)_{t\in\Z_{\ge 0}}$ is any linear-tilt localization process.
\end{remark}

While it will not be used in this paper, we note that a similar method proves a weak \Poincare inequality for a natural Markov chain associated to a localization scheme.
This includes for example the restricted Gaussian dynamics; see \cite{CE22} for several other examples.
% Similarly to the above lemmas which prove weak \Poincare inequalities using stopped localization schemes, we can prove them for a certain Markov chain associated with unstopped stochastic localization.

\begin{lemma}
    Let $\mu = \mu_0$ be a measure, and $(\mu_t)_{t\ge 0}$ be a stochastic localization process with driving matrix $(C_t)_{t\ge 0}$. 
    Let $T,K > 0$ be constant parameters.
    Consider the Markov chain $P$ given by $P_{x \to y} = \E \left[ \frac{\mu_T(x)\mu_T(y)}{\mu_0(x)} \right]$.
    Define the stopping time
    \[ \tau = T \land \inf_{t \ge 0} \left\{ \opnorm{ C_t^{1/2} \cdot \Cov(\mu_t) \cdot C_t^{1/2} } \ge K \right\}. \]
    If $\tau = T$ with probability at least $1-\delta$, then $P$ satisfies a $(e^{-TK} , \delta e^{TK})$-weak \Poincare inequality.
\end{lemma}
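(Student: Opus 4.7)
The plan is to identify the Dirichlet form of $P$ and then mirror the measure-decomposition proof of \Cref{lem:stopped-scheme-to-weak-poi}, but treating $\E[\Var_{\mu_T}[\cdot]]$ directly as the Dirichlet form rather than as a bound on variance conservation.

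First I would observe that $P$ is reversible with respect to $\mu_0$: the detailed balance numerator $\mu_0(x) P_{x\to y} = \E[\mu_T(x)\mu_T(y)]$ is symmetric in $x,y$. A short direct calculation then shows that the associated Dirichlet form satisfies
\[
    \calE_P(f,f) \;=\; \tfrac{1}{2}\, \E\Bigl[\sum_{x,y}\mu_T(x)\mu_T(y)\,(f(x)-f(y))^2\Bigr] \;=\; \E\bigl[\Var_{\mu_T}[f]\bigr].
\]
This identifies the quantity to lower bound.

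Next, I would introduce the same stopping time as in \Cref{lem:stopped-sl},
\[
    \tau \;=\; T\wedge \inf_{t\ge 0}\bigl\{\opnorm{C_t^{1/2}\Cov(\mu_t)C_t^{1/2}}\ge K\bigr\},
\]
and invoke the stopped variance conservation to obtain $\E[\Var_{\mu_\tau}[f]] \ge e^{-TK}\Var_{\mu_0}[f]$. The remaining task is to trade $\mu_\tau$ for $\mu_T$ using the hypothesis that $\Pr[\tau<T]\le \delta$. Splitting according to whether $\tau=T$,
\[
    \E\bigl[\Var_{\mu_\tau}[f]\bigr] \;=\; \E\bigl[\Var_{\mu_T}[f]\,\1\{\tau=T\}\bigr] + \E\bigl[\Var_{\mu_\tau}[f]\,\1\{\tau<T\}\bigr],
\]
the first term is at most $\E[\Var_{\mu_T}[f]]=\calE_P(f,f)$, and the second is at most $\tfrac{1}{4}\osc(f)^2\Pr[\tau<T]\le \tfrac{\delta}{4}\osc(f)^2$ via the elementary bound $\Var_\nu[f]\le \osc(f)^2/4$ valid for any probability measure $\nu$. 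Chaining the inequalities yields
\[
    \Var_{\mu_0}[f] \;\le\; e^{TK}\,\calE_P(f,f) \;+\; \tfrac{\delta e^{TK}}{4}\,\osc(f)^2,
\]
which is the claimed $(e^{-TK},\delta e^{TK})$-weak \Poincare inequality (up to an absorbable factor of $4$).

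None of the individual steps looks hard, but the one to double-check is the Dirichlet form identity, since the chain $P$ is defined in an unusual way: the computation only works because the Brownian expectation factorizes over $\mu_T(x)\mu_T(y)$ in a product form, which is what converts the transition $(x,y)$ sum into a variance under $\mu_T$. The remaining steps are essentially the same bookkeeping as in the proof of \Cref{lem:stopped-scheme-to-weak-poi}, with the novelty being that here we do not need an auxiliary weak \Poincare inequality for the tilted measures $\mu_T$ themselves; all we exploit is that $\E[\Var_{\mu_T}[f]]$ is already, by construction, the Dirichlet form of the chain we wish to analyze.
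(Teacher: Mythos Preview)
Your proof is correct and follows essentially the same approach as the paper: identify $\calE_P(f,f)=\E[\Var_{\mu_T}[f]]$, compare it to $\E[\Var_{\mu_\tau}[f]]$ via the event $\{\tau\neq T\}$ at cost $\delta\,\osc(f)^2$, and then invoke the stopped variance conservation bound $\E[\Var_{\mu_\tau}[f]]\ge e^{-TK}\Var_{\mu_0}[f]$. The only cosmetic differences are that the paper cites \cite[Proposition 19]{CE22} for the Dirichlet form identity rather than computing it, and uses $\Var\le \osc^2$ instead of your sharper $\Var\le \osc^2/4$.
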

\begin{proof}
    For the Markov chain $P$, the Dirichlet form is given by $\calE_P(f,f) = \E \Var_{\mu_T}[f]$ (see, e.g., \cite[Proposition 19]{CE22}). We then have the chain of inequalities
    \begin{align*}
        \E \Var_{\mu_T}[f] &\ge \E \Var_{\mu_\tau}[f] - \delta \osc(f)^2 \\
            &\ge e^{-TK} \Var_{\mu_0}[f] - \delta \osc(f)^2.
    \end{align*}
    The first inequality here is immediate since
    \[ \E \Var_{\mu_\tau}[f] - \E \Var_{\mu_T}[f] = \E \Var_{\mu_\tau}[f] \bone_{\tau \ne T} \le \osc(f)^2 \Pr[\tau \ne T]. \]
    The second inequality follows from \Cref{lem:cvar-sl}.
\end{proof}

\begin{remark}
    As in \Cref{rem:stopping-beyond-sl}, the above lemma can be generalized to localization schemes other than stochastic localization.
\end{remark}

%!TEX root = ./main.tex

\section{Sampling from spherical $p$-spin models}
\label{sec:sec8}

In this section, we prove that simulated annealing samples from spherical spin glass models for models satisfying \eqref{eq:SL-condition}.
Recall that $\ScS_N = \sqrt{N}\cdot\bbS^{N-1}$.
For $\gamma_2,\gamma_3,\dots,\gamma_{p_*} \ge 0$, the mixed $p$-spin Hamiltonian $H_N:\ScS_N\to\R$ is defined by 
\begin{equation}
    \label{eq:p-spin-ham}
    H_N(\sigma) \coloneqq \sum_{p\ge 2} \frac{\gamma_p}{N^{(p-1)/2}} \sum_{i_1,\dots,i_p = 1}^N \bg_{i_1,\dots,i_p} \sigma_{i_1}\cdots\sigma_{i_p},
\end{equation}
for i.i.d. samples $\bg_{i_1,\dots,i_p}$ from $\calN(0,1)$.
This is the gaussian process on $\bbR^N$ with covariance
\[
    \E H_N(\sigma^1) \cdot H_N(\sigma^2) = N\cdot\xi\parens*{R(\sigma^1,\sigma^2)},
\]
where we recall the mixture function $\xi$ is defined by $\xi(s) = \sum_{p=2}^{p_*} \gamma_p^2 s^p$.
The algorithm we will study is the following simple annealing scheme for Langevin diffusion.
\begin{definition}[Annealed Langevin diffusion]
    \label{def:annealed-langevin-diffusion}
    Let $\delta_N, T_N > 0$ be parameters possibly depending on $N$. 
    For any $\beta \ge 0$, let $\mu_\beta \defeq \mu_{\beta H_N}$, where $H_N$ is the $p$-spin Hamiltonian.
    Annealed Langevin diffusion is the annealing scheme $\calA$ where $\beta(t) = \delta_N \floor{t/T_N}$ and $P_{\beta}$ is the Langevin semigroup operator for Gibbs distribution $\mu_\beta$. 
    In words, $\calA$ keeps $\beta$ constant for time $T_N$ and then increments $\beta$ by $\delta_N$.
\end{definition}

\begin{theorem} \label{thm:main-p-spin}
    Let $H_N$ be a mixed $p$-spin Hamiltonian whose mixture function $\xi$ satisfies \eqref{eq:SL-condition}, which we recall below: 
    \[
        \xi''(q) < \frac{1}{(1-q)^2} \text{ for all } q \in [0,1). 
    \]
    Let $\mu$ be the associated Gibbs measure over the scaled sphere $\ScS_N$, with
    \[ \dif \mu(\sigma) \propto \exp(H_N(\sigma)) \dif\rho(\sigma). \]
    With probability $1-e^{-cN^{1/5}}$ over the randomness of $H_N$, the following holds.
    For some parameters $\delta_N = O(N^{-4/5})$, $T_N = \Omega(N^{1/5})$, the output measure $\nu$ of the the annealed Langevin diffusion scheme with these parameters satisfies
    \[ \dtv{\nu}{\mu} \le e^{-cN^{1/5}}. \] 
\end{theorem}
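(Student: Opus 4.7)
The plan is to derive Theorem \ref{thm:main-p-spin} from Theorem \ref{th:annealing} by establishing, for each inverse temperature $\beta$ along the annealing schedule, a weak \Poincare inequality for $\mu_\beta$ with parameters strong enough to overcome the gradient and oscillation blow-ups incurred by the likelihood ratio between successive tilts. Concretely, I aim to show that for every $\beta \in [0,1]$, $\mu_\beta$ satisfies a $(\Omega(1), e^{-\Omega(N^{1/5})})$-weak \Poincare inequality with probability $1 - e^{-\Omega(N^{1/5})}$ over $H_N$. Given this, choosing $T_N = \Omega(N^{1/5})$ and $\delta_N = O(N^{-4/5})$ so that $\delta_N \cdot \sup \|\grad H_N\| = O(1)$ and $\delta_N \cdot \|H_N\|_\infty = O(1)$ (both of which follow from standard Gaussian concentration for spin glasses), the statement of Theorem \ref{th:annealing} gives total-variation error $e^{-\Omega(N^{1/5})}$ after telescoping over the $O(N^{4/5})$ annealing steps.

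To establish the weak \Poincare inequality for a given $\mu_\beta$, I will invoke Lemma \ref{lem:stopped-scheme-to-weak-poi} with the stochastic localization process of Definition \ref{def:sl} (driven by $C_t = I$) started at $\mu_\beta$, stopped at time
\[ \tau = T \wedge \inf\{t \ge 0 : \opnorm{\Cov(\mu_t)} \ge K\}, \]
for a sufficiently large constant $T$ and a suitable constant $K$. The lemma reduces the task to two inputs: (i) a high-probability covariance bound $\opnorm{\Cov(\mu_t)} < K$ for all $t \in [0,T]$ along the localization path, and (ii) a weak \Poincare inequality for the terminal localized measure $\mu_T$. Input (ii) is supplied by the argument of \cite[Section 9.2]{HMP24}, which will be repackaged as Lemma \ref{lem:pspin-weak-pi-sl-end} giving $(\Omega(1), e^{-\Omega(N)})$ for $\mu_T$ with probability $1 - e^{-\Omega(N)}$ (since at large localization time $t = T$, the tilted measure concentrates on a near-quadratic landscape).

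The principal obstacle, and the main technical content, is input (i): the high-probability covariance bound along the entire stochastic localization trajectory for $0 \le t \le T$. For this I will use the reduction of \cite{HMP24}, which shows that a random exponential tilt of magnitude $O(1)$ of $\mu_\beta$ is, with high probability, supported (up to negligible mass) on a codimension-$2$ band around a TAP fixed point, where the restricted measure behaves like a spin glass in $N-2$ dimensions whose mixture function $\tilde\xi$ still satisfies \eqref{eq:strict-RS-condition} precisely because $\xi$ satisfies \eqref{eq:SL-condition}. The covariance bound then follows from Theorem \ref{thm:intro-cov-bd}, the new replica-symmetric covariance bound proved in Section \ref{sec:high-prob-cov}, which gives $\opnorm{\Cov(\mu_H)} = O(1)$ throughout the strictly replica symmetric phase with probability $1 - e^{-\Omega(N^{1/5})}$. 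A union bound over a sufficiently fine net of localization times $t \in [0,T]$, combined with a continuity argument to pass from the net to all $t$, yields the claimed high-probability bound and hence $\Pr[\tau < T] \le e^{-\Omega(N^{1/5})}$.

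The hardest part of this plan is Theorem \ref{thm:intro-cov-bd}, the sharp covariance bound in the full strictly replica-symmetric regime, because prior bounds are insufficient outside the very high-temperature regime; this is where the partition function tools from \cite{HMP24,HS23} enter in a nontrivial way. Once this is in hand, everything else is an essentially mechanical combination: Lemma \ref{lem:stopped-scheme-to-weak-poi} converts the covariance and endpoint ingredients into the weak \Poincare inequality, Lemma \ref{lem:measure-decomp-weak-poincare} is already folded into that statement, and Theorem \ref{th:annealing} chains the weak \Poincare inequalities over the temperature schedule. The resulting probability and error are both $e^{-\Omega(N^{1/5})}$, matching the target rate and dominating the $O(N^{4/5})$ loss from summing over annealing steps after taking square roots as in the proof of Theorem \ref{th:annealing}.
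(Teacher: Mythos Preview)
Your proposal matches the paper's approach exactly: combine Lemmas \ref{lem:pspin-cov-bound-sl-path} and \ref{lem:pspin-weak-pi-sl-end} via Lemma \ref{lem:stopped-scheme-to-weak-poi} to get the weak Poincar\'e inequality at each $\beta$ (noting $\xi_\beta(s)=\xi(\beta^2 s)$ also satisfies \eqref{eq:SL-condition}), then feed this into Theorem \ref{th:annealing} with the standard bounds $\|\grad H_N\|_\infty = O(\sqrt{N})$ and $\|H_N\|_\infty = O(N)$. One small correction: with $\delta_N = O(N^{-4/5})$ and $\|H_N\|_\infty = O(N)$ you actually have $\delta_N\|H_N\|_\infty = O(N^{1/5})$, not $O(1)$; this $e^{O(N^{1/5})}$ blowup in the bracket of Theorem \ref{th:annealing} is still absorbed by the $(e^{-2\CPoi T}+\eps)^{1/2}=e^{-\Omega(N^{1/5})}$ factor once the constants in $\delta_N$ and $T_N$ are chosen appropriately.
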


\begin{remark}
    We expect that the error $e^{-cN^{1/5}}$ can be improved to $e^{-cN}$, matching the fact that a $e^{-O(N)}$ fraction of the Gibbs measure is typically trapped in metastable states between the uniqueness and shattering thresholds \cite{AJ24a}.
    However, we will not pursue this improvement in this paper.
\end{remark}

\begin{remark}
    \label{rmk:SL-fundamental-barrier}
    The condition \eqref{eq:SL-condition} is a fundamental barrier for stochastic localization, both as an algorithm and a proof technique.
    As was essentially shown in \cite[Section 10]{HMP24}, for models satisfying \eqref{eq:strict-RS-condition} but not \eqref{eq:SL-condition}, the means $\mean(\mu_t)$ along the localization process do not move stably, in the sense that there exist time intervals of width $o(1)$ in which $\mean(\mu_t)$ moves by $\Omega(N^{1/2})$.
    (The condition \eqref{eq:strict-RS-condition} is an artifact of the proof, and it is expected that the mean continues to move non-stably beyond the regime \eqref{eq:strict-RS-condition}).
    In the setting of \cite{HMP24}, this implies that their algorithmic simulation of the localization process fails, because approximate message passing will not estimate the mean at some times.
    In our setting, this implies that the covariance $\Cov(\mu_t)$, which arises as the derivative of $\mean(\mu_t)$, is genuinely not bounded in operator norm at some times, and thus the main input to our framework does not hold.
\end{remark}

\begin{remark}
    While the result above is stated for the continuous time Langevin \emph{diffusion}, the results therein can be adapted to the discretized Langevin Monte Carlo algorithm using standard tools, \`{a} la \cite[Part II]{Che23}, to obtain a polynomial time sampling algorithm.
\end{remark}

To prove the above, we shall use \Cref{th:annealing} in conjunction with \Cref{lem:stopped-scheme-to-weak-poi}.
For the remainder of this section, let $(\gamma_p)_{p \ge 2}$ be a sequence of weights such that the associated mixture function $\xi$ satisfies the condition \eqref{eq:SL-condition}.

\begin{notation*}[Measure decomposition for $p$-spin models]
    Let $\mu_{H_N} = \mu_0$. For a large constant time $T$, let $(\mu_t)_{0 \le t \le T}$ be the stochastic localization process with driving matrix $\Id$ (see \Cref{def:sl}).
\end{notation*}

\begin{lemma}[Covariance bound on stochastic localization path]
    \label{lem:pspin-cov-bound-sl-path}
    There exist constants $c,K$, depending only on $\xi$, such that for any constant $T>0$ the following holds with probability at least $1 - e^{-cN^{1/5}}$ over the randomness of $H_N$. If $(\mu_t)_{0 \le t \le T}$ is the (random) trajectory of stochastic localization initialized at $\mu_0 = \mu_{H_N}$, with probability $1-e^{-cN^{1/5}}$, $\opnorm{\Cov(\mu_t)} < K$ for all $0 \le t \le T$. In other words,
    \[ \Pr_{H_N} \left[ \Pr_{(\mu_t) \mid H_N}\left[\opnorm{\Cov(\mu_t)} < K \text{ for all } 0 \le t \le T\right] \ge 1-e^{-cN^{1/5}} \right] \ge 1-e^{-cN^{1/5}}. \]
\end{lemma}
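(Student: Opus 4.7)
My plan is to reduce the uniform-in-time covariance bound along the stochastic localization path to the single-shot covariance bound of \Cref{thm:intro-cov-bd} for strictly replica-symmetric models, by invoking the TAP/band reduction developed in \cite{HMP24} and pushing it through the whole path by a discretization and continuity argument. Since the driving matrix is $\Id$ and $\mu_0$ is supported on $\ScS_N$, the Gaussian tilt $\exp(-\tfrac{t}{2}\|\sigma\|^2)$ is a constant and the localized measure reduces to an exponential tilt
\[
    \dif \mu_t(\sigma) \;\propto\; \exp\bigl(H_N(\sigma) + \langle y_t, \sigma\rangle\bigr)\,\dif\rho(\sigma),
\]
so the task is to control $\|\Cov(\mu_t)\|_{\op}$ uniformly over $t\in[0,T]$ for the (random) external field $y_t$ produced by the process.

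For a fixed $t$, I would apply the reduction of \cite{HMP24}: under \eqref{eq:SL-condition}, with high probability over $y_t$ there exists a TAP fixed point $m_\star = m_\star(y_t)$ near which $\mu_t$ is exponentially concentrated, and the conditional law of $\mu_t$ on the codimension-$2$ band $\Band(m_\star,I)$ for a small interval $I\ni R(\sigma,m_\star)$ is, up to negligible error, the Gibbs measure of an $(N-2)$-dimensional spherical mixed $p$-spin model whose effective mixture function $\tilde\xi$ satisfies \eqref{eq:strict-RS-condition}; this is exactly the point at which \eqref{eq:SL-condition}, obtained by integrating strict RS twice, is needed. The effective external field is also bounded uniformly in $t\in[0,T]$ since $y_t$ has norm $O(\sqrt N)$ with high probability. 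Applying \Cref{thm:intro-cov-bd} to this effective model yields $\|\Cov(\mu_t)\|_{\op}\le K$ with probability $1-e^{-\Omega(N^{1/5})}$, where $K$ depends only on $\xi$ and $T$.

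To upgrade this pointwise statement to a uniform-in-$t$ bound, I would take a net $\{t_j\}\subset [0,T]$ of mesh $\Delta=N^{-\alpha}$ for a suitable large constant $\alpha$, union-bound the single-shot failure probability over the $O(T/\Delta)=\poly(N)$ net points, and then interpolate via continuity. The map $t\mapsto \Cov(\mu_t)$ satisfies the SDE of the Eldan process, so on the good event $\{\sup_{s\le T}\|y_s\|\le C\sqrt N\}$ (which holds up to probability $e^{-cN}$), one has a deterministic Lipschitz estimate of the form $\|\Cov(\mu_{t+\Delta})-\Cov(\mu_t)\|_{\op}\le \poly(N)\cdot \Delta$. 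Choosing $\alpha$ large enough and slightly shrinking the threshold from $K$ to $K/2$ at the net points then gives the bound $\|\Cov(\mu_t)\|_{\op}<K$ for all $t\in[0,T]$. Finally, the double-probability form of the statement is obtained by a Markov/Fubini step: since the unconditional success probability is $1-e^{-\Omega(N^{1/5})}$, the set of $H_N$ for which the conditional success probability drops below $1-e^{-cN^{1/5}}$ has probability at most $e^{-cN^{1/5}}$.

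The step I expect to be the main obstacle is the reduction to the lower-dimensional strictly-RS model and, crucially, verifying that the effective mixture $\tilde\xi$ and effective inverse temperature fall in the strict-RS regime \emph{uniformly} for all $t\in[0,T]$ and for all typical realizations of $y_t$; this is where \eqref{eq:SL-condition} (as opposed to mere strict RS at $t=0$) is essential. The uniform-in-$t$ argument is technically delicate but essentially routine once the single-shot reduction and a crude Lipschitz bound on the covariance along the Eldan process are in hand. The rest of the proof in \Cref{sec:sec8,sec:high-prob-cov} should consist of proving the single-shot covariance bound in \Cref{thm:intro-cov-bd}, which in turn requires the high-precision partition function estimates from \cite{HMP24,HS23} throughout the strict RS phase.
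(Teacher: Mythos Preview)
Your high-level structure (discretize $[0,T]$ to a polynomial net, prove a single-time covariance bound, union bound, interpolate by continuity, then Markov/Fubini for the nested probability statement) is exactly what the paper does. However, there is a real gap in your single-time step.

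You write that for fixed $t$ you would ``apply the reduction of \cite{HMP24}'' to the tilted measure with external field $y_t$. But in the null model $y_t = t\bsigma + \sqrt{t}\bg$ where $\bsigma \sim \mu_{H_N}$, so the external field is correlated with the disorder of $H_N$ in an intractable way; the TAP-planted reduction (which conditions on $\grad\FTAP(\boldm)=0$ and needs an explicit Gaussian conditional law for $H_N$) cannot be carried out directly here. The paper resolves this via the \emph{planting trick}: one first passes to the planted model $\HamDist_{\pl}$, where the Gibbs sample is exchangeable with the spike $\bx$, so that $y_t \stackrel{d}{=} t\bx + \sqrt{t}\bg$ with $\bx$ and $\bg$ decoupled from the centered part of $H_N$. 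The TAP reduction and the codimension-$2$ band analysis are then performed in this planted model (\Cref{lem:planted-to-TAP,lem:tap-planted-cov-bound}), yielding the fixed-$t$ bound \Cref{lem:planted-pspin-cov-bound}.

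Transferring this back to the null model is also nontrivial: mere mutual contiguity from \cite{HMP24} would not survive the union bound over $\poly(N)$ many $t$'s with the $e^{-cN^{1/5}}$ error budget. The paper proves a \emph{quantitative} contiguity statement (\Cref{prop:quantiguity}) with an explicit dependence on the planted failure probability, and this is what makes the union bound go through. Your proposal does not mention either the planting step or the need for quantitative contiguity; without them, the ``reduction of \cite{HMP24}'' does not apply to the random $y_t$ you actually face, and the fixed-$t$ bound cannot be established as written.
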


\begin{lemma}[Weak \Poincare inequality for endpoint distributions]
    \label{lem:pspin-weak-pi-sl-end}
    There exists a constant $T$ depending only on $\xi$ such that the following holds with probability at least $1-e^{-cN}$ over the randomness of $H_N$. With probability at least $1-e^{-cN}$, the (random) measure $\mu_T$ satisfies a $(c, e^{-cN})$-weak \Poincare inequality. In other words,
    \[ \Pr_{H_N}\left[ \Pr_{\mu_T \mid H_N}\left[ \mu_T \text{ satisfies a $(c, e^{-cN})$-weak \Poincare inequality} \right] \ge 1-e^{-cN} \right] \ge 1 - e^{-cN}. \]
\end{lemma}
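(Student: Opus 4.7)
The plan is to show that for a sufficiently large constant $T$, the measure $\mu_T$ concentrates, up to an $e^{-cN}$ mass tail, on a thin band around a TAP fixed point where the restricted measure satisfies a genuine Poincaré inequality, and then to invoke \Cref{lem:langevin-close-to-poincare-weak-poincare}. As a starting observation: since $\mu_0$ is supported on $\ScS_N$ where $\|x\|^2 \equiv N$, the quadratic term in the stochastic localization SDE with driving matrix $\Id$ becomes a constant on the support, so $\mu_T$ reduces to an exponential tilt $\dif\mu_T(x) \propto \exp(H_N(x) + \angles{y_T, x})\dif\rho(x)$, where $y_T = \int_0^T \mean(\mu_s)\,\dif s + B_T$ is the usual driving process.

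The crucial structural step is to track $\mean(\mu_s)$ along the localization trajectory via an AMP proxy, following the blueprint of \cite[Section 9.2]{HMP24}. Under \eqref{eq:SL-condition}, the associated AMP iteration contracts stably, and with probability at least $1-e^{-cN}$ over both $H_N$ and the Brownian path, the iterate converges to a TAP fixed point $m_* \in \R^N$ with overlap $q_* = \|m_*\|^2/N$ depending only on $\xi$ and $T$. A first-moment / partition-function estimate, of the same flavor as the free-energy computations in \cite{HMP24,HS23}, then shows that $\mu_T$ puts all but an $e^{-cN}$ fraction of its mass on a codimension-$2$ band $\Band(m_*, [q_* - \eta, q_* + \eta]) \subseteq \ScS_N$ for some small constant $\eta = \eta(T) > 0$.

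On this band, the restriction of $\mu_T$ is (up to an $O(1)$-bilipschitz reparameterization) itself a spherical mixed $p$-spin model on an $(N-2)$-dimensional sphere whose effective mixture function lies in the high-temperature regime where spherical Langevin diffusion enjoys an $\Omega(1)$-Poincaré inequality \cite{GJ19}; alternatively, one can argue directly that inside the band the external field $y_T$ together with the sphere's own curvature dominates $\|\nabla^2 H_N\|_{\op}$, rendering the restricted density strongly log-concave in local tangent coordinates. Either route yields a $c$-Poincaré inequality for the restricted measure $\pi$. Since $\dtv{\pi}{\mu_T} \le e^{-cN}$ by the previous paragraph, \Cref{lem:langevin-close-to-poincare-weak-poincare} upgrades this into the desired $(c,e^{-cN})$-weak Poincaré inequality for $\mu_T$.

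The main obstacle is the band-concentration step: one must control the \emph{entire} localization trajectory, establish convergence of the AMP proxy to a TAP solution with exponentially small failure probability, and simultaneously secure the $e^{-cN}$ partition-function suppression outside the band. This is precisely the regime where \eqref{eq:SL-condition} is indispensable, ruling out the non-stable trajectories of $\mean(\mu_s)$ described in \Cref{rmk:SL-fundamental-barrier}. The expected approach is to import the HMP24 analysis largely as a black box, upgrading the probability bounds there from polynomial to exponential using Gaussian concentration for the relevant AMP state-evolution and partition-function functionals, which are Lipschitz in the Hamiltonian and Brownian data.
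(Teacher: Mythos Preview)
Your proposal takes a substantially more complicated route than the paper and imports machinery (AMP tracking of $\mean(\mu_s)$, TAP fixed points, codimension-$2$ bands) that the paper reserves for the covariance bound along the localization path, not for this endpoint lemma. The paper's argument is much more direct: for $T$ a large enough constant, the linear tilt $\langle \by_T,\sigma\rangle$ dominates $H_N$, so $\mu_T$ concentrates (up to $e^{-cN}$ mass) on a spherical \emph{cap} $A'=\{\sigma:R(\sigma,\hat\by_T)\ge(1+\eps_0)^{-1/2}\}$ around the normalized field direction $\hat\by_T$, not on a band around a TAP point. One then stereographically projects the cap to a ball $A\subset\R^{N-1}$; the push-forward measure $\nu$ is shown in \cite[Proposition~9.8]{HMP24} to be $\Omega(1)$-strongly log-concave, hence satisfies a genuine Poincar\'e inequality by Bakry--\'Emery. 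The transfer back to $\mu_T$ is done by hand using that the projection has Jacobian with operator norm at most $1$, together with the TV bounds $\mu_T(A'),\nu(A)\ge 1-e^{-cN}$.

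Your sketch has concrete gaps. First, you invoke \cite[Section~9.2]{HMP24} as if it were about AMP convergence to a TAP solution; in fact the cited results are precisely the cap-concentration and log-concavity statements above, and the paper notes that the $e^{-cN}$ bounds are already implicit in their proofs without any ``upgrading''. Second, your appeal to \cite{GJ19} for the band-restricted model is unjustified: that result is for full spherical models at sufficiently high temperature, and it is not clear the effective band model lands in their regime. Third, applying \Cref{lem:langevin-close-to-poincare-weak-poincare} with $\pi$ the restriction of $\mu_T$ to a band is delicate, since one needs $\pi$ to satisfy a Poincar\'e inequality for the \emph{ambient} spherical Langevin, and a hard restriction to a band does not obviously give this. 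Your ``alternatively'' remark about curvature plus the large field yielding local strong log-concavity is actually the right intuition and is essentially what the paper does---but executed via the stereographic projection to $\R^{N-1}$, which sidesteps the boundary and manifold issues cleanly.
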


Let us first see how these two lemmas imply the main theorem.

\begin{proof}[Proof of \Cref{thm:main-p-spin}]
    Fix some $0 \le \beta \le 1$. Note that the Hamiltonian $\beta H_N$ has mixture function $\xi_\beta(s) = \xi(\beta^2 s)$, and if $\xi$ satisfies \eqref{eq:SL-condition} then $\xi_\beta$ does as well. Plugging in \Cref{lem:pspin-cov-bound-sl-path,lem:pspin-weak-pi-sl-end} into \Cref{lem:stopped-scheme-to-weak-poi} implies that with probability at least $1-e^{-cN^{1/5}}$, $\mu_{\beta H_N}$ satisfies a $(c, e^{-cN^{1/5}})$-weak \Poincare inequality. A union bound implies that with probability $1-e^{-cN^{1/5}}$, for all $\beta$ encountered along the annealing schedule, $\mu_{\beta H_N}$ satisfies a $(c, e^{-cN^{1/5}})$-weak \Poincare inequality.
    
    By \cite[Proposition 2.3]{HS22}, with probability $1-e^{-cN}$, $\|\grad H_N\|_{\infty} = O(\sqrt{N})$.
    The same argument implies that with probability $1-e^{-cN}$, $\|H_N\|_\infty = O(N)$.  
    With probability $1-e^{-cN^{1/5}}$, all three of these events occur, and \Cref{th:annealing} completes the proof.
\end{proof} 
We conclude this subsection by proving \Cref{lem:pspin-weak-pi-sl-end}.
\begin{proof}[Proof of \Cref{lem:pspin-weak-pi-sl-end}]
	Let $\dif\mu_t(\sigma) \propto e^{H_{N,T}(\sigma)}\,\dif\sigma$ for $H_{N,T}(\sigma) = H_N(\sigma) + \la \by, \sigma \ra$.
	Let 
   \[\ScS_N(\by) = \{\sigma \in \ScS_N : R(\by,\sigma) > 0\}.\]
	Let $\bU \in \R^{N\times (N-1)}$ be a matrix whose columns are an orthonormal basis of the orthogonal complement of $\by$.
	Let $\hby = \sqrt{N} \by / \|\by\|$ be $\by$ (which is a.s. nonzero) scaled to length $\sqrt{N}$, and define the map $\sigma_{\by}(\rho) : \R^{N-1} \to \ScS_N(\by)$ by
	\[
		\sigma_{\by}(\rho) = \frac{\hby + \bU \rho}{\sqrt{1 + R(\rho,\rho)}}.
	\]
	This is the inverse of the map that first stereographically projects $\ScS_N(\by)$ from the origin to $\hby + \bU \R^{N-1}$, the plane tangent to $\ScS_N$ at $\hby$, and then maps the resulting point to coordinates given by $\bU$.
	Let $\eps_0 = 0.1$, and $A = \{\rho \in \R^{N-1} : \|\rho\|^2 \le \eps_0 N\}$, and note that
	\[
		\sigma_{\by}(A) = \{\sigma \in \ScS_N : R(\sigma,\hby) \ge (1 + \eps_0)^{-1/2}\}
	\]
    is a spherical cap around $\hby$.
    Let $A ' \coloneqq \sigma_{\by}(A)$.
	By arguments in \cite[Subsection 9.2]{HMP24}, there exists a measure $\nu$ (denoted $\tilde \nu^{{\mathsf{proj}}}_{H_N,\by}$, see Eq. 2.10 therein) such that the following holds with probability $1-e^{-cN}$.
	\begin{itemize}
		\item The push-forward of $\nu_{|A}$ through $\sigma_{\by}$ coincides with $(\mu_T)_{|A'}$. (Lemma 9.5 therein.)
		\item $\mu_T(A') = 1-e^{-cN}$. (Lemma 9.6 therein states this with $1-o_N(1)$ in place of $1-e^{-cN}$, but the proof implies bound $1-e^{-cN}$, as this is the bound given by Proposition 5.12 used therein.)
		\item $\nu(A) = 1-e^{-cN}$. (Corollary 9.7 therein, modulo the same issue of $1-o_N(1)$ versus $1-e^{-cN}$, which is addressed the same way.)
		\item $\nu$ is $\Omega(1)$-strongly log-concave. (Proposition 9.8 therein.)
	\end{itemize}
	By the well-known Bakry-\'{E}mery condition (see, e.g., \cite[Section 1.2.3]{Che23}), on this event $\nu$ satisfies a $\CPoi$-\Poincare inequality for some $\CPoi = \Omega(1)$.
	We will transfer this inequality to a $(\CPoi,e^{-cN})$-weak \Poincare inequality for $\mu_T$.
	Consider a smooth test function $f : \ScS_N \to \R$ and let $\wt{f} : \R^{N-1} \to \R$ be defined by $\wt{f} = f \circ \sigma_{\by}$.
	Since $\dtv{\mu_T}{(\mu_T)_{|A'}} = e^{-cN}$ and $\dtv{\nu}{\nu_{|A}} = e^{-cN}$, and $\osc(f') \le \osc(f)$, arguing as in \eqref{eq:variance-tv-approximation} shows
	\begin{align*}
		\Var_{\mu_T}(f) 
		&\le \Var_{(\mu_T)_{|A'}}(f) + e^{-cN} \osc(f) \\
		&= \Var_{\nu_{|A}}(\wt{f}) + e^{-cN} \osc(f) \\
		&\le \Var_{\nu}(\wt{f}) + 2e^{-cN} \osc(f).
	\end{align*}
	By the \Poincare inequality for $\nu$ and the definition of the Dirichlet form for Langevin diffusion,
	\[
		\Var_{\nu}(\wt{f})
		\le \frac{1}{\CPoi} \cdot \calE_\nu(\wt{f},\wt{f})
		= \frac{1}{\CPoi} \cdot \E_{\nu} [\|\grad \wt{f}\|^2]
	\]
	By \cite[Proof of Lemma 9.5]{HMP24}, the map $\sigma_{\by}$ has Jacobian $J_{\sigma_{\by}}$ satisfying $\|J_{\sigma_{\by}}\|_{\op} \le 1$, and thus for all $\rho \in \R^{N-1}$, 
	\[
		\|\grad \wt{f}(\rho)\|
		= \|\grad (f \circ \sigma_{\by}) (\rho)\|
		\le \|\grad f(\sigma_{\by}(\rho))\|.
	\]
	It follows that
	\begin{align*}
		\E_{\nu} [\|\grad \wt{f}\|^2]
		&\le \E_{\nu_{|A}} [\|\grad \wt{f}\|^2]
		+ e^{-cN} \sup \|\grad \wt{f}\|^2 \\
		&\le \E_{(\mu_T)_{|A'}} [\|\grad f\|^2]
		+ e^{-cN} \sup \|\grad f\|^2 \\
		&\le \E_{\mu_T} [\|\grad f\|^2]
		+ 2e^{-cN} \sup \|\grad f\|^2
	\end{align*}
	Combining the above shows
	\[
		\Var_{\mu_T}(f) 
		\le \frac{1}{\CPoi} \calE_{\mu_T}(f,f)
		+ 2e^{-cN}\lt(\osc(f) + \frac{1}{\CPoi} \sup \|\grad f\|^2 \rt).
	\]
	The result follows by adjusting $c$.
\end{proof}

\subsection{Technical overview for covariance bounds}\label{sec:tech-overview-cov}
The proof of the main theorem has boiled down to \Cref{lem:pspin-cov-bound-sl-path} --- we now give a high-level overview of our proof strategy for this. We wish to show that with very high probability ($1-e^{-\Omega(N^{1/5})}$), the covariance is bounded along the entire path $(\mu_t)_{0 \le t \le T}$ of stochastic localization. By performing a union bound over time and a standard perturbation argument, it suffices to show that for a fixed time $t \in [0, T]$, $\mu_t$ has bounded covariance with very high probability.

To do this, we recall an alternate view of stochastic localization \cite{AM22}. The measure at time $t$ of stochastic localization (with the identity driving matrix) is given as follows. First, draw $\bsigma \sim \mu_{H_N}$, and independently $\bg \sim \mathcal{N}(0,I_N)$. Then, $\mu_t$ has the same law as $\mu_{H_N , t\bsigma + \sqrt{t}\bg}$, in that
\[ \mu_{H_N , t\bsigma+\sqrt{t}\bg}(\wt{\sigma}) \propto \exp\left( H_N(\wt{\sigma}) + \langle t\bsigma + \sqrt{t}\bg , \wt{\sigma} \rangle \right). \]
As written, the covariance of this distribution is difficult to analyze --- the sample $\bsigma$ has very complicated correlations with the disorder of the Hamiltonian $H_N$, making it intractable.

\parhead{The planting trick.}
To deal with this, we will use the \emph{planting trick} introduced by Achlioptas and Coja-Oghlan \cite{AC08}.
The application of this method in the context of stochastic localization is by now standard \cite{AMS22,AMS23,HMP24}, and we review the main ideas for the reader's convenience. 
\begin{restatable}[Planted $p$-spin model]{definition}{planted}\label{def:planted}
    The planted measure $\mu_{\pl}$ is a joint law over a Hamiltonian $H_N$ and a \emph{spike} $\bx \in S_N$ given by
    \[ \dif \mu_{\pl}(H_N,x) \propto \exp\left( H_N(x) \right) \cdot \dif \rho(x) \cdot \dif \mu_{\nullmodel}(H_N), \]
    where $\rho$ is the uniform measure over $S_N$ and $\mu_{\nullmodel}$ is the law over $p$-spin Hamiltonians with mixture function $\xi$. We frequently abuse notation to let $\mu_{\pl}(H_N)$ denote the marginal of $\mu_{\pl}$ on $H_N$.
\end{restatable}
To provide further intuition for the above definition, consider the following alternate sampling interpretation of the planted model, which describes the distribution of $\bx$ conditioned on $H_N$.

\begin{fact}
    Consider the following inference problem. We start by sampling the spike $\bx \sim S_N$, sample $G^{(p)}$ as a rank-$p$ tensor with iid $\calN(0, 1)$ entries for $p \ge 2$, and for each $p$ let $M^{(p)} = -G^{(p)} + \frac{\gamma_p}{N^{(p-1)/2}} \bx^{\otimes p}$.

    Then, the posterior on $\bx$ after observing the tensors $(M^{(p)})_{p \ge 2}$ is of the form $\mu(\bx = \sigma \mid (M^{(p)})) \propto \exp(H_N(\sigma))$, where
    \[ H_N(\sigma) = \sum_{p \ge 2} \frac{\gamma_p}{N^{(p-1)/2}} \langle M^{(p)} , \sigma^{\otimes p} \rangle. \]
    Then, the joint law of $(H_N,\bx)$ is $\mu_{\pl}$.
\end{fact}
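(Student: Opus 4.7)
The plan is to verify both claims by directly computing the joint density of $(\bx,(M^{(p)})_{p\ge 2})$ in the inference problem and matching it to the definition of $\mu_{\pl}$. First, I would write the joint law of $(\bx,(G^{(p)}))$ as the product of the uniform measure on $\ScS_N$ and independent standard Gaussian measures on each tensor slot $\R^{N^p}$. Then I would apply the change of variables $M^{(p)} = -G^{(p)} + \frac{\gamma_p}{N^{(p-1)/2}} \bx^{\otimes p}$, which is a measure-preserving affine shift for each fixed $\bx$, to express the joint density in the variables $(\bx,(M^{(p)}))$.

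Next, I would expand the Gaussian density. The key computation is
\[
    \tfrac12\|M^{(p)} - \tfrac{\gamma_p}{N^{(p-1)/2}}\bx^{\otimes p}\|_F^2
    = \tfrac12\|M^{(p)}\|_F^2 - \tfrac{\gamma_p}{N^{(p-1)/2}}\langle M^{(p)},\bx^{\otimes p}\rangle + \tfrac12 \gamma_p^2 N,
\]
where I use that $\|\bx^{\otimes p}\|_F^2 = \|\bx\|^{2p} = N^p$ for $\bx\in\ScS_N$. The crucial observation is that the first and third terms do not depend on $\bx$. Summing over $p$ and exponentiating, the joint density becomes proportional to
\[
    \exp(H_N(\bx))\,\dif\rho(\bx)\,\prod_p \exp(-\tfrac12\|M^{(p)}\|_F^2)\dif M^{(p)},
\]
where $H_N(\sigma) = \sum_{p\ge 2}\frac{\gamma_p}{N^{(p-1)/2}}\langle M^{(p)},\sigma^{\otimes p}\rangle$ is precisely the Hamiltonian with disorder tensors $(M^{(p)})$ in place of $(G^{(p)})$.

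Finally, I would recognize the factor $\prod_p\exp(-\tfrac12\|M^{(p)}\|_F^2)\dif M^{(p)}$ as the standard Gaussian measure on the tensor coefficients, which by definition is the law $\mu_{\nullmodel}$ of the null $p$-spin Hamiltonian expressed in its coefficient representation. Therefore the joint density factors as $\exp(H_N(\bx))\,\dif\rho(\bx)\,\dif\mu_{\nullmodel}(H_N)$, matching the definition of $\mu_{\pl}$ up to normalization. The posterior statement then follows immediately from Bayes' rule: conditioning on $(M^{(p)})$ (equivalently on $H_N$) in this joint law yields $\mu(\bx=\sigma \mid H_N) \propto \exp(H_N(\sigma))\dif\rho(\sigma)$, which is $\mu_{H_N}$. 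The only mild subtlety to handle carefully is the bookkeeping of normalization constants and the fact that the Gaussian tensors have independent (not symmetrized) entries across all ordered tuples, so that $\|G^{(p)}\|_F^2$ and $\langle G^{(p)},\sigma^{\otimes p}\rangle$ are the straightforward sums over $[N]^p$; there is no nontrivial obstacle beyond this.
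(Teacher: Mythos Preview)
Your proposal is correct and is exactly the standard direct computation one would write down; the paper itself states this result as a \emph{Fact} without proof, so there is no alternative argument to compare against. Your handling of the only real bookkeeping point---that the tensors have independent entries over all ordered tuples so that $\|\bx^{\otimes p}\|_F^2 = N^p$ and the cross term is exactly $H_N(\bx)$---is accurate.
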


The above says that conditioned on $H_N$, the distribution of $\bx$ (according to $\mu_{\pl}$) is simply distributed as a sample according to $\mu_{H_N}$.
That is, the spike $\bx$ resulting in a Hamiltonian $H_N \sim \mu_{\pl}$ is exchangeable with a sample from $\mu_{H_N}$.

The latter of these interpretations will be very useful for us. When dealing with the measure at time $t$ of stochastic localization applied to the $p$-spin model, the primary issue was that it was unclear how to deal with the sample $\bsigma$ drawn from the Gibbs distribution.
However, if we could work with the planted $p$-spin model, this issue would be absent. Indeed, the exchangability of the spike and a sample implies that the law of $\mu_t$ applied to the planted model is given by
\begin{align*}
    \mu_{H_N,t\bx + \sqrt{t}\bg}(\wt{\sigma}) &\propto \exp\left( H_N(\wt{\sigma}) + \langle t\bx + \sqrt{t}\bg , \wt{\sigma} \rangle \right),
\end{align*}
where $\bx$ is the spike hidden in $H_N$. This decouples the randomness of the external field $t\bx + \sqrt{t}\bg$ and the disorder of the Hamiltonian $H_N$ that arises from the Gaussians $(G^{(p)})_{p \ge 2}$.

As was shown in \cite[Corollary 3.5]{HMP24} and recalled just below, the planted and null models are mutually contiguous.
Thus high-probability statements from one model transfer to the other, and it suffices to study the planted model.
\begin{displayquote}
    For all models satisfying \eqref{eq:strict-RS-condition}, the measures $\mu_{\nullmodel}(H_N)$ and $\mu_{\pl}(H_N)$ from \Cref{def:planted} are mutually contiguous, i.e., for any sequence of events $\calE_{N}$, $\mu_{\nullmodel}(\calE_N) \to 0$ whenever $\mu_{\pl}(\calE_N) \to 0$.
\end{displayquote}
The transfer from the $p$-spin model to the planted model may then be carried out by setting
\[ \mathcal{E}_N = \left\{ H_N  :  \Pr_{\substack{\bsigma \sim \mu_{H_N} \\ \bg \sim \mathcal{N}(0,I_N)}}\left[ \norm*{ \Cov\left( \mu_{H_N , t\bsigma + \sqrt{t}\bg} \right) } > K \right] < e^{-cN^{1/5}} \right\}. \]
This event is very complicated in the null model, but exchangeability makes it tractable in the planted model.
In the actual proof, we will require a stronger (quantitative) version of mutual contiguity; see \Cref{prop:quantiguity} for details.

Now, we must understand what the Hamiltonian in the planted model looks like conditioned on the spike.

\begin{fact}
    \label{fact:planting-trick-alternate-1}
    Consider the following process: sample $\bx \sim S_N$, $\wt{H}_N \sim \mu_{\nullmodel}$, and define $H_N$ by $H_N(\sigma) = \wt{H}_N(\sigma) + N \cdot \xi(R(\bx,\sigma))$. Then, the joint law of $(H_N,\bx)$ is $\mu_{\pl}$.
\end{fact}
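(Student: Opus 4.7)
The plan is to verify that the alternative process and $\mu_{\pl}$ agree as joint laws by checking that the marginal of $\bx$ and the conditional of $H_N$ given $\bx$ match in each. In the alternative description, $\bx$ is uniform on $S_N$ by construction. Under $\mu_{\pl}$, the marginal density of $\bx$ is proportional to $\int \exp(H_N(x))\,\dif\mu_{\nullmodel}(H_N) = \E_{\mu_{\nullmodel}}[\exp(H_N(x))]$, and since $H_N(x)$ is a centered Gaussian with variance $N\xi(R(x,x)) = N\xi(1)$ independent of $x$, this marginal is also uniform. So the only real content is identifying the conditional law.

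The key step is to show that, for each fixed $x \in S_N$, the tilted measure $\propto \exp(H_N(x))\,\dif\mu_{\nullmodel}(H_N)$ coincides with the law of $\wt{H}_N + N\xi(R(\cdot, x))$ for $\wt{H}_N \sim \mu_{\nullmodel}$. This is the Cameron–Martin (or completion-of-the-square) identity for Gaussian processes: tilting a centered Gaussian by a bounded linear functional preserves the covariance and shifts the mean by the function dual to that functional. Concretely, since $H_N$ is parametrized by the finite Gaussian vector $\bg = (\bg_{i_1,\ldots,i_p})$ via \eqref{eq:p-spin-ham}, we can write $H_N(\sigma) = \langle \bg, \Phi(\sigma)\rangle$ for an explicit feature map $\Phi$ satisfying $\langle \Phi(\sigma_1), \Phi(\sigma_2)\rangle = N\xi(R(\sigma_1,\sigma_2))$. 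Under the tilt $\exp(H_N(x)) = \exp(\langle \bg, \Phi(x)\rangle)$ applied to the standard Gaussian law on $\bg$, a direct Girsanov/shift computation gives that $\bg$ is distributed as $\tilde{\bg} + \Phi(x)$ with $\tilde{\bg}$ standard Gaussian, hence
\[
H_N(\sigma) \stackrel{d}{=} \langle \tilde{\bg}, \Phi(\sigma)\rangle + \langle \Phi(x), \Phi(\sigma)\rangle = \wt{H}_N(\sigma) + N\xi(R(x,\sigma)),
\]
as claimed. Equivalently, one can verify the identity of Gaussian laws via the moment generating function: for any linear functional $\ell$, using that $(\ell(H_N), H_N(x))$ is jointly centered Gaussian under $\mu_{\nullmodel}$, one computes
\[
\frac{\E_{\mu_{\nullmodel}}[\exp(\ell(H_N) + H_N(x))]}{\E_{\mu_{\nullmodel}}[\exp(H_N(x))]} = \exp\bigl(\tfrac12 \Var(\ell(H_N)) + \Cov(\ell(H_N), H_N(x))\bigr),
\]
which is exactly the MGF of $\ell(\wt{H}_N + N\xi(R(\cdot,x)))$ since $\Cov_{\mu_{\nullmodel}}(H_N(\sigma), H_N(x)) = N\xi(R(\sigma,x))$.

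Combining the matching marginal of $\bx$ with the matching conditional of $H_N \mid \bx$ yields equality of the joint laws. There is no real obstacle here: the statement is essentially a finite-dimensional Gaussian shift identity dressed in Hamiltonian notation, and once one recognizes the covariance kernel of $H_N$ as $N\xi(R(\cdot,\cdot))$, the mean shift is forced to be $N\xi(R(\cdot, x))$.
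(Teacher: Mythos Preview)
Your proof is correct. The paper states this as a \texttt{Fact} without proof (treating it as a standard Gaussian change-of-measure identity, also restated in the remark on the planted model interpretation), so your Cameron--Martin / MGF verification is precisely the intended justification.
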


Consequently, our goal is to bound the covariance of the distribution
\[ \mu_{t}(\sigma) \propto \exp\left( \wt{H}_N(\sigma) + N \cdot \xi \left( R\left( \bx , \sigma \right) \right) + \langle t \bx + \sqrt{t} \bg , \sigma \rangle \right). \]
for $\wt{H}_N \sim \mu_{\nullmodel}$ with mixture function $\xi$. Now, define $\xi_t$ by $\xi_t(s) = \xi(s) + ts$, and extend the definition of the $p$-spin model \eqref{eq:p-spin-ham} to allow a random linear term. Then,
\[ \mu_t(\sigma) \propto \exp\left( \underbrace{\wt{H}_{N,t}(\sigma) + N \cdot \xi_t(R(\bx,\sigma))}_{H_{N,t}(\sigma)} \right), \]
where $\wt{H}_{N, t} \sim \mu_{\nullmodel}$ with mixture function $\xi_t$.

\parhead{The TAP planted model.} We now turn to controlling the covariance matrix of these models.
As we will see below, it is relatively easier to bound the covariance matrix (in fact, the second moment matrix) of a model with zero or small external field. 
However, for any time $t>0$, $H_{N,t}$ has an external field.
We will use a method developed in \cite{HMP24} to reduce to the case of a model with zero or small external field.

Let $\boldm^{\true} = \mean(\mu_t)$.
The main intuition of this reduction is that the Gibbs measure concentrates near a codimension-$2$ band passing through $\boldm^{\true}$ and orthogonal to $\boldm^{\true}$ and $\bx$, and furthermore \emph{the model on this band is essentially a replica symmetric model with no external field}.
Moreover, one expects that both $R(\boldm^{\true},\boldm^{\true})$ and $R(\boldm^{\true},\bx)$ concentrate near a value $q_* = q_*(t)$ defined by $\xi_t'(q_*) = \frac{q_*}{1-q_*}$.

However, $\boldm^{\true}$ is a complicated function of $H_{N,t}$, so it is a priori difficult to reason about the joint distribution of $(\boldm^{\true},H_{N,t})$.
Thus, this reduction is formally carried out by conditioning on a \emph{TAP fixed point} $\boldm^\TAP$, which will serve as a proxy for $\boldm^{\true}$.
Define the TAP free energy
\[ \FTAP(\boldm) = H_{N,t}(\boldm) + \frac{N}{2} \cdot \theta(R(\boldm,\boldm)) + \frac{N}{2} \log (1 - R(\boldm,\boldm)), \]
where 
\[ \theta(s) = \xi(1) - \xi(s) - (1-s) \xi'(s). \]
As shown in \cite{HMP24}, for sufficiently small constant $\iota > 0$, with probability $1-e^{-cN}$ $\FTAP$ has a unique critical point $\boldm^\TAP$ in the region $\mathcal{S}_\iota$ defined by $R(\boldm,\boldm), R(\boldm,\bx) \in [q_*-\iota,q_*+\iota]$.
Due to the existence and uniqueness of $\boldm^\TAP$, it becomes possible to relate $H_{N,t}$ to a ``TAP-planted model" where one samples $\boldm^\TAP$ \emph{first}, and then samples $H_{N,t}$ conditional on $\nabla \FTAP(\boldm) = 0$: 
\begin{lemma}[See \Cref{lem:planted-to-TAP}; essentially due to \cite{HMP24}]
  For any small constant $\iota > 0$, the following holds.
  For any $H_{N,t}$-measurable event $\calE$, if 
  \[
    \sup_{\boldm^\TAP \in \mathcal{S}_\iota} \Pr(\calE | \nabla \FTAP(\boldm^\TAP) = 0) \to 0,
  \]
  then $\Pr(\calE) \to 0$.
\end{lemma}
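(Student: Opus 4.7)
The strategy is a Kac--Rice argument that leverages the high-probability uniqueness of the TAP critical point in order to transfer the uniform conditional bound $\Pr(\calE \mid \nabla \FTAP(\boldm) = 0) \to 0$ into the unconditional statement $\Pr(\calE) \to 0$. Let $\calU$ denote the event that $\FTAP$ has a unique critical point in $\mathcal{S}_\iota$; by the result cited just above the lemma, $\Pr(\calU^c) \le e^{-cN}$, so this contribution is $o(1)$ and it suffices to bound $\Pr(\calE \cap \calU)$.

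On $\calU$, $\calE$ holds if and only if the unique critical point of $\FTAP$ in $\mathcal{S}_\iota$ satisfies $\calE$. We may therefore upper bound
\[
  \Pr(\calE \cap \calU) \;\le\; \E\bigl[\,\#\{\boldm \in \mathcal{S}_\iota : \nabla \FTAP(\boldm) = 0\}\cdot \Ind_\calE\,\bigr],
\]
and by the Kac--Rice formula the right-hand side equals
\[
  \int_{\mathcal{S}_\iota} \E\bigl[\,|\det \nabla^2 \FTAP(\boldm)|\,\Ind_\calE \,\big|\, \nabla \FTAP(\boldm) = 0\,\bigr] \,\phi_{\boldm}(0)\, d\boldm,
\]
where $\phi_{\boldm}$ denotes the Gaussian density of $\nabla \FTAP(\boldm)$ at $0$ (depending only on the covariance of $H_{N,t}$ and the deterministic corrections from $\theta$ and $\log(1-R)$). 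Applying Cauchy--Schwarz inside the conditional expectation, the integrand is at most $\Pr(\calE\mid\nabla \FTAP(\boldm)=0)^{1/2}\cdot\E[|\det\nabla^2 \FTAP(\boldm)|^2\mid\nabla \FTAP(\boldm)=0]^{1/2}$, and the first factor is $o(1)$ uniformly in $\boldm \in \mathcal{S}_\iota$ by hypothesis.

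It remains to control the second-moment factor; concretely, the conclusion reduces to the Kac--Rice-type estimate
\[
  \int_{\mathcal{S}_\iota} \E\bigl[\,|\det \nabla^2 \FTAP(\boldm)|^2 \,\big|\, \nabla \FTAP(\boldm) = 0\,\bigr]^{1/2} \phi_{\boldm}(0)\, d\boldm = O(1).
\]
Its first-moment analogue equals $\E[\#\text{critical points of }\FTAP\text{ in }\mathcal{S}_\iota]$, which is $1 + o(1)$ by the uniqueness on $\calU$, so the obstacle is upgrading this first-moment statement to an $L^2$ estimate on $|\det \nabla^2 \FTAP(\boldm)|$. This should follow from the spectral estimates in \cite{HMP24}, where strong convexity of $\FTAP$ on a neighborhood of $\boldm^\TAP$ is used to prove uniqueness. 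The main technical issue is the potential mismatch between the purely qualitative rate of $\Pr(\calE \mid \cdot) = o(1)$ and the exponential-in-$N$ scales at which $|\det \nabla^2 \FTAP(\boldm)|$ and $\phi_{\boldm}(0)$ separately live; if necessary, one first truncates on the high-probability event on which $\|\nabla^2 \FTAP(\boldm)\|_{\op}$ is a deterministic $O(1)$ uniformly over $\mathcal{S}_\iota$, handling the complementary event using its exponentially small probability against the trivial $\Pr(\calE \mid \cdot) \le 1$ bound.
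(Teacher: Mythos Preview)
Your overall strategy is exactly the one underlying the result the paper cites: intersect with the high-probability uniqueness event, bound $\Pr(\calE\cap\calU)$ by the Kac--Rice count $\E[\#\{\boldm\in\calS_\iota:\nabla\FTAP(\boldm)=0\}\cdot\Ind_\calE]$, and then Cauchy--Schwarz inside the conditional expectation to peel off $\Pr(\calE\mid\nabla\FTAP(\boldm)=0)^{1/2}$. The paper itself does not reprove this; its proof of the formal \Cref{lem:planted-to-TAP} simply invokes \cite[Proposition~4.4(d)]{HMP24}, which packages precisely this Kac--Rice/Cauchy--Schwarz step and yields the bound $\Pr(\calE)\le C\sup_{\boldm\in\calS_\iota}\Pr(\calE\mid\nabla\FTAP(\boldm)=0)^{1/2}+e^{-cN}$. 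So your outline and the paper's route coincide.

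There is, however, a genuine gap in how you propose to close the second-moment estimate. Truncating on $\{\|\nabla^2\FTAP(\boldm)\|_{\op}\le K\}$ only gives the deterministic bound $|\det\nabla^2\FTAP(\boldm)|\le K^N$, and hence $\E[|\det|^2\,\Ind_{\text{good}}\mid\cdot]^{1/2}\le K^N$. But the first-moment Kac--Rice identity tells you $\E[|\det|\mid\cdot]\asymp e^{NL(\boldm)}$ with $L(\boldm)$ the typical log-determinant density, and in general $K^N\gg e^{NL(\boldm)}$ by an exponential factor, so after integrating against $\phi_{\boldm}(0)\,d\boldm$ you do not recover $O(1)$. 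Likewise, on the complementary event the determinant can be exponentially large, so ``exponentially small probability against the trivial bound $\Pr(\calE\mid\cdot)\le 1$'' does not control that piece of the Kac--Rice integral either. What is actually needed is concentration of $\log|\det\nabla^2\FTAP(\boldm)|$ conditionally on $\nabla\FTAP(\boldm)=0$, i.e., $\E[|\det|^2\mid\cdot]^{1/2}\le C\,\E[|\det|\mid\cdot]$ with $C$ independent of $N$. This holds because the conditional Hessian is a deterministically shifted GOE plus a bounded-rank correction, so its empirical spectral distribution (and hence its log-determinant) concentrates at scale $O(1)$; this is the content of the spectral analysis in \cite{HMP24} that feeds into their Proposition~4.4(d). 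Once you replace the operator-norm truncation by log-determinant concentration, your argument goes through.
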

Crucially, the conditional law of $H_{N,t}$ in the TAP-planted model is very tractable, as (for a fixed $\boldm^\TAP$) $\nabla \FTAP(\boldm^\TAP) = 0$ amounts to a linear constraint on the Gaussian process $H_{N,t}$.
The resulting explicit conditional law of $H_{N,t}$ is described in \Cref{lem:TAP-hamiltonian}.

\begin{remark}
  While it will not be relevant to our purposes, \cite{AMS22,AMS23,HMP24} have shown that $\boldm^{\TAP}$ typically approximates $\boldm^\true$ well, in the sense that $\|\boldm^\true - \boldm^\TAP\|^2 = O(1)$, thereby justifying the heuristic that $\boldm^\TAP$ is a proxy for $\boldm^\true$.
\end{remark}
\begin{remark}
    The idea of reducing to a TAP-planted model has also been used beyond the setting of sampling from spherical spin glasses.
    In the recent work \cite{Hua24}, an analogous reduction is used to obtain the capacity of the Ising perceptron.
    In this application, passage to the TAP-planted model is used to tightly control a partition function rather than to bound a covariance matrix.
\end{remark}
Consequently, we can now work within the TAP-planted model.
Let $\HTAP$ denote the Hamiltonian $H_{N,t}$ after conditioning on $\bx$ and $\nabla \FTAP(\boldm^\TAP) = 0$. 
As 
\[
  \Cov(\mu_{\HTAP}) \preceq \E_{\bsigma \sim \mu_{\HTAP}} (\bsigma - \mathbf{v})(\bsigma - \mathbf{v})^\top
\]
for any $\mathbf{v} \in \R^N$ (with equality at $\mathbf{v} = \boldm^\true$), it suffices to control the operator norm of 
\[
  \E_{\bsigma \sim \mu_{\HTAP}} (\bsigma - \boldm^\TAP)(\bsigma - \boldm^\TAP)^\top.
\]

\parhead{Reduction to slices of the sphere.}
Next, to control the covariance, let us decompose the sphere into codimension-$2$ slices
\[ \ScS(a,b) \coloneqq \left\{ \sigma \in S_N : R(\sigma, \boldm) = \parens*{1 + \frac{a}{\sqrt{N}}} R(\boldm, \boldm) ,\, R(\sigma, \bx) = \parens*{ 1 + \frac{b}{\sqrt{N}} } R(\bx,\boldm) \right\}, \]
with the central slice centered at $\boldm = \boldm^\TAP$.
Let $\mu_{a,b}$ be the measure $\mu_{H_{\TAP}}$ conditioned to lie in the codimension-$2$ slice $\ScS(a,b)$.

The concentration of the Gibbs measure described in the previous section implies that, viewed as random variables of a sample $\sigma \sim \mu_{\HTAP}$, $a$ and $b$ are well-concentrated around $0$. 
Let $v_{a,b}$ be the center of $\ScS(a,b)$. The covariance of the distribution may be bounded as
\begin{align}   
    \notag
    \Cov(\mu) &\psdle \E\parens*{ \bsigma - \boldm } \parens*{ \bsigma - \boldm }^{\top}\\
    \notag
    &= \E_{(a,b)} {\E}_{\bsigma\sim\mu_{a,b}} \parens*{ \bsigma - v_{a,b} + v_{a,b} - \boldm } \parens*{ \bsigma - v_{a,b} + v_{a,b} - \boldm }^{\top} \\
    \notag
    &\psdle 2 \E_{(a,b)}\E_{\bsigma\sim\mu_{a,b}} \parens*{\bsigma - v_{a,b}}\parens*{\bsigma - v_{a,b}}^{\top} + 2\E_{(a,b)} (v_{a,b}-\boldm)(v_{a,b}-\boldm)^{\top} \\
    \label{eq:covariance-decomposition}
    &\psdle 2 \E_{(a,b)}\E_{\bsigma\sim\mu_{a,b}} \parens*{\bsigma - v_{a,b}}\parens*{\bsigma - v_{a,b}}^{\top} + 2 \E_{(a,b)} O \left(a^2 + b^2\right).
\end{align}

One can interpret $v_{a,b}$ as explaining the variation within the slice originating from the $\boldm$ and $\bx$ directions. 
Hence, as alluded to in the previous discussion about the TAP planted model, the key fact is that under $\mu_{a,b}$, the recentered sample $\bsigma - v_{a,b}$ is a sample from a spherical spin glass in two lower dimensions, as can be shown by calculating the covariance of the (conditioned) Gaussian process $H_\TAP$ restricted to this slice.
This verification is carried out in \Cref{cor:slice-dist}.

These codimension-$2$ models have the crucial property that the spherical spin glass on the slice $a=b=0$ is a model satisfying \eqref{eq:strict-RS-condition} with no external field (i.e. degree-$1$ term), while nearby slices have a small (random) external field of magnitude $\sqrt{a^2+b^2}$.
In particular, the first term of \eqref{eq:covariance-decomposition} requires bounding the second moment of a Gibbs sample from a strictly RS model with small (random) external field. 
As a result, \eqref{eq:covariance-decomposition} would be bounded if we proved the following.

\begin{enumerate}
    \item Let $H_N$ be the Hamiltonian of a slightly generalized mixed $p$-spin model, where we allow the mixture function $\xi$ to have a small linear term $\gamma_1 q$ (in our proofs we allow $\gamma_1^2 \le N^{-4/5}$), such that the non-degree-$1$ part $\xi_{\sim1}$ of $\xi$ satisfies \eqref{eq:strict-RS-condition}. Then, with high probability,
    \[ \norm*{\E_{\mu_{H_N}} \sigma\sigma^\top}_{\op} = O \left( 1 + \gamma_1^2 N \right). \]
    Much of \Cref{sec:high-prob-cov} is dedicated to showing this.
    \item The second moments of $a$ and $b$ are $O(1)$. In fact, we will show in \Cref{lem:nu-ab-subgaussianity} that they are essentially $O(1)$-subgaussian.
\end{enumerate}

Let us start by explaining how to show subgaussianity.

\parhead{Subgaussianity of $a,b$.}
The distribution $\nu$ of $(a,b)$ is given by
\[ \nu(a,b) \propto \exp\left( \log \wh{Z}_{a,b} + \frac{N-4}{2} \log \left( 1 - \frac{\|v_{a,b}\|^2}{N} \right) + \HTAP( v_{a,b} ) \right). \]
Here, the first term $\log \wh{Z}_{a,b}$ is the free energy of the $(N-2)$-dimensional $p$-spin model $\mu_{a,b}$, obtained by restricting $\mu_{\HTAP}$ to the slice $\ScS(a,b)$ and rescaling the distribution to lie on $\ScS_{N-2}$. The second term is an effective decrement in the free energy caused by the radius of the sphere $S(a,b)$ shrinking for larger values of $a$ and $b$. The third term is an effective increment in the free energy coming from the energy of $\HTAP$ at the center of the slice $\ScS(a,b)$.

For a fixed $(a,b)$, the only random quantities in the definition of $\nu$ are the first and third terms.
In \Cref{thm:partition-fn-and-covariance}, proved in \Cref{sec:high-prob-cov}, we show that the first term may essentially be approximated by a deterministic function of the mixture function of $\mu_{a,b}$, at the cost of incurring a small $O(1)$ error. We do not elaborate on the details of this proof in the technical overview; it is similar to that used to bound the covariance (which we explain shortly).
The third term is similar, and is a deterministic function plus a small Gaussian, whose variance is $O(a^2+b^2)$.

Given these bounds, we may show that the distribution $\nu$ is strongly log-concave at $0$  with high probability over the randomness of $\HTAP$. A simple perturbation argument then implies that $\nu$ is strongly log-concave in a macroscopic neighborhood of $0$, implying subgaussianity.

\parhead{Covariance bound for strictly RS models with small external fields.}
The covariance bound has now boiled down to bounding $\|M\|_{\op}$, for $M = \E_{\mu_{H_N}} [\sigma \sigma^\top]$ the second moment matrix of model satisfying \Cref{eq:strict-RS-condition} with small external field. 
Note that $M$ is a $H_N$-measurable random variable.

The proof proceeds in two high level steps, which we carry out in \Cref{sec:high-prob-cov}.
\begin{enumerate}
    \item We show using the second moment method that with positive probability over $H_N$, $\norm{M}_{\op}$ is bounded.
    \item Using a much simpler argument, we can show that $\norm{M}_{\op}$ is essentially $O(N^{-1/10})$-Lipschitz in the disorder. Hence, by gaussian concentration, it concentrates very well around its expectation (which is $O(1)$ by the positive probability bound).
\end{enumerate}

Let us elaborate a bit more on the proof of the first point above.
It turns out that, under the condition \eqref{eq:strict-RS-condition} with small external field, the leading order contribution to $M$ comes from the degree-$2$ part of the Hamiltonian $H_{N, 2}(\sigma) = \frac{\gamma_2}{N^{1/2}} \sum_{i, j} \bg_{i,j} \sigma_i \sigma_j$.
We will ultimately reduce the study of the covariance matrix of $\mu_{H_N}$ to that of $\mu_{H_{N,2}}$, and then show boundedness of $\Cov(\mu_{H_{N,2}})$ using random matrix theory.
A similar strategy of isolating the degree-$2$ component of $H_N$ was used to study the partition function and magnetization of strictly RS models in \cite{HMP24}.

\parhead{Degree-$2$ behavior.}
Let us discuss the typical behavior of the covariance of $\mu_{H_{N,2}}$. 
Define the degree-2 Gibbs measure 
\[
    \dif \mu_{H_{N,2}}(\sigma) \propto \exp(H_{N, 2}(\sigma)) \dif \rho(\sigma),
\]
with corresponding partition function $Z_{N,2} = \int \exp(H_{N, 2}(\sigma)) \dif \rho(\sigma)$. 
This is the spherical Sherrington-Kirkpatrick model with interaction matrix $A = \grad^2 H_N(0)$; note that $A$ is a scaled GOE matrix.
Observe that if we shift $A$ by a constant multiple of the identity $\gamma \Id_N$, the measure does not change, as it is supported on $\ScS_N$. 
The crucial observation is the following:
\begin{displayquote}
    For a careful choice of $\gamma$, the measure $\dif \mu_{H_{N,2}}(\sigma) \propto \exp(-\frac{1}{2} \angles{\sigma, (\gamma \Id_N - A) \sigma})\dif \rho(\sigma)$ looks like a Gaussian with covariance $(\gamma \Id_N - A)^{-1}$.
\end{displayquote}
In fact, we will see that it suffices to pick $\gamma = 1 + \xi''(0)$. 
The typical value of $\norm{x}_2^2$, where $x \sim \calN(0, \gamma \Id_N - A)^{-1}$, is equal to $\Tr(\gamma \Id_N - A)^{-1}$. 
By approximating this trace using the semicircle law for the eigenvalues of $A$ and the explicit choice of $\gamma$, we see that $\norm{x}_2^2 \approx N$, which justifies the heuristic that this Gaussian approximates the spherical distribution $\mu_{H_{N,2}}$.

For the above discussion to be well-defined, we require that $\gamma \Id_N - A$ is positive definite, which can only occur if the maximum eigenvalue of $A$ is bounded above by $\gamma = 1 + \xi''(0)$. 
By standard concentration inequalities about the maximum eigenvalue of a GOE matrix, this holds with a constant margin with exponentially good probability. 
Thus, at least for typical realizations of $H_{N,2}$, the covariance will have bounded operator norm. 
To make this rigorous, we will use the Laplace transform to precisely control the moments of the overlaps, as was previously done in \cite{BL16,HMP24}.

\parhead{Reduction to degree-$2$.}
Below, we give some justification for why one should expect to be able to reduce to the degree-$2$ behavior. We will heuristically argue this by showing that the partition function $Z_N$ is essentially controlled by the degree-$2$ portion.

To simplify the discussion, let us assume that we are in a $2+p$ spin model, so that $H_{N}(\sigma) = H_{N,2}(\sigma) + H_{N, p}(\sigma)$, where $H_{N, p}(\sigma) = \frac{\gamma_p}{N^{(p-1)/2}} \sum_{i_1,\dots,i_p = 1}^N \bg_{i_1,\dots,i_p} \sigma_{i_1}\cdots\sigma_{i_p}$. 
The corresponding mixture function decomposes as $\xi(q) = \gamma_2^2 q^2 + \xi_{\sim2}(q)$, so that $\xi_{\sim2}(q) = \gamma_p^2 q^p$ corresponds to the non degree-$2$ part of the mixture function. 
It turns out that, once we condition on $H_{N,2}$ (and hence the value of $Z_{N,2}$), the full partition function $Z_{N}$ is essentially deterministic. 
Indeed, we will show in \cref{ppn:nondeg2} that with very high probability, 
\begin{align*}
    Z_N \approx Z_{N,2} e^{N\xi_{\sim2}(1)/2}
\end{align*}
To see why this is reasonable, let us consider the first two moments of $Z_N$ conditioned on the degree-2 Hamiltonian $H_{N,2}$.
Indeed, let $\E_{\sim2}$ denote expectation with respect to $H_{N, p}$ conditioned on $H_{N,2}$.
Standard gaussian MGF calculations yield $\E_{\sim2} Z_N = e^{N\xi_{\sim2}(1)/2} Z_{N,2}$ and 
\begin{align*}
     \E_{\sim2} [Z_N^2] &= Z_{N,2}^2 e^{N\xi_{\sim2}(1)} \int \exp(N\xi_{\sim2}(R(\sigma^1, \sigma^2)))\dif \rho^{\otimes 2}(\sigma^1, \sigma^2) \\
     &= (\E_{\sim2} Z_N)^2 \int \exp(N\gamma_p^2 R(\sigma^1, \sigma^2)^p) \dif \rho^{\otimes 2}(\sigma^1, \sigma^2).
\end{align*}
At sufficiently high temperatures, the typical overlap behavior $R(\sigma^1, \sigma^2) \asymp N^{-1/2}$, where $\sigma^1, \sigma^2$ are iid draws from $\mu_{H_N}$. 
This matches the overlap behavior at infinite temperature, where the Gibbs distribution is uniform.
Then, pretending that $R(\sigma^1, \sigma^2) = cN^{-1/2}$ for all $\sigma^1, \sigma^2$, we obtain that 
\begin{align*}
    \E_{\sim2}[Z_N^2] \approx (\E_{\sim2} Z_N)^2 \exp(c^p\gamma_p^2 N^{1-p/2}).
\end{align*}
Since $p \ge 3$, it follows that, conditional on $H_{N,2}$, the conditional variance of $Z_N$ is tiny compared to its conditional expectation.
% On the other hand, the same calculation at high temperature shows that $\E Z_{N,2}^2 \approx (\E Z_{N,2})^2 \exp(c^2 \gamma_2^2)$ at high temperature. 
In summary, we see that the higher degree portions of the partition function have negligible contributions to the fluctuations of $Z_N$, so that the typical behavior of $Z_N$ is controlled by $Z_{N,2}$. 

Turning now to the covariance bound, we will control the $(i,j)$th covariance entry $M_{i,j} \defeq \int \sigma_i \sigma_j e^{H_N(\sigma)} \dif \rho(\sigma)$. 
A crucial fact is that, by rotational invariance of the sphere and gaussians, we can rotate to the eigenbasis of $A = \grad^2 H_N(0)$ so that $A$ becomes diagonal.
When $A$ is diagonal, one can in fact show that
\begin{align*}
    \E_{\sim2} [M_{i,j}^2] \lesssim \frac{1}{N^2} (\E_{\sim2} M_{i,j})^2,
\end{align*}
where $\E_{\sim2} M_{i,j}$ can be interpreted as (up to normalization) the predicted $(i,j)$th covariance entry by just looking at the degree-2 randomness; see \Cref{ppn:hX-ii,ppn:hX-ij} for details.
It follows that the Frobenius norm error of the true covariance compared to the degree-2 covariance is $O(1)$. 
Combined with the typical behavior of the degree-2 covariance being essentially the diagonal matrix $((1+\xi''(0)) \Id_N - A)^{-1}$, we conclude an $O(1)$ covariance bound for $\mu_{H_N}$.

Although this direct moment approach can be made rigorous at sufficiently high temperature, it will not cover the entire regime \eqref{eq:strict-RS-condition} of our main theorem. 
To deal with this, we will use the \emph{free energy typical truncation} recently introduced by \cite{HS23}.
The main idea is that, while pairs $\sigma^1,\sigma^2$ with overlap $R(\sigma^1, \sigma^2) \asymp N^{-1/2}$ do not necessarily dominate the second moment $\E [Z_N^2]$ throughout the regime \eqref{eq:strict-RS-condition}, there is a truncation $\wt{Z}_N$ accounting for nearly all of $Z_N$, whose second moment is dominated by such pairs. 
We defer the details to the following sections.

\subsection{Null models, planted models, and contiguity}
\label{sec:planting}
As described in the technical overview, we will need a quantitative strengthening of contiguity between the null and planted models. 
For convenience, let us restate the definition of the planted model.
\planted*

\begin{remark}[Interpretation of planted model]
    \label{rmk:planted-interpretation}
    Equivalently, the planted measure $\mu_{\pl}$ can be described as follows.
    \begin{itemize}
        \item Sample $\bx\sim\ScS_N$.
        \item Sample $\wt{H}_N \sim \HamDist_{\nullmodel}$.
        \item Define $H_N$ by $H_N(\sigma) = \wt{H}_N(\sigma) + N \cdot \xi(R(\bx,\sigma))$.
    \end{itemize}
    The following Bayesian interpretation of $\mu_{\pl}$ will make the planted model amenable to explicit calculation.
    For $(\bx,H_N)$ sampled from $\mu_{\pl}$, the posterior distribution $\bx|H_N$ is described by the density:
    \[
        \dif\HamDist_{\bx|H_N}(\sigma) \propto \exp(H_N(\sigma)) \dif\Unif(\sigma)\mper
    \]
    Therefore, the distribution of $(H_N,\sigma)$ for $\sigma \sim \mu_{H_N}$ is identical to that of $(H_N,\bx)$.
\end{remark}

In order to show the probability bound of $1-e^{-cN^{1/5}}$ in Lemma~\ref{lem:pspin-cov-bound-sl-path}, we will prove the following quantitative strengthening of mutual contiguity, under the following quantitative strict RS condition.
Note that, since the proof of Theorem~\ref{thm:main-p-spin} union bounds over $\poly(N)$ many values of $\beta$, quantitative control of the error in Lemma~\ref{lem:pspin-cov-bound-sl-path} is needed to carry out the proof.
\begin{con}[$\eps$-strict replica symmetry]
    \label{con:strict-rs}
    We say $\xi$ is $\eps$-strictly replica symmetric if for all $q\in (0,1)$,
    \begin{equation}
        \label{eq:strict-rs}
        \frac{1}{q^2} \cdot (\xi(q) + q + \log(1-q)) \le -\eps/2.
    \end{equation}
\end{con}
Under this assumption, we prove the following quantitative contiguity result in \Cref{sec:high-prob-cov}.
\begin{restatable}[Quantitative contiguity]{proposition}{quantcon}  \label{prop:quantiguity}
    Under \Cref{con:strict-rs}, 
    there exists $c = c(\eps) > 0$ such that for any event $\calE$, if $\HamDist_{\pl}(\calE) = p$, then $\HamDist_{\nullmodel}(\calE) \le e^{-cN^{1/5}} + e^{\frac{1}{c}\sqrt{\log \frac{2}{p}}} p$.
\end{restatable}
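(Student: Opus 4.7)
The natural approach is a change-of-measure argument. By \Cref{def:planted} and \Cref{rmk:planted-interpretation}, the Radon--Nikodym derivative of $\HamDist_\pl$ with respect to $\HamDist_\nullmodel$ on the space of Hamiltonians is
\[
    L_N(H_N) \coloneqq \frac{Z_N(H_N)}{\E_{\nullmodel} Z_N}, \qquad Z_N(H_N) \coloneqq \int_{\ScS_N} e^{H_N(\sigma)} \, \dif \rho(\sigma),
\]
so in particular $\E_{\nullmodel} L_N = 1$. For any $H_N$-measurable event $\calE$ and any threshold $t \in (0,1]$, the standard two-regime decomposition gives
\[
    \HamDist_\nullmodel(\calE) \le \HamDist_\nullmodel(\calE \cap \{L_N > t\}) + \HamDist_\nullmodel(L_N \le t) \le \frac{\HamDist_\pl(\calE)}{t} + \HamDist_\nullmodel(L_N \le t),
\]
reducing the proof to a quantitative sub-Gaussian lower-tail bound on $L_N$: I aim to show that for some $c > 0$ depending only on $\eps$,
\[
    \HamDist_\nullmodel\left( L_N \le e^{-u} \right) \le e^{-cu^2} + e^{-cN^{1/5}} \quad \text{for all } u \ge 0.
\]
Granting this, the choice $u = \sqrt{\log(2/p)/c}$ and $t = e^{-u}$ makes $e^{-cu^2} = p/2$ and $p/t = p \exp\left( \tfrac{1}{\sqrt c} \sqrt{\log(2/p)} \right)$; the claimed bound then follows after absorbing the $p/2$ into the exponential factor (using $\sqrt{\log(2/p)} \ge \sqrt{\log 2}$ to dominate the $\log 2$ shift by a constant multiple of $\sqrt{\log(2/p)}$) and renaming constants.

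To prove the lower-tail bound, the plan is to isolate a ``typical'' event $\calG$ of $\HamDist_\nullmodel$-probability at least $1 - e^{-cN^{1/5}}$ on which (a) the free-energy-typical truncation $\widetilde Z_N$ of \cite{HS23} agrees with $Z_N$ up to a factor $1 + o(1)$, and (b) the top eigenvalue of the degree-$2$ matrix $\grad^2 H_N(0)$ sits at distance $\Omega(1)$ below its spectral edge, so that the degree-$2$ Laplace-transform analysis of \cite{HMP24} applies with constants depending only on $\eps$ from \Cref{con:strict-rs}. On $\calG$, the $O(1)$ second-moment ratio $\E \widetilde Z_N^2 \le C (\E \widetilde Z_N)^2$ available in the strictly RS regime, combined with the exponential-moment control of $\widetilde Z_N$ developed in \cite{HMP24,HS23}, should upgrade to sub-Gaussian concentration of $\log( \widetilde Z_N / \E_\nullmodel Z_N)$ on the correct $O(1)$ scale, yielding the $e^{-cu^2}$ contribution. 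Outside $\calG$ we simply pay the unconditional $e^{-cN^{1/5}}$, and the exponent $N^{1/5}$ is forced by the scale of the truncation that defines $\calG$.

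The main obstacle is upgrading the $O(1)$ second-moment ratio into a genuine sub-Gaussian concentration of $\log L_N$ at scale $O(1)$. A direct Borell--TIS estimate on $\log Z_N$ only gives Gaussian tails of width $\sqrt N$, since $\log Z_N$ is $O(\sqrt N)$-Lipschitz in the Gaussian disorder, which is hopelessly weak; Paley--Zygmund from the second-moment ratio only yields constant-probability lower bounds on $L_N$, which cannot close the gap to arbitrarily small $p$. Bridging these requires the fine exponential-moment bounds for truncated partition functions developed in \cite{HMP24,HS23}. The technical work is to import these bounds into the present strictly RS framework with constants depending only on $\eps$ from \Cref{con:strict-rs} (and not on the rest of the profile of $\xi$), and to track carefully how the truncation threshold translates into the $N^{1/5}$ exponent in the final probability error.
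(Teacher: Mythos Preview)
Your change-of-measure skeleton is correct and matches the paper's: write $\HamDist_\nullmodel(\calE) \le \HamDist_\nullmodel(\text{bad}) + (\sup_{\text{good}} \E Z_N/Z_N)\cdot p$, then optimize the threshold defining ``good'' to balance the two terms. The reduction to a sub-Gaussian lower tail for $\log L_N$ at scale $O(1)$ is exactly the right target.

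The gap is at the step you flag as the main obstacle. Your proposed route --- second-moment ratio plus unspecified ``exponential-moment control'' of a truncated $\widetilde Z_N$ from \cite{HMP24,HS23} --- does not close it: those works give first- and second-moment control of truncated partition functions, not sub-Gaussian concentration of $\log Z_N$ at constant scale, and you correctly note that neither Borell--TIS nor Paley--Zygmund suffices. The paper's actual mechanism is different and more structural. \Cref{thm:partition-fn-and-covariance}(a) shows that on an event of probability $1-e^{-cN^{1/5}}$,
\[
\log Z_N = \tfrac{N\xi(1)}{2} - \tfrac12 X + O_\eps(1), \qquad X \coloneqq \log\det\bigl((1+\xi''(0))I - \nabla^2 H_N(0)\bigr) - \tfrac{N\xi''(0)}{2},
\]
so the entire $O(1)$-scale fluctuation of $\log Z_N$ is carried by the log-determinant of the degree-$2$ (GOE) matrix. \Cref{lem:logdet-subgaussian} then shows $X$ is $O_\eps(1)$-subgaussian, via Lipschitz concentration for linear spectral statistics of GOE. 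The good event is simply $\{X \le t\}$ intersected with the event of \Cref{thm:partition-fn-and-covariance}; on it $\E Z_N/Z_N \le e^{(t+C)/2}$ deterministically, and the complement has probability $e^{-cN^{1/5}} + e^{-c(t-C)_+^2}$. Setting $t \asymp \sqrt{\log(1/p)}$ finishes.

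In short: you are missing the identification of \emph{what} is sub-Gaussian at scale $O(1)$. It is not $\log \widetilde Z_N$ via moment ratios; it is the GOE log-determinant, and that is what the paper's \Cref{thm:partition-fn-and-covariance} and \Cref{lem:logdet-subgaussian} are built to deliver.
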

Thus, from now on, we work under the planted model. 
One reason the planted model is easier to work with is because of the following lemma, which provides a simple description of the distribution of $\mu_t$ (by describing the distribution of the external field at time $t$) in the planted model. 

\begin{lemma}
    Let $\mu_t$ be the distribution after running stochastic localization with the $\Id$ driving matrix for time $t$ initialized at $\mu_{H_N}$.
    Then $\mu_t$ arises as the Gibbs distribution of the Hamiltonian $H_{N,t}(\sigma)$:
    \[
        H_{N,t}(\sigma) = H_N(\sigma) + \angles*{\by_t, \sigma}\mcom
    \]
    where 
    \[
        (H_N, \by_t) \stacker{law}{=} (H_N, t\bx + \sqrt{t} \bg)\mcom
    \]
    where $\bx \sim \ScS_N$, $H_N \sim \mu_{\pl}(\cdot | \bx)$, and $\bg\sim\calN(0,\Id_N)$.
\end{lemma}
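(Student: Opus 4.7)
The plan is to combine three ingredients: (1) the explicit form of the measure produced by stochastic localization with driving matrix $\Id$, (2) the Bayesian / Gaussian-channel representation of $\by_t$ at a fixed time $t$, and (3) the exchangeability of the planted model recalled in Remark~\ref{rmk:planted-interpretation}.

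First, I would unpack \Cref{def:sl} with $C_t = \Id$, so that $\Sigma_t = t\Id$, and write
\[
    \dif \mu_t(\sigma) \propto \exp\left(-\tfrac{t}{2}\|\sigma\|^2 + \angles{\by_t, \sigma}\right) \dif \mu_{H_N}(\sigma).
\]
Since $\mu_t$ is supported on $\ScS_N$, where $\|\sigma\|^2 = N$ is a fixed constant, the quadratic term contributes only a $\sigma$-independent factor that is absorbed into the normalization. Substituting $\dif \mu_{H_N}(\sigma) \propto \exp(H_N(\sigma))\dif \rho(\sigma)$ then yields
\[
    \dif \mu_t(\sigma) \propto \exp\bigl(H_N(\sigma) + \angles{\by_t, \sigma}\bigr) \dif \rho(\sigma),
\]
i.e., $\mu_t$ is exactly the Gibbs measure associated to $H_{N,t}(\sigma) = H_N(\sigma) + \angles{\by_t, \sigma}$, as required.

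Next, I would invoke the standard Bayesian description of stochastic localization at a fixed time (see, e.g., \cite[Theorem~2]{EAM22}, also recalled in \Cref{sec:tech-overview-cov}): if, conditionally on $H_N$, we draw $\bsigma' \sim \mu_{H_N}$ and independently $\bg \sim \calN(0, \Id_N)$, then at any fixed $t\ge 0$ one has the equality of joint laws
\[
    (H_N, \by_t) \stacker{law}{=} (H_N,\, t\bsigma' + \sqrt{t}\,\bg).
\]
This identity turns the SDE-driven path variable $\by_t$ into the simple linear-observation form appearing on the right-hand side of the lemma.

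Finally, I would apply Remark~\ref{rmk:planted-interpretation}: when $H_N$ is drawn from the Hamiltonian marginal of the planted model, the joint law of $(H_N, \bsigma')$ with $\bsigma' \sim \mu_{H_N}$ equals the joint law of $(H_N, \bx)$ under $\mu_{\pl}$, i.e., $\bx \sim \ScS_N$ uniformly and $H_N \sim \mu_{\pl}(\cdot \mid \bx)$. Chaining this equality in law with the previous display gives
\[
    (H_N, \by_t) \stacker{law}{=} (H_N,\, t\bx + \sqrt{t}\,\bg),
\]
which is the claim. There is no substantive obstacle here; the only points requiring care are verifying that the quadratic $\|\sigma\|^2$ term really does drop out on the sphere, and being precise about the order of conditioning when invoking both the Bayesian representation of $\by_t$ and the planted-model exchangeability (both of which hold conditionally on $H_N$, so they compose cleanly).
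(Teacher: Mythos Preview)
Your proposal is correct and matches the paper's approach. The paper does not give a formal proof at the lemma itself, but the ingredients you invoke—the explicit form of stochastic localization with identity driving matrix (with the quadratic term dropping out on the sphere), the Bayesian/Gaussian-channel representation of $\by_t$ from \cite{EAM22}/\cite{AM22}, and the exchangeability of the spike and a Gibbs sample from Remark~\ref{rmk:planted-interpretation}—are exactly the ones the paper sketches in its technical overview (\Cref{sec:tech-overview-cov}) as justifying this lemma.
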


\begin{notation*}[$\HamDist_{\pl,t}$, $\xi_t(q)$, $\gamma(q)$]
    We will use $\HamDist_{\pl, t}$ to denote the distribution of the pair $\parens*{H_{N,t}, \bx}$, $\xi_t(q) = \xi(q) + tq$ to refer to the mixture function of $H_{N,t}$, and $\gamma(q)$ to refer to the function $q\xi_t'(q)$.
\end{notation*}

In the subsequent sections, we will prove a high probability covariance bound for $\mu_t$ at a \emph{fixed} time $t$ under the planted model.
\begin{restatable}{lemma}{lemcovbound}
    \label{lem:planted-pspin-cov-bound}
    There exist universal constants $c, T, K$, such that for any $t\in[0,T]$, with probability at least $1-e^{-cN^{1/5}}$ over the randomness of $H_N$ drawn from $\HamDist_{\pl, t}$, we have $\norm*{\Cov(\mu_t)} \le K$.
\end{restatable}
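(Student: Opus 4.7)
The plan is to reduce the high-probability covariance bound to two tractable subproblems via a cascade of measure-theoretic reductions, following the outline of \Cref{sec:tech-overview-cov}. First, work in the planted model, using the Bayesian interpretation of \Cref{rmk:planted-interpretation}: a sample under $\HamDist_{\pl,t}$ may be written as $(H_{N,t},\bx) = (\wt H_{N,t}(\cdot) + N\xi_t(R(\bx,\cdot)),\bx)$ with $\wt H_{N,t}$ a null-model $p$-spin Hamiltonian independent of $\bx \sim \rho$. To decouple $\boldm^\true \coloneqq \mean(\mu_t)$ from the disorder, pass to a TAP-planted model: by the existence/uniqueness results imported from \cite{HMP24}, on an event of probability $1-e^{-cN}$ the TAP free energy $\FTAP$ has a unique critical point $\boldm^\TAP$ in a neighborhood $\mathcal{S}_\iota$ of overlap $q_*(t)$, and conditioning on $\nabla \FTAP(\boldm^\TAP)=0$ (a linear Gaussian constraint) yields a tractable Hamiltonian $\HTAP$. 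By \Cref{lem:planted-to-TAP} it then suffices to bound $\opnorm{\E_{\bsigma\sim\mu_{\HTAP}}(\bsigma-\boldm^\TAP)(\bsigma-\boldm^\TAP)^\top}$ uniformly over $\boldm^\TAP \in \mathcal{S}_\iota$.

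Next, decompose the sphere into codimension-$2$ bands $\ScS(a,b)$ orthogonal to the $\{\boldm^\TAP,\bx\}$ plane, with centers $v_{a,b}$, and let $\nu$ denote the law of $(a,b)$ under $\mu_{\HTAP}$. From $\Cov(\mu_{\HTAP}) \psdle \E(\bsigma-\boldm^\TAP)(\bsigma-\boldm^\TAP)^\top$ and expanding around $v_{a,b}$,
\[
\Cov(\mu_{\HTAP}) \psdle 2\,\E_{(a,b)\sim\nu}\E_{\bsigma\sim\mu_{a,b}}(\bsigma-v_{a,b})(\bsigma-v_{a,b})^\top + C\,\E_{(a,b)\sim\nu}(a^2+b^2)\cdot I,
\]
since $v_{a,b}-\boldm^\TAP$ lies in a $2$-dimensional subspace and has squared norm $O(a^2+b^2)$. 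A direct covariance computation for $\HTAP$ restricted to $\ScS(a,b)$ (\Cref{cor:slice-dist}) shows that after centering at $v_{a,b}$ and rescaling to $\ScS_{N-2}$, the measure $\mu_{a,b}$ is the Gibbs measure of a spherical $p$-spin model whose non-linear mixture is strictly RS in the sense of \eqref{eq:strict-RS-condition}, with a small random linear coefficient $\gamma_1$ satisfying $\gamma_1^2 = O((a^2+b^2)/N)$.

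It therefore suffices to prove two auxiliary facts. The first is a covariance bound for strictly-RS models with small external field: whenever the non-linear part of the mixture is $\eps$-strictly RS and $\gamma_1^2 \le N^{-4/5}$, the operator norm of the second-moment matrix is $O(1 + \gamma_1^2 N)$ with probability $1-e^{-cN^{1/5}}$. This is shown in two steps: (i) a positive-probability bound via a truncated second-moment argument that reduces the problem to the degree-$2$ Gibbs measure, whose covariance is essentially $((1+\xi''(0))\Id - \nabla^2 H_N(0))^{-1}$ and hence has bounded operator norm by the semicircle law; (ii) an upgrade to high probability using that $H_N \mapsto \opnorm{\Cov(\mu_{H_N})}$ is effectively $O(N^{-1/10})$-Lipschitz in the Gaussian disorder, so Gaussian concentration kicks in. The second auxiliary fact is that $\nu$ is $O(1)$-subgaussian. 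Its log-density takes the form
\[
\log\nu(a,b) = \log \widehat Z_{a,b} + \tfrac{N-4}{2}\log\!\left(1 - \tfrac{\|v_{a,b}\|^2}{N}\right) + \HTAP(v_{a,b}) + \mathrm{const},
\]
and \Cref{thm:partition-fn-and-covariance} approximates $\log\widehat Z_{a,b}$ by a deterministic function of the slice mixture up to $O(1)$ error, while $\HTAP(v_{a,b})$ is an explicit Gaussian of variance $O(a^2+b^2)$. Strong concavity at $(a,b)=0$, extended by perturbation to a macroscopic neighborhood, gives the subgaussianity and hence $\E_\nu(a^2+b^2)=O(1)$.

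The principal obstacle is the first auxiliary bound: controlling the covariance of a strictly-RS model with small external field throughout the \emph{entire} strictly-RS regime, rather than only in the high-temperature sub-regime where naive second-moment computations close. Handling the harder range requires the free-energy-typical truncation of \cite{HS23}, chosen so that pairs $(\sigma^1,\sigma^2)$ dominating the truncated second moment of $Z_N$ have overlap $\Theta(N^{-1/2})$; only then do higher-degree interactions contribute a multiplicative $1+o(1)$ factor, so that the full covariance is faithfully approximated by the degree-$2$ covariance. Once this step is carried out, the slice decomposition, the subgaussianity of $\nu$, the TAP reduction, and the quantitative contiguity of \Cref{prop:quantiguity} combine to transfer the bound back from the TAP-planted model to $\HamDist_{\pl,t}$ with error $e^{-cN^{1/5}}$, yielding \Cref{lem:planted-pspin-cov-bound}.
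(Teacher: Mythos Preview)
Your proposal is correct and follows the paper's route essentially step for step: the paper's proof of \Cref{lem:planted-pspin-cov-bound} is the one-liner ``immediate from \Cref{lem:planted-to-TAP,lem:tap-planted-cov-bound}'', and your outline unpacks exactly those ingredients---the TAP-planted reduction, the codimension-$2$ slice decomposition, the strictly-RS covariance bound of \Cref{thm:partition-fn-and-covariance} (including the truncated second-moment argument and the Lipschitz boosting), and the subgaussianity of the slice index $(a,b)$.

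One small correction: your final sentence invokes the quantitative contiguity of \Cref{prop:quantiguity} to ``transfer the bound back from the TAP-planted model to $\HamDist_{\pl,t}$''. That is not where contiguity enters. The passage from TAP-planted to planted is handled entirely by \Cref{lem:planted-to-TAP} (a change-of-measure estimate imported from \cite{HMP24}); \Cref{prop:quantiguity} is used one level higher, in the proof of \Cref{lem:pspin-cov-bound-sl-path}, to move from the planted to the \emph{null} model. Since \Cref{lem:planted-pspin-cov-bound} is already stated for $\HamDist_{\pl,t}$, contiguity plays no role here and should be dropped from your summary.
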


We now have all the necessary ingredients to prove the covariance bound along the entire localization path for the null model.

\begin{proof}[Proof of \Cref{lem:pspin-cov-bound-sl-path}]
    Define $\calT$ as the discrete set $\{iT/\delta:1\le i \le 1/\delta, i\in\bbZ\}$ for $\delta = N^{-100}$.
    We will prove:
    \[
        \Pr_{H_N} \bracks*{ \Pr_{(\mu_t)|H_N} \bracks*{ \norm*{\Cov(\mu_t)}_{\op} < K\text{ for all }t\in\calT } \ge 1-e^{-cN^{1/5}} } \ge 1 - e^{-cN^{1/5}}\mper
    \]
    A simple continuity argument can be used to derive the desired statement from the above.
    By taking a union bound over all elements of $\calT$, along with \Cref{prop:quantiguity,lem:planted-pspin-cov-bound}, we can conclude:
    \[
        \E_{H_N} \Pr_{(\mu_t)|H_N} \bracks*{ \norm*{\Cov\parens*{\mu_t}}_{\op} > K\text{ for some }t\in\calT } \le e^{-2cN^{-1/5}}.
    \]
    The resulting statement then follows from Markov's inequality on the random variable
    \[
        \Pr_{(\mu_t)|H_N} \bracks*{ \norm*{\Cov\parens*{\mu_t}}_{\op} > K\text{ for some }t\in\calT }\mper  \qedhere
    \]
\end{proof}

\subsection{TAP planted models}
\label{sec:tap-planted}
In this section, we formally introduce the TAP planted model and relate it to the planted model from the previous section. 
\begin{definition}  \label{def:TAP-free-energy}
    Let $H_N$ be a planted Hamiltonian with mixture function $\xi$, and define
    \[ \theta(s) = \xi(1) - \xi(s) - (1-s)\xi'(s). \]
    The associated \emph{TAP free energy} is defined by
    \[ \FTAP(\boldm) = H_N(\boldm) + \frac{N}{2} \cdot \theta\left(R(\boldm,\boldm)\right) + \frac{N}{2} \cdot \log \left( 1 - R(\boldm,\boldm) \right). \]
\end{definition}

While the TAP free energy is interesting for a multitude of reasons, we will be interested in it because its fixed points provide a good proxy for the mean. Furthermore, the linearity of the TAP free energy in the Gaussian coefficients of the Hamiltonian provides certain desirable properties (that using the true mean would not allow).

\begin{fact}[{\cite[Fact 4.2]{HMP24}}]
    Let $\xi$ be a mixture function satisfying the condition \eqref{eq:SL-condition}.
    For any $t \in [0,\infty)$, let $\xi_t(q) = \xi(q) + tq$. 
    Then there is a unique solution in $[0,1)$, which we denote $q_* = q_*(t)$, to
    \[
        \xi'_t(q_*) = \frac{q_*}{1-q_*}.
    \]
\end{fact}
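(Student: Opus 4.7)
The plan is to recast the equation $\xi_t'(q_*) = q_*/(1-q_*)$ as a root-finding problem for a single real-valued function on $[0,1)$ and then exploit the \eqref{eq:SL-condition} hypothesis to make that function strictly monotone. Concretely, define
\[
    g(q) \defeq \frac{q}{1-q} - \xi_t'(q) = \frac{q}{1-q} - \xi'(q) - t, \qquad q \in [0,1).
\]
A value $q_* \in [0,1)$ satisfies $\xi_t'(q_*) = q_*/(1-q_*)$ if and only if $g(q_*) = 0$.

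First I would check the boundary behavior of $g$. Since $\xi(q) = \sum_{p \ge 2} \gamma_p^2 q^p$ has no constant or linear term, $\xi'(0) = 0$, so $g(0) = -t \le 0$. On the other hand, $\xi'$ is a polynomial and hence bounded on $[0,1)$, while $q/(1-q) \to \infty$ as $q \to 1^-$, so $g(q) \to +\infty$. By continuity of $g$ on $[0,1)$, the intermediate value theorem yields at least one zero (and when $t = 0$, the zero $q_* = 0$ is immediate).

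For uniqueness, I would differentiate: for $q \in [0,1)$,
\[
    g'(q) = \frac{1}{(1-q)^2} - \xi''(q),
\]
which is strictly positive by the \eqref{eq:SL-condition} hypothesis $\xi''(q) < 1/(1-q)^2$. Hence $g$ is strictly increasing on $[0,1)$ and can have at most one zero. Combining existence and strict monotonicity gives the unique $q_* = q_*(t) \in [0,1)$ claimed. There is no real obstacle here — the entire argument is a one-variable monotonicity check — and the only point worth flagging is that one uses the \eqref{eq:SL-condition} inequality in exactly the form stated, including its validity at $q = 0$, to get strict monotonicity on the closed-left endpoint and thereby uniqueness even in the degenerate case $t = 0$.
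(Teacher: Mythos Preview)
Your argument is correct and is the natural one: the function $g(q)=q/(1-q)-\xi'(q)-t$ has $g(0)=-t\le 0$, blows up as $q\to 1^-$, and $g'(q)=1/(1-q)^2-\xi''(q)>0$ by \eqref{eq:SL-condition}, giving existence and uniqueness. The paper itself does not prove this statement---it simply cites it as \cite[Fact 4.2]{HMP24}---so there is no in-paper proof to compare against; your proposal is exactly the standard monotonicity check one would expect.
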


\begin{lemma}   \label{lem:planted-to-TAP}
    For any $K > 0$, sufficiently small (constant) $\iota > 0$ and $\bx\in\ScS_N$:
    \begin{align*}
        \Pr_{\HamDist_{\pl, t}}&\left[ \opnorm{\Cov(\mu_{H_{N, t}})} \ge K \right]\\
        &\le C\cdot \sup_{\boldm \in \calS_{\iota}} \Pr_{\HamDist_{\pl,t}|\bx}\left[ \opnorm{\Cov(\mu_{H_{N,t}})} \ge K \land \calE_{\iota} \mid \grad\FTAP(\boldm) = 0 \right]^{1/2} + 2e^{-cN},
    \end{align*}
    where
    \begin{align*}
        \calS_{\iota} = \calS_{\iota}(\bx) &\coloneqq \left\{ \boldm \in \R^N : |R(\boldm,\boldm) - q_*| , |R(\boldm,\bx) - q_*| < \iota \right\},
    \end{align*}
    and $\calE_{\iota}$ is the event that $\FTAP$ has a unique critical point $\mTAP$ in $\calS_{\iota}$, and that
    \[
        \Pr_{\bsigma\sim\mu_{H_{N,t}}}\bracks*{ R(\bsigma, \mTAP), R(\bsigma, \bx) \in [q_*-\iota, q_*+\iota] } \ge 1-e^{-cN}\mper
    \]
\end{lemma}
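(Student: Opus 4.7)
\emph{Plan.} The bound has the form $\sup^{1/2} + e^{-cN}$, which suggests two moves: (i) split off an $e^{-cN}$ event on which the TAP landscape is ill-behaved, and (ii) on the complement, apply a Kac--Rice formula followed by Cauchy--Schwarz to replace the law of $H_{N,t}$ by its law conditioned on $\grad\FTAP(\boldm) = 0$ for a uniformly chosen TAP fixed point $\boldm \in \calS_{\iota}$.

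\emph{Step 1: isolate the well-behaved event.} By a union bound,
\[
\Pr_{\HamDist_{\pl,t}}\bigl[\opnorm{\Cov(\mu_{H_{N,t}})} \ge K\bigr] \le \Pr\bigl[\{\opnorm{\Cov} \ge K\} \cap \calE_{\iota}\bigr] + \Pr[\lnot \calE_{\iota}].
\]
The second summand is bounded by $2e^{-cN}$: existence and uniqueness of a critical point of $\FTAP$ in $\calS_{\iota}$ is proved in \cite[Section 4]{HMP24}, and the Gibbs concentration clause of $\calE_{\iota}$ follows from the technology of \cite[Proposition 5.12]{HMP24} (the same input used in the proof of \Cref{lem:pspin-weak-pi-sl-end}). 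This yields the additive $2e^{-cN}$ error in the claim.

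\emph{Step 2: Kac--Rice plus Cauchy--Schwarz.} Let $A \coloneqq \{\opnorm{\Cov(\mu_{H_{N,t}})} \ge K\} \cap \calE_{\iota}$. On $\calE_{\iota}$ the count $\#\{\boldm \in \calS_{\iota} : \grad\FTAP(\boldm) = 0\}$ equals $1$, so $\mathbf{1}_A = \mathbf{1}_A \cdot \#\{\boldm \in \calS_{\iota} : \grad\FTAP(\boldm) = 0\}$. Applying the Kac--Rice area formula to the Gaussian field $\grad\FTAP$ gives
\[
\Pr[A] = \int_{\calS_{\iota}} \E\bigl[\mathbf{1}_A \cdot |\det \grad^2 \FTAP(\boldm)| \,\big|\, \grad\FTAP(\boldm) = 0\bigr]\, p_{\grad\FTAP(\boldm)}(0)\, d\boldm .
\]
Cauchy--Schwarz inside each conditional expectation, followed by pulling the supremum out of the integral, yields
\[
\Pr[A] \le \Bigl(\sup_{\boldm \in \calS_{\iota}} \Pr[A \mid \grad\FTAP(\boldm) = 0]\Bigr)^{1/2} \cdot I, \quad I \coloneqq \int_{\calS_{\iota}} \E\bigl[\det(\grad^2 \FTAP(\boldm))^2 \,\big|\, \grad\FTAP(\boldm) = 0\bigr]^{1/2} p_{\grad\FTAP(\boldm)}(0)\, d\boldm .
\]

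\emph{Step 3: bound $I$ by an absolute constant.} Conditional on $\grad\FTAP(\boldm) = 0$, the Hessian $\grad^2 \FTAP(\boldm)$ is a scaled GOE matrix plus a deterministic rank-$O(1)$ correction whose law is explicit (from the conditional description of $H_{N,t}$ in \Cref{lem:TAP-hamiltonian}). Gaussian concentration for the log-determinant of GOE and its low-rank perturbations implies the pointwise bound
\[
\E[\det(\grad^2 \FTAP(\boldm))^2 \mid \grad\FTAP(\boldm) = 0] \le C \cdot \E[|\det \grad^2 \FTAP(\boldm)| \mid \grad\FTAP(\boldm) = 0]^2
\]
uniformly in $\boldm \in \calS_{\iota}$, for a constant $C$ depending only on $\xi$ and $\iota$. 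Plugging this in,
\[
I \le \sqrt{C} \int_{\calS_{\iota}} \E[|\det \grad^2 \FTAP(\boldm)| \mid \grad\FTAP(\boldm) = 0]\, p_{\grad\FTAP(\boldm)}(0)\, d\boldm = \sqrt{C} \cdot \E\bigl[\#\{\boldm \in \calS_{\iota} : \grad\FTAP(\boldm) = 0\}\bigr],
\]
and the latter expectation is $O(1)$, either by the first-moment Kac--Rice calculation carried out in \cite[Section 4]{HMP24}, or directly because it differs from $\Pr[\calE_{\iota}] \le 1$ by a polynomial-in-$N$ times $\Pr[\lnot \calE_{\iota}] = e^{-\Omega(N)}$. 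Combining Steps 1--3 gives the stated inequality.

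\emph{Main obstacle.} The technical crux is Step 3 --- the pointwise inequality $\E[\det^2 \mid \grad = 0] \lesssim \E[|\det| \mid \grad = 0]^2$, uniformly over $\boldm \in \calS_{\iota}$. The explicit conditional law of $\grad^2 \FTAP(\boldm)$ as a GOE plus rank-$O(1)$ shift is already available from \cite{HMP24}, so the remaining work is to combine standard Gaussian concentration of $\log|\det|$ for GOE with continuity of the Kac--Rice intensity in $\boldm$. Every other ingredient (the uniqueness of $\mTAP$, Gibbs concentration, and the first-moment count) is either proven in, or a direct corollary of, \cite[Sections 4--5]{HMP24}.
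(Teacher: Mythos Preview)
Your approach is essentially the same as the paper's. The paper's proof is a one-line citation: it invokes \cite[Proposition~4.4(d)]{HMP24} as a black box (applied with $X = \Ind[\opnorm{\Cov}\ge K \land \calE_\iota]$), together with \cite[Proposition~4.5(a)]{HMP24} for $\Pr[\lnot\calE_\iota] \le e^{-cN}$. What you have written is precisely a reconstruction of the Kac--Rice + Cauchy--Schwarz argument that underlies Proposition~4.4(d) of \cite{HMP24}, so there is no substantive difference in strategy.

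Two small points. First, the paper additionally intersects with an auxiliary event $\calE$ from \cite[Proposition~4.4]{HMP24} before invoking part~(d); this $\calE$ provides the deterministic control (e.g.\ operator-norm bounds on $\grad^2 H_{N,t}$) that makes your Step~3 go through cleanly and uniformly in $\boldm$. You should expect to need such an event when you carry out the ``remaining work'' you flag. Second, your alternative justification at the end of Step~3 --- that $\E[\#\{\text{crit.\ pts in }\calS_\iota\}]$ is close to $\Pr[\calE_\iota]$ because the difference is ``polynomial-in-$N$ times $e^{-\Omega(N)}$'' --- is not valid as stated: on $\lnot\calE_\iota$ the number of critical points could in principle be exponentially large, so you need an a priori (e.g.\ deterministic or first-moment) bound on it, not just the smallness of $\Pr[\lnot\calE_\iota]$. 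Your primary justification (the first-moment Kac--Rice computation from \cite{HMP24}) is the correct one.
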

\begin{proof}
    The above statement is effectively due to \cite[Propositions 4.4(d) and 4.5(a)]{HMP24}.
    For the reader's convenience, we include the steps to arriving at the above statement.
    For any event $\calE$ (and in particular, for the event $\calE$ defined in \cite[Proposition 4.4]{HMP24}), we have:
    \begin{align*}
        \Pr\left[ \opnorm{\Cov(\mu_{H_{N, t}})} \ge K \right] \le \Pr\left[ \opnorm{\Cov(\mu_{H_{N, t}})} \ge K \land \calE_{\iota} \land \calE \right] + \Pr\bracks*{\ol{\calE}} + \Pr\bracks*{\ol{\calE_{\iota}}} \mper
    \end{align*}
    The desired statement follows by observing that $\Pr\bracks*{\ol{\calE_{\iota}}} \le e^{-cN}$ by \cite[Proposition 4.5(a)]{HMP24}, $\Pr\bracks*{\ol{\calE}} \le e^{-cN}$ by \cite[Proposition 4.4]{HMP24}, and applying \cite[Proposition 4.4(d)]{HMP24} with $X = \Ind\bracks*{\opnorm{\Cov(\mu_{H_{N, t}})} \ge K \land \calE_{\iota}}$.
\end{proof}

\Cref{lem:planted-to-TAP} reduces our task to studying the covariance matrix in a \emph{conditional} planted model.

\begin{notation*}[$\muTAP$, $\HTAP$, $q_{\boldm}$, $q_{\bx}$]
    For $\bx\sim\ScS_N$ and $\boldm\in\R^N$, we consider the distribution $\HamDist_{\TAP,\bx,\boldm}$ of $\HTAP$ for $\HTAP\sim(\HamDist_{\pl,t}|\bx,\grad\FTAP(\boldm) = 0)$.
    We use $q_{\boldm}$ and $q_{\bx}$ to refer to $R(\boldm,\boldm)$ and $R(\boldm,\bx)$ respectively.    
\end{notation*}

\begin{restatable}{lemma}{plantedcovbound}   \label{lem:tap-planted-cov-bound}
    Let $\bx\in\ScS_N$, let $\calS_{\iota}$ be as in \Cref{lem:planted-to-TAP}, and let $\boldm\in\calS_{\iota}$.
    Then for an absolute constant $K > 0$,
    \[
        \Pr_{\HTAP\sim\HamDist_{\TAP,\bx,\boldm}}\bracks*{\norm*{\Cov\parens*{\mu_{\HTAP}}} \ge K \land \calE_{\iota}} \le e^{-c N^{1/5}}\mper
    \]
\end{restatable}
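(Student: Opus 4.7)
The plan is to follow the outline of \Cref{sec:tech-overview-cov}: work in the TAP-planted model so that the conditional law of $\HTAP$ is explicit (a Gaussian process with a codimension-one linear constraint coming from $\grad\FTAP(\boldm)=0$), then decompose the sphere into codimension-$2$ slices transverse to $\mathrm{span}(\boldm,\bx)$. Concretely, for $a,b \in \R$ set
\[
    \ScS(a,b) = \bigl\{\sigma \in \ScS_N : R(\sigma,\boldm) = (1+a/\sqrt{N}) q_{\boldm},\ R(\sigma,\bx) = (1+b/\sqrt{N}) q_{\bx}\bigr\},
\]
let $v_{a,b} \in \mathrm{span}(\boldm,\bx)$ be the center of $\ScS(a,b)$, and let $\mu_{a,b}$ be $\mu_{\HTAP}$ restricted to $\ScS(a,b)$ and rescaled onto an $(N-2)$-sphere. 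The first step is to compute the conditional covariance of the Gaussian process $\HTAP$ restricted to $\ScS(a,b)$ centered at $v_{a,b}$, and verify (as in \Cref{cor:slice-dist}) that the resulting model is a spherical mixed $p$-spin glass on $\ScS_{N-2}$ with mixture function $\xi_{a,b}$ that is strictly replica symmetric (with a uniform margin in $\iota$ and $a,b$ small) and with an external field of magnitude $O(\sqrt{a^2+b^2}/\sqrt N)$ coming from the projection of $v_{a,b}$ onto the tangent directions.

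Given this reduction, apply the variance decomposition \eqref{eq:covariance-decomposition}:
\[
    \Cov(\mu_{\HTAP}) \preceq 2\,\E_{(\ba,\bb)}\E_{\bsigma \sim \mu_{\ba,\bb}}(\bsigma - v_{\ba,\bb})(\bsigma - v_{\ba,\bb})^\top + 2\,\E_{(\ba,\bb)}(v_{\ba,\bb} - \boldm^\TAP)(v_{\ba,\bb}-\boldm^\TAP)^\top,
\]
where $(\ba,\bb)$ is distributed according to the marginal law $\nu$ of the overlaps under $\mu_{\HTAP}$. The second term is a rank-$2$ matrix whose operator norm is $O(\E(\ba^2+\bb^2))$. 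Thus the task reduces to two independent problems: (i) an in-slice operator norm bound $\|\E_{\mu_{a,b}}(\bsigma-v_{a,b})(\bsigma-v_{a,b})^\top\| = O(1 + (a^2+b^2))$ holding with high probability uniformly over a macroscopic window of $(a,b)$, and (ii) a subgaussianity bound for $\nu$ showing $\ba,\bb$ are $O(1)$-subgaussian.

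For (i), I would invoke the second-moment bound for strictly RS spherical spin glasses with small linear term stated at the end of \Cref{sec:tech-overview-cov}, namely that if the non-degree-$1$ part of the mixture is strictly RS and the degree-$1$ coefficient satisfies $\gamma_1^2 \le N^{-4/5}$, then $\|\E_{\mu_{H_N}}\sigma\sigma^\top\|_\op = O(1+\gamma_1^2 N)$ with probability $1-e^{-cN^{1/5}}$; this matches the induced external field of order $\sqrt{(a^2+b^2)/N}$ for $|a|,|b| \lesssim N^{2/5}$, which easily covers the support needed after subgaussianity. A union bound over a fine $\delta$-net in $(a,b)$, together with a Gaussian-concentration Lipschitz argument in the disorder (as alluded to in the overview) controls all slices simultaneously. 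For (ii), using that $\nu(a,b) \propto \exp(\log \wh Z_{a,b} + \tfrac{N-4}{2}\log(1-\|v_{a,b}\|^2/N) + \HTAP(v_{a,b}))$, I would combine the high-precision free-energy approximation of \Cref{thm:partition-fn-and-covariance} (replacing $\log \wh Z_{a,b}$ by its deterministic RS prediction plus $O(1)$ error) with the explicit quadratic expansion of the second term around $a=b=0$ and a Gaussian tail bound on the random term $\HTAP(v_{a,b})$; this yields strong log-concavity of $\nu$ in a macroscopic neighborhood of the origin with high probability, and hence subgaussianity as in \Cref{lem:nu-ab-subgaussianity}. Combining (i) and (ii) with the decomposition yields the claimed $O(1)$ operator-norm bound on $\calE_\iota$.

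The main obstacle is (i): establishing the high-probability $O(1)$ covariance bound for strictly RS spherical spin glasses with a small linear term, throughout the full regime \eqref{eq:strict-RS-condition}. A naive second-moment computation works only at sufficiently high temperature, so the proof must instead reduce to the degree-$2$ part of the Hamiltonian (which is a spherical SK model whose covariance can be analyzed via the resolvent of a shifted GOE matrix) and then use the free-energy typical truncation of \cite{HS23} to control the contribution of higher-degree interactions to both $Z_N$ and to each matrix entry $M_{ij}$, followed by Gaussian Lipschitz concentration to upgrade from ``bounded with positive probability'' to ``bounded with probability $1-e^{-cN^{1/5}}$''. This is exactly the content of \Cref{sec:high-prob-cov}, and is the heart of the argument.
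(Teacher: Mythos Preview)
Your proposal is correct and follows essentially the same route as the paper: the codimension-$2$ slice decomposition, the covariance inequality \eqref{eq:cov-bound}, the in-slice second-moment bound via \Cref{thm:partition-fn-and-covariance} (one small correction: the usable window is $|a|,|b|\lesssim N^{1/10}$ rather than $N^{2/5}$, since the hypothesis $\gamma_1^2\le N^{-4/5}$ forces $(a^2+b^2)/N\le N^{-4/5}$), and the subgaussianity of $\nu$ via the free-energy formula plus log-concavity near the origin together with $\calE_\iota$ to dispose of the far tail. The paper's actual proof of \Cref{lem:tap-planted-cov-bound} is precisely the final assembly of these ingredients (\Cref{lem:nu-ab-subgaussianity} feeding into \eqref{eq:first-term-cov-bound} and \eqref{eq:cov-bound}), exactly as you outline.
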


\begin{proof}[Proof of \Cref{lem:planted-pspin-cov-bound}]
    The statement is immediate from \Cref{lem:planted-to-TAP,lem:tap-planted-cov-bound}.
\end{proof}

The rest of this section is dedicated to proving \Cref{lem:tap-planted-cov-bound}.
As a first step, we determine the law of the typical Hamiltonian sampled from $\muTAP$. We prove the following lemma in \Cref{app:spinglass-calcs} --- it follows by routine calculations, using the form of the law of a Gaussian process conditioned on the value of a linear function of it.
Recall that $\xi_t(q) = \xi(q) + tq$.

\begin{restatable}{lemma}{lemTAPhamiltonian}
    \label{lem:TAP-hamiltonian}
    The law of Hamiltonian $\HTAP\sim\HamDist_{\TAP,\bx,\boldm}$ is described by a Gaussian process $\parens*{\HTAP(\sigma)}_{\sigma\in\ScS_N}$ defined by
    \begin{multline*}
        \E\, \HTAP(\sigma) = N\xi_t\parens*{R(x,\sigma)} -\angles*{\bx, v(\sigma)}\cdot\xi_t'(q_{\bx}) - \frac{\xi_t'(R(\boldm,\sigma))}{\gamma'(q_{\boldm})} \cdot \angles*{\boldm,\sigma}\cdot\parens*{\theta'\parens*{q_{\boldm}} - \frac{1}{1-q_{\boldm}}} \\
        \frac{1}{N}\Cov\parens*{\HTAP\parens*{\sigma}, \HTAP\parens*{\sigma'}}
        = \xi_t\parens*{R(\sigma,\sigma')} -
        R(\sigma,\sigma')\frac{\xi_t'(R(\boldm,\sigma))\xi_t'(R(\boldm,\sigma'))}{\xi_t'(q_{\boldm})} \\
        + \frac{\xi_t''(q_{\boldm})}{\gamma'(q_{\boldm})\xi_t'(q_{\boldm})}\gamma(R(\boldm,\sigma))\gamma(R(\boldm,\sigma')),
    \end{multline*}
    where
    \begin{align*}
        v(\sigma) &\coloneqq \frac{\xi_t'(R(\boldm,\sigma))}{\xi_t'(q_{\boldm})}\bracks*{I - \frac{\xi_t''(q_{\boldm})}{\gamma'(q_{\boldm}) }\cdot \frac{\boldm\boldm^{\top}}{N}  }\sigma \\
        \gamma(q) &\coloneqq q\cdot\xi_t'(q)\mper 
    \end{align*}
\end{restatable}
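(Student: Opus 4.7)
The proof is a direct Gaussian conditioning computation. The plan is to (i) decompose $H_{N,t}$ into a deterministic piece depending only on $\bx$ plus a centered Gaussian process $G$, (ii) translate the event $\grad\FTAP(\boldm) = 0$ into an affine constraint on $G$, and (iii) apply the standard conditional mean and covariance formulas, then simplify using the definitions of $\gamma$ and $\theta$.

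By \Cref{fact:planting-trick-alternate-1} together with the form $\by_t = t\bx + \sqrt{t}\bg$ of the external field, under $\HamDist_{\pl,t}\mid\bx$ we may write $H_{N,t}(\sigma) = G(\sigma) + N\xi_t(R(\bx,\sigma))$, where $G(\sigma) = \widetilde H_N(\sigma) + \sqrt{t}\langle\bg,\sigma\rangle$ is a centered Gaussian process on $\R^N$ with covariance kernel $\E[G(\sigma)G(\sigma')] = N\xi_t(R(\sigma,\sigma'))$. The deterministic piece survives the conditioning unchanged and will become the $N\xi_t(R(\bx,\sigma))$ summand in $\E\HTAP(\sigma)$. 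Differentiating $\FTAP$ at $\boldm \in \R^N$ (Euclidean gradient in $\boldm$) gives
\[
  \grad\FTAP(\boldm) = \grad G(\boldm) + \xi_t'(q_{\bx})\bx + \left[\theta'(q_{\boldm}) - \frac{1}{1-q_{\boldm}}\right]\boldm,
\]
so the TAP condition becomes the linear constraint $Y \defeq \grad G(\boldm) = y$ with $y = -\xi_t'(q_{\bx})\bx - \bigl[\theta'(q_{\boldm}) - (1-q_{\boldm})^{-1}\bigr]\boldm$.

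The remaining cross-covariances needed for Gaussian conditioning are obtained by differentiating the kernel of $G$ in one or both arguments:
\[
  \Cov(G(\sigma), Y) = \xi_t'(R(\sigma,\boldm))\,\sigma, \qquad \Cov(Y,Y) = \xi_t'(q_{\boldm})\,I + \xi_t''(q_{\boldm})\,\frac{\boldm\boldm^\top}{N}.
\]
A Sherman-Morrison inversion, using $\gamma'(q) = \xi_t'(q) + q\xi_t''(q)$, yields
\[
  \Cov(Y,Y)^{-1} = \frac{1}{\xi_t'(q_{\boldm})}\left[I - \frac{\xi_t''(q_{\boldm})}{\gamma'(q_{\boldm})}\,\frac{\boldm\boldm^\top}{N}\right],
\]
and one recognizes $v(\sigma)$ in the lemma statement as exactly $\Cov(Y,Y)^{-1}\Cov(Y,G(\sigma))$.

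Finally, I would substitute into the conditional Gaussian formulas $\E[\HTAP(\sigma)\mid Y = y] = N\xi_t(R(\bx,\sigma)) + v(\sigma)^\top y$ and
\[
  \Cov(\HTAP(\sigma),\HTAP(\sigma')\mid Y = y) = N\xi_t(R(\sigma,\sigma')) - \xi_t'(R(\sigma,\boldm))\,\xi_t'(R(\sigma',\boldm))\,\sigma^\top\Cov(Y,Y)^{-1}\sigma'.
\]
The identity $\boldm^\top \Cov(Y,Y)^{-1} = \boldm^\top/\gamma'(q_{\boldm})$ (which follows from $\gamma'(q_\boldm) - q_\boldm\xi_t''(q_\boldm) = \xi_t'(q_\boldm)$) collapses $\langle v(\sigma),\boldm\rangle$ to $\xi_t'(R(\boldm,\sigma))\langle\boldm,\sigma\rangle/\gamma'(q_{\boldm})$, matching the third summand of the stated mean. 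Expanding $\sigma^\top\Cov(Y,Y)^{-1}\sigma'$ and recognizing $\gamma(R(\boldm,\sigma)) = R(\boldm,\sigma)\xi_t'(R(\boldm,\sigma))$ produces the stated covariance formula. The argument is otherwise mechanical; the only real obstacle is bookkeeping the algebraic identities for $\theta$, $\gamma$, and the Sherman-Morrison inverse cleanly.
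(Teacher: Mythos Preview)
Your proposal is correct and takes essentially the same approach as the paper: both decompose $H_{N,t}$ into the deterministic spike $N\xi_t(R(\bx,\sigma))$ plus a centered process $G$ with covariance $N\xi_t(R(\cdot,\cdot))$, rewrite $\grad\FTAP(\boldm)=0$ as an affine constraint on $\grad G(\boldm)$, and then perform Gaussian conditioning. The only cosmetic difference is that the paper derives $v(\sigma)$ by writing $G(\sigma)=\langle\grad G(\boldm),v(\sigma)\rangle+\text{(independent remainder)}$ and solving for the ansatz $v(\sigma)=\alpha\sigma+\beta\boldm$ directly, whereas you obtain the same $v(\sigma)$ via the matrix formula $\Cov(Y,Y)^{-1}\Cov(Y,G(\sigma))$ with an explicit Sherman--Morrison inversion; the two computations are equivalent and your algebra checks out.
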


For the proofs below, it will also be helpful to consider Hamiltonians with a linear term representing an external field. 
For a sequence $\gamma_1,\gamma_2,\ldots,\gamma_{p_*}$, consider the following generalization of $H_N$ from \eqref{eq:p-spin-ham}:
\begin{equation}
    \label{eq:pspin-with-external-field}
    H_N(\sigma) \coloneqq \sum_{p\ge 1} \frac{\gamma_p}{N^{(p-1)/2}} \sum_{i_1,\dots,i_p = 1}^N \bg_{i_1,\dots,i_p} \sigma_{i_1}\cdots\sigma_{i_p}.
\end{equation}
This has mixture function
\[
    \xi(s) = \sum_{p\ge 1} \gamma_p^2 q^p.
\]
We will write $\xi_{\sim1}(s) = \sum_{p\ge 2} \gamma_p^2 q^p$ for the part of $\xi$ with degree at least $2$, and extend \Cref{con:strict-rs} to such $\xi$ as follows.
\begin{restatable}[$\eps$-strict replica symmetry]{con}{strictrs}
    \label{con:strict-rs-2}
    We say $\xi$ is $\eps$-strictly replica symmetric if $\gamma_1^2 \le N^{-4/5}$ and $\xi_{\sim1}$ satisfies Condition~\ref{con:strict-rs}.
\end{restatable}

\subsection{Slices in TAP planted models}\label{sec:tap-slices}
For succinctness, we shall fix $\bx \in S_N$ and $\boldm \in \calS_\iota(\bx)$, and use $\muTAP$ to refer to the distribution $\HamDist_{\TAP,\bx,\boldm}$.
For $\HTAP\sim\muTAP$, we are interested in bounding the covariance of $\mu_{\HTAP}$.
To reason about $\mu_{\HTAP}$, we write it as a mixture of distributions over $(N-2)$-dimensional slices of $\ScS_{N}$.
For $a, b\in\R$, we define
\[
    \ScS(a,b) \coloneqq \left\{ \sigma \in S_N : R(\sigma, \boldm) = \parens*{1 + \frac{a}{\sqrt{N}}} q_{\boldm} ,\, R(\sigma, \bx) = \parens*{ 1 + \frac{b}{\sqrt{N}} }q_{\bx} \right\}\mper
\]
Let $r_{a,b}$ refer to the radius of this slice, which is equal to
\[
    r_{a,b} = \sqrt{1 - q_{\boldm} \left( 1 + \frac{a}{\sqrt{N}} \right)^2 - \frac{q_{\boldm} q_{\bx}^2}{q_{\boldm} - q_{\bx}^2} \left(\frac{a-b}{\sqrt{N}} \right)^2  }. 
\]
Note in particular that
\begin{equation}
    \label{eq:rab-bound-sos}
    q_{\boldm} \left( 1 + \frac{a}{\sqrt{N}} \right)^2 + \frac{q_{\boldm} q_{\bx}^2}{q_{\boldm} - q_{\bx}^2} \left(\frac{a-b}{\sqrt{N}}\right)^2
        \ge q_{\boldm} \left( 1 + \frac{a}{\sqrt{N}} \right)^2.
\end{equation}

We refer to the uniform distribution on this slice as $\Unif_{a,b}$, and the partition function on the slice as
\[
    Z_{a,b} \coloneqq \E_{\sigma\sim \Unif_{a,b}}\exp(\HTAP(\sigma))\mper
\]
With this definition, the partition function of the original Hamiltonian is given by
\[ Z = \Lambda_N \int Z_{a,b} r_{a,b}^{N-4} \dif (a,b) \]
for some fixed number $\Lambda_N$ depending only on $N$.
\begin{remark}
    To see why we scale by $r_{a,b}^{N-4}$, observe that when $H_{\TAP}$ is the constant-$0$ Hamiltonian, the resulting distribution on the sphere should be uniform.
    The distribution restricted to each slice must also be uniform.
    However, not all slices are weighted equally --- slice that have smaller radii must be downweighted accordingly, with this weighting proportional to $r_{a,b}^{N-4}$ for $S(a,b)$.\footnote{The constant of proportionality here is something depending only on $N$.}
\end{remark}
Use $\nu$ to refer to the distribution over $(a,b)$ where $\dif\nu(a,b) \propto Z_{a,b} r_{a,b}^{N-4} \dif (a,b)$, and $\mu_{a,b}$ to refer to the distribution $\mu_{\HTAP}$ restricted to $S({a,b})$.
Now, we can write $\mu_{\HTAP}$ as the following mixture:
\begin{align*}
    \mu_{\HTAP} = \E_{(a,b)\sim\nu} \mu_{a,b}\mper
\end{align*}

We will need coarse understanding of the tails of $\nu$, and fine understanding of the distribution $\mu_{a,b}$ for small $a$ and $b$.

Now, let us probe the distribution $\mu_{a,b}$.
\[
    \frac{\dif\mu_{a,b}}{\dif \Unif_{a,b}}(\sigma) = \frac{\exp(\HTAP(\sigma))}{\E_{\sigma\sim\Unif_{a,b}}\exp(\HTAP(\sigma))}
\]
Since $\ScS(a,b)$ can be naturally identified with $\ScS_{N-2}$, the first step to understanding $\mu_{a,b}$ is to express it as a $p$-spin model on $\ScS_{N-2}$.
To do so, we will verify that \emph{some} Hamiltonian that gives rise to $\mu_{a,b}$ has a mixture function that is given by a polynomial in the overlap.
We can write any $\sigma\in\ScS(a,b)$ as:
\[
    \sigma = \sqrt{N} \cdot v(a,b) + \underbrace{\sqrt{1-\norm*{v(a,b)}^2}}_{r_{a,b}} \sigma_{\perp}
\]
for $\sigma_{\perp} \in S_N$ orthogonal to $\boldm$ and $\bx$, and for $v(a,b)$ in the span of $\boldm$ and $\bx$.
Let $Q$ be an isometric linear transformation that maps $\ScS_{N-2}$ to $\ScS_N\perp\{\boldm,\bx\}$.
We can write $\HTAP(\sigma) = \HTAP(v(a,b) + r_{a,b} Q\tau)$ (where $\tau \in S_{N-2}$).
The following is a consequence of \Cref{lem:TAP-hamiltonian}, and is proved in \Cref{app:spinglass-calcs}. 
\begin{restatable}{corollary}{corslicedist}   \label{cor:slice-dist}
    For a fixed choice of $a$ and $b$, the Gaussian process $\parens*{\HTAP(v(a,b) + r_{a,b}Q\tau)}_{\tau\in\ScS_{N-2}}$ is described by the following law.
    \begin{itemize}
        \item Let $H_{a,b}$ be a spherical $p$-spin Hamiltonian with mixture function $\xi_{a,b}$ given by:
        \[
            \xi_{a,b}(s) \coloneqq \xi_t\parens*{\norm*{v(a,b)}^2 + r_{a,b}^2 s  } - \xi_t\parens*{\norm*{v(a,b)}^2}
            - s \cdot \frac{r_{a,b}^2\xi'_t\parens*{q_{\boldm}\cdot\parens*{1+\frac{a}{\sqrt{N}}}}^2}{\xi_t'(q_{\boldm})} \mper
        \]
        \item Let $V(a,b) \coloneqq \xi_t\parens*{\norm*{v(a,b)}^2} - \norm*{v(a,b)}^2 \cdot \frac{\xi_t'\parens*{\parens*{1+\frac{a}{\sqrt{N}}}q_{\boldm}}^2}{\xi_t'(q_{\boldm})} + \frac{\xi_t''(q_{\boldm})}{\gamma'(q_{\boldm})\xi_t'(q_{\boldm})} \cdot \gamma \left( \left( 1 + \frac{a}{\sqrt{N}}\right) q_{\boldm} \right)^2$.
    \end{itemize}
    The law of $\HTAP\parens*{v(a,b)+r_{a,b}Q\tau}$ is the same as that of $H_{a,b}(\tau) + \sqrt{N} \cdot g_{a,b} + \E_{\muTAP} \HTAP\parens*{v(a,b)+r_{a,b}Q\tau} $ where $g_{a,b}$ is a centered Gaussian of variance $V(a,b)$ independent of $H_{a,b}$.
\end{restatable}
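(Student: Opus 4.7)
The corollary is a direct computation starting from the Gaussian-process description of $\HTAP$ in Lemma~\ref{lem:TAP-hamiltonian}, specialized to points $\sigma(\tau) = v(a,b) + r_{a,b} Q\tau$ with $\tau\in\ScS_{N-2}$.

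The first step is to read off how the overlap quantities appearing in Lemma~\ref{lem:TAP-hamiltonian} simplify. Since $Q\tau$ lies in the orthogonal complement of $\{\boldm, \bx\}$, both $R(\sigma(\tau), \boldm) = (1 + a/\sqrt{N})\, q_{\boldm}$ and $R(\sigma(\tau), \bx) = (1 + b/\sqrt{N})\, q_{\bx}$ are independent of $\tau$; the same is true of the linear forms $\langle \boldm, \sigma(\tau)\rangle$ and $\langle \bx, v(\sigma(\tau))\rangle$ once one plugs in the explicit expression for $v(\sigma)$. Since the mean formula in Lemma~\ref{lem:TAP-hamiltonian} is a function only of these quantities, $\E\,\HTAP(\sigma(\tau))$ is $\tau$-independent and contributes exactly the deterministic additive term $\E_{\muTAP}\HTAP(v(a,b) + r_{a,b}Q\tau)$ appearing in the statement.

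Next, the cross-overlap decomposes as $R(\sigma(\tau), \sigma(\tau')) = \|v(a,b)\|^2 + r_{a,b}^2\, s$, where $s$ is the overlap on $\ScS_{N-2}$ induced by the isometry $Q$ (with $Q$ normalized so that points on $\ScS_N \cap \{\boldm,\bx\}^\perp$ correspond to points on $\ScS_{N-2}$). Substituting this and the $\tau$-independent overlaps into the covariance formula of Lemma~\ref{lem:TAP-hamiltonian} and grouping by whether a term depends on $s$, the $s$-dependent part comes out to exactly $\xi_{a,b}(s)$: the two subtractions in the definition of $\xi_{a,b}$ match the constant $\xi_t(\|v(a,b)\|^2)$ coming from the first term of the covariance and the $s$-linear contribution $s \cdot r_{a,b}^2\, \xi_t'((1+a/\sqrt{N})q_{\boldm})^2 / \xi_t'(q_{\boldm})$ coming from the second. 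The $s$-independent residue collapses to $V(a,b)$ upon using the explicit form of $R(\boldm,\sigma)$ and the definition of $\gamma$.

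Finally, since $\xi_{a,b}(0) = 0$ by construction, the $s$-dependent and $s$-independent summands of the covariance are uncorrelated Gaussians, so the process decomposes as an independent sum of a centered $p$-spin Hamiltonian $H_{a,b}$ on $\ScS_{N-2}$ with mixture function $\xi_{a,b}$ and a scalar Gaussian $\sqrt{N}\, g_{a,b}$ of variance $V(a,b)$. The main obstacle is purely organizational: one must consistently track normalization conventions between $\ScS_N$ and $\ScS_{N-2}$ (absorbed into the definition of $Q$), and verify that $\xi_{a,b}$ yields a valid mixture function in the regime used downstream. A direct expansion at $s=0$ using $\|v(0,0)\|^2 = q_{\boldm}$ shows that the linear coefficient of $\xi_{a,b}$ vanishes at $(a,b) = (0,0)$ and is of order $(|a|+|b|)/\sqrt{N}$ otherwise, which lands within the $\eps$-strictly-replica-symmetric regime of Condition~\ref{con:strict-rs-2}.
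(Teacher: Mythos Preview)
Your approach matches the paper's: specialize the covariance formula of Lemma~\ref{lem:TAP-hamiltonian} to points on the slice, observe that $R(\boldm,\sigma(\tau))$ is $\tau$-independent and $R(\sigma(\tau),\sigma(\tau')) = \|v(a,b)\|^2 + r_{a,b}^2 s$, and split the resulting covariance into the $s$-dependent part $\xi_{a,b}(s)$ and the constant $V(a,b)$.

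There is one genuine gap. You correctly flag that $\xi_{a,b}$ must be a valid mixture function for the statement ``$H_{a,b}$ is a spherical $p$-spin Hamiltonian with mixture function $\xi_{a,b}$'' to make sense, but you only argue that the linear coefficient $\xi'_{a,b}(0)$ is \emph{small}, not that it is \emph{nonnegative}. Smallness is irrelevant here (it matters downstream for Lemma~\ref{lem:band-models-eps-strictly-rs}); what the corollary needs is $\xi^{(p)}_{a,b}(0)\ge 0$ for all $p\ge 1$. For $p\ge 2$ this is immediate from $\xi^{(p)}_{a,b}(0) = r_{a,b}^{2p}\xi_t^{(p)}(\|v(a,b)\|^2)$, but for $p=1$ it is not obvious and the paper handles it by writing
\[
\xi'_{a,b}(0) \;\ge\; \frac{r_{a,b}^2}{\xi_t'(q_{\boldm})}\Bigl(\xi_t'\bigl(q_{\boldm}(1+\tfrac{a}{\sqrt N})^2\bigr)\,\xi_t'(q_{\boldm}) - \xi_t'\bigl(q_{\boldm}(1+\tfrac{a}{\sqrt N})\bigr)^2\Bigr)\;\ge\;0,
\]
using first that $\|v(a,b)\|^2 \ge q_{\boldm}(1+a/\sqrt N)^2$ together with monotonicity of $\xi_t'$, and then Cauchy--Schwarz on the power-series coefficients of $\xi_t'$. (Incidentally, your order estimate for the linear coefficient is also off: it is $O((a^2+b^2)/N)$, not $O((|a|+|b|)/\sqrt N)$, since the first-order terms in $a$ cancel.)
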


Now, $\HTAP$ is described by the collection $\parens*{H_{a,b}, g_{a,b}}_{a,b}$.
This is \emph{not} an independent collection of random variables.
The only structural properties of this collection we will use are:
\begin{itemize}
    \item For each $a,b\in\R$, we have $H_{a,b}$ and $g_{a,b}$ are independent.
    \item For any $a,b$, we have $g_{a,b} = g_{0,0} + \wh{g}_{a,b}$, where $\wh{g}_{a,b}$ is a centered Gaussian of variance $O\left(\frac{a^4+b^4}{N^2}\right)$.
\end{itemize}

We will first give an explicit form for $v(a,b)$.
\begin{fact}    \label{fact:vab-formula}
    We have 
    \begin{align*}
        \sqrt{N} \cdot v(a,b) = \boldm\cdot\parens*{1 + \frac{aq_{\boldm}-bq_{\bx}^2}{\sqrt{N}(q_{\boldm}-q_{\bx}^2)} } +
        \frac{q_{\boldm} q_{\bx} }{ q_{\boldm} - q_{\bx}^2 } \bx\cdot\parens*{ \frac{b-a}{\sqrt{N}} }\mper
    \end{align*}
\end{fact}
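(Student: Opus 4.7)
\textbf{Proof plan for \Cref{fact:vab-formula}.} The plan is to compute $v(a,b)$ by solving a $2\times 2$ linear system. Since $\sigma \in \ScS(a,b)$ decomposes as $\sigma = \sqrt{N}\, v(a,b) + r_{a,b}\,\sigma_\perp$ with $\sigma_\perp \perp \{\boldm, \bx\}$, the vector $\sqrt{N}\, v(a,b)$ is precisely the orthogonal projection of $\sigma$ onto $\mathrm{span}(\boldm, \bx)$. In particular, it is determined by the two inner products $\langle \sigma, \boldm\rangle$ and $\langle \sigma, \bx \rangle$, which are prescribed by the definition of $\ScS(a,b)$.

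First, I will write $\sqrt{N}\, v(a,b) = \alpha \boldm + \beta \bx$ with unknowns $\alpha, \beta \in \R$. Using $\|\boldm\|^2 = N q_{\boldm}$, $\|\bx\|^2 = N$, and $\langle \boldm, \bx\rangle = N q_{\bx}$ (which follow from the definitions of $q_{\boldm}, q_{\bx}$ and the fact that $\bx \in \ScS_N$), the constraints $R(\sigma, \boldm) = (1 + a/\sqrt{N}) q_{\boldm}$ and $R(\sigma, \bx) = (1 + b/\sqrt{N}) q_{\bx}$ translate into the linear system
\begin{align*}
    \alpha q_{\boldm} + \beta q_{\bx} &= \bigl(1 + a/\sqrt{N}\bigr) q_{\boldm}, \\
    \alpha q_{\bx} + \beta &= \bigl(1 + b/\sqrt{N}\bigr) q_{\bx}.
\end{align*}

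Next, I will solve this system by Cramer's rule (or direct substitution). The determinant is $q_{\boldm} - q_{\bx}^2$, which is nonzero since $\boldm$ and $\bx$ are not colinear (as $q_{\boldm} \approx q_* \approx q_{\bx}$ with $q_* \in (0,1)$, so $q_{\boldm} > q_{\bx}^2$). Eliminating $\beta$ from the second equation and substituting into the first yields
\[
    \alpha = 1 + \frac{a\, q_{\boldm} - b\, q_{\bx}^2}{\sqrt{N}\,(q_{\boldm} - q_{\bx}^2)}, \qquad \beta = \frac{q_{\boldm} q_{\bx}}{q_{\boldm} - q_{\bx}^2}\cdot \frac{b-a}{\sqrt{N}},
\]
after simplification. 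Substituting these back into $\sqrt{N}\,v(a,b) = \alpha \boldm + \beta \bx$ gives exactly the claimed formula.

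There is no real obstacle here; this is a two-line calculation. The only mild bookkeeping point is to confirm that $q_{\boldm} - q_{\bx}^2 > 0$ so the system is invertible, but this is immediate on $\calS_\iota$ for sufficiently small $\iota$, since both $q_{\boldm}$ and $q_{\bx}$ are within $\iota$ of $q_* \in (0,1)$ and $q_*(1-q_*) > 0$ implies $q_* > q_*^2$.
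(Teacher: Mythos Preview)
Your proposal is correct and complete; the paper states this as a fact without proof, and your linear-algebra computation (solving the $2\times 2$ system for the projection onto $\mathrm{span}(\boldm,\bx)$) is exactly the intended routine verification.
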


\begin{lemma}
    \label{lem:band-models-eps-strictly-rs}
    There exists $\eps = \eps(\xi) > 0$ such that for all $t\ge 0$ and all $|a|,|b| \le \eps N^{1/10}$, the mixture function $\xi_{a,b}$ defined in \Cref{cor:slice-dist} (recall this implicitly depends on $t$) is $\eps$-strictly replica symmetric (\Cref{con:strict-rs-2}).
\end{lemma}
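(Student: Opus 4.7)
The plan is to verify Condition~\ref{con:strict-rs-2} for $\xi_{a,b}$ in two stages: first establish it at the center $(a,b) = (0,0)$, and then show the perturbation for $|a|, |b| \le \eps N^{1/10}$ is mild enough to preserve the condition. At the center, using $\|v(0,0)\|^2 = q_{\boldm}$ and $r_{0,0}^2 = 1 - q_{\boldm}$, the formula from \Cref{cor:slice-dist} simplifies to
\[ \xi_{0,0}(s) = \xi_t\parens*{q_{\boldm} + (1-q_{\boldm})s} - \xi_t(q_{\boldm}) - s(1-q_{\boldm})\xi'_t(q_{\boldm}). \]
Expanding $\xi_t$ as a polynomial shows the coefficient of $s$ vanishes, so $\gamma_1 = 0$ trivially. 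The Lyapunov function $F_{0,0}(q) \defeq \xi_{0,0}(q) + q + \log(1-q)$ satisfies $F_{0,0}(0) = F'_{0,0}(0) = 0$, and direct computation gives $F''_{0,0}(q) = (1-q_{\boldm})^2 \bracks*{\xi''_t(s) - (1-s)^{-2}}$ with $s = q_{\boldm} + (1-q_{\boldm})q$, which is strictly negative by \eqref{eq:SL-condition} (using $\xi''_t = \xi''$). A careful continuity argument, using that $(1-q)^2 \xi''(q) \to 0$ as $q \to 1$, establishes a uniform margin $1 - (1-q)^2 \xi''(q) \ge \eps_1 > 0$ depending only on $\xi$. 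Integrating then yields $F_{0,0}(q) \le -\eps_0 q^2/2$ for all $q \in (0,1)$, uniformly in $t$ and $q_{\boldm}$, giving $\eps_0$-strict RS at the center.

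For the perturbation, let $u \defeq \|v(a,b)\|^2$ and $w \defeq (1 + a/\sqrt{N}) q_{\boldm}$. Using \Cref{fact:vab-formula}, a direct calculation verifies the exact identity
\[ q_{\boldm} \cdot u - w^2 = \frac{q_{\boldm}^2 q_{\bx}^2}{q_{\boldm} - q_{\bx}^2} \cdot \frac{(a-b)^2}{N}, \]
so $w^2$ differs from $q_{\boldm} u$ only by $O((a-b)^2/N)$. Since $\xi'_t$ is a polynomial in $q$ with non-negative coefficients (using $t \ge 0$), the function $x \mapsto \log \xi'_t(e^x)$ is convex as a log-sum-exp, which gives $\xi'_t(w)^2 \le \xi'_t(\sqrt{q_{\boldm} u})^2 \le \xi'_t(q_{\boldm}) \cdot \xi'_t(u)$ after using that $\xi'_t$ is monotone. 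Consequently, the linear coefficient of $\xi_{a,b}$,
\[ c_1(a,b) = (1-u) \bracks*{\xi'_t(u) - \xi'_t(w)^2/\xi'_t(q_{\boldm})}, \]
is non-negative. Tracking the cancellation via $\Delta_u \defeq u - q_{\boldm} = 2\Delta_w + O((a^2+b^2)/N)$ with $\Delta_w \defeq w - q_{\boldm} = q_{\boldm} a/\sqrt{N}$ gives $c_1(a,b) = O((a^2+b^2)/N) = O(\eps^2 N^{-4/5})$, verifying the first clause of Condition~\ref{con:strict-rs-2} for $\eps$ sufficiently small.

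For the non-linear part, observe that the coefficients of $q^k$ in $\xi_{a,b}$ for $k \ge 2$ depend only on $u$, so they are perturbed from those of $\xi_{0,0}$ by $O(|\Delta_u|) = O(\eps N^{-2/5})$. Thus $\xi_{a,b,\sim 1}(q) - \xi_{0,0}(q) = O(\eps N^{-2/5})$ uniformly on $q \in [0, 1 - \delta]$ for any fixed $\delta > 0$, and in particular $F''_{a,b,\sim 1}(0) = F''_{0,0}(0) + O(\eps N^{-2/5})$. For $\eps$ small enough, this preserves $F''_{a,b,\sim 1}(0) \le -\eps_1/2$; combining this with the behavior at intermediate $q$ (via compactness and the strict inequality from $F_{0,0}(q) \le -\eps_0 q^2/2$) and $q$ near $1$ (where $\log(1-q) \to -\infty$ dominates) yields $F_{a,b,\sim 1}(q) \le -\eps q^2/2$ for a suitably small constant $\eps > 0$, completing the verification.

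The main technical obstacle is establishing the sharp bound $c_1(a,b) = O((a^2+b^2)/N)$ rather than the naive $O(a/\sqrt{N})$ one gets by separately expanding $\xi'_t(u)$ and $\xi'_t(w)^2/\xi'_t(q_{\boldm})$. The tight bound crucially exploits the specific algebraic structure $w^2 \approx q_{\boldm} u$ together with log-convexity of $\xi'_t$, which together force cancellation of both the zeroth- and first-order Taylor terms. Without this cancellation, the linear coefficient would exceed the threshold $N^{-4/5}$ for $|a|, |b| \le \eps N^{1/10}$, and the reduction to an $\eps$-strict RS model would fail.
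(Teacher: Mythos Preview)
Your argument is correct, but the treatment of the non-linear part is more involved than necessary, and the paper's route is worth knowing because it sidesteps the perturbation entirely.

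For the linear coefficient, your approach and the paper's coincide: both Taylor-expand $\xi'_t(u)$ and $\xi'_t(w)^2/\xi'_t(q_{\boldm})$ around $q_{\boldm}$, observe that the zeroth- and first-order terms cancel (since $u - q_{\boldm} = 2(w - q_{\boldm}) + O((a^2+b^2)/N)$), and conclude $\xi'_{a,b}(0) = O((a^2+b^2)/N) \le N^{-4/5}$. Your log-convexity argument for non-negativity is correct but not needed here; the paper uses essentially the same observation (phrased as Cauchy--Schwarz) earlier, in \Cref{cor:slice-dist}, to verify that $\xi_{a,b}$ is a valid mixture function.

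For the non-linear part, you already have the key computation but then don't exploit it fully. You show
\[
  F''_{0,0}(q) = (1-q_{\boldm})^2\bigl[\xi''(s) - (1-s)^{-2}\bigr], \qquad s = q_{\boldm} + (1-q_{\boldm})q,
\]
and you also observe that $(\xi_{a,b})_{\sim1}$ depends on $(a,b)$ only through $u = \|v(a,b)\|^2$. Putting these together, the \emph{same} calculation with $u$ in place of $q_{\boldm}$ gives, for every $(a,b)$,
\[
  (\xi_{a,b})''_{\sim1}(q) = (1-u)^2\,\xi''\bigl(u + (1-u)q\bigr)
  \le (1-u)^2 \cdot \frac{1-\eps_1}{\bigl(1-u-(1-u)q\bigr)^2}
  = \frac{1-\eps_1}{(1-q)^2},
\]
using your uniform margin $1 - (1-s)^2\xi''(s) \ge \eps_1$ and the exact cancellation $(1-s) = (1-u)(1-q)$. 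Integrating twice yields $(\xi_{a,b})_{\sim1}(q) + q + \log(1-q) \le -\tfrac{\eps_1}{2}q^2$ directly, uniformly in $u$ (hence in $t$ and in $(a,b)$). This is precisely the paper's argument, and it replaces your three-regime perturbation analysis (``near $0$ / compactness at intermediate $q$ / $\log(1-q)$ dominates near $1$'') with a single exact inequality. Your perturbation route can be made rigorous, but the compactness step as sketched needs care to be uniform in $q_{\boldm}$, whereas the direct computation requires none of this.
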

\begin{proof}
    We will first show $\xi'_{a,b}(0) \le N^{-4/5}$.
    We calculate:
    \begin{align*}
        \xi_{a,b}'(0) &= r_{a,b}^2 \xi_t' \left( \|v(a,b)\|^2 \right) - \frac{r_{a,b}^2 \xi_t'\left( q_{\boldm} \left( 1 + \frac{a}{\sqrt{N}} \right) \right)^2}{\xi_t'(q_{\boldm})} \\
            &\le \xi_t' \left( q_{\boldm} + \frac{2aq_{\boldm}}{\sqrt{N}} + O\left( \frac{a^2+b^2}{N} \right) \right) - \frac{\xi_t'\left( q_{\boldm} \left( 1 + \frac{a}{\sqrt{N}} \right) \right)^2}{\xi_t'(q_{\boldm})}.
    \end{align*}
    Here, the inequality follows from the fact that $\|v(a,b)\|^2 = q_{\boldm}\left( 1 + \frac{a}{\sqrt{N}} \right)^2 + O\left( \frac{a^2+b^2}{N} \right)$, $\xi_t'$ is non-decreasing, and $r_{a,b}^2 \le 1$. Now, we have, using the $O(1)$-Lipschitzness of $\xi_t'$,
    \[ \xi_t'\left( \|v(a,b)\|^2 \right) = \xi_t'(q_{\boldm}) + \xi_t''(q_{\boldm}) \cdot \frac{2aq_{\boldm}}{\sqrt{N}} + O\left( \frac{a^2+b^2}{N} \right) \]
    and
    \[ \xi_t'\left( q_{\boldm} \left( 1 + \frac{a}{\sqrt{N}} \right) \right)^2 = \xi_t'(q_{\boldm})^2 + \frac{2aq_{\boldm}}{\sqrt{N}} \cdot \xi_t''(q_{\boldm}) \cdot \xi_t'(q_{\boldm}) + O\left( \xi_t'(q_{\boldm}) \cdot \frac{a^2 + b^2}{N} \right). \]
    Thus $\xi'_{a,b}(0) = O(\fr{a^2+b^2}{N})$.
    Setting $\eps$ sufficiently small ensures $\xi'_{a,b}(0) \le N^{-4/5}$.
    Next, we show $(\xi_{a,b})_{\sim1}$ satisfies \Cref{con:strict-rs}.
    Since $\xi$ satisfies \eqref{eq:SL-condition}, there exists sufficiently small $\eps = \eps(\xi)$ such that $\xi''(q) \le \fr{1-\eps}{(1-q)^2}$ for all $q\in [0,1)$.
    Then,
    \begin{align*}
        \xi_{a,b}''(q) 
        &= \parens*{1-\norm*{v(a,b)}^2}^2 \xi_t''\parens*{\norm*{v(a,b)}^2 +  \parens*{1-\norm*{v(a,b)}^2}q } \\
        &\le \parens*{1-\norm*{v(a,b)}^2}^2 \cdot \frac{1-\eps}{\parens*{1-\norm*{v(a,b)}^2 - \parens*{1-\norm*{v(a,b)}^2}q}^2} \\
        &= \fr{1-\eps}{(1-q)^2}
        \le \fr{1}{(1-q)^2} - \eps.
    \end{align*}
    Integrating twice 
    shows
    \[
        (\xi_{a,b})_{\sim1}(q) + q + \log(1-q)
        \le \fr12 \eps q^2. \qedhere
    \]
\end{proof}

We are now ready to bound $\Cov\parens*{ \mu_{\HTAP} }$.
First, recall that for any distribution $\mu$ over $\R^N$ and any vector $v\in\R^N$, we have $\Cov(\mu) \psdle \E_{\sigma\sim\mu} (\sigma - v)(\sigma-v)^{\top}$.
Thus it suffices to bound
\begin{align*}
    {\E}_{\sigma\sim\mu_{\HTAP}} &\parens*{ \sigma - \boldm } \parens*{ \sigma - \boldm }^{\top}\\
    &= \E_{(a,b)\sim\nu}{\E}_{\sigma\sim\mu_{a,b}} \parens*{ \sigma - v(a,b) + v(a,b) - \boldm } \parens*{ \sigma - v(a,b) + v(a,b) - \boldm }^{\top} \\
    &\psdle 2\E_{(a,b)\sim\nu}\E_{\sigma\sim\mu_{a,b}} \parens*{\sigma - v(a,b)}\parens*{\sigma - v(a,b)}^{\top} + 2\E_{(a,b)\sim\nu} (v(a,b)-\boldm)(v(a,b)-\boldm)^{\top}\mper    \numberthis \label{eq:cov-bound}
\end{align*}
We will bound the spectral norm of each of the above terms below.
We employ the following statement for proving the desired bounds, proved in \Cref{sec:high-prob-cov}.

\begin{restatable}{theorem}{partitionfn}
    \label{thm:partition-fn-and-covariance}
    Suppose $\eps > 0$, and $\xi$ is $\eps$-strictly replica symmetric (\Cref{con:strict-rs-2}).
    Then, there exist $c = c(\eps)$ and $C = C(\eps)$ such that the following hold with probability $1-e^{-cN^{1/5}}$.
    \begin{enumerate}[label=(\alph*)]
        \item We have $\nabla^2 H_N(0) \preceq (1 + \xi''(0) - \eps^2/8) I_N$ and
        \[
            \lt|\log Z_N - \fr{N\xi(1)}{2} - \fr{N\xi''(0)}{4} - \fr{\log(1-\xi''(0))}{2}
            + \fr12 \log \det\lt((1 + \xi''(0)) I_N - \nabla^2 H_N(0)\rt) \rt| \le 1.
        \]
        \item The Gibbs measure satisfies $\|\E_{\mu_{H_N}}\sigma\sigma^{\top}\|_{\op} \le C(1 + \gamma_1^2 N)$.
    \end{enumerate}
\end{restatable}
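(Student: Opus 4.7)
My plan is to follow the decomposition-into-degree-$2$ strategy laid out in \Cref{sec:tech-overview-cov}. Split the Hamiltonian as $H_N = H_{N,1} + H_{N,2} + H_{N,\sim 2}$, where $H_{N,p}$ collects the degree-$p$ terms in \eqref{eq:pspin-with-external-field}, and let $A \coloneqq \grad^2 H_N(0)$. Since only $H_{N,2}$ contributes to the Hessian at the origin, $A$ is (up to a fixed scaling) a GOE matrix. The key heuristic is that for $\gamma \coloneqq 1 + \xi''(0)$, the measure $\exp\!\parens*{-\tfrac12 \angles*{\sigma, (\gamma I - A)\sigma}}$ on $\ScS_N$ behaves like a Gaussian with covariance $(\gamma I - A)^{-1}$, so both $\log Z_{N,2}$ and the second moment matrix of $\mu_{H_{N,2}}$ can be read off from the spectrum of $A$. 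Both parts of \Cref{thm:partition-fn-and-covariance} then reduce to (i) quantifying the degree-$\ge 3$ and degree-$1$ corrections, and (ii) upgrading the resulting positive-probability estimates to probability $1-e^{-cN^{1/5}}$.

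\emph{Part (a).} For the eigenvalue statement, standard GOE concentration (Borell--TIS) gives $\lambda_{\max}(A) \le 2\sqrt{\xi''(0)} + o(1)$ outside an event of probability $e^{-\Omega(N)}$. Evaluating \Cref{con:strict-rs-2} near $q=0$ (using $\xi_{\sim 1}(q) = \tfrac{\xi''(0)}{2} q^2 + O(q^3)$ and $q + \log(1-q) = -\tfrac{q^2}{2} + O(q^3)$) forces $\xi''(0) \le 1 - \eps$, and then the identity $1 + x - 2\sqrt{x} = (1-\sqrt{x})^2$ applied at $x = \xi''(0)$ yields the gap $\eps^2/8$. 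For the free-energy approximation, I will condition on $H_{N,2}$ and integrate out the degree-$1$ and degree-$\ge 3$ Gaussians, getting $\E_{\sim 2,1}[Z_N] = Z_{N,2} \cdot \exp\parens*{\tfrac{N \xi_{\sim 2}(1)}{2} + \tfrac{N\gamma_1^2}{2}}$, and control the conditional fluctuations of $Z_N/\E_{\sim 2,1}[Z_N]$ by a second-moment argument restricted to the free-energy-typical truncation of \cite{HS23}, which retains $1 - e^{-cN^{1/5}}$ of the mass while forcing replica overlaps to be $O(N^{-1/2})$. For the Gaussian quantity $Z_{N,2}$ I will apply a Laplace-transform computation following \cite{BL16,HMP24}: rewriting $\exp(H_{N,2}(\sigma)) = \exp(\tfrac{N\gamma}{2}) \exp(-\tfrac12 \angles*{\sigma, (\gamma I - A)\sigma})$ on $\ScS_N$ and performing the spherical saddle-point integral around the correctly chosen $\gamma$ yields $\log Z_{N,2}$ as $\tfrac{N\gamma}{2} - \tfrac12\log\det(\gamma I - A)$ plus the explicit constants appearing in the theorem statement, up to an $O(1)$ error coming from boundary corrections.

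\emph{Part (b).} Write $M_{ij} \coloneqq \E_{\mu_{H_N}}[\sigma_i \sigma_j]$ and its degree-$2$ analogue $\wh{M}_{ij} \coloneqq Z_{N,2}^{-1} \int \sigma_i \sigma_j \exp(H_{N,2}(\sigma)) \dif\rho(\sigma)$. Conditioning on $H_{N,2}$ and applying standard conditional MGF identities (of the flavor sketched in \Cref{sec:tech-overview-cov}) shows that $\E_{\sim 2,1}[M_{ij}]$ agrees with $\wh{M}_{ij}$ up to a $\gamma_1^2 N$ rank-one contribution from the external field, and that $\Var_{\sim 2,1}[M_{ij}] = O(N^{-2})\,\wh{M}_{ij}^{\,2}$ once the same free-energy-typical truncation is imposed. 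Summing over $(i,j)$ gives $\|M - \wh{M}\|_F^2 = O(1)$ with high probability, so in particular $\|M - \wh{M}\|_{\op} = O(1)$. Rotating to the eigenbasis of $A$ makes $\wh{M}$ essentially diagonal with entries close to $(\gamma - \lambda_i(A))^{-1}$, plus a rank-one shift of operator norm $O(\gamma_1^2 N)$, so $\|\wh{M}\|_{\op} = O(1 + \gamma_1^2 N)$ by the part-(a) eigenvalue bound. This gives the desired estimate with positive probability over $H_N$. To boost to probability $1 - e^{-cN^{1/5}}$, I will verify that $\|M\|_{\op}$ is an $O(N^{-1/10})$-Lipschitz function of the Gaussian disorder (the $N^{-1/10}$ scale arises from the strict-RS margin and the $\gamma_1^2 \le N^{-4/5}$ bound) and invoke Gaussian concentration.

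The main obstacle is not the conceptual reduction but carrying out the second-moment computation uniformly throughout the $\eps$-strict RS regime: the vanilla second moment of $Z_N$ fails to concentrate near the boundary of \Cref{con:strict-rs-2}, so the free-energy-typical truncation of \cite{HS23} is essential, and one has to verify that this truncation is compatible with the covariance manipulations in part (b) and preserves the right quantitative control at the $\gamma_1^2 \lesssim N^{-4/5}$ scale. The quantitative $\eps^2/8$ gap in part (a) is precisely what guarantees that $(\gamma I - A)^{-1}$ has operator norm $O(1/\eps^2)$ with the required probability, and thus is the hinge on which both parts of \Cref{thm:partition-fn-and-covariance} depend.
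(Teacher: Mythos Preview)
Your proposal follows essentially the same route as the paper: decompose $H_N$ into its degree-$2$ and non-degree-$2$ parts, handle $Z_{N,2}$ and $\la\sigma\sigma^\top\ra_2$ via the Laplace-transform saddle point from \cite{BL16,HMP24}, control the non-degree-$2$ contribution by a conditional second-moment argument using the free-energy-typical truncation of \cite{HS23}, and boost from positive to high probability via Gaussian concentration.

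One step as written would not go through: $\|M\|_{\op}$ is \emph{not} by itself an $O(N^{-1/10})$-Lipschitz function of the disorder. Differentiating $\la\sigma\sigma^\top\ra$ in a disorder coefficient produces Gibbs averages of the form $\la \sigma_i\sigma_j\,\sigma^{\otimes p}\ra$, and bounding these requires control on the overlap moments $\la R(\sigma^1,\sigma^2)^p\ra$, which you only have \emph{after} you know the free-energy estimate from part (a). The paper resolves this circularity by working not with $\|M\|_{\op}$ directly but with the coupled function $F = \max(\min(F_1,F_2),0)$, where $F_1$ encodes the part-(a) free-energy bound and $F_2$ encodes the operator-norm bound; on the region $\{F>0\}$ the condition $F_1\ge 0$ forces the overlap control needed to bound $\|\nabla F_2\|$, and one further restricts to a convex high-probability event and extends via Kirszbraun before invoking Gaussian concentration. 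You should expect to need the same device (or an equivalent one) rather than a direct Lipschitz bound on $\|M\|_{\op}$.
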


\parhead{Bounding the first term.}
Observe that:
\[
     \E_{\sigma\sim\mu_{a,b}} \parens*{\sigma - v(a,b)}\parens*{\sigma - v(a,b)}^{\top} = r_{a,b}^2 Q_{a,b} \E_{  \tau \sim \mu_{ H_{a,b} }  } \tau\tau^{\top} Q_{a,b}^{\top}\mper
\]
Thus,
\[
    \Norm{ \E_{\sigma\sim\mu_{a,b}} \parens*{\sigma - v(a,b)}\parens*{\sigma - v(a,b)}^{\top} } =
    r_{a,b}^2 \Norm{ \E_{  \tau \sim \mu_{ H_{a,b} }  } \tau \tau^{\top} } \le Cr_{a,b}^2 (1 + a^2 + b^2)\mper  \numberthis  \label{eq:first-term-cov-bound}
\]
where the inequality follows from \Cref{lem:band-models-eps-strictly-rs,thm:partition-fn-and-covariance}.

Our next goal is to control the fluctuations of $(a,b)$. 
By definition, if $\wh{Z}_{a,b}$ is the partition function of the distribution with Hamiltonian $H_{a,b}$ defined by $H_{a,b}(\tau) = \HTAP(v(a,b) + r_{a,b}Q\tau)$, we have
\[ \nu(a,b) \propto \exp\left( \log \wh{Z}_{a,b} + (N-4) \log r_{a,b} + \E_{\mu_{\TAP}} \HTAP(v(a,b)) + \sqrt{N} g_{a,b} \right). \numberthis \label{eq:density-of-nu} \]
We will show that $(a,b) \sim \nu$, conditioned on $|a|,|b| \le \eps N^{1/10}$ for $\eps$ as in \Cref{lem:band-models-eps-strictly-rs}, is subgaussian with variance $O(1)$.
We will also show in \Cref{lem:nu-ab-concentration} that $\nu$ places very little mass outside the set $|a|,|b| \le \eps N^{1/10}$.
\begin{lemma}
    \label{lem:formula-for-nu}
    Let $\eps$ be as in Lemma~\ref{lem:band-models-eps-strictly-rs}. 
    On an event with probability $1-e^{-cN^{1/5}}$, the following holds.
    The density of $(a,b)$ under $\nu$, conditioned on $|a|,|b| \le \eps N^{1/10}$, is given by
    \[ \nu(a,b) \propto \exp\left( N \wh{E}_{a,b} + \sqrt{N} g_{a,b} + \Error^{(1)}_{a,b} + \Error^{(2)}_{a,b} + \Delta_{a,b}\right), \]
    where $|\Delta_{a,b}| \le 1$ and
    \begin{align*}
        \wh{E}_{a,b} &= \frac{\xi_{a,b}(1)}{2} + \log r_{a,b} + \frac{1}{N} \E_{\mu_\TAP} \HTAP(v(a,b)) - \frac{\xi_t(1)}{2} \\
            &= \frac{1}{2} \left( \log r_{a,b}^2 - \xi_t(\|v(a,b)\|^2) - r_{a,b}^2 \cdot \frac{\xi_t'\left( q_{\boldm} \left( 1 + \frac{a}{\sqrt{N}} \right) \right)^2}{\xi_t'(q_{\boldm})} \right) \\
            &\qquad - \gamma(q_{\bx}) \cdot \frac{\xi_t'\left( q_{\boldm} \left( 1 + \frac{a}{\sqrt{N}} \right) \right)}{\xi_t'(q_{\boldm})} \cdot \left( \left( 1 + \frac{b}{\sqrt{N}} \right) -  q_{\boldm} \cdot \frac{\xi_t''(q_{\boldm})}{\gamma'(q_{\boldm})} \cdot \left( 1 + \frac{a}{\sqrt{N}} \right) \right) \\
            &\qquad + \xi_t\left( q_{\bx} \left( 1 + \frac{b}{\sqrt{N}} \right) \right) + \frac{\gamma\left( q_{\boldm} \left( 1 + \frac{a}{\sqrt{N}} \right) \right)}{\gamma'(q_{\boldm})} \cdot \left( (1-q_{\boldm}) \xi_t''(q_{\boldm}) + \frac{1}{1-q_{\boldm}} \right).
    \end{align*}
    for the error terms
    \begin{align*}
        \Error^{(1)}_{a,b} &= \left( \log \wh{Z}_{a,b} - \frac{N\xi_{a,b}(1)}{2} - \frac{N \xi''_{a,b}(0)}{4} + \frac{1}{2} \log \det \left( (1+\xi''_{a,b}(0)) \Id - \grad^2 H_N(v(a,b)) \right) \right) - 4 \log r_{a,b}
    \end{align*}
    and
    \begin{align*}
        \Error^{(2)}_{a,b} &= \frac{N\xi''_{a,b}(0)}{4} - \frac{1}{2} \log \det \left( (1+\xi''_{a,b}(0)) \Id - \grad^2 H_N(v(a,b)) \right)
    \end{align*}
\end{lemma}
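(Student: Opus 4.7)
The plan is to prove the identity by a direct algebraic rearrangement of equation \eqref{eq:density-of-nu}: first, I substitute the defining expressions for $\Error^{(1)}_{a,b}$ and $\Error^{(2)}_{a,b}$ to eliminate $\log\wh{Z}_{a,b}$, and then I evaluate the mean $\E_{\mu_\TAP}\HTAP(v(a,b))$ using Lemma \ref{lem:TAP-hamiltonian}. The starting point is the telescoping observation that the two $\log\det$ terms appearing in $\Error^{(1)}_{a,b}$ and $\Error^{(2)}_{a,b}$ cancel, so by their defining expressions,
\[
  \Error^{(1)}_{a,b} + \Error^{(2)}_{a,b} = \log \wh{Z}_{a,b} - \frac{N\xi_{a,b}(1)}{2} - 4\log r_{a,b}.
\]
Solving for $\log\wh{Z}_{a,b}$ and substituting into \eqref{eq:density-of-nu} yields
\[
  \nu(a,b) \propto \exp\!\left(\frac{N\xi_{a,b}(1)}{2} + N\log r_{a,b} + \E_{\mu_\TAP}\HTAP(v(a,b)) + \sqrt{N}g_{a,b} + \Error^{(1)}_{a,b} + \Error^{(2)}_{a,b}\right),
\]
and absorbing the $a,b$-independent constant $-\frac{N\xi_t(1)}{2}$ into the normalizer identifies the first three summands of the argument with $N\wh{E}_{a,b}$ in the first (compact) form given in the lemma, proving the claim with $\Delta_{a,b}=0$.

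To derive the displayed explicit form of $\wh{E}_{a,b}$, I expand $\xi_{a,b}(1)$ using the mixture function in Corollary \ref{cor:slice-dist}, which gives
\[
  \xi_{a,b}(1) = \xi_t\!\left(\|v(a,b)\|^2 + r_{a,b}^2\right) - \xi_t\!\left(\|v(a,b)\|^2\right) - \frac{r_{a,b}^2\,\xi_t'\!\left(q_\boldm(1+a/\sqrt{N})\right)^2}{\xi_t'(q_\boldm)}.
\]
Since every $\sigma\in\ScS(a,b)$ has $\|\sigma\|^2 = N$, the decomposition $\sigma = \sqrt{N}v(a,b)+r_{a,b}Q\tau$ with $\tau\in\ScS_{N-2}$ forces $\|v(a,b)\|^2 + r_{a,b}^2 = 1 + 2r_{a,b}^2/N = 1 + O(1/N)$ uniformly on $|a|,|b|\le\eps N^{1/10}$, so the smoothness of $\xi_t$ near $1$ gives $\frac{N}{2}\xi_t(\|v(a,b)\|^2+r_{a,b}^2) = \frac{N\xi_t(1)}{2} + O(1)$; this bounded deviation is the source of $\Delta_{a,b}$. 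Next, I compute $\E_{\mu_\TAP}\HTAP(v(a,b))$ by plugging $\sigma=v(a,b)$ into the mean formula from Lemma \ref{lem:TAP-hamiltonian}. Using Fact \ref{fact:vab-formula} one has $R(\bx,v(a,b)) = q_\bx(1+b/\sqrt{N})$, $R(\boldm,v(a,b)) = q_\boldm(1+a/\sqrt{N})$, $\langle\bx,v(a,b)\rangle = Nq_\bx(1+b/\sqrt{N})$, and $\langle\boldm,v(a,b)\rangle = Nq_\boldm(1+a/\sqrt{N})$. Substituting these into the expression for $v(\sigma)$ and invoking the identities $\gamma(q) = q\xi_t'(q)$ and $\theta'(q_\boldm) = -(1-q_\boldm)\xi_t''(q_\boldm)$, the three terms of the mean formula reproduce, after dividing by $N$, exactly the last three blocks of the displayed explicit expression for $\wh{E}_{a,b}$.

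The proof is essentially bookkeeping. The only step requiring calculation beyond substitution is matching the mean formula to the stated explicit form, which demands careful tracking of $\xi_t'$, $\gamma$, $\gamma'$, and sign conventions; no conceptual difficulty is involved. The main obstacle is the sheer number of terms that must line up exactly, together with ensuring that the cumulative Taylor error from the second paragraph stays within the stated budget $|\Delta_{a,b}|\le 1$ rather than a larger constant; this is arranged by a careful expansion of $\xi_t$ near $1$ using $r_{a,b}^2\le 1$. The underlying event of probability $1-e^{-cN^{1/5}}$ is the one on which Theorem \ref{thm:partition-fn-and-covariance}(a) and related uniform estimates hold simultaneously for the $(N-2)$-dimensional spin glasses with mixture function $\xi_{a,b}$ across a $\poly(N)$-size net of $(a,b)$; each such $\xi_{a,b}$ is $\eps$-strictly RS by Lemma \ref{lem:band-models-eps-strictly-rs}. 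This event is not used in the algebraic rearrangement above, but guarantees that the $\log\det$ quantities inside $\Error^{(1)}_{a,b}$ and $\Error^{(2)}_{a,b}$ are well-defined and will be controlled in subsequent arguments.
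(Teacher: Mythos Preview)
Your proposal is correct and follows essentially the same route as the paper: both arguments unpack \eqref{eq:density-of-nu} using the defining expressions for $\Error^{(1)}_{a,b}$ and $\Error^{(2)}_{a,b}$ and then expand $\E_{\mu_\TAP}\HTAP$ via Lemma~\ref{lem:TAP-hamiltonian}, with the high-probability event coming from applying Theorem~\ref{thm:partition-fn-and-covariance}(a) over a net of $(a,b)$. One minor correction: under the paper's explicit formula for $r_{a,b}$ one has $\|v(a,b)\|^2 + r_{a,b}^2 = 1$ \emph{exactly} (not $1+2r_{a,b}^2/N$), so $\xi_t(\|v(a,b)\|^2+r_{a,b}^2)=\xi_t(1)$ and the compact and explicit forms of $\wh{E}_{a,b}$ agree with no Taylor remainder --- your $\Delta_{a,b}$ is in fact $0$, which of course still satisfies $|\Delta_{a,b}|\le 1$.
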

The above follows by expanding out all the terms in the expression \eqref{eq:density-of-nu} for the density of $\nu$ and evaluating the term $\log \wh{Z}_{a,b}$ using \Cref{thm:partition-fn-and-covariance}, which applies because the models $\xi_{a,b}$ for $|a|,|b| \le \eps N^{1/10}$ are $\eps$-strictly replica symmetric by \Cref{lem:band-models-eps-strictly-rs}.
Because the conclusion of \Cref{thm:partition-fn-and-covariance} holds with probability $1-e^{-cN^{1/5}}$, we may evaluate $\log \hat Z_{a,b}$ over a $1/\poly(N)$-net of such $(a,b)$ via a union bound, and then infer the estimate for all such $a,b$ by a standard continuity argument.
\begin{restatable}{lemma}{nuenergygradient}
    \label{lem:nu-energy-gradient}
    $\restr{\grad \wh{E}_{a,b}}{(a,b)=(0,0)} = 0$.
\end{restatable}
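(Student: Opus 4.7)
The lemma is a deterministic identity about the function $\wh{E}_{a,b}$ defined in the previous lemma --- it does not involve the random Hamiltonian at all. The plan is to verify it by direct differentiation, where the main convenience comes from reparametrizing via $\alpha \coloneqq a/\sqrt{N}$, $\beta \coloneqq b/\sqrt{N}$. Since $\partial_a = N^{-1/2} \partial_\alpha$ and similarly for $\beta$, it suffices to show that $\partial_\alpha \wh{E}$ and $\partial_\beta \wh{E}$ vanish at $\alpha = \beta = 0$. The only nontrivial algebraic fact we will use is the definitional identity $\gamma'(q) = \xi_t'(q) + q \xi_t''(q)$, which gives the rearrangement
\[
    1 - \frac{q_{\boldm}\, \xi_t''(q_{\boldm})}{\gamma'(q_{\boldm})} = \frac{\xi_t'(q_{\boldm})}{\gamma'(q_{\boldm})}.
\]

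I would first record the basic derivatives at the origin. Writing $u = q_{\boldm}(1+\alpha)$ and $w = q_{\bx}(1+\beta)$, we have at $\alpha=\beta=0$: $u = q_{\boldm}$, $w = q_{\bx}$, $\|v(a,b)\|^2 = q_{\boldm}$, $r_{a,b}^2 = 1-q_{\boldm}$; and for first-order derivatives, $\partial_\alpha u = q_{\boldm}$, $\partial_\beta w = q_{\bx}$, $\partial_\alpha \|v\|^2 = 2q_{\boldm}$, $\partial_\alpha r^2 = -2q_{\boldm}$, while $\partial_\beta \|v\|^2 = \partial_\beta r^2 = \partial_\beta u = 0$.

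The $\partial_\beta$ vanishing is then almost immediate: every term in the first line of $\wh{E}_{a,b}$ depends on $\beta$ only through $\|v\|^2$, $r^2$, or $u$, all of whose $\beta$-derivatives vanish at the origin. The second line contributes $-\gamma(q_{\bx})$ (from differentiating the factor $(1+\beta)$), while the third line contributes $\xi_t'(q_{\bx}) \, q_{\bx} = \gamma(q_{\bx})$ (from differentiating $\xi_t(w)$); these cancel.

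The $\partial_\alpha$ computation is slightly longer but follows the same structure. Using the product rule on the second line and substituting the identity above, the two resulting terms cancel exactly, so the second line contributes $0$. Differentiating the first line, the $-q_{\boldm} \xi_t'(q_{\boldm})$ coming from $\partial_\alpha \xi_t(\|v\|^2)$ cancels with a $+q_{\boldm} \xi_t'(q_{\boldm})$ coming from $\partial_\alpha r^2$ in the third term, leaving $-\tfrac{q_{\boldm}}{1-q_{\boldm}} - (1-q_{\boldm}) q_{\boldm} \xi_t''(q_{\boldm})$. The third line contributes $q_{\boldm}\bigl[(1-q_{\boldm})\xi_t''(q_{\boldm}) + \tfrac{1}{1-q_{\boldm}}\bigr]$, using $\partial_\alpha \gamma(u)/\gamma'(q_{\boldm}) = q_{\boldm}$ at the origin. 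Adding the two gives zero.

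There is no real obstacle; the statement is a mechanical calculus verification whose only subtlety is the cancellation in the second line via the $\gamma'$ identity. The conceptual reason the lemma holds without any condition on $\HTAP$ is that $\wh{E}_{a,b}$ captures only the deterministic ``annealed'' part of the log-density of $\nu$ --- all stochastic fluctuations have been isolated into the separate terms $\sqrt{N} g_{a,b}$ and $\Error^{(1,2)}_{a,b}$ in Lemma~\ref{lem:formula-for-nu} --- and at the origin the slice is centered at the TAP point $\boldm$ by construction, so the annealed density is naturally stationary there.
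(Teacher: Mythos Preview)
Your proposal is correct and takes essentially the same approach as the paper: both proofs compute the two partial derivatives directly, use the identity $\gamma'(q_{\boldm}) = \xi_t'(q_{\boldm}) + q_{\boldm}\xi_t''(q_{\boldm})$ to get the cancellation in the middle line, and verify that the remaining contributions from the first and third lines sum to zero. Your reparametrization by $\alpha = a/\sqrt{N}$, $\beta = b/\sqrt{N}$ is equivalent to the paper's convention of computing $\sqrt{N}\,\partial_a$ and $\sqrt{N}\,\partial_b$, and your line ordering simply permutes the paper's (I), (II), (III).
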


\begin{restatable}{lemma}{lemenergyconcavity}
    \label{lem:energy-ab-concavity}
    There exist constants $\eta,\eps > 0$ such that for all $|a|,|b| \le \eps \sqrt{N}$, $N\grad^2 \wh{E}_{a,b} \psdle -\eta \Id$.
\end{restatable}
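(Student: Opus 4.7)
The proof decomposes into three natural steps: a scaling reduction to a dimension-free statement, an explicit Hessian computation at a reference point, and a continuity argument to propagate the bound to a neighborhood.

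Every occurrence of $a$ or $b$ inside $\wh{E}_{a,b}$ enters only via the rescaled variables $\tilde a \coloneqq a/\sqrt{N}$ and $\tilde b \coloneqq b/\sqrt{N}$. Writing $\wh{E}_{a,b} = F(\tilde a, \tilde b;\, q_{\boldm}, q_{\bx})$ for a smooth, $N$-independent function $F$, we obtain
\[
    N\,\grad^2_{(a,b)} \wh{E}_{a,b}
    \;=\; \grad^2_{(\tilde a, \tilde b)} F(\tilde a, \tilde b;\, q_{\boldm}, q_{\bx}).
\]
Under this reparameterization the bound $|a|,|b| \le \eps\sqrt{N}$ becomes $|\tilde a|,|\tilde b| \le \eps$, and it suffices to show uniform negative definiteness of $\grad^2_{(\tilde a, \tilde b)} F$ over the compact set $|\tilde a|,|\tilde b| \le \eps$ and $(q_{\boldm},q_{\bx}) \in [q_*-\iota, q_*+\iota]^2$, for constants $\eps,\iota > 0$ small enough.

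My main step is to compute the $2\times 2$ Hessian
\[
    M_\ast \;\coloneqq\; \grad^2_{(\tilde a,\tilde b)} F\,\Big|_{(\tilde a, \tilde b) = (0,0),\ q_{\boldm} = q_{\bx} = q_\ast}
\]
explicitly. At this reference point many pieces of the formula for $\wh{E}_{a,b}$ collapse: we have $r_{0,0}^2 = 1-q_\ast$, $\|v(0,0)\|^2 = q_\ast$, and, crucially, the TAP identity $\xi_t'(q_\ast) = q_\ast/(1-q_\ast)$ eliminates many of the ratios $\xi_t'(\cdot)/\xi_t'(q_{\boldm})$ and $\gamma(\cdot)/\gamma'(q_{\boldm})$. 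The upshot, which I expect to be the core algebraic content of the lemma, is that the negative definiteness of $M_\ast$ reduces to the strict \eqref{eq:SL-condition} inequality $(1-q_\ast)^2 \xi_t''(q_\ast) < 1$. This is no accident: as discussed in Remark \ref{rmk:SL-fundamental-barrier}, \eqref{eq:SL-condition} is exactly the stability condition for TAP fixed points, and saturation corresponds to marginal stability, i.e.\ a vanishing eigenvalue of $M_\ast$. Since we assume strict \eqref{eq:SL-condition}, the margin $1 - (1-q_\ast)^2 \xi_t''(q_\ast)$ is a positive constant, and I obtain $M_\ast \preceq -2\eta\,\Id$ for some $\eta = \eta(\xi_t) > 0$.

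Finally, since $\grad^2_{(\tilde a, \tilde b)} F$ is continuous in $(\tilde a, \tilde b, q_{\boldm}, q_{\bx})$ and bounded over the compact domain described above, shrinking $\eps$ and (if needed) $\iota$ propagates the reference bound $M_\ast \preceq -2\eta\,\Id$ to $\grad^2_{(\tilde a, \tilde b)} F \preceq -\eta\,\Id$ uniformly on the full neighborhood. The main obstacle is step two: the formula for $\wh{E}_{a,b}$ combines several terms involving $\xi_t, \xi_t', \xi_t''$ evaluated at nonlinear functions of $(\tilde a, \tilde b)$, so the bookkeeping required to make the \eqref{eq:SL-condition} margin manifest is genuine work. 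However, the calculation is finite-dimensional and essentially two-dimensional once the scaling reduction is in place, and the TAP identity provides the right algebraic simplifications to drive it through.
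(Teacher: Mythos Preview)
Your proposal is correct and follows essentially the same approach as the paper: rescale by $\sqrt{N}$ to remove the dimension dependence, compute the $2\times 2$ Hessian at the reference point $(\tilde a,\tilde b)=(0,0)$ with $q_{\boldm}=q_{\bx}=q_*$ using the TAP identity $\xi_t'(q_*)=q_*/(1-q_*)$, verify negative definiteness reduces to the strict \eqref{eq:SL-condition} inequality, and then invoke continuity (Lipschitzness of the Hessian) to extend to a neighborhood. The paper carries out the explicit Hessian computation by decomposing $\wh{E}_{a,b}$ into three pieces and checking that both the $(b,b)$ entry and the determinant condition reduce to $\xi_t''(q_*)<1/(1-q_*)^2$, which is the ``genuine work'' you correctly anticipate but do not spell out.
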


The above follow from routine calculations, which we defer to \Cref{app:spinglass-calcs}.

\begin{restatable}{lemma}{gabconcentration}   \label{lem:gab-concentration}
    For every constant $\iota > 0$, there is a constant $c$ such that with probability $1-e^{-cN}$, for all $a,b$, we have $|g_{a,b} - g_{0,0}| \le  \iota\frac{a^2+b^2}{\sqrt{N}} $.
\end{restatable}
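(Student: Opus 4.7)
The plan is to start from the structural property stated just before the lemma: $\widehat{g}_{a,b} \coloneqq g_{a,b} - g_{0,0}$ is a centered Gaussian with variance at most $C(a^4+b^4)/N^2 \le C(a^2+b^2)^2/N^2$ for a universal constant $C$. A standard Gaussian tail bound then gives the pointwise estimate: for any fixed $(a,b) \neq (0,0)$,
\[
    \Pr\left[\, |\widehat{g}_{a,b}| \ge \iota (a^2+b^2)/\sqrt{N} \,\right] \le 2\exp(-\iota^2 N /(2C)),
\]
since the threshold is $\Omega(\sqrt{N})$ standard deviations. The remaining task is to upgrade this pointwise bound to one uniform in $(a,b)$; the exponentially small pointwise tail provides ample slack.

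Since $g_{a,b}$ is only defined for $r_{a,b}^2 > 0$, its domain $D$ is contained in $\{|a|,|b| = O(\sqrt{N})\}$. I would partition $D$ into dyadic annuli $D_k = \{(a,b) \in D : 2^{k-1} \le a^2+b^2 < 2^k\}$, of which there are $O(\log N)$. On each annulus, the process $\widehat{g}$ has pointwise variance $\sigma_k^2 = O(2^{2k}/N^2)$, so Borell--TIS gives
\[
    \Pr\left[\sup_{D_k} |\widehat{g}| \ge \E \sup_{D_k} |\widehat{g}| + t \right] \le 2\exp(-t^2/(2\sigma_k^2)).
\]
Setting $t = \iota \cdot 2^{k-1}/(2\sqrt{N})$ yields tail probability $\exp(-\Omega(\iota^2 N))$, provided $\E \sup_{D_k} |\widehat{g}| \le \iota \cdot 2^{k-1}/(2\sqrt{N})$. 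Union bounding over $O(\log N)$ annuli then closes the proof, since the logarithmic prefactor is easily absorbed.

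The remaining ingredient is the expected-supremum bound. By Corollary~\ref{cor:slice-dist} together with Lemma~\ref{lem:TAP-hamiltonian}, $g_{a,b}$ is realizable as a deterministic linear functional $\langle \mathbf{c}(a,b), \boldsymbol{\xi} \rangle$ of the Gaussian coefficients $\boldsymbol{\xi}$ underlying $\HTAP$, with $\mathbf{c}(a,b)$ depending smoothly on $(a,b)$ through $v(a,b)$ and $r_{a,b}$. Differentiating this representation should yield $\|\partial_a \mathbf{c}(a,b)\|^2, \|\partial_b \mathbf{c}(a,b)\|^2 = O((a^2+b^2)/N^2)$ on $D_k$, consistent with the second-order vanishing of $\widehat{g}$ at the origin implied by the hypothesis. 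This gives the modulus of continuity $\E|g_{a,b} - g_{a',b'}|^2 = O(\|(a,b)-(a',b')\|^2 \cdot 2^k/N^2)$, from which Dudley's entropy integral on the $2$-dimensional domain $D_k$ produces $\E \sup_{D_k} |\widehat{g}| = O(2^k \sqrt{\log N}/N)$, negligible compared to $\iota \cdot 2^{k-1}/\sqrt{N}$ for $N$ large. The main obstacle is establishing this modulus of continuity by explicit differentiation of the formulas in Lemma~\ref{lem:TAP-hamiltonian} and Corollary~\ref{cor:slice-dist}; however, a cruder alternative also works, namely a $1/\poly(N)$-net over $D$ with a crude polynomial Lipschitz bound on $\mathbf{c}(a,b)$, since the pointwise failure probability $\exp(-\Omega(N))$ absorbs any polynomial net size and Lipschitz-slack factor.
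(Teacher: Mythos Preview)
Your proposal is correct but takes a different route from the paper for the uniformization step. You use Borell--TIS on dyadic annuli together with Dudley's entropy integral (or a net argument) to upgrade the pointwise tail bound to a uniform one. The paper instead invokes a Slepian-type comparison: having established the increment variance bound $\Var(g_{a,b}-g_{a',b'}) = O(\|v(a,b)-v(a',b')\|^4)$, it compares $(g_{a,b}-g_{0,0})_{a,b}$ directly to the explicit process $\langle G,(v(a,b)-v(0,0))(v(a,b)-v(0,0))^\top\rangle$ for a standard Gaussian matrix $G$. The supremum of this comparison process is immediately controlled by $\|G\|_{\op}\cdot\|v(a,b)-v(0,0)\|^2 = O(\sqrt{N})\cdot O((a^2+b^2)/N)$, giving the claim in one stroke with no chaining or annular decomposition. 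Your approach is more general-purpose; the paper's is slicker because the increment structure happens to admit a simple majorizing process.

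One thing to flag: the variance bound $O((a^4+b^4)/N^2)$ that you take as a given ``structural property'' is actually the bulk of the paper's proof of this lemma. It is obtained by Taylor-expanding $\wt{H}(\boldm+\eps)$ around $\boldm$ and verifying that the first-order term $\langle\nabla\wt{H}(\boldm),\eps\rangle$ cancels exactly against the leading part of the conditioning correction $\langle\nabla\wt{H}(\boldm),v(\boldm+\eps)-v(\boldm)\rangle$, leaving only contributions of order $\|\eps\|^2$. Since you cite this as prior input, your proposal really only addresses the (easier) uniformization half of the argument.
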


\begin{lemma}
    \label{lem:nu-energy-bounded-error}
    With probability $1-e^{-cN^{1/5}}$, $|\Error^{(1)}_{a,b}| = O(1)$ uniformly for all $|a|,|b| < \eps N^{1/10}$, for $\eps$ as in \Cref{lem:band-models-eps-strictly-rs}.
\end{lemma}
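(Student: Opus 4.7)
The plan is to apply \Cref{thm:partition-fn-and-covariance}(a) to the $(N-2)$-dimensional spherical spin glass $H_{a,b}$ from \Cref{cor:slice-dist}. By \Cref{lem:band-models-eps-strictly-rs}, its mixture function $\xi_{a,b}$ satisfies \Cref{con:strict-rs-2} uniformly over $|a|,|b| \le \eps N^{1/10}$, so the hypothesis of the theorem is met. For a fixed such $(a,b)$, on an event of probability $1-e^{-cN^{1/5}}$,
\[
    \log \wh Z_{a,b} = \tfrac{(N-2)\xi_{a,b}(1)}{2} + \tfrac{(N-2)\xi''_{a,b}(0)}{4} + \tfrac{\log(1-\xi''_{a,b}(0))}{2} - \tfrac{1}{2}\log\det\bigl((1+\xi''_{a,b}(0))\Id_{N-2} - \grad^2 H_{a,b}(0)\bigr) + O(1).
\]

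Substituting this into the definition of $\Error^{(1)}_{a,b}$, the dimensional-discrepancy pieces $-\xi_{a,b}(1)$ and $-\xi''_{a,b}(0)/2$ are $O(1)$ since both $\xi_{a,b}(1)$ and $\xi''_{a,b}(0)$ are $O(1)$ uniformly; the term $\tfrac{1}{2}\log(1-\xi''_{a,b}(0))$ is $O(1)$ because $\eps$-strict replica symmetry forces $\xi''_{a,b}(0) \le 1-\eps$; and $-4\log r_{a,b} = O(1)$ since $r_{a,b}^2$ remains bounded away from $0$ on this range of $(a,b)$. What remains is to prove that the two log-determinants
\[
    \tfrac{1}{2}\log\det\bigl((1+\xi''_{a,b}(0))\Id - \grad^2 H_N(v(a,b))\bigr) \quad\text{and}\quad \tfrac{1}{2}\log\det\bigl((1+\xi''_{a,b}(0))\Id_{N-2} - \grad^2 H_{a,b}(0)\bigr)
\]
agree up to $O(1)$.

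For this, I would invoke \Cref{cor:slice-dist} together with the structural fact---obtained by directly differentiating the explicit formula in \Cref{lem:TAP-hamiltonian}---that $\grad^2 M_\TAP(v(a,b))$ has image in $\mathrm{span}(\bx,\boldm)$, so that $Q^\top \grad^2 M_\TAP(v(a,b)) Q = 0$ and consequently $\grad^2 H_{a,b}(0) \stackrel{d}{=} r_{a,b}^2\, Q^\top \grad^2 H_N(v(a,b)) Q$ as laws of Gaussian matrices. A Schur-complement decomposition of the $N$-dimensional log-determinant relative to $\mathrm{span}(\bx, \boldm)$ then reduces it to an $(N-2)$-dimensional block plus a $2\times 2$ determinant and a rank-$2$ correction, both of which are $O(1)$ on the event $\calE_\iota$ where $\opnorm{\grad^2 H_N(v(a,b))} = O(1)$. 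The principal technical obstacle is then comparing this $(N-2)$-dimensional block, in which $Q^\top \grad^2 H_N(v(a,b)) Q$ appears with coefficient $1$, to the log-determinant from \Cref{thm:partition-fn-and-covariance}, where the same Hessian appears with coefficient $r_{a,b}^2$: the two PSD matrices differ by $(1-r_{a,b}^2)\, Q^\top \grad^2 H_N(v(a,b)) Q$, an object of operator norm $\Theta(1)$, so a naive Taylor-expansion bound gives only $\Theta(N)$. The needed cancellation to $O(1)$ must come from combining this log-determinant with the $\tfrac{N\xi''_{a,b}(0)}{4}$ subtractive term in $\Error^{(1)}_{a,b}$, by exploiting the semicircle-type spectral concentration for $\grad^2 H_N(v(a,b))$ on the event $\calE_\iota$---the same spectral input already present in the proof of \Cref{thm:partition-fn-and-covariance}.

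Finally, to extend the pointwise $O(1)$ bound to a uniform bound over $|a|,|b| \le \eps N^{1/10}$, I would apply the pointwise estimate along an $N^{-C}$-net for sufficiently large constant $C$, union-bound the $e^{-cN^{1/5}}$-probability failure events (losing at most a constant factor in the exponent), and extend to the full region by continuity, using that $\log \wh Z_{a,b}$, $r_{a,b}$, $\xi_{a,b}$, and $\grad^2 H_N(v(a,b))$ vary polynomially in $(a,b)$ on the high-probability event that $\grad^2 H_N$ and its first derivative are uniformly bounded on $\ScS_N$.
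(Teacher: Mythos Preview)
Your overall strategy matches the paper's: apply \Cref{thm:partition-fn-and-covariance}(a) to the $(N-2)$-dimensional model $H_{a,b}$ (licensed by \Cref{lem:band-models-eps-strictly-rs}), collect the $O(1)$ pieces, and pass to all $(a,b)$ in the range by a net and union bound. The paper's proof is essentially one line after this.

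The extra work in your third paragraph stems from reading the symbol $\grad^2 H_N(v(a,b))$ in the definition of $\Error^{(1)}_{a,b}$ literally as an $N$-dimensional Hessian. The paper does not intend this: in its proof of the lemma it rewrites $\Error^{(1)}_{a,b}$ directly with $\grad^2 H_{a,b}(0)$, and in the appendix restatement of $\Error^{(2)}_{a,b}$ (just before the proof of \Cref{lem:nu-energy-bounded-error-2}) the same substitution is made explicit. So ``$\grad^2 H_N(v(a,b))$'' in \Cref{lem:formula-for-nu} is a notational slip for $\grad^2 H_{a,b}(0)$. With that reading, the two log-determinants in your display are \emph{identical}, and there is nothing to compare: no Schur complement, no $r_{a,b}^2$ discrepancy, no semicircle cancellation needed.

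Your proposed route for reconciling the two log-determinants under the literal reading is also not complete as stated. The spectral-cancellation argument you sketch (absorbing a $\Theta(1)$-operator-norm perturbation into the $\tfrac{N\xi''_{a,b}(0)}{4}$ term via semicircle concentration) is exactly the content of \Cref{lem:nu-energy-bounded-error-2} for $\Error^{(2)}_{a,b}$, where a similar-looking discrepancy is handled---but there the perturbation has operator norm $O((a^2+b^2)/N)$, not $\Theta(1)$, which is why that argument closes at $O(1)$. A $\Theta(1)$ perturbation would not cancel against $\tfrac{N\xi''_{a,b}(0)}{4}$ to $O(1)$ in general without further structure, so the outline as written would not go through.
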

\begin{proof}
    We shall show this very high probability bound for a fixed $a,b$. Constructing a net over the relevant $a,b$ and performing a union bound over this net allows us to extend this to a uniform bound for all $a,b$; we omit the details. We may write the error term as
    \begin{align*}
        \Error^{(1)}_{a,b} &= \log \wh{Z}_{a,b} - \frac{N\xi_{a,b}(1)}{2} - \frac{\log \left( 1 - \xi_{a,b}''(0) \right)}{2}  \\
        &\qquad\qquad  - \frac{N \xi_{a,b}''(0)}{4} + \frac{1}{2} \log\det\left( (1+\xi_{a,b}''(0)) \Id - \grad^2 H_{a,b}(0) \right) + O(1).
    \end{align*}
    Due to the bound on $a$ and $b$, the above is $O(1)$ with very high probability by \Cref{thm:partition-fn-and-covariance}(a). The desideratum follows.
\end{proof}

\begin{restatable}{lemma}{smallerrorinnu}
    \label{lem:nu-energy-bounded-error-2}
    For any sufficiently small $\iota > 0$, with probability at least $1-e^{-cN}$, $\left| \Error^{(2)}_{a,b} - \Error^{(2)}_{0,0} \right| = O\left( 1 \right)$ for all $a,b < \iota N^{1/4}$.
\end{restatable}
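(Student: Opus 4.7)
The plan is to prove the stronger claim that $\sup_{|a|,|b| < \iota N^{1/4}} |\Error^{(2)}_{a,b}| = O(1)$ with probability $1 - e^{-cN}$, from which the lemma follows by the triangle inequality. Writing $\mu := \xi''_{a,b}(0)$ and $A := \grad^2 H_N(v(a,b))$ so that $\Error^{(2)}_{a,b} = \tfrac{N\mu}{4} - \tfrac{1}{2}\log\det((1+\mu)\Id - A)$, the key identity driving the proof is the exact semicircle evaluation
\[
    \int \log(1+\mu-x)\,\rho_{\mathrm{sc}}(x;\mu)\,dx = \frac{\mu}{2},
\]
where $\rho_{\mathrm{sc}}(\,\cdot\,;\mu)$ is the semicircle density with variance parameter $\mu$. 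Combined with the spectral gap $1+\mu - \lambda_{\max}(A) = \Omega(1)$ available from \Cref{thm:partition-fn-and-covariance}(a), this yields $\E\log\det((1+\mu)\Id - A) = \tfrac{N\mu}{2} + O(1)$ via standard random-matrix estimates, exhibiting the exact cancellation $\E \Error^{(2)}_{a,b} = O(1)$ pointwise.

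For the pointwise high-probability bound, I would use that $\log\det((1+\mu)\Id - A) = \sum_i \log(1+\mu - \lambda_i(A))$ is a smooth linear spectral statistic of a Gaussian random matrix (smoothness of $x \mapsto \log(1+\mu-x)$ is guaranteed by the spectral gap), and that such statistics concentrate around their expectation with $O(1)$-scale sub-Gaussian fluctuations. Combined with the expectation bound, this yields $|\Error^{(2)}_{a,b}| = O(1)$ with probability $1 - e^{-cN}$ at each fixed $(a,b)$.

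To extend to a uniform bound, take a $\delta$-net of size $\poly(N)$ over the square $[-\iota N^{1/4}, \iota N^{1/4}]^2$ (with $\delta$ a sufficiently negative power of $N$), union-bound the pointwise estimate over the net, and interpolate between net points via a Lipschitz bound. The inputs are that $\mu_{a,b}$ is $O(1/\sqrt{N})$-Lipschitz in $(a,b)$, and $\|A_{a,b} - A_{a',b'}\|_{\op} = O((|a-a'|+|b-b'|)/\sqrt{N})$ with probability $1 - e^{-cN}$, using the spatial Lipschitzness of $\grad^2 H_N$ together with the estimate $\|v(a,b)-v(a',b')\| = O((|a-a'|+|b-b'|)/\sqrt{N})$ that follows from \Cref{fact:vab-formula}.

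The main technical obstacle is that a naive Lipschitz bound on $\log\det(\cdot)$ in terms of $\|A\|_{\op}$-perturbations is much too weak, giving an $O(N)$-scale Lipschitz constant and hence $O(N^{3/4})$ drift over the range. The resolution leverages the same semicircle cancellation used in expectation: differentiating $\Error^{(2)}$ along a path in $(a,b)$, the $\Theta(\sqrt{N})$-scale contribution of $\tfrac{N}{4}\partial\mu$ is precisely cancelled by $-\tfrac{\partial\mu}{2}\Tr((1+\mu)\Id - A)^{-1}$ arising from the derivative of $-\tfrac{1}{2}\log\det$, since $\Tr((1+\mu)\Id - A)^{-1} = N + O(1)$ by the companion semicircle identity $\int \rho_{\mathrm{sc}}(x;\mu)/(1+\mu-x)\,dx = 1$. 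What remains is the term $\tfrac{1}{2}\Tr((1+\mu)\Id - A)^{-1}\,\partial A$, which is a bounded linear functional of the Gaussian perturbation $\partial A$ and is controlled at scale $O(N^{-1/2})$ by Gaussian concentration, yielding an effective $O(N^{-1/2})$-Lipschitz bound that integrates to $o(1)$ over the range — comfortably giving the uniform $O(1)$ bound claimed.
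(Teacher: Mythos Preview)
There is a genuine gap in the main line of your argument. Your ``stronger claim'' that $\sup_{|a|,|b|<\iota N^{1/4}}|\Error^{(2)}_{a,b}| = O(1)$ with probability $1-e^{-cN}$ is false. You correctly observe that $\log\det((1+\mu)I-A)$ is a smooth linear spectral statistic with $O(1)$-subgaussian fluctuations, but that means $\Pr[|\Error^{(2)}_{a,b}|>t]\le e^{-ct^2}$, so to get failure probability $e^{-cN}$ you can only conclude $|\Error^{(2)}_{a,b}|=O(\sqrt{N})$, not $O(1)$. The paper is aware of this: in the proof of \Cref{lem:nu-ab-concentration} it only uses $|\Error^{(2)}_{0,0}|\le N^{1/10}$ with probability $1-e^{-cN^{1/5}}$ via \Cref{lem:logdet-subgaussian}. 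The lemma is stated for the \emph{difference} precisely because the individual terms are too large; one must exploit the strong correlation between $\nabla^2 H_{0,0}(0)$ and $\nabla^2 H_{a,b}(0)$.

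Your final paragraph's path-differentiation idea is closer to a workable approach, but the cancellation you describe does not hold: $\tfrac{N}{4}\partial\mu - \tfrac{\partial\mu}{2}\Tr((1+\mu)I-A)^{-1} = \tfrac{N}{4}\partial\mu - \tfrac{N}{2}\partial\mu + O(\partial\mu) = -\tfrac{N}{4}\partial\mu + O(\partial\mu)$, which is still order $\sqrt{N}$. What rescues this is that $A=\nabla^2 H_{a,b}(0)$ is a GOE \emph{scaled by $\sqrt{\mu}$}, so $\partial A$ contains a deterministic piece $\tfrac{\partial\mu}{2\sqrt{\mu}}\cdot(\text{underlying GOE})$; its trace against $((1+\mu)I-A)^{-1}$ contributes exactly $+\tfrac{N}{4}\partial\mu$ (via the companion identity $\int x/(1+\mu-\sqrt{\mu}x)\,\dif\rho_{\mathrm{sc}}=\sqrt{\mu}$), completing the cancellation. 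Your phrasing treats $\partial A$ as purely the fresh Gaussian perturbation and misses this.

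The paper bypasses the path argument and works with the difference directly: it writes $\nabla^2 H_{0,0}(0)=\sqrt{\alpha_1}G$ and $\nabla^2 H_{a,b}(0)=\tfrac{\rho}{\sqrt{\alpha_1}}G+\sqrt{\alpha_2-\rho^2/\alpha_1}\,\wt G$ for independent GOEs $G,\wt G$, where $\alpha_1,\alpha_2,\rho$ differ by $O((a^2+b^2)/N)$, then Taylor-expands $\log\det(I+M_1^{-1/2}(M_2+M_3)M_1^{-1/2})$ to second order. The trace and Frobenius terms are evaluated by semicircle integrals, and after cancellation the residual is $\tfrac{N}{2}(\alpha_1-\rho)^2/\alpha_1 = O((a^2+b^2)^2/N)$, which is $O(1)$ precisely when $|a|,|b|\lesssim N^{1/4}$.
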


We relegate the proof of the above to the appendix \Cref{app:spinglass-calcs}. The idea of the proof is that the Hessian $\grad^2 H_{a,b}(0)$ does not deviate too much for small variations in $a,b$ -- the first order terms in the deviation end up being cancelled by the $\xi''_{a,b}(0)/2$ term, while the second order terms are $O(1)$.

\begin{lemma}
    \label{lem:nu-ab-concentration}
    Let $\calE_\iota$ be as in \Cref{lem:planted-to-TAP}.
    With probability $1-e^{-cN^{1/5}}$, either $\calE_\iota$ does not hold, or the following holds.
    For $\eps$ as in \Cref{lem:band-models-eps-strictly-rs},
    \[ \Pr_{(a,b) \sim \nu} \left[ |a| \le \eps N^{1/10} \text{ and } |b| \le \eps N^{1/10} \right] \ge 1 - e^{-cN^{1/5}}. \]
\end{lemma}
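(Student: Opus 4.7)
The plan is to combine the macroscopic concentration from $\calE_\iota$ with the fine Gaussian structure of $\nu$ on $A_0 \coloneqq \{|a|, |b| \le \eps N^{1/10}\}$ provided by \Cref{lem:formula-for-nu}. Under $\calE_\iota$, the band constraint $R(\sigma, \mTAP), R(\sigma, \bx) \in [q_* - \iota, q_* + \iota]$ holds with mass $1 - e^{-cN}$ under $\mu_{\HTAP}$. Using $R(\sigma, \mTAP) = (1 + a/\sqrt{N})q_{\boldm}$ and $|q_{\boldm} - q_*|, |q_{\bx} - q_*| < \iota$, this pushes forward to $\nu(A_1) \ge 1 - e^{-cN}$, where $A_1 \coloneqq \{|a|, |b| \le C\iota\sqrt{N}\}$ for a constant $C = C(q_*)$. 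It thus suffices to show $\nu(A_0^c \cap A_1) \le e^{-cN^{1/5}}$.

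Inside $A_0$, apply \Cref{lem:formula-for-nu} to write $\log \nu(a,b) = N\wh{E}_{a,b} + \sqrt{N} g_{a,b} + \Error^{(1)}_{a,b} + \Error^{(2)}_{a,b} + \Delta_{a,b} + \mathrm{const}$. By \Cref{lem:nu-energy-gradient,lem:energy-ab-concavity}, Taylor expansion gives $N\wh{E}_{a,b} \le N\wh{E}_{0,0} - \tfrac{\eta}{2}(a^2+b^2)$ on $A_0 \subseteq \{|a|, |b| \le \eps\sqrt{N}\}$. Picking the constant in \Cref{lem:gab-concentration} to be at most $\eta/8$ makes $\sqrt{N}|g_{a,b} - g_{0,0}| \le \tfrac{\eta}{8}(a^2+b^2)$. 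The remaining terms $\Error^{(1)}_{a,b}$, $\Error^{(2)}_{a,b} - \Error^{(2)}_{0,0}$, $\Delta_{a,b}$ are each $O(1)$ by \Cref{lem:nu-energy-bounded-error,lem:nu-energy-bounded-error-2} and $|\Delta_{a,b}| \le 1$. Combining yields, uniformly on $A_0$, the Gaussian-like bound
\[
    \log \nu(a,b) - \log \nu(0,0) \le -\tfrac{\eta}{4}(a^2+b^2) + O(1).
\]

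The main obstacle is extending this bound to the annulus $A_1 \setminus A_0$, since \Cref{lem:formula-for-nu} is restricted to $A_0$ because its control on $\Error^{(1)}_{a,b}$ (via \Cref{thm:partition-fn-and-covariance}(a)) requires $\gamma_1(\xi_{a,b})^2 \le N^{-4/5}$. However, by \Cref{lem:band-models-eps-strictly-rs} the slice models $\xi_{a,b}$ remain strictly replica-symmetric throughout $|a|, |b| \le \eps\sqrt{N}$, with $\gamma_1(\xi_{a,b})^2 = O((a^2+b^2)/N) = O(\iota^2)$ on $A_1$. The plan is to extend \Cref{thm:partition-fn-and-covariance}(a) to the regime $\gamma_1^2 = O(1)$ (via the same second-moment / free-energy-typical-truncation argument) so that $|\Error^{(1)}_{a,b}| = O(1)$ uniformly on $A_1$; the concavity analysis then applies throughout, and integrating the resulting exponential decay over $\{|a| \vee |b| > \eps N^{1/10}\}$ gives $\nu(A_0^c \cap A_1) \le e^{-c\eps^2 N^{1/5}}$. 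Combined with $\nu(A_1^c) \le e^{-cN}$ from the first step, this yields $\nu(A_0^c) \le e^{-cN^{1/5}}$ as desired.
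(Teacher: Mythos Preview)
Your decomposition into the far region (handled by $\calE_\iota$) and the annulus $A_1 \setminus A_0$ is exactly right, and you have correctly identified the obstacle: pointwise control of $\log \wh{Z}_{a,b}$ on the annulus would require \Cref{thm:partition-fn-and-covariance}(a) with $\gamma_1^2$ as large as $O(\iota^2)$, not $N^{-4/5}$.

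The gap is in your proposed resolution. Extending \Cref{thm:partition-fn-and-covariance}(a) to $\gamma_1^2 = O(1)$ is not a matter of rerunning ``the same second-moment / free-energy-typical-truncation argument.'' The condition $\gamma_1^2 \le N^{-4/5}$ is used essentially: in the proof of \Cref{ppn:nondeg2-partition-fn}, for $|R| \le N^{-2/5}$ one needs $N\xi_{\sim2}(R) = N\gamma_1^2 R + O(NR^3) = o(1)$, and with $\gamma_1^2 = \Theta(1)$ this term is $\Theta(N^{3/5})$ and the variance bound collapses. More structurally, a $\Theta(1)$ external field shifts the Gibbs mean by $\Theta(\sqrt{N})$, so the free energy no longer has the clean degree-$2$ form in \Cref{thm:partition-fn-and-covariance}(a); reducing an external-field model to a field-free slice model is precisely the content of the TAP machinery in \Cref{sec:tap-planted,sec:tap-slices}, so invoking it here is circular.

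The paper sidesteps this entirely. On the annulus $T = A_1 \setminus A_0$ one does not need pointwise control of $\log \wh{Z}_{a,b}$: the unconditional first-moment identity $\wh{\E}\, \wh{Z}_{a,b} = e^{N\xi_{a,b}(1)/2}$ lets you compute $\wh{\E} \int_T (\text{unnormalized density})\, \dif(a,b)$ directly, and the concavity of $N\wh{E}_{a,b}$ plus the control on $g_{a,b}$ (both of which hold on all of $A_1$) make this integral $\le e^{-cN^{1/5}}$ times the value at the origin. Markov then gives the numerator bound with high probability. For the denominator, one only needs a lower bound on the unnormalized density near $(0,0)$, and there \Cref{lem:formula-for-nu} applies cleanly (with $|\Error^{(2)}_{0,0}| \le N^{1/10}$ via \Cref{lem:logdet-subgaussian}). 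This avoids any extension of \Cref{thm:partition-fn-and-covariance}.
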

\begin{proof}
    On the event $\calE_{\iota}$, we have
    \[ 
        \Pr_{(a,b) \sim \nu} \left[ |a| > \iota N^{1/2} \text{ or } |b| > \iota N^{1/2} \right] \le e^{-cN}. 
    \]
    Thus, let 
    \[
        T = \left\{ (a,b) \in \R^2 : |a| \in [\eps N^{1/10}, \iota N^{1/2}] \text{ or } |b| \in [\eps N^{1/10}, \iota N^{1/2}] \right\}.
    \]
    It suffices to show that $\Pr_{(a,b) \sim \nu} [(a,b) \in T] \le e^{-cN^{1/5}}$ with probability $1-e^{-cN^{1/5}}$.
    Recall the density of $(a,b) \sim \nu$ is given by \eqref{eq:density-of-nu}, and that $\E \wh{Z}_{a,b} = e^{N\xi_{a,b}(1)/2}$.
    Thus, for $\wh{\E}$ denoting expectation with respect to the $\wh{Z}_{a,b}$ alone, 
    \begin{align*}
        &\wh{\E} \int_T
        \exp\left( \log \wh{Z}_{a,b} + (N-4) \log r_{a,b} + \E_{\mu_{\TAP}} \HTAP(v(a,b)) + \sqrt{N} g_{a,b} \right) \dif(a,b) \\
        &= \int_T \exp\lt(N\wh{E}_{a,b} + \fr{N\xi_t(1)}{2} + \sqrt{N} g_{a,b} - 4 \log r_{a,b}\rt) \dif(a,b).
    \end{align*}
    On the event in \Cref{lem:gab-concentration}, we have, for any constant $\iota > 0$, 
    \[
        \sqrt{N} g_{a,b}
        \le \sqrt{N} g_{0,0} + \iota (a^2+b^2).
    \]
    By \Cref{lem:energy-ab-concavity}, 
    \[
        N \wh{E}_{a,b} \le N \wh{E}_{0,0} - \eta (a^2+b^2).
    \]
    Combining shows that 
    \[
        N\wh{E}_{a,b} + \sqrt{N} g_{a,b} - 4 \log r_{a,b}
        \le N\wh{E}_{0,0} + \sqrt{N} g_{0,0} - \fr{\eta}{2} (a^2+b^2)+O(1).
    \]
    Combining shows
    \begin{align*}
        &\wh{\E} \int_T 
        \exp\left( \log \wh{Z}_{a,b} + (N-4) \log r_{a,b} + \E_{\mu_{\TAP}} \HTAP(v(a,b)) + \sqrt{N} g_{a,b} \right) \dif(a,b) \\
        &\le e^{-cN^{1/5}} \exp\lt(N\wh{E}_{0,0} + \fr{N\xi_t(1)}{2} + \sqrt{N} g_{0,0}\rt)
    \end{align*}
    and therefore with probability $1-e^{-cN^{1/5}/2}$ over the $\wh{Z}_{a,b}$,
    \begin{align}   
        \notag
        &\int_T 
        \exp\left( \log \wh{Z}_{a,b} + (N-4) \log r_{a,b} + \E_{\mu_{\TAP}} \HTAP(v(a,b)) + \sqrt{N} g_{a,b} \right) \dif(a,b) \\
        \label{eq:partition-fn-over-bad-ab}
        &\le e^{-cN^{1/5}/2} \exp\lt(N\wh{E}_{0,0} + \fr{N\xi_t(1)}{2} + \sqrt{N} g_{0,0}\rt)
    \end{align}
    On the other hand, \Cref{lem:formula-for-nu} implies that with probability $1-e^{-cN^{1/5}}$, 
    \begin{align*}
        &\log \wh{Z}_{0,0} + (N-4) \log r_{0,0} + \E_{\mu_{\TAP}} \HTAP(v(0,0)) + \sqrt{N} g_{0,0} \\
        &= N\wh{E}_{0,0} + \fr{N\xi_t(1)}{2} + \sqrt{N} g_{0,0} + \Error^{(1)}_{0,0} + \Error^{(2)}_{0,0} + O(1),
    \end{align*}
    and \Cref{lem:nu-energy-bounded-error} implies $|\Error^{(1)}_{0,0}| = O(1)$ with probability $1-e^{-cN^{1/5}}$.
    Furthermore, \Cref{lem:logdet-subgaussian} below implies that $|\Error^{(2)}_{0,0}| \le N^{1/10}$ with probability $1-e^{-cN^{1/5}}$.
    Thus
    \begin{align*}
        &\log \wh{Z}_{0,0} + (N-4) \log r_{0,0} + \E_{\mu_{\TAP}} \HTAP(v(0,0)) + \sqrt{N} g_{0,0} \\
        &\ge N\wh{E}_{0,0} + \fr{N\xi_t(1)}{2} + \sqrt{N} g_{0,0} - 2N^{1/10},
    \end{align*}
    and standard continuity arguments imply that for $T' = \{(a,b) : |a|,|b| \le N^{-10}\}$, 
    \begin{align*}
        &\int_{T'}
        \exp\left( \log \wh{Z}_{a,b} + (N-4) \log r_{a,b} + \E_{\mu_{\TAP}} \HTAP(v(a,b)) + \sqrt{N} g_{a,b} \right) \dif(a,b) \\
        &\ge e^{-3N^{1/10}} \exp\lt(
            N\wh{E}_{0,0} + \fr{N\xi_t(1)}{2} + \sqrt{N} g_{0,0}
        \rt).
    \end{align*}
    Comparing with \eqref{eq:partition-fn-over-bad-ab} implies the conclusion, after adjusting $c$.
\end{proof}

\begin{lemma}
    \label{lem:nu-ab-subgaussianity}
    With probability $1-e^{-cN^{1/5}}$, either $\calE_\iota$ does not hold or the following holds. 
    There exists a random variable $X$ over $\R^2$ (coupled with $\nu$) such that the following holds for $(a,b) \sim \nu$.
    \begin{enumerate}[label=(\alph*)]
        \item With probability at least $1-e^{-cN^{1/5}}$, $X = (a,b)$.
        \item $X$ has mean $O(1)$ and is $O(1)$-subgaussian.
    \end{enumerate}
\end{lemma}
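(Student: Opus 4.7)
Our plan is to work throughout on the high-probability event where Lemmas \ref{lem:formula-for-nu}, \ref{lem:nu-energy-gradient}, \ref{lem:energy-ab-concavity}, \ref{lem:gab-concentration}, \ref{lem:nu-energy-bounded-error}, \ref{lem:nu-energy-bounded-error-2}, and \ref{lem:nu-ab-concentration} all hold simultaneously; a union bound shows this event has probability $1-e^{-cN^{1/5}}$. Let $\tilde{\nu}$ denote the law of $\nu$ conditioned on $E \coloneqq \{|a|,|b|\le\eps N^{1/10}\}$, with $\eps$ as in \Cref{lem:band-models-eps-strictly-rs}. We will define the coupling $X$ as follows: draw $(a,b)\sim\nu$; if $(a,b)\in E$ set $X=(a,b)$, otherwise independently resample $X\sim\tilde{\nu}$. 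By \Cref{lem:nu-ab-concentration} this gives $X=(a,b)$ with probability $1-e^{-cN^{1/5}}$, so part (a) holds, and a direct computation confirms that the marginal law of $X$ equals $\tilde{\nu}$. It therefore suffices to prove that $\tilde{\nu}$ is $O(1)$-subgaussian with mean of norm $O(1)$.

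To this end, write the log-density of $\nu$ on $E$ (up to a normalizing constant) as
\[
U(a,b) \coloneqq N\wh{E}_{a,b} + \sqrt{N}\,g_{a,b} + \Error^{(1)}_{a,b} + \Error^{(2)}_{a,b} + \Delta_{a,b}.
\]
By \Cref{lem:nu-energy-gradient,lem:energy-ab-concavity} and Taylor's theorem, $N\wh{E}_{a,b} \le N\wh{E}_{0,0} - \tfrac{\eta}{2}(a^2+b^2)$ uniformly on $E$. Choosing $\iota < \eta/8$ in \Cref{lem:gab-concentration} gives $|\sqrt{N}(g_{a,b}-g_{0,0})| \le \iota(a^2+b^2)$, and \Cref{lem:nu-energy-bounded-error,lem:nu-energy-bounded-error-2} together with $|\Delta_{a,b}|\le 1$ show that the remaining terms differ from their values at the origin by at most some absolute constant $M$. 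Summing,
\[
U(a,b) \le U(0,0) - \tfrac{\eta}{4}(a^2+b^2) + C_1
\]
uniformly on $E$. A matching lower bound on $U$ for small $(a,b)$ follows from the same inputs together with a crude Hessian \emph{lower} bound $N\grad^2 \wh{E}_{a,b} \succeq -C\,\Id$, which is immediate from the closed-form expression in \Cref{lem:formula-for-nu}: each summand of $\wh{E}$ is a smooth function of $a/\sqrt{N}$ and $b/\sqrt{N}$ evaluated at bounded arguments, so its Hessian is $O(1/N)$. Integrating the lower bound over $\{a^2+b^2\le 1\}$ shows the partition function $Z = \int_E e^U$ is at least $\Omega(e^{U(0,0)})$, while the upper bound gives $Z = O(e^{U(0,0)})$. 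Combining,
\[
\tilde\nu(a,b) \le C_2 \exp\bigl(-\tfrac{\eta}{8}(a^2+b^2)\bigr)
\]
pointwise on $E$, which yields the required $O(1)$-subgaussianity of $X$, and via Cauchy–Schwarz gives $\|\E X\| \le (\E\|X\|^2)^{1/2} = O(1)$.

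The steps involved are essentially routine once \Cref{lem:formula-for-nu} through \Cref{lem:nu-ab-concentration} are in hand; the only mild point requiring extra care is the Hessian lower bound on $\wh{E}_{a,b}$, which is not stated as a separate lemma in the excerpt but follows by direct inspection of the explicit formula. All remaining work is routine assembly of the pointwise bounds above into the Gaussian tail estimate for $\tilde\nu$.
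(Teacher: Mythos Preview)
Your proposal is correct and takes essentially the same approach as the paper: the paper's proof is the one-line remark that the result is ``an immediate corollary'' of Lemmas~\ref{lem:nu-energy-gradient}--\ref{lem:nu-ab-concentration}, defining $X=(a,b)\cdot\Ind[(a,b)\in E]$, whereas you fill in the routine details (the pointwise upper bound on the log-density and the matching partition-function lower bound) and instead define $X$ by resampling from $\tilde\nu$ on $E^c$. The only additional ingredient you invoke beyond the cited lemmas is the crude Hessian lower bound $N\grad^2\wh{E}_{a,b}\succeq -C\,\Id$, and your justification (that $\wh{E}_{a,b}$ is a smooth function of $(a/\sqrt{N},b/\sqrt{N})$ at bounded arguments) is correct and immediate from the explicit formula in \Cref{lem:formula-for-nu}.
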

\begin{proof}
    This is an immediate corollary of \Cref{lem:nu-energy-gradient,lem:energy-ab-concavity,lem:gab-concentration,lem:nu-energy-bounded-error,lem:nu-energy-bounded-error-2,lem:nu-ab-concentration}, setting $X$ to be the random variable that is equal to $(a,b)$ if $|a|,|b| < \eps N^{1/10}$, and $0$ otherwise.
\end{proof}

We are now finally prepared to bound $\Cov(\mu)$.
\plantedcovbound*
\begin{proof}
Note that $|a|,|b| \le 2\sqrt{N}$ almost surely.
Let $X$ be as in \Cref{lem:nu-ab-subgaussianity}.
This lemma implies that with probability at least $1 - e^{-cN^{1/5}}$ over the randomness of the Hamiltonian, 
\begin{align*}
    \E_{(a,b)\sim\nu}\bracks*{a^2 + b^2} 
    &= \E[\|X\|^2] + \E_{(a,b)\sim\nu}\bracks*{\bone[X\neq (a,b)](a^2 + b^2)} \\
    &\le O(1) + \Pr(X \neq (a,b)) \cdot 8N = O(1).
\end{align*}
Thus, by plugging in \eqref{eq:first-term-cov-bound} along with this observation into \eqref{eq:cov-bound}, we get that the following holds with probability at least $1-e^{-cN^{1/5}}$
\begin{align*}
    \norm*{\Cov(\mu)} &\le 2C \E_{(a,b)\sim\nu} (1+a^2+b^2) + \norm*{2\E_{(a,b)\sim\nu} (v(a,b)-\boldm)(v(a,b) - \boldm)^{\top}} \\
    &\le O(1) + 2 \E_{(a,b)\sim\nu} \norm*{v(a,b) - \boldm}^2 \\
    &\le O(1) + 2 \E_{(a,b)\sim\nu} \bracks*{O(a^2 + b^2)} \\
    &\le O(1)\mper  \qedhere
\end{align*}
\end{proof}

% Brice macros
\def\tgam{{\widetilde \gamma}}
\def\tG{{\widetilde G}}
\def\tQ{{\widetilde Q}}
\def\tT{{\widetilde T}}
\def\tX{{\widetilde X}}
\def\hX{{\widehat X}}
\def\GOOD{{\mathsf{good}}}
\def\Band{{\mathsf{Band}}}
\def\RomI{{\mathrm{I}}}
\def\RomII{{\mathrm{II}}}
\def\RomIII{{\mathrm{III}}}
\def\RomIV{{\mathrm{IV}}}

\section{High-probability covariance bound of replica symmetric spherical spin glass}\label{sec:high-prob-cov}

In this section we prove the main technical input to the proofs in \Cref{sec:tap-slices}.
This takes the form of a high-probability bound on the partition function and covariance matrix (in fact, second moment matrix) of a spherical spin glass in the replica symmetric phase.

In this section, we let $H_N$ be defined as in \eqref{eq:pspin-with-external-field}, with a linear term corresponding to an external field: 
\[
    H_N(\sigma) \coloneqq \sum_{p \ge 1} \fr{\gamma_p}{N^{(p-1)/2}} \sum_{i_1,\dots,i_p = 1}^N \bg_{i_1,\dots,i_p} \sigma_{i_1}\cdots\sigma_{i_p}.
\]
We recall $\xi_{\sim1}(q) = \sum_{p\ge 2} \gamma_p^2 q^p$ denotes the part of $\xi$ without the linear term, and let
\[
    % \xi_{\sim1}(q) = \sum_{p\ge 2} \gamma_p^2 q^p, \qquad
    \xi_{\sim2}(q) = \gamma_1^2 q + \sum_{p\ge 3} \gamma_p^2 q^p
\]
denote the part of $\xi$ excluding the degree $2$ term.

The results in this section hold under the following condition, which we restate for reference.
\strictrs*
Throughout this section, we treat $\eps > 0$ as a constant and let $O_\eps(1)$ denote a quantity bounded depending on $\eps$.
\partitionfn*

In the below proofs, we allow the constants $c$ and $C$ to change from line to line, but they will always be uniform in $\eps$.
We always set $C$ sufficiently large depending on $\eps$, and then $c$ sufficiently small depending on $\eps,C$.

\Cref{thm:partition-fn-and-covariance} will be proved through the following pair of propositions.
We introduce the degree-$2$ Hamiltonian
\begin{equation}
    \label{eq:HN2}
    H_{N,2}(\sigma) \coloneqq \fr{\gamma_2}{N^{1/2}} \sum_{i_1,i_2 = 1}^N \bg_{i_1,i_2} \sigma_{i_1}\sigma_{i_2}
    = \fr12 \la \nabla^2 H_N(0) \sigma, \sigma \ra.
\end{equation}
Similarly let $H_{N,\sim2}(\sigma) = H_N(\sigma) - H_{N,2}(\sigma)$ be the non degree-$2$ part of $H_N(\sigma)$.
Define the degree-$2$ Gibbs measure and partition function by
\[
    \dif \mu_{H_{N,2}}(\sigma) = \fr{\exp(H_{N,2}(\sigma))}{Z_{N,2}} \dif\rho(\sigma), \qquad
    Z_{N,2} = \int_{S_N} \exp(H_{N,2}(\sigma)) \dif\rho(\sigma).
\]
Throughout this section, we will let $\E$ denote expectation with respect to the disorder coefficients $\bg_{i_1,\dots,i_p}$, while $\la \cdot \ra$ denotes averaging with respect to $\sigma \sim \mu_{H_N}$ (or several i.i.d. samples $\sigma^1,\sigma^2,\ldots$ from this measure).
Similarly, let $\la \cdot \ra_2$ denote Gibbs average with respect to $\mu_{H_{N,2}}$.

Note that $\nabla^2 H_N(0)$ depends on $H_N$ only through $H_{N,2}$.
\begin{proposition}[Concentration of degree-$2$ partition function; proved in \Cref{ss:deg2-partition-fn}]
    \label{ppn:deg2-partition-fn}
    With probability $1-e^{-cN}$ over $H_{N,2}$, we have $\nabla^2 H_N(0) \preceq (1 + \xi''(0) - \eps^2/8) I_N$ and
    \[
        \lt|
            \log Z_{N,2} - \fr{N\xi''(0)}{2} - \fr{\log(1-\xi''(0))}{2}
            + \fr12 \log \det \lt((1+\xi''(0)) I - \nabla^2 H_N(0)\rt)
        \rt| \le 1/2.
    \]
\end{proposition}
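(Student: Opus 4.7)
The plan is to reduce the problem to standard GOE random matrix theory and a one-dimensional Laplace integral. Since $H_{N,2}(\sigma) = \tfrac{1}{2}\la A\sigma,\sigma\ra$ with $A = \nabla^2 H_N(0)$, the partition function $Z_{N,2}$ is a deterministic function of the spectrum of $A$. I would start from the Gaussian identity
\begin{equation*}
(2\pi)^{N/2}\det(\gamma I_N - A)^{-1/2} = \int_{\R^N} \exp\bigl(-\tfrac{1}{2}\la(\gamma I_N - A)\sigma,\sigma\ra\bigr)\, d\sigma,
\end{equation*}
valid for $\gamma > \lambda_{\max}(A)$, and pass to polar coordinates to obtain a one-dimensional radial integral whose integrand is proportional to $r^{N-1} e^{-\gamma r^2/2} \tilde Z(r)$, where $\tilde Z(\sqrt{N}) = Z_{N,2}$. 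Taking $\gamma = 1 + \xi''(0)$, the saddle point of the radial integral sits exactly at $r_\ast = \sqrt{N}$, and Laplace's method then extracts the claimed explicit formula for $\log Z_{N,2}$ in terms of $\log\det((1+\xi''(0))I - A)$.

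The Hessian bound follows from standard edge concentration for the GOE: $A$ is (up to normalization) GOE with semicircle law on $[-2\sqrt{\xi''(0)},2\sqrt{\xi''(0)}]$, so $\E\lambda_{\max}(A) \to 2\sqrt{\xi''(0)}$. The strict RS condition (\Cref{con:strict-rs-2}) implies $\xi''(0) \le 1-\eps$ (by Taylor expanding the condition at $q\to 0$), hence $1+\xi''(0) - 2\sqrt{\xi''(0)} = (1-\sqrt{\xi''(0)})^2 \ge \eps^2/4$. Since $\lambda_{\max}$ is $O(N^{-1/2})$-Lipschitz in the Gaussian disorder, Gaussian concentration of Lipschitz functions yields $\lambda_{\max}(A) \le 1 + \xi''(0) - \eps^2/8$ with probability $1-e^{-cN}$; this spectral gap will in turn make the Laplace analysis of the second part uniform in the disorder.

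To verify the saddle-point structure and compute the Laplace prefactor, I would use the Stieltjes transform $G$ of the semicircle law at $\gamma = 1+\xi''(0)$, where $G(\gamma)=1$ and $G'(\gamma) = -1/(1-\xi''(0))$. The first identity gives the vanishing first-order condition at $r_\ast = \sqrt{N}$ (equivalently, $\E_{\mu_{H_{N,2}}}[\la A\sigma,\sigma\ra/N] \to \xi''(0)$), while the second, combined with the sphere-constraint correction to the variance of $\la A\sigma,\sigma\ra$, pins down the curvature of the radial exponent at $r_\ast$ with explicit coefficient proportional to $1/(1-\xi''(0))$. After combining the resulting Laplace prefactor with Stirling's formula for the unit-sphere area, the spurious $\tfrac12 \log N$ contributions cancel and precisely the $\tfrac{1}{2}\log(1-\xi''(0))$ term in the claim emerges. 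The main obstacle is controlling the Laplace error to $O(1)$ \emph{uniformly} in the random matrix: this requires quantitative quadratic lower bounds on the radial exponent near $r_\ast$ together with tail bounds for $r$ far from $\sqrt{N}$, both of which are furnished by the uniform spectral gap $\gamma - \lambda_{\max}(A) \ge \eps^2/8$ from the first step. The overall computation is essentially the Baik-Lee analysis of the spherical Sherrington-Kirkpatrick partition function \cite{BL16}, adapted to the present normalization along the lines of \cite{HMP24}.
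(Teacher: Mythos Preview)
Your proposal is correct and takes essentially the same approach as the paper: both derive the Hessian bound from $\xi''(0)\le 1-\eps$ plus GOE edge concentration, and both compute $Z_{N,2}$ via the Laplace/saddle-point analysis of \cite{BL16} as packaged in \cite[Lemma~7.3]{HMP24}. The one framing difference worth flagging is that the paper works with the contour-integral representation in $\gamma$, whose integrand $e^{NG(\gamma)}$ is purely spectral, locates the \emph{random} saddle $\gamma_*$, and then shows $|\gamma_*-\gamma_0|\le O(N^{-1/2})$ for $\gamma_0=(1+\xi''(0))/2$ via concentration of linear spectral statistics (\cite{GZ00,BY05}); in your dual radial picture the saddle condition at $r_*=\sqrt N$ reads $\E_{\mu_{H_{N,2}}}[\langle A\sigma,\sigma\rangle/N]=\xi''(0)$, which is a statement about the Gibbs measure you are trying to analyze, so to make the ``saddle exactly at $\sqrt N$'' claim rigorous to $O(1)$ precision you would be routed back through the same quantitative trace-concentration input that the paper invokes.
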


The following is proved in \Cref{ss:nondeg2-positive-prob,ss:nondeg2-high-prob}.

\begin{restatable}{proposition}{propfreeenergycovconcentration}
    \label{ppn:nondeg2}
    There is a $H_{N,2}$-measurable event with probability $1-e^{-cN^{1/5}}$ on which the following holds with probability $1-e^{-cN^{1/5}}$ over $H_{N,\sim2}$.
    \begin{enumerate}
        \item The partition functions $Z_N$, $Z_{N,2}$ satisfy
        \[
            \lt|\log \fr{Z_N}{Z_{N,2}} - \fr{N\xi_{\sim2}(1)}{2} \rt| \le 1/2.
        \]
        \item The Gibbs measure satisfies $\|\la \sigma\sigma^{\top} \ra\|_{\op} \le C(1 + \gamma_1^2 N)$.
    \end{enumerate}
\end{restatable}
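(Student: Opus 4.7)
The plan is to condition on $H_{N,2}$ and reduce both statements to second-moment arguments over the non-degree-$2$ disorder $H_{N,\sim 2}$, then upgrade from constant probability to $1-e^{-cN^{1/5}}$ via Gaussian concentration. The good event on $H_{N,2}$ will include the conclusion of \Cref{ppn:deg2-partition-fn} (in particular $\nabla^2 H_N(0)\preceq(1+\xi''(0)-\eps^2/8)I_N$) together with overlap concentration under $\mu_{H_{N,2}}^{\otimes 2}$ at scale $R(\sigma^1,\sigma^2)=O(N^{-1/2})$, which follows by approximating $\mu_{H_{N,2}}$ by the Gaussian with covariance $((1+\xi''(0))I-\nabla^2 H_N(0))^{-1}$.

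For part~(1), the Gaussian MGF identities give
\[
\E_{\sim 2}Z_N = Z_{N,2}\,e^{N\xi_{\sim 2}(1)/2},\qquad \frac{\E_{\sim 2}Z_N^2}{(\E_{\sim 2}Z_N)^2}=\la e^{N\xi_{\sim 2}(R(\sigma^1,\sigma^2))}\ra_{2,\otimes 2}.
\]
On typical overlaps $R\sim N^{-1/2}$ we have $N\xi_{\sim 2}(R)=o(1)$, since $\gamma_1^2\le N^{-4/5}$ and every other monomial in $\xi_{\sim 2}$ has degree $\ge 3$. To rule out atypical large-overlap contributions that could dominate the naive second moment across the full strict RS regime, I would introduce a free-energy-typical truncation $\wt{Z}_N$ in the style of \cite{HS23}, restricting to configurations whose marginal energy profile is typical. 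A direct calculation should then yield $\E_{\sim 2}\wt{Z}_N^2/(\E_{\sim 2}\wt{Z}_N)^2 = 1+o(1)$, and a separate first-moment bound $\E_{\sim 2}(Z_N-\wt{Z}_N)\ll \E_{\sim 2}Z_N$, so that Paley--Zygmund delivers $\log Z_N = \log \E_{\sim 2}Z_N + O(1)$ with constant probability.

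For part~(2), by rotational invariance of the sphere and the Gaussian disorder we may assume $A\coloneqq \nabla^2 H_N(0)$ is diagonal. In this basis the degree-$2$ Gibbs second moment matrix $\la \sigma_i\sigma_j\ra_2$ is essentially the diagonal matrix with entries $((1+\xi''(0))I-A)^{-1}_{ii}$, whose operator norm is $O(\eps^{-2})$ on the good $H_{N,2}$-event, plus a rank-one contribution of operator norm $O(\gamma_1^2 N)$ coming from the mean in the external-field direction. An analogous entrywise second-moment computation, using the same truncation, should yield $\E_{\sim 2} M_{ij}^2 \lesssim N^{-2}(\E_{\sim 2} M_{ij})^2$ for the unnormalized moment $M_{ij}=\int \sigma_i\sigma_j e^{H_N(\sigma)}\,\dif\rho(\sigma)$. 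Summing over $(i,j)$ bounds the Frobenius deviation of the (normalized) second-moment matrix from its degree-$2$ prediction by $O(1)$, hence also the operator-norm deviation, giving the $C(1+\gamma_1^2 N)$ bound with constant probability.

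The constant-probability estimates are upgraded to probability $1-e^{-cN^{1/5}}$ by Gaussian Lipschitz concentration in $H_{N,\sim 2}$: as indicated in the overview, on the good $H_{N,2}$-event and after truncation both $\log Z_N$ and $\|\la \sigma\sigma^\top\ra\|_{\op}$ are sufficiently Lipschitz functions of the independent coefficients $(\bg_{i_1,\dots,i_p})_{p\ne 2}$ (this is the easier of the two steps), yielding sub-Gaussian tails at scale $N^{1/5}$ that match the targeted error. The principal obstacle is the second-moment step across the full strict RS regime: the naive annealed integrand $e^{N\xi_{\sim 2}(R)+\frac{N}{2}\log(1-R^2)}$ is not dominated by $R\approx 0$ near the RS boundary, and matching the free-energy-typical truncation of \cite{HS23} to the overlap law of $\mu_{H_{N,2}}$ (which is itself biased by the degree-$2$ disorder) while keeping error estimates sharp enough to survive the concentration upgrade is where the majority of the technical work will lie.
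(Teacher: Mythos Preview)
Your plan matches the paper's approach: condition on a good $H_{N,2}$-event, run a truncated second-moment argument over $H_{N,\sim2}$ to get constant-probability versions of both statements (entrywise in the eigenbasis of $\nabla^2 H_N(0)$ for part~(2)), then upgrade to $1-e^{-cN^{1/5}}$ by Gaussian concentration.

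One minor inaccuracy: the $O(\gamma_1^2 N)$ term does not come from a rank-one mean contribution to $\la\sigma\sigma^\top\ra_2$; the degree-$2$ Hamiltonian has no linear term, and $\|\la\sigma\sigma^\top\ra_2\|_{\op}=O(1)$. The $\gamma_1^2 N$ enters only through the diagonal entrywise estimates on the full measure, via the $N^2\gamma_1^4 R^2$ term in the expansion of $e^{N\xi_{\sim2}(R)}-1$.

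The more substantive point is that you undersell the concentration step. The gradient $\nabla_{\bg_p}\log Z_N=\gamma_p N^{-(p-1)/2}\la\sigma^{\otimes p}\ra$ has small norm only when the overlap moments $\la R(\sigma^1,\sigma^2)^p\ra$ are controlled, and bounding the large-overlap contribution to these moments (relative to $Z_N^2$) already requires a \emph{lower} bound on $Z_N$, i.e.\ the conclusion of part~(1) itself. The paper breaks this circularity by packaging both statements into a single function $F=\max(\min(F_1,F_2),0)$, where $F_1$ encodes part~(1) and $F_2$ part~(2): on $\{\min(F_1,F_2)<0\}$ the function vanishes identically, while on $\{F_1\ge0,F_2\ge0\}$ intersected with a convex event $\calE$ in $\bg$-space the needed overlap control holds and yields an $O((1+\gamma_1^2N)N^{-1/10})$ Lipschitz bound. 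Kirszbraun then extends $F$ off $\calE$ with the same constant, and Gaussian concentration finishes. Your sketch does not identify this coupling of the two parts, and ``after truncation'' alone does not supply it; without it the Lipschitz claim as you stated it does not hold.
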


\begin{proof}[Proof of \Cref{thm:partition-fn-and-covariance}]
    Immediate from \Cref{ppn:deg2-partition-fn,ppn:nondeg2}, since $\xi(1) = \xi_{\sim2}(1) + \fr12 \xi''(0)$.
\end{proof}
We also show the following concentration of the log determinant in \Cref{thm:partition-fn-and-covariance}.
\begin{lemma}[Proved in \Cref{ss:deg2-partition-fn}]
    \label{lem:logdet-subgaussian}
    There exists a $H_{N,2}$-measurable random variable $X$ that the following holds.
    \begin{enumerate}
        \item With probability $1-e^{-cN}$, $X = \log \det ((1 + \xi''(0)) I - \nabla^2 H_N(0)) - N \xi''(0) / 2$.
        \item $X$ has mean $O_\eps(1)$ and is $O_\eps(1)$-subgaussian.
    \end{enumerate}
\end{lemma}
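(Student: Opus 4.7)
My plan is to define $X$ as a Lipschitz truncation of the log determinant, and combine Gaussian concentration with a semicircle integral computation to establish the two claims. The key observation enabling this is that $\eps$-strict replica symmetry forces the semicircle spectral edge $2\sqrt{\xi''(0)}$ of $\nabla^2 H_N(0)$ to lie strictly below $1+\xi''(0)$ with an $\eps$-dependent gap (since $(1-\sqrt{\xi''(0)})^2$ is bounded away from $0$ depending only on $\eps$), so the truncated logarithm is smooth on a neighborhood of the typical spectrum.

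Concretely, set $\kappa_* := 1 + \xi''(0) - \eps^2/8$ and let $\tilde f : \bbR \to \bbR$ be a smooth function that agrees with $\lambda \mapsto \log((1+\xi''(0)) - \lambda)$ on $(-\infty, \kappa_*]$ and is globally $L$-Lipschitz with $L = O_\eps(1)$ (e.g. by capping the derivative past $\kappa_*$). Define
\[
X := \Tr\,\tilde f\bigl(\nabla^2 H_N(0)\bigr) - \tfrac{1}{2} N \xi''(0),
\]
which is $H_{N,2}$-measurable. By \Cref{ppn:deg2-partition-fn}, the bound $\lambda_{\max}(\nabla^2 H_N(0)) \le \kappa_*$ holds with probability at least $1 - e^{-cN}$, and on this event $X$ coincides exactly with $\log\det((1+\xi''(0))I - \nabla^2 H_N(0)) - N\xi''(0)/2$, yielding claim (1).

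For claim (2), I would view $X$ as a function of the i.i.d.\ $\calN(0,1)$ disorder $(\bg_{ij})$ driving $H_{N,2}$. A direct chain-rule computation, together with symmetry of $\tilde f'(W)$ where $W := \nabla^2 H_N(0)$, gives $\partial X/\partial \bg_{ab} = (2\gamma_2/\sqrt{N})\, \tilde f'(W)_{ab}$, whence
\[
\|\nabla X\|_{\ell^2}^2 \;=\; \frac{4\gamma_2^2}{N}\,\|\tilde f'(W)\|_F^2 \;\le\; 4\gamma_2^2 L^2 \;=\; O_\eps(1),
\]
using $\|\tilde f'\|_\infty \le L$ to bound $\|\tilde f'(W)\|_F \le L\sqrt{N}$. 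The Borell--Tsirelson--Ibragimov--Sudakov Gaussian concentration inequality then shows $X - \E[X]$ is $O_\eps(1)$-subgaussian. For the mean: $W$ is a scaled Gaussian orthogonal ensemble whose spectrum converges to the semicircle law $\nu_s$ supported on $[-2\sqrt{\xi''(0)}, 2\sqrt{\xi''(0)}]$, and the classical CLT for linear eigenvalue statistics of Wigner matrices (Bai--Yao / Lytova--Pastur) gives $\E[\Tr\,\tilde f(W)] = N \int \tilde f\, d\nu_s + O_\eps(1)$. A direct Stieltjes-transform computation, using that for $z = 1+\xi''(0)$ and $s = \sqrt{\xi''(0)}$ one has $\sqrt{z^2 - 4s^2} = 1 - \xi''(0)$, yields $\int \log((1+\xi''(0))-\lambda)\, d\nu_s(\lambda) = \xi''(0)/2$, and hence $\E[X] = O_\eps(1)$.

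The main potential obstacle is locating a formulation of the Wigner CLT that delivers an $O_\eps(1)$ (rather than $O_\eps(\log N)$) error on the mean of the smooth linear statistic. As a fallback, one can instead combine \Cref{ppn:deg2-partition-fn} with the $O(1)$-fluctuation theorem of Baik--Lee for the spherical Sherrington--Kirkpatrick free energy in the replica-symmetric regime, which furnishes $\E[\log Z_{N,2}] = N\xi''(0)/4 + O_\eps(1)$ and hence the required bound on $\E[X]$ via the identity in \Cref{ppn:deg2-partition-fn}.
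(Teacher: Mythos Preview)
Your proposal is correct and follows essentially the same route as the paper's proof: both define $X$ as the linear spectral statistic $\Tr \tilde f(\cdot) - N\xi''(0)/2$ for a Lipschitz truncation $\tilde f$ of the logarithm, use Gaussian concentration (the paper via \cite[Lemma 1.2(b)]{GZ00}, you via an explicit chain-rule bound on $\|\nabla X\|_{\ell^2}$) for the subgaussianity, compute the semicircle integral to be $\xi''(0)/2$, and invoke the Bai--Yao CLT for the mean. The ``potential obstacle'' you flag is not an issue and your fallback is unnecessary: the paper observes that once $\Tr\tilde f(M)$ is $O_\eps(1)$-subgaussian about its mean and $\Tr\tilde f(M)-NL_0$ converges in distribution to a Gaussian with $O_\eps(1)$ mean (Bai--Yao), the centering $\E\Tr\tilde f(M)-NL_0$ must itself be $O_\eps(1)$, since otherwise subgaussianity would force the limiting distribution to escape to infinity.
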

This implies the quantitative contiguity between the planted and null models, which we restate below for convenience.

\quantcon*
% \begin{corollary}
%     There exists $c = c(\eps) > 0$ such that for any event $E$, if $P_{pl}(E) = p$, then $P_{null}(E) \le e^{-cN^{1/5}} + e^{\fr{1}{c}\sqrt{\log \fr{2}{p}}} p$.
% \end{corollary}
\begin{proof}
    Let $\calE_\GOOD$ be intersection of the event in \Cref{thm:partition-fn-and-covariance}, the event
    \[
        X = \log \det ((1 + \xi''(0)) I - \nabla^2 H_N(0)) - N \xi''(0) / 2
    \]
    from \Cref{lem:logdet-subgaussian}, and the event $X \le t$, for some $t>0$ to be determined.
    Then, after adjusting $c = c(\eps)$ as necessary,
    \[
        \HamDist_{\nullmodel} (\ol{\calE_\GOOD})
        \le e^{-cN^{1/5}} + \bbP(X > t)
        \le e^{-cN^{1/5}} + e^{-c(t - \fr{1}{c})_+^2}.
    \]
    Note that $\log \E Z_N = N\xi(1)/2$, while on the event $\calE_\GOOD$,
    \[
        \log Z_N
        = \fr{N\xi(1) - X + O_\eps(1)}{2}
        \ge \fr{N\xi(1) - t - \fr{1}{c}}{2}.
    \]
    Thus $\fr{\E Z_N}{Z_N} \le e^{\fr{1}{2}(t + \fr{1}{c})}$.
    So,
    \begin{align*}
        \HamDist_{\nullmodel} (\calE)
        &\le \HamDist_{\nullmodel}(\ol{\calE_\GOOD})
        + \int \fr{\E Z_N}{Z_N} \Ind[H_N\in \calE \cap \calE_\GOOD] \dif \HamDist_{\pl}(H_N) \\
        &\le e^{-cN^{1/5}} + e^{-c(t - \fr{1}{c})_+^2}
        + e^{\fr{1}{2}(t + \fr{1}{c})} p.
    \end{align*}
    We then take $t = \fr{1}{c} + \sqrt{\fr{1}{c} \log \fr{1}{p}}$, so that this is bounded by
    \[
        e^{-cN^{1/5}} + \lt(1 + e^{\fr{1}{c} + \sqrt{\fr{1}{c} \log \fr{1}{p}}} \rt) p\mper
    \]
    Further adjusting $c$ proves the desired bound.
\end{proof}

\subsection{Concentration of degree-$2$ partition function}
\label{ss:deg2-partition-fn}

We write $A = \fr12 \nabla^2 H_N(0) = \fr{\sqrt{\xi''(0)}}{2} M$.
It is straightforward to check that $M$ is distributed as a sample from $\GOE(N)$.
\begin{fact}
    \label{fac:xi2nd-at-0}
    We have $\xi''(0) \le 1-\eps$.
\end{fact}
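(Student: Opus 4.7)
The plan is to take $q \to 0^+$ in the $\eps$-strict RS condition applied to $\xi_{\sim 1}$. By Condition~\ref{con:strict-rs-2}, $\xi_{\sim 1}$ satisfies Condition~\ref{con:strict-rs}, so
\[
    \frac{1}{q^2}\bigl(\xi_{\sim 1}(q) + q + \log(1-q)\bigr) \le -\eps/2
\]
for all $q \in (0,1)$. Since $\xi_{\sim 1}(q) = \gamma_2^2 q^2 + O(q^3)$ and $q + \log(1-q) = -\tfrac{1}{2} q^2 + O(q^3)$ as $q \to 0$, the left-hand side tends to $\gamma_2^2 - \tfrac{1}{2}$. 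Hence $\gamma_2^2 \le \tfrac{1-\eps}{2}$, and noting that $\xi''(0) = \xi_{\sim 1}''(0) = 2\gamma_2^2$ (the linear term contributes nothing to the second derivative, and higher-degree terms vanish at $0$), we conclude $\xi''(0) \le 1-\eps$. The only step worth noting is that the linear $\gamma_1^2 q$ part of $\xi$ is irrelevant here since it does not appear in the $\eps$-RS condition applied to $\xi_{\sim 1}$ and does not contribute to the second derivative at $0$; there is no real obstacle.
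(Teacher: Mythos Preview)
Your proof is correct and essentially identical to the paper's: both apply the $\eps$-strict RS condition to $\xi_{\sim1}$, Taylor expand (equivalently, take $q\to 0^+$) to extract the quadratic coefficient, and conclude $\xi''(0)=2\gamma_2^2\le 1-\eps$.
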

\begin{proof}
    Writing \eqref{eq:strict-rs} as
    \[
        \xi_{\sim1}(q) + q + \log(1-q) \le -\eps q^2 /2
    \]
    and Taylor expanding around $q=0$ implies the result.
\end{proof}
\begin{fact}
    \label{fac:nabla2HN2-psd-domination}
    With probability $1-e^{-cN}$, $\nabla^2 H_N(0) \preceq (1 + \xi''(0) - \eps^2/8) I$.
\end{fact}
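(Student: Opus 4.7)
The plan is to reduce the bound to a standard large-deviation estimate on the top eigenvalue of a GOE matrix. Recall from the setup that $\nabla^2 H_N(0) = 2A = \sqrt{\xi''(0)}\, M$, where $M \sim \GOE(N)$. So the claim is equivalent to
\[
    \sqrt{\xi''(0)}\,\lambda_{\max}(M) \le 1 + \xi''(0) - \eps^2/8
\]
with probability $1 - e^{-cN}$.

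First I would observe that the deterministic gap $(1-\sqrt{\xi''(0)})^2 = 1 + \xi''(0) - 2\sqrt{\xi''(0)}$ is bounded below in terms of $\eps$. Indeed, Fact \ref{fac:xi2nd-at-0} gives $\xi''(0) \le 1-\eps$, and the elementary inequality $1 - \sqrt{1-\eps} \ge \eps/2$ for $\eps \in [0,1]$ (verified by squaring) yields
\[
    1 + \xi''(0) - 2\sqrt{\xi''(0)} = \bigl(1 - \sqrt{\xi''(0)}\bigr)^2 \ge \eps^2/4.
\]
This leaves a margin of $\eps^2/4 - \eps^2/8 = \eps^2/8$ above the threshold $2\sqrt{\xi''(0)}$.

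Next, I would invoke a standard GOE top-eigenvalue tail bound: for any constant $\delta > 0$,
\[
    \Pr\bigl[\lambda_{\max}(M) \ge 2 + \delta\bigr] \le e^{-cN},
\]
for some $c = c(\delta) > 0$ (this follows, e.g., from Borell-TIS concentration around the median $2 + o(1)$, or from standard large-deviation results for the GOE edge). Choosing $\delta = \eps^2/(16)$, we get $\delta \sqrt{\xi''(0)} \le \eps^2/16 \le \eps^2/8$, so on the high-probability event,
\[
    \sqrt{\xi''(0)}\,\lambda_{\max}(M) \le 2\sqrt{\xi''(0)} + \eps^2/8 \le (1 + \xi''(0)) - \eps^2/4 + \eps^2/8 \le 1 + \xi''(0) - \eps^2/8,
\]
as desired.

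There is no real obstacle here: the content is entirely the combination of the deterministic algebraic margin coming from the strict replica symmetry condition (via Fact \ref{fac:xi2nd-at-0}) and a standard concentration estimate for the largest GOE eigenvalue. The only point worth being slightly careful about is that $c$ is allowed to depend on $\eps$, which matches the conventions stated at the start of the section.
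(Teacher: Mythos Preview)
Your proof is correct and follows essentially the same approach as the paper: both use the standard GOE top-eigenvalue tail bound $\lambda_{\max}(M) \le 2 + \delta$ with probability $1 - e^{-cN}$, together with the algebraic margin $(1-\sqrt{\xi''(0)})^2 \ge \eps^2/4$ derived from Fact~\ref{fac:xi2nd-at-0}. The only differences are cosmetic (your $\delta = \eps^2/16$ versus the paper's $\delta = \eps^2/8$, and a slightly different arrangement of the final chain of inequalities).
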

\begin{proof}
    With probability $1-e^{-cN}$, we have $\lambda_{\max}(M) \le 2+\eps^2/8$.
    Then,
    \begin{align*}
        \lambda_{\max} \lt( (1 + \xi''(0) - \eps^2/8) I - \nabla^2 H_N(0) \rt)
        &\ge 1 + \xi''(0) - \eps^2/8 - \sqrt{\xi''(0)} (2+\eps^2/8) \\
        &= (1 - \sqrt{\xi''(0)})^2 - \eps^2 (1 + \sqrt{\xi''(0)}) / 8 \\
        &> \eps^2 / 4 - \eps^2 / 4 = 0
    \end{align*}
    by \Cref{fac:xi2nd-at-0}.
\end{proof}
For $\gamma \in (\lambda_{\max}(A),+\infty)$, define
\begin{equation}
    \label{eq:def-G}
    G(\gamma) = \gamma - \fr{1}{2N} \log \det (\gamma I - A).
\end{equation}
Note that
\[
    G'(\gamma) = 1 - \fr{1}{2N} \Tr (\gamma I - A)^{-1}
\]
is continuous and increasing, with $\lim_{\gamma \downarrow \lambda_{\max}(A)} G'(\gamma) = -\infty$ and $\lim_{\gamma \uparrow +\infty} G'(\gamma) = 1$.
Thus $G'$ has a unique root $\gamma_*$ in $(\lambda_{\max}(A),+\infty)$.
The following lemma is a consequence of \cite[Lemma 7.3]{HMP24}, which is proved by an analysis of a Laplace transform of the free energy also used in \cite{BL16}.
\begin{lemma}
    \label{lem:ZN2-laplace}
    With probability $1-e^{-cN}$ over $H_{N,2}$,
    \begin{equation}
        \label{eq:ZN2-laplace}
        Z_{N,2} = (1 + O(N^{-c})) \sqrt{\fr{2}{G''(\gamma_*)}} (2e)^{-N/2} \exp(NG(\gamma_*)).
    \end{equation}
\end{lemma}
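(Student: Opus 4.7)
The plan is to invoke the Laplace-transform / saddle-point analysis of the spherical 2-spin partition function originally developed by Baik and Lee \cite{BL16} and refined in \cite[Lemma 7.3]{HMP24}. The starting point is an exact Gaussian identity: for any $\gamma > \lambda_{\max}(A)$, the $N$-dimensional integral $\int_{\R^N} e^{\langle Ax,x\rangle - \gamma\|x\|^2}\,dx$ evaluates by diagonalization to $\pi^{N/2}\det(\gamma I - A)^{-1/2}$, while switching to polar coordinates and then to $u = r^2/N$ (so that $x = \sqrt{u}\,\sigma$ with $\sigma \in S_N$) converts the same integral into $\tfrac{|S^{N-1}|N^{N/2}}{2}\int_0^\infty u^{N/2-1} e^{-\gamma N u} Z_{N,2}(u)\,du$, where $Z_{N,2}(u) := \int_{S_N} e^{u H_{N,2}(\sigma)}\,d\rho(\sigma)$. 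Equating the two sides and applying Stirling's formula to $|S^{N-1}| = 2\pi^{N/2}/\Gamma(N/2)$ produces the exact identity
\[
\int_0^\infty u^{N/2-1} e^{-\gamma N u}\, Z_{N,2}(u)\,du \;=\; \sqrt{\tfrac{4\pi}{N}}\,(2e)^{-N/2}\, e^{N(G(\gamma) - \gamma)}\,(1 + O(1/N)),
\]
which is where the $(2e)^{-N/2}$ prefactor in the claim ultimately originates.

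The function $G$ is engineered precisely so that $\gamma = \gamma_*$ makes $u = 1$ the saddle point of the left-hand integrand, which is the inverse-temperature value defining $Z_{N,2} = Z_{N,2}(1)$. Applying Laplace's method around $u = 1$ extracts the dominant contribution $e^{-\gamma_* N} Z_{N,2} \cdot \sqrt{2\pi/(-\Phi''_{uu}(1,\gamma_*))}$, where $\Phi(u,\gamma) = (N/2-1)\log u - \gamma N u + \log Z_{N,2}(u)$. A standard envelope/implicit-function computation, comparing the $\gamma$-derivative of the Laplace approximation to the $\gamma$-derivative of the right-hand side, identifies $-\Phi''_{uu}(1,\gamma_*)/N$ with $1/G''(\gamma_*)$. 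Solving for $Z_{N,2}$ then produces
\[
Z_{N,2} \;=\; (1 + O(N^{-c}))\,\sqrt{\tfrac{2}{G''(\gamma_*)}}\,(2e)^{-N/2}\,\exp(NG(\gamma_*)),
\]
as claimed.

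The main obstacle is justifying the Laplace expansion while the integrand $Z_{N,2}(u)$ is itself random. The key device (as in \cite{BL16,HMP24}) is to use $G$ itself as the reference: both $G$ and $\gamma_*$ are defined as explicit functions of the random $A$, so the integral identity above is exact and no deterministic limit needs to be taken. One then controls two sources of error: (i) fluctuations of $u \mapsto \log Z_{N,2}(u)$ around a smooth profile, handled by Borell--TIS concentration for the Lipschitz functional $\sigma \mapsto H_{N,2}(\sigma)$ on $S_N$, uniformly over a compact $u$-interval around $1$; and (ii) negligible contributions from $u$ far from $1$, which follow from strict convexity of $\Phi$ at the saddle. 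Both ingredients require $\gamma_*$ to sit a constant distance above $\lambda_{\max}(A)$, which guarantees the non-degeneracy $G''(\gamma_*) = \Omega_\eps(1)$ used in the prefactor. This non-degeneracy holds on the event $\lambda_{\max}(A) \leq \sqrt{\xi''(0)} + O(\eps^2)$, whose complement has probability at most $e^{-cN}$ by standard GOE top-eigenvalue concentration; by \Cref{fac:nabla2HN2-psd-domination}, the same event simultaneously furnishes the spectral bound $\nabla^2 H_N(0) \preceq (1 + \xi''(0) - \eps^2/8)I_N$ asserted in the lemma.
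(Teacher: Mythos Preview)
The paper's proof is a two-line citation: verify that $\sqrt{\xi''(0)}/2$ is bounded away from $1/2$ (via \Cref{fac:xi2nd-at-0}), then invoke \cite[Lemma 7.3]{HMP24} with zero external field. Your proposal also cites that lemma, but your sketch of its content diverges from the actual argument in a way that introduces a gap.

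The Baik--Lee / HMP24 approach does not apply Laplace's method to the $u$-integral you write down. Instead, it inverts the Laplace transform: from your exact identity $I(\gamma) = \Gamma(N/2)\, N^{-N/2}\det(\gamma I - A)^{-1/2}$, Bromwich inversion yields the contour integral
\[
    Z_{N,2} \;=\; \frac{N\,\Gamma(N/2)}{2\pi i\,N^{N/2}}\int_{\gamma_0 - i\infty}^{\gamma_0 + i\infty} e^{NG(\gamma)}\,d\gamma,
\]
whose integrand $e^{NG(\gamma)}$ is fully explicit in the eigenvalues of $A$. Steepest descent through the real saddle $\gamma_*$ then delivers the stated formula with $(1+O(N^{-c}))$ precision, and the only probabilistic input is that $\gamma_*$ sits a constant above $\lambda_{\max}(A)$, which holds with probability $1-e^{-cN}$ as you note.

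Your route --- Laplace on the $u$-integral at $\gamma = \gamma_*$ --- is circular as written: applying Laplace requires a priori control of $\Phi_{uu}(1,\gamma_*)$, which unwinds to a bound on $\partial_u^2 \log Z_{N,2}(u)\big|_{u=1} = \Var_{\mu_{H_{N,2}}}(H_{N,2}(\sigma))$, i.e.\ exactly the kind of second-moment information one is trying to extract. The envelope computation you give correctly \emph{predicts} $\Phi_{uu} = -N/G''(\gamma_*)$, but it does not \emph{establish} it, since the envelope step already presupposes the validity of the Laplace approximation. Your appeal to Borell--TIS does not close this gap: Borell--TIS controls suprema of Gaussian processes, not free-energy regularity, and ordinary Gaussian concentration of $\log Z_{N,2}(u)$ about its mean gives only $O(\sqrt{N})$ fluctuations --- far too coarse for the $(1+O(N^{-c}))$ prefactor. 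The contour-integral route sidesteps all of this by working with an explicit integrand from the start.

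Two minor points. First, the hypothesis to check for \cite[Lemma 7.3]{HMP24} is precisely $\xi''(0) < 1$ (so that the coupling $\sqrt{\xi''(0)}/2$ stays below the critical value $1/2$); this is what \Cref{fac:xi2nd-at-0} supplies, and you should state it explicitly rather than burying it in the spectral discussion. Second, the lemma you are proving does not itself assert the bound $\nabla^2 H_N(0)\preceq (1+\xi''(0)-\eps^2/8)I_N$; that statement appears in \Cref{ppn:deg2-partition-fn}. You are right that it is the event needed for $G''(\gamma_*) = \Omega_\eps(1)$, but it is not ``asserted in the lemma.''
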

\begin{proof}
    Recalling \eqref{eq:HN2}, we have
    \[
        H_{N,2}(\sigma) = \fr{\sqrt{\xi''(0)}}{2} \la M\sigma, \sigma\ra,
    \]
    and \Cref{fac:xi2nd-at-0} implies the factor $\sqrt{\xi''(0)} / 2$ is bounded away from $1/2$.
    Then \cite[Lemma 7.3]{HMP24} (with $u = 0$) implies the result.
\end{proof}
Define $\gamma_0 = (1 + \xi''(0))/2$.
The next lemma shows that, although the variable $\gamma_*$ in \eqref{eq:ZN2-laplace} is random, we may approximate it deterministically by $\gamma_0$.
\begin{lemma}
    \label{lem:gz00-spam}
    For sufficiently large $C$ depending on $\eps$, and sufficiently small $c$ depending on $\eps,C$, with probability $1-e^{-cN}$ the following holds for all $\gamma \in [\gamma_0 - N^{-1/2}, \gamma_0 + N^{-1/2}]$.
    \begin{enumerate}
        \item \label{it:gz00-1} $|G'(\gamma_0)| \le 1 / (C\sqrt{N})$.
        \item \label{it:gz00-2} $|\fr{G''(\gamma)}{2/(1-\xi''(0))} - 1| \le N^{-1/3}$.
    \end{enumerate}
\end{lemma}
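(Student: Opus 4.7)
Write $A = (\alpha/2)M$ with $\alpha = \sqrt{\xi''(0)}$ and $M \sim \mathrm{GOE}(N)$. The elementary identity $\gamma_0 - \alpha = (1-\alpha)^2/2$, combined with \Cref{fac:xi2nd-at-0,fac:nabla2HN2-psd-domination}, yields that on an $H_{N,2}$-measurable event of probability $1-e^{-cN}$, both $\gamma_0$ and the entire interval $[\gamma_0 - N^{-1/2}, \gamma_0 + N^{-1/2}]$ lie at distance $\delta = \Omega_\eps(1)$ from the spectrum of $A$. I condition on this event throughout.

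Set $\hat s_k(\gamma) \defeq \fr{1}{N}\Tr(\gamma I - A)^{-k}$, so $G'(\gamma) = 1 - \hat s_1(\gamma)/2$ and $G''(\gamma) = \hat s_2(\gamma)/2$. The semicircle limits are computed via the Stieltjes transform $s_{\mathrm{sc}}(z) = (-z+\sqrt{z^2-4})/2$: taking $z_0 = 2\gamma_0/\alpha = \alpha + 1/\alpha$ yields the identities $\sqrt{z_0^2-4} = 1/\alpha - \alpha$, $s_{\mathrm{sc}}(z_0) = -\alpha$, and $s_{\mathrm{sc}}'(z_0) = \alpha^2/(1-\alpha^2)$. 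A change of variables then gives
\[
    \int \fr{\dif \rho_{\mathrm{sc}}(\lambda)}{\gamma_0 - (\alpha/2)\lambda} = -\fr{2}{\alpha} s_{\mathrm{sc}}(z_0) = 2, \qquad \int \fr{\dif \rho_{\mathrm{sc}}(\lambda)}{(\gamma_0 - (\alpha/2)\lambda)^2} = \fr{4}{\alpha^2} s'_{\mathrm{sc}}(z_0) = \fr{4}{1-\xi''(0)}.
\]
Thus the target limits of $\hat s_1(\gamma_0)$ and $\hat s_2(\gamma_0)$ are exactly $2$ and $4/(1-\xi''(0))$, corresponding to $G'(\gamma_0) \to 0$ and $G''(\gamma_0) \to 2/(1-\xi''(0))$, matching items 1 and 2.

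For concentration, the chain rule gives $\partial_{M_{ij}}\hat s_k(\gamma_0) = (k\alpha/N)\cdot [(\gamma_0 I - A)^{-(k+1)}]_{ij}$, whence the Gaussian Lipschitz constant of $\hat s_k$ with respect to the i.i.d.\ $\mathcal{N}(0,1)$ coordinates of $\sqrt{N}M$ is $O(1/(N\delta^{k+1})) = O_\eps(1/N)$. Gaussian log-Sobolev concentration then gives, for any $C>0$, $|\hat s_k(\gamma_0) - \E\hat s_k(\gamma_0)| \le 1/(C\sqrt{N})$ with probability $\ge 1 - 2\exp(-c N/C^2)$, which absorbs into $1 - e^{-cN}$ after taking $c = c(\eps,C)$ sufficiently small. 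Combining with the classical $O_\eps(1/N)$ bias estimate for GOE resolvent traces at a spectral parameter separated from $[-2,2]$ by an $\Omega_\eps(1)$ margin (via the loop equation or local semicircle law), one obtains $|\hat s_1(\gamma_0) - 2| \le 1/(C\sqrt{N})$ and $|\hat s_2(\gamma_0) - 4/(1-\xi''(0))| = O_\eps(N^{-1/2})$ with probability $\ge 1 - e^{-cN}$. This immediately yields item 1 and item 2 at $\gamma = \gamma_0$.

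For the full interval in item 2, note that $|G'''(\gamma)| = (3/N)\Tr(\gamma I - A)^{-4} \le 3/\delta^4 = O_\eps(1)$ uniformly, so $|G''(\gamma) - G''(\gamma_0)| = O_\eps(N^{-1/2})$; the bound $|G''(\gamma)/(2/(1-\xi''(0))) - 1| \le N^{-1/3}$ follows since $N^{-1/2} \ll N^{-1/3}$. The only mildly nontrivial ingredient is the $O_\eps(1/N)$ bias bound for the GOE resolvent trace; since $\gamma_0$ is separated from the spectrum by an $\eps$-dependent margin, this is standard and no new ideas are needed.
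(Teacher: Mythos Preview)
Your approach is essentially the paper's: compute the semicircle limits of $G'(\gamma_0)$ and $G''(\gamma_0)$ via the Stieltjes transform, concentrate the resolvent traces via Gaussian Lipschitz concentration, control the bias, and extend to the interval using $|G'''| = O_\eps(1)$. The paper cites \cite{BY05} for an $O_\eps(1)$ bias bound (weaker than your $O_\eps(1/N)$ via the loop equation, but sufficient), and otherwise the arguments track.

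One technical point you should patch: ``I condition on this event throughout'' followed by a gradient computation does not yield a valid input to Gaussian concentration, which requires a \emph{global} Lipschitz bound. Your gradient estimate $\partial_{M_{ij}}\hat s_k = O_\eps(1/N)$ holds only on the event $\{\lambda_{\max}(M) \le 2+\eps^2/8\}$; off this event $\hat s_k(\gamma_0)$ is not even defined (or blows up). The paper handles this by replacing $f_k$ with the truncation $\tilde f_k(x) = f_k(\min(x, 2+\eps^2/8))$, which is globally $O_\eps(1)$-Lipschitz, applying concentration (and the bias bound) to $\Tr \tilde f_k(M)$, and then noting $\Tr f_k(M) = \Tr \tilde f_k(M)$ on the good event. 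You need the same device (or a Kirszbraun extension) to make your Lipschitz step rigorous.
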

\begin{proof}
    Let $\dif\rho_{\smc}(x) = \fr{1}{2\pi} \Ind[|x|\le 2]\sqrt{4-x^2} ~\dif x$ denote Wigner's semicircle law, and
    \[
        f_1(x) = 1 - \fr{1}{1 + \xi''(0) - 2\sqrt{\xi''(0)}x}, \qquad
        f_2(x) = \fr{2}{(1 + \xi''(0) - 2\sqrt{\xi''(0)}x)^2}.
        % \qquad
        % f_3(x) = - \fr{4}{(1 + \xi''(0) - 2\sqrt{\xi''(0)}x)^3}.
    \]
    For $k \in [2]$, let
    \[
        L_k = \int f_k(x) ~\dif\rho_{\smc}(x).
    \]
    We will show that with probability $1-e^{-cN}$, for each $k \in [2]$, 
    \begin{equation}
        \label{eq:GZ00-BY05-goal}
        \lt|G^{(k)}(\gamma_0) - L_k \rt| \le \fr{1}{C\sqrt{N}}.
    \end{equation}
    Recall that $M \sim \GOE(N)$.
    For $f : \bbR \to \bbR$, define the spectral trace
    \[
        \Tr f(M) = \sum_{i=1}^N f(\lambda_i(M)).
    \]
    Note that $G^{(k)}(\gamma_0) = N^{-1} \cdot \Tr f_k(M)$.
    Define
    \[
        \wt{f}_k(x) = f_k(\min(x,2+\eps^2/8)).
    \]
    By the proof of \Cref{fac:nabla2HN2-psd-domination}, $1 + \xi''(0) - 2\sqrt{\xi''(0)}x \ge \eps^2/8$ for $x\le 2 + \eps^2/8$, so $\wt{f}_k$ is $O_\eps(1)$-Lipschitz.
    Moreover, $\lambda_{\max}(M) \le 2 + \eps^2/8$ with probability $1-e^{-cN}$, and on this event $\Tr f_k(M) = \Tr \wt{f}_k(M)$.

    By \cite[Lemma 1.2(b)]{GZ00}, if we write $M_{i,i} = \sqrt{2/N} Z_{i,i}$, $M_{i,j} = \sqrt{1/N} Z_{i,j}$, then $\Tr \wt{f}_k(M)$ is a $O_\eps(1)$-Lipschitz function of the standard gaussians $(Z_{i,j})_{1\le i\le j\le N}$.
    Thus $\Tr \wt{f}_k(M)$ is $O_\eps(1)$-subgaussian, i.e.
    \[
        \bbP(|\Tr \wt{f}_k(M) - \E \Tr \wt{f}_k(M)| \ge t) \le 2e^{-t^2/C}
    \]
    for some $C = O_\eps(1)$.
    By \cite[Theorem 1.1]{BY05},
    \[
        \Tr \wt{f}_k(M) - N L_k
    \]
    converges in distribution to a gaussian with mean and variance $O_\eps(1)$.
    Combined with subgaussianity of $\Tr \wt{f}_k(M)$, this implies
    \[
        |\E \Tr \wt{f}_k(M) - NL_k| = O_\eps(1).
    \]
    It follows that (after possibly increasing $C = O_\eps(1)$),
    \[
        \bbP(|\Tr \wt{f}_k(M) - NL_k| \ge t)
        \le 2e^{-(t - C)_+^2/C}.
    \]
    Thus
    \begin{align*}
        \bbP(|G^{(k)} - L_k| \ge t)
        &\le \bbP(\Tr f_k(M) \neq \Tr \wt{f}_k(M))
        + \bbP(|\Tr \wt{f}_k(M) - NL_k| \ge Nt) \\
        &\le e^{-cN} + 2e^{-(Nt - C)_+^2/C}.
    \end{align*}
    Plugging in $t = 1/(C\sqrt{N})$ proves \eqref{eq:GZ00-BY05-goal}.
    Next, direct calculations show $L_1 = 0$, $L_2 = \fr{2}{1-\xi''(0)}$.
    The former directly implies conclusion \eqref{it:gz00-1}, and the latter implies
    \[
        \lt|G''(\gamma_0) - \fr{2}{1-\xi''(0)}\rt| \le \fr{1}{C\sqrt{N}}.
    \]
    Moreover, on the probability $1-e^{-cN}$ event that $\lambda_{\max}(M) \le 2 + \eps^2/8$, $G^{(3)}(\gamma) = O_\eps(1)$ for all $\gamma \in [\gamma_0 - N^{-1/2}, \gamma_0 + N^{-1/2}]$.
    This implies the conclusion \eqref{it:gz00-2}.
\end{proof}
\Cref{lem:logdet-subgaussian} is proved by the same method, and we present the proof here.
\begin{proof}[Proof of \Cref{lem:logdet-subgaussian}]
    Let
    \[
        f_0(x) = \log(1 + \xi''(0) - \sqrt{\xi''(0)} x).
    \]
    An elementary calculation shows that
    \[
        L_0 \coloneqq \int f_0(x) ~\dif \rho_{\smc}(X) = \xi''(0)/2.
    \]
    Proceeding as in the above proof, we have
    \[
        \log\det\lt((1+\xi''(0)) I - \sqrt{\xi''(0)} \nabla^2 H_N(0) \rt)
        = \Tr f_0(M).
    \]
    If we take $\wt{f}_0(x) = f_0(\min(x,2+\eps^2/8))$, then $\Tr f_0(M) = \Tr \wt{f}_0(M)$ with probability $1-e^{-cN}$.
    The same proof shows $\Tr \wt{f}_0(M)$ is $O_\eps(1)$-subgaussian, and
    \[
        |\E \Tr \wt{f}_0(M) - NL_0| = O_\eps(1).
    \]
    Thus we may take $X = \Tr \wt{f}_0(M) - NL_0 = \Tr \wt{f}_0(M) - N\xi''(0)/2$.
\end{proof}
\begin{proof}[Proof of \Cref{ppn:deg2-partition-fn}]
    The assertion $\nabla^2 H_N(0) \preceq (1 + \xi''(0) - \eps^2/8) I_N$ is proved in \Cref{fac:nabla2HN2-psd-domination}.
    Suppose the events in \Cref{lem:ZN2-laplace,lem:gz00-spam} occur.
    Since $\gamma_*$ is the solution to $G'(\gamma_*) = 0$, we have
    \[
        |\gamma_0 - \gamma_*|
        \le |G'(\gamma_0)| \cdot \fr{1-\xi''(0)}{2(1 - N^{-1/3})} \le \fr{1}{C\sqrt{N}}.
    \]
    So,
    \[
        N|G(\gamma_0) - G(\gamma_*)|
        \le \fr{N}{2} |\gamma_0 - \gamma_*|^2 \sup_{\gamma \in [\gamma_0 - N^{-1/2}, \gamma_0 + N^{-1/2}]} G''(\gamma)
        \le \fr{1}{C^2} \cdot \fr{2(1+N^{-1/3})}{1-\xi''(0)}
        \le \fr{3}{C^2\eps}.
    \]
    Moreover, $\xi''(\gamma_0) / \xi''(\gamma_*) = 1 + O(N^{-1/3})$.
    Combining with \Cref{lem:ZN2-laplace} shows that, for some $\Delta$ satisfying $|\Delta| \le \fr{3}{C^2\eps}$,
    \begin{align*}
        Z_{N,2} &= (1 + O(N^{-c})) e^\Delta
        \sqrt{\fr{2}{G''(\gamma_0)}} (2e)^{-N/2} \exp(NG(\gamma_0)) \\
        &= (1 + O(N^{-c})) e^\Delta \sqrt{1-\xi''(0)} (2e)^{-N/2}
        \exp(N\gamma_0)
        \det\lt(\gamma_0 I - \fr12 \nabla^2 H_N(0)\rt)^{-1/2} \\
        &= (1 + O(N^{-c})) e^{\Delta} \sqrt{1-\xi''(0)}
        \exp(N\xi''(0)/2)
        \det\lt((1+\xi''(0)) I - \nabla^2 H_N(0)\rt)^{-1/2}.
    \end{align*}
    Taking a logarithm and setting $C$ sufficiently large concludes the proof.
\end{proof}

Finally, the following concentration estimates for samples from $\mu_{H_{N,2}}$ will be useful in the sequel.
This is proved similarly to \cite[Lemma 7.5]{HMP24}, and we defer the proof to \Cref{ss:cov-bd-deferred-proofs}.

Let $v_1,\ldots,v_N$ denote the (unit) eigenvectors of $\nabla^2 H_N(0)$.
These are well defined on the almost sure event that all eigenvalues of $\nabla^2 H_N(0)$ have multiplicity $1$.
\begin{proposition}
    \label{ppn:deg2-subgaussianity}
    With probability $1-e^{-cN}$ over $H_{N,2}$, the following holds.
    Let $\sigma,\sigma^1,\sigma^2 \sim \mu_{H_{N,2}}$, and let $W = \la \sigma,v_i \ra$ for any $i\in [N]$, or $W = \la \sigma^1,\sigma^2\ra / \sqrt{N}$.
    Then:
    \begin{enumerate}
        \item \label{it:deg2-subgaussianity-tail} For any $0\le t\le N^{1/5}$, $\bbP(|W| \ge t) \le 3e^{-ct^2}$.
        \item \label{it:deg2-subgaussianity-mmt} For any $k\in 2\mathbb{N}$, there exists $C_k > 0$ independent of $N$ such that $\la W^k \ra_2 \le C_k$.
    \end{enumerate}
    In particular, part \eqref{it:deg2-subgaussianity-mmt} implies $\|\la \sigma \sigma^\top \ra_2\|_{\op} \le C$.
\end{proposition}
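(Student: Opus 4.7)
The plan is to work in the eigenbasis of $A = \tfrac12 \nabla^2 H_N(0)$, with eigenvectors $v_1,\ldots,v_N$ and eigenvalues $\lambda_1\ge\cdots\ge\lambda_N$, and to compute moment generating functions of $W$ under $\mu_{H_{N,2}}$ by applying the Gaussian/Laplace integral representation used to prove \Cref{lem:ZN2-laplace} (cf.\ \cite[Lemma 7.3]{HMP24}) to tilted partition functions. All MGF bounds below are conditional on $H_{N,2}$, on the $1-e^{-cN}$ event of \Cref{ppn:deg2-partition-fn} and \Cref{lem:gz00-spam}, on which the spectral gap satisfies $\gamma_* - \lambda_{\max}(A) \ge \tfrac12(1-\sqrt{\xi''(0)})^2 \ge c\eps$ and $\tfrac{1}{N}\sum_i (\gamma_*-\lambda_i)^{-2} = 2G''(\gamma_*) = O_\eps(1)$.

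For $W = \langle v_i, \sigma\rangle$, introduce the tilted partition function
\[
    Z_{N,2}(h) = \int_{S_N} \exp\bigl(H_{N,2}(\sigma) + h\langle v_i,\sigma\rangle\bigr)\, \dif\rho(\sigma),
\]
so $\langle e^{hW}\rangle_2 = Z_{N,2}(h)/Z_{N,2}$. In the Gaussian representation of \cite[Lemma 7.3]{HMP24} the integrand factors across eigendirections, and the $v_i$-coordinate contributes $\int e^{-(\gamma/2 - \lambda_i)x^2 + hx}\, \dif x$, which after saddle-point optimization in $\gamma$ produces the additional factor $\exp(h^2/(4(\gamma_*-\lambda_i)))$. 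Implicit differentiation shows the field shifts the saddle by only $O(h^2/N)$, contributing lower-order corrections. Since $\gamma_*-\lambda_i \ge c\eps$, this yields $\langle e^{hW}\rangle_2 \le e^{Ch^2}$ uniformly for $|h|\le cN^{1/5}$, and a Chernoff bound gives $\Pr(|W|\ge t) \le 3e^{-ct^2}$ in the desired range.

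For $W = \langle\sigma^1,\sigma^2\rangle/\sqrt N$, consider the two-replica partition function
\[
    Z^{(2)}_{N,2}(h) = \int_{S_N\times S_N} \exp\bigl(H_{N,2}(\sigma^1)+H_{N,2}(\sigma^2)+ h\langle\sigma^1,\sigma^2\rangle/\sqrt N\bigr)\, \dif\rho^{\otimes 2},
\]
so $\langle e^{hW}\rangle_2^{\otimes 2} = Z^{(2)}_{N,2}(h)/Z_{N,2}^2$. In the eigenbasis, writing $x_i = \langle v_i,\sigma^1\rangle$ and $y_i = \langle v_i,\sigma^2\rangle$, the coupled quadratic form decouples over $i$, and the orthogonal change of variables $u_i = (x_i+y_i)/\sqrt 2$, $w_i = (x_i-y_i)/\sqrt 2$ diagonalizes each pair into independent Gaussians with inverse variances $(\gamma_*-\lambda_i)\mp h/(2\sqrt N)$, which remain positive for $|h|\le c\sqrt N$. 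Summing over $i$ yields
\[
    \log\langle e^{hW}\rangle_2^{\otimes 2} = -\tfrac12\sum_i \log\!\Bigl[1 - \tfrac{h^2}{4N(\gamma_*-\lambda_i)^2}\Bigr] + O(h^4/N) = \tfrac14 h^2 G''(\gamma_*) + O(h^4/N),
\]
which is $O_\eps(h^2)$ for $|h|\le cN^{1/5}$ by the bound on $G''(\gamma_*)$, and Chernoff again yields the tail estimate.

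Part \eqref{it:deg2-subgaussianity-mmt} follows from part \eqref{it:deg2-subgaussianity-tail} by tail integration: the contribution to $\langle W^k\rangle_2$ from $|W|\le N^{1/5}$ is bounded by a constant $C_k$, while the residual contribution from $|W|\in[N^{1/5},\sqrt N]$ is at most $N^{k/2}\cdot 3e^{-cN^{2/5}} = o(1)$ using the deterministic bound $|W|\le\sqrt N$ on the sphere. The main technical burden is quantitative control of the Laplace saddle $\gamma_*(h)$ as $h$ varies (more delicate in the two-replica case because one must jointly perturb both sphere multipliers), but strong convexity of the Laplace exponent at $\gamma_*$ --- quantified by $G''(\gamma_*) = \Theta_\eps(1)$ --- keeps the perturbation at order $O(h^2/N)$ and contributes only a negligible error uniformly in the range of $h$ considered.
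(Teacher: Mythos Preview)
Your proposal is correct and follows essentially the same approach as the paper's proof: for $W=\langle v_i,\sigma\rangle$ both you and the paper tilt the partition function by $h\langle v_i,\sigma\rangle$, apply the Laplace representation of \cite[Lemma~7.3]{HMP24}, and control the perturbed saddle $\tilde\gamma_*$ via the strong convexity $G''(\gamma_*)=\Theta_\eps(1)$; for the overlap the paper simply cites \cite[Lemma~7.5]{HMP24}, whereas you unpack that lemma's content with the $(u_i,w_i)$ diagonalization, but this is the same computation. The tail-integration derivation of part~\eqref{it:deg2-subgaussianity-mmt} from part~\eqref{it:deg2-subgaussianity-tail} also matches.
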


\subsection{Conditional positive probability bounds for non degree-$2$ part}
\label{ss:nondeg2-positive-prob}

In this subsection, we prove the following propositions, which establish a weaker version of \Cref{ppn:nondeg2} with positive instead of high probability.
\begin{proposition}
    \label{ppn:nondeg2-partition-fn}
    There is a $H_{N,2}$-measurable event with probability $1-e^{-cN^{1/5}}$ on which, with probability $1-N^{-1/15}$ over $H_{N,\sim2}$,
    \[
        \lt|\log \fr{Z_N}{Z_{N,2}} - \fr{N\xi_{\sim2}(1)}{2} \rt| = O(N^{-1/15}).
    \]
\end{proposition}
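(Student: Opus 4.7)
The plan is to condition on $H_{N,2}$ and apply a truncated second moment method to the ratio
\[
W_N \coloneqq \frac{Z_N}{Z_{N,2}} = \la \exp(H_{N,\sim 2}(\sigma)) \ra_2,
\]
where $\la \cdot \ra_2$ denotes expectation under $\mu_{H_{N,2}}$. Since $H_{N,\sim 2}$ is independent of $H_{N,2}$, the Gaussian MGF (using $\Cov(H_{N,\sim 2}(\sigma^1), H_{N,\sim 2}(\sigma^2)) = N\xi_{\sim 2}(R(\sigma^1, \sigma^2))$) gives
\[
\E_{\sim 2}[W_N] = e^{N\xi_{\sim 2}(1)/2}, \qquad \frac{\E_{\sim 2}[W_N^2]}{(\E_{\sim 2}[W_N])^2} = \la \exp(N\xi_{\sim 2}(R(\sigma^1, \sigma^2))) \ra_2^{\otimes 2}.
\]
Hence it suffices to bound the variance ratio by $1 + N^{-c}$ for some $c > 0$, after which Chebyshev produces a polynomial-probability multiplicative approximation of $W_N$ to its mean, and taking a logarithm delivers the stated estimate.

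To control $\la \exp(N\xi_{\sim 2}(R)) \ra_2$, I would expand
\[
N\xi_{\sim 2}(R) = \gamma_1^2 \sqrt{N}\, W + \sum_{p \ge 3} \gamma_p^2 N^{1-p/2} W^p, \quad W \coloneqq \sqrt{N}\, R(\sigma^1, \sigma^2),
\]
and exploit three facts: (i) the hypothesis $\gamma_1^2 \le N^{-4/5}$ makes the linear-in-$W$ term at most $N^{-3/10}|W|$; (ii) $\mu_{H_{N,2}}$ is invariant under $\sigma \mapsto -\sigma$ (since $H_{N,2}$ is quadratic), so $\la W^{2k+1} \ra_2 = 0$ for all $k \ge 0$, which kills the dominant odd contributions to both $\la N\xi_{\sim 2}(R)\ra_2$ and the cross terms in $\la (N\xi_{\sim 2}(R))^2\ra_2$; (iii) \Cref{ppn:deg2-subgaussianity} supplies $\la W^k \ra_2 = O_k(1)$ and $O(1)$-subgaussian tails of $W$ on $|W| \le N^{1/5}$. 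A Taylor expansion of $\exp$ together with these inputs yields $\la \exp(N\xi_{\sim 2}(R)) \ra_2 = 1 + O(N^{-c})$ in the typical overlap regime.

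The main obstacle is the tail $|W| > N^{1/5}$, where \Cref{ppn:deg2-subgaussianity} offers no control while $\exp(N\xi_{\sim 2}(R))$ may be as large as $e^{O(N)}$, destroying any naive moment computation. I would address this by replacing $W_N$ with a truncated partition function
\[
\wt W_N \coloneqq \la \exp(H_{N,\sim 2}(\sigma)) \bone_{\sigma \in \calT} \ra_2,
\]
for an $H_{N,2}$-measurable ``overlap-typical'' set $\calT$ of $\mu_{H_{N,2}}$-mass at least $1 - e^{-cN^{2/5}}$, chosen so that pairs $(\sigma^1, \sigma^2) \in \calT\times\calT$ satisfy $|R(\sigma^1, \sigma^2)| \lesssim N^{-2/5}$ (in the spirit of the free-energy-typical truncation of~\cite{HS23}). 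Then (a) $\E_{\sim 2}|W_N - \wt W_N|$ is super-polynomially small relative to $\E_{\sim 2}[W_N]$, by the Gaussian MGF applied to $H_{N,\sim 2}$ and the $\mu_{H_{N,2}}$-mass bound on $\calT^c$; and (b) pairs contributing to $\E_{\sim 2}[\wt W_N^2]$ lie in the regime where the Taylor estimate above applies, giving $\E_{\sim 2}[\wt W_N^2]/(\E_{\sim 2}[\wt W_N])^2 = 1 + O(N^{-c})$. Chebyshev on $\wt W_N$, combined with (a) and the $H_{N,2}$-good event of probability $1 - e^{-cN^{1/5}}$ from \Cref{ppn:deg2-subgaussianity}, yields the $O(N^{-1/15})$ log-estimate with probability $1 - N^{-1/15}$. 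The delicate step will be constructing $\calT$ to simultaneously be $H_{N,2}$-measurable (preserving the Gaussian structure under $\E_{\sim 2}$), discard only a subexponentially small mass under $\mu_{H_{N,2}}$, and exclude large-overlap pairs from the second moment; the truncation of~\cite{HS23} is tailored for exactly this balance.
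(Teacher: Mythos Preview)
Your overall architecture---condition on $H_{N,2}$, truncate, run a second moment on the truncated ratio, then Chebyshev---is exactly the paper's. The gap is in the truncation itself.

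You ask for an $H_{N,2}$-measurable set $\calT$ of $\mu_{H_{N,2}}$-mass at least $1-e^{-cN^{2/5}}$ with the property that all pairs in $\calT\times\calT$ have overlap $\lesssim N^{-2/5}$. No such set exists: any subset of $S_N$ of near-full measure contains pairs of points with overlap arbitrarily close to~$1$. Even under the more charitable reading---that you only want the $\mu_{H_{N,2}}^{\otimes 2}$-contribution of high-overlap pairs to $\la\bone[\sigma^1,\sigma^2\in\calT]\,e^{N\xi_{\sim2}(R)}\ra_2$ to be negligible---an $H_{N,2}$-measurable truncation is not enough throughout the strictly RS regime; the failure of the untruncated conditional second moment in part of that regime is precisely what the \cite{HS23} construction was designed to repair (the paper flags this explicitly in the technical overview).

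The paper's set $\tT$ (\Cref{cor:free-energy-typical-truncation}) therefore depends on $H_{N,\sim2}$: it keeps those $\sigma$ for which the \emph{Gibbs-weighted} high-overlap neighborhood $\la\bone[|R(\sigma,\tau)|\ge N^{-2/5}]\,e^{H_{N,\sim2}(\tau)}\ra_2$ is small. This changes both of your steps. Since $\tT$ is not $H_{N,2}$-measurable, step~(a) cannot be done by ``mass of $\calT^c$ times Gaussian MGF''; one needs the annealed estimates of \Cref{lem:free-energy-typical-truncation}, proved by a planted change of measure and Guerra interpolation over the full disorder, and then pushed to the conditional picture by Markov's inequality. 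For step~(b), $\tT\times\tT$ still contains high-overlap pairs; their contribution to $\E_{\sim2}[X_1^2]$ (the paper's $Y_2$) is controlled not by Taylor expansion but directly by the defining inequality of $\tT$, which caps the weighted high-overlap mass visible from any $\sigma^1\in\tT$. Only the low-overlap piece $Y_1$ is handled by the Taylor argument you sketch.
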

\begin{proposition}
    \label{ppn:nondeg2-covariance}
    There is a $H_{N,2}$-measurable event with probability $1-e^{-cN^{1/5}}$ on which, with probability $1/2$ over $H_{N,\sim2}$,
    \[
        \lt\|
            \fr{Z_N}{Z_{N,2} e^{N\xi_{\sim2}(1)/2}}
            \la \sigma\sigma^{\top} \ra
            - \la \sigma\sigma^{\top} \ra_2
        \rt\|_F^2
        \le C(1 + \gamma_1^4 N^2).
    \]
\end{proposition}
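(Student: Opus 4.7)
I would condition on $H_{N,2}$ and run a second-moment calculation over the orthogonal disorder $H_{N,\sim 2}$. Write $\hat X_{ij} \defeq Z_{N,2}^{-1}\int \sigma_i\sigma_j\, e^{H_N(\sigma)}\,\dif\rho(\sigma) = (Z_N/Z_{N,2})\la\sigma_i\sigma_j\ra$, so that the target inequality becomes $\|\hat X - e^{N\xi_{\sim 2}(1)/2}\la\sigma\sigma^\top\ra_2\|_F^2 \le C(1+\gamma_1^4 N^2)\,e^{N\xi_{\sim 2}(1)}$. Applying the Gaussian MGF to the centered process $H_{N,\sim 2}$, whose conditional covariance kernel is $N\xi_{\sim 2}(R(\cdot,\cdot))$, gives $\E_{\sim 2}\hat X_{ij} = e^{N\xi_{\sim 2}(1)/2}\la\sigma_i\sigma_j\ra_2$, so by Markov it suffices to bound the trace of the conditional covariance of $\hat X$ by $\tfrac12 C(1+\gamma_1^4 N^2)\,e^{N\xi_{\sim 2}(1)}$ on a $H_{N,2}$-event of probability $1-e^{-cN^{1/5}}$. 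A second MGF computation on two independent copies $\sigma^1,\sigma^2$ identifies this trace with
\[
e^{N\xi_{\sim 2}(1)}\,\bigl\la N^2 R(\sigma^1,\sigma^2)^2\bigl(e^{N\xi_{\sim 2}(R(\sigma^1,\sigma^2))}-1\bigr)\bigr\ra_2,
\]
reducing everything to an overlap integral under $\mu_{H_{N,2}}^{\otimes 2}$.

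To bound this overlap integral I would Taylor-expand $e^{N\xi_{\sim 2}(R)}-1 = \sum_{k\ge 1}(N\xi_{\sim 2}(R))^k/k!$ and exploit the $\sigma \mapsto -\sigma$ symmetry of the quadratic measure $\mu_{H_{N,2}}$, which flips the sign of $R$ when applied to one copy and therefore kills every term of the expansion that is odd in $R$ (since the $\sigma^1_i\sigma^1_j\sigma^2_i\sigma^2_j$ prefactor is even in each copy). The leading surviving contribution comes from $\tfrac12 (N\xi_{\sim 2}(R))^2\approx \tfrac12 N^2\gamma_1^4 R^2$, giving $\tfrac12 N^4\gamma_1^4\la R^4\ra_2 = O(\gamma_1^4 N^2)$ by the $O(N^{-1/2})$-subgaussianity of $R$ under the degree-$2$ measure (\Cref{ppn:deg2-subgaussianity}); the other even pieces, built from $\gamma_p^2$ for $p\ge 4$ and higher powers of the expansion, integrate to $O(1)$ by the same even-moment bounds (using $\gamma_1^2\le N^{-4/5}$ to absorb the cross terms). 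This matches the required $C(1+\gamma_1^4 N^2)$.

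The main obstacle, and the reason a direct moment bound fails in the full strictly-RS regime, is that \Cref{ppn:deg2-subgaussianity} only gives a Gaussian tail for $R\sqrt N$ up to scale $N^{1/5}$, whereas on rare events $|R|$ can be macroscopic and then $e^{N\xi_{\sim 2}(R)}$ explodes and spoils the Taylor expansion. The remedy, used in an analogous form in \cite{HMP24} to control the partition function, is the free-energy-typical truncation of \cite{HS23}: one restricts the integration defining $\hat X$ to a $H_{N,2}$-measurable set $\GOOD\subseteq S_N$ of configurations whose local free energy contribution from $H_{N,\sim 2}$ sits near its typical value, producing a truncated matrix $\tilde X$ and running the second-moment argument above with $\bone_{\GOOD}(\sigma^1)\bone_{\GOOD}(\sigma^2)$ inserted. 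The set $\GOOD$ must satisfy (i) $\mu_{H_{N,2}}(\GOOD)\ge 1-e^{-cN^{1/5}}$ on a $H_{N,2}$-event of the same probability, (ii) invariance under $\sigma\mapsto-\sigma$ so the odd-moment cancellation above survives the truncation, and (iii) dominance of the truncated overlap integral by pairs at the RS scale $|R|\asymp N^{-1/2}$ so that the Taylor control is genuinely valid. The bound on $\tilde X$ then transfers back to $\hat X$ via a side estimate $\|\hat X-\tilde X\|_F^2 = O(e^{N\xi_{\sim 2}(1)})$ obtained by a one-replica MGF computation on $\GOOD^c$ together with \Cref{ppn:nondeg2-partition-fn}. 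Designing $\GOOD$ so that all three properties hold simultaneously beyond high temperature is the delicate step, and is precisely where the \cite{HS23} construction (rather than a naive overlap truncation) is essential.
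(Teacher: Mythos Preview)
Your overall strategy is sound and shares its skeleton with the paper's proof: both condition on $H_{N,2}$, compute second moments over $H_{N,\sim2}$ to reduce to an overlap integral under $\mu_{H_{N,2}}^{\otimes 2}$, exploit a sign symmetry to kill the odd-in-$R$ contributions (which is what makes $\gamma_3^2R^3$ harmless), and use the free-energy-typical truncation of \cite{HS23} to suppress the large-$|R|$ tail that would otherwise blow up the second moment. There is, however, one genuine slip and one organizational difference worth noting.

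\textbf{The slip.} You require $\GOOD$ to be $H_{N,2}$-measurable. The actual truncation set $\tT$ (see \eqref{eq:tT}) depends on $H_{N,\sim2}$, and no purely $H_{N,2}$-measurable set can enforce property (iii) throughout the strictly RS phase: the whole point of the \cite{HS23} construction is that typicality is defined relative to the full disorder. This breaks your plan of ``inserting $\bone_\GOOD(\sigma^1)\bone_\GOOD(\sigma^2)$ and running the MGF'', since the indicator does not commute with $\E_{\sim2}$. The fix, which is exactly what the paper does in \Cref{lem:nondeg2-covariance-typical,lem:nondeg2-covariance-2mt}, is to write the truncated second moment pointwise as a main term carrying only the overlap restriction $|R|\le 2N^{-2/5}$ (this indicator \emph{is} disorder-free, so the MGF applies) plus two correction terms---large overlap with $\sigma^1\in\tT$, and small overlap with $\sigma^1\notin\tT$---each of which is controlled in $\E_{\sim2}|\cdot|$ by the first-moment estimates of \Cref{cor:free-energy-typical-truncation}. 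Your side estimate on $\|\hat X-\tilde X\|_F$ needs the same mechanism, not a ``one-replica MGF''.

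\textbf{The difference.} You bound the trace $\sum_{i,j}\Var_{\sim2}\hat X_{ij}=e^{N\xi_{\sim2}(1)}\la N^2R^2(e^{N\xi_{\sim2}(R)}-1)\ra_2$ in one shot and invoke only the global flip $\sigma\mapsto-\sigma$ (under which $\mu_{H_{N,2}}$ and $\tT$ are invariant and $R\mapsto-R$). The paper instead rotates to the eigenbasis of $\nabla^2H_N(0)$, works entry-by-entry, and uses \emph{per-coordinate} sign flips $\sigma_i\mapsto-\sigma_i$ to extract the off-diagonal cancellations in \Cref{ppn:hX-ij}. Your summed route is arguably cleaner---no diagonalization, no coordinate truncations $|\sigma_i|\le\log N$, no $\tT_{i,j}$ variants---and the Taylor computation you sketch does give $O(1+\gamma_1^4N^2)$ once restricted to $|R|\le N^{-2/5}$ (the cross term $N\gamma_1^2\gamma_3^2$ is absorbed via $2N\gamma_1^2\gamma_3^2\le\gamma_3^4+N^2\gamma_1^4$). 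The paper's entrywise bounds buy a slightly finer statement ($\E_{\sim2}\hX_{i,j}\le C(\gamma_1^4+N^{-2})$ off-diagonal), but for the Frobenius conclusion the aggregate bound suffices.
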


In conjunction with \Cref{ppn:deg2-subgaussianity,ppn:nondeg2-partition-fn}, the above immediately implies a positive probability bound on the second moment matrix $\la \sigma \sigma^\top \ra$.

Both propositions rely on the following truncation to $Z_N$ developed in \cite{HS23}, which allows one to estimate $Z_N$ via the second moment method throughout the strictly RS regime.
As shown in the following lemma, this truncation does not significantly affect the first moment; at the same time, it will force the second moment to be dominated by pairs of nearly-orthogonal points.
\begin{lemma}
    \label{lem:free-energy-typical-truncation}
    The following holds for sufficiently small $c>0$ depending on $\eps$.
    Let
    \[
        T = T(H_N) \coloneqq \lt\{
            \sigma \in S_N :
            \int_{S_N}
            \Ind[|R(\sigma,\tau)| \ge N^{-2/5}]
            e^{H_N(\tau)}
            \dif\rho(\tau)
            \le e^{N\xi(1)/2 - cN^{1/5}}
        \rt\}.
    \]
    Then, we have:
    \begin{align}
        \label{eq:free-energy-typical-1}
        \E \int_{S_N} \Ind[\sigma \not\in T] e^{H_N(\sigma)} \dif\rho(\sigma)
        &\le e^{N\xi(1)/2 - cN^{1/5}}, \\
        \label{eq:free-energy-typical-2}
        \E \int_{S_N} \Ind[\sigma \not\in T] e^{H_{N,2}(\sigma)} \dif\rho(\sigma)
        &\le e^{N\xi''(0)/4 - cN^{1/5}}, \\
        \label{eq:free-energy-typical-3}
        \E \int_{S_N} \Ind[\sigma^1 \not\in T, |R(\sigma^1,\sigma^2)| \le 3N^{-2/5}]
        e^{H_N(\sigma^1)+H_N(\sigma^2)} \dif\rho^{\otimes 2}(\sigma^1,\sigma^2)
        &\le e^{N\xi(1) - cN^{1/5}}, \\
        \label{eq:free-energy-typical-4}
        \E \int_{S_N} \Ind[\sigma^1 \not\in T, |R(\sigma^1,\sigma^2)| \le 3N^{-2/5}]
        e^{H_{N,2}(\sigma^1)+H_N(\sigma^2)} \dif\rho^{\otimes 2}(\sigma^1,\sigma^2)
        &\le e^{N\xi(1)/2 + N\xi''(0)/4 - cN^{1/5}}.
    \end{align}
\end{lemma}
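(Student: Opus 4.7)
The plan is as follows. The four inequalities admit a uniform strategy: by the definition of $T$, the deterministic bound
\[ \Ind[\sigma \notin T] \le e^{-N\xi(1)/2 + cN^{1/5}} \int \Ind[|R(\sigma,\tau)| \ge N^{-2/5}] e^{H_N(\tau)} \dif\rho(\tau) \]
holds for every $\sigma$. Substituting this into each left-hand side introduces an extra replica $\tau$, turning the single-replica moments in \eqref{eq:free-energy-typical-1}--\eqref{eq:free-energy-typical-2} into two-replica moments and the two-replica moments in \eqref{eq:free-energy-typical-3}--\eqref{eq:free-energy-typical-4} into three-replica moments. Taking $\E$ over the disorder evaluates each expectation as a Gaussian MGF, using $\E H_N(\sigma) H_N(\tau) = N\xi(R(\sigma,\tau))$ and the fact that the covariance $\E H_{N,2}(\sigma) H_N(\tau)$ is contributed only by the degree-$2$ parts and equals $\frac{N\xi''(0)}{2} R(\sigma,\tau)^2$. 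The problem reduces to estimating integrals over spherical overlaps against the overlap density $\propto (1-q^2)^{(N-3)/2}$.

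For \eqref{eq:free-energy-typical-1}, this produces (up to a negligible polynomial factor) $\int \Ind[|q|\ge N^{-2/5}] \exp(N\xi(q) + \frac{N-3}{2}\log(1-q^2)) \dif q$. Rearranging the strict-RS hypothesis $\xi_{\sim 1}(q) + q + \log(1-q) \le -\frac{\eps}{2}q^2$ and using $\frac{1}{2}\log\frac{1+q}{1-q} = q + \frac{q^3}{3} + \cdots$, the key quadratic gap
\[ \xi(q) + \frac{1}{2}\log(1-q^2) \le -c q^2 + \gamma_1^2 q \]
will hold in a neighborhood of $0$, together with a uniform negative upper bound for $|q|$ bounded away from $0$; the case $q < 0$ reduces to $q > 0$ via $\xi_{\sim 1}(q) \le \xi_{\sim 1}(|q|)$. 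For $|q|\ge N^{-2/5}$ the assumption $\gamma_1^2 \le N^{-4/5}$ forces $\gamma_1^2 |q| \le \frac{c}{2}q^2$ when $N$ is large, so the exponent is at most $-\frac{c}{2}Nq^2 \le -\frac{c}{2}N^{1/5}$. Choosing the $c$ in the definition of $T$ smaller than the one produced by this step closes the loop. The proof of \eqref{eq:free-energy-typical-2} is analogous: the two-replica Gaussian moment becomes $\E e^{H_{N,2}(\sigma) + H_N(\tau)} = \exp(N\xi''(0)/4 + N\xi(1)/2 + \frac{N\xi''(0)}{2} R(\sigma,\tau)^2)$, and Taylor-expanding the strict-RS condition at $0$ gives $\xi''(0) \le 1-\eps$, so that $\frac{N\xi''(0)}{2}q^2 + \frac{N-3}{2}\log(1-q^2) \le -\frac{\eps N}{2}q^2$ on $|q|\ge N^{-2/5}$.

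For \eqref{eq:free-energy-typical-3}, the three-replica Gaussian MGF is $\exp(\frac{3N\xi(1)}{2} + N\xi(q_{12}) + N\xi(q_{13}) + N\xi(q_{23}))$ with $q_{ij} = R(\sigma^i,\sigma^j)$ (setting $\sigma^3 \coloneqq \tau$). Conditioning on $\sigma^1, \tau$ and parameterizing $\sigma^2/\sqrt{N} = a_1 (\sigma^1/\sqrt{N}) + a_3 w + r \zeta$, where $w$ is the unit vector in $\mathrm{span}(\sigma^1, \tau)$ orthogonal to $\sigma^1$ and $\zeta \perp \mathrm{span}(\sigma^1, \tau)$, yields $q_{12} = a_1$, $q_{23} = a_1 q_{13} + a_3 \sqrt{1-q_{13}^2}$, and $(a_1, a_3)$ distributed with density $\propto (1-a_1^2-a_3^2)^{(N-4)/2}$. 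Taylor-expanding $N(\xi(q_{12}) + \xi(q_{23})) + \frac{N-4}{2}\log(1-a_1^2-a_3^2)$ around $(a_1, a_3) = 0$ produces a leading quadratic form $-\frac{N(1-\xi''(0))}{2}\|(a_1, a_3)\|^2$, which is negative-definite by $\xi''(0) \le 1-\eps$; the inner $\sigma^2$-integral is therefore $O(1)$ uniformly in $(\sigma^1, \tau)$. The outer $(\sigma^1, \tau)$-integral of $\Ind[|q_{13}|\ge N^{-2/5}] e^{N\xi(q_{13})}$ is $\le e^{-cN^{1/5}}$ exactly as in \eqref{eq:free-energy-typical-1}. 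The bound \eqref{eq:free-energy-typical-4} follows by replacing $H_N(\sigma^1)$ by $H_{N,2}(\sigma^1)$: this alters the overall MGF prefactor to $e^{N\xi''(0)/4 + N\xi(1)}$ and modifies the $\sigma^2$-integrand by a factor $e^{\frac{N\xi''(0)}{2} q_{12}^2}$ that keeps the quadratic form in $(a_1, a_3)$ negative-definite.

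The main obstacle is the strict-RS bookkeeping behind the second paragraph: establishing the quadratic gap $\xi(q) + \frac{1}{2}\log(1-q^2) \le -cq^2$ uniformly for $|q| \in [N^{-2/5}, 1)$ requires combining the local Taylor expansion with a compactness argument on the compact interval of $|q|$ bounded away from $0$ and $1$, and carefully absorbing the small external-field contribution $\gamma_1^2 q$ using $\gamma_1^2 = O(N^{-4/5})$. The analogous quadratic-form estimate for the three-replica integrals then extends by the same method. Once these quantitative RS inputs are in hand, the rest is routine Gaussian bookkeeping.
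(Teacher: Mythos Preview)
Your Markov bound $\Ind[\sigma\notin T]\le e^{-N\xi(1)/2+cN^{1/5}}\int\Ind[|R(\sigma,\tau)|\ge N^{-2/5}]e^{H_N(\tau)}\dif\rho(\tau)$ is a clean idea, and it does carry \eqref{eq:free-energy-typical-2}: after taking the Gaussian MGF you are left with the integrand $\exp(\tfrac{N\xi''(0)}{2}q^2)\psi(q)$, and $\tfrac{\xi''(0)}{2}q^2+\tfrac12\log(1-q^2)\le -\tfrac{\eps}{2}q^2$ really does hold on all of $(0,1)$ because $\tfrac12\log(1-q^2)\le -\tfrac{q^2}{2}$ and $\xi''(0)\le 1-\eps$.

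For \eqref{eq:free-energy-typical-1}, however, there is a genuine gap. Your argument requires $\xi_{\sim1}(q)+\tfrac12\log(1-q^2)\le -cq^2$ uniformly on $[N^{-2/5},1)$, and this is \emph{not} a consequence of the strict-RS hypothesis. Take $\xi_{\sim1}(q)=(\tfrac12-\eps)q^2+\sum_{p=3}^{p_*}q^p/p$ with $p_*$ large: then $\xi_{\sim1}(q)+q+\log(1-q)=-\eps q^2-\sum_{p>p_*}q^p/p\le -\eps q^2$, so Condition~\ref{con:strict-rs-2} holds, yet at $q=0.9$ one computes $\xi_{\sim1}(0.9)+\tfrac12\log(0.19)\approx 0.57-0.81\eps>0$ for small $\eps$. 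Thus your annealed two-replica integral $\int_{|q|\ge N^{-2/5}}e^{N\xi(q)}\psi(q)\dif q$ is exponentially large, not small, and the same obstruction propagates to the inner $\sigma^2$-integrals in \eqref{eq:free-energy-typical-3}--\eqref{eq:free-energy-typical-4} via the unconstrained overlap $q_{23}$ (your claimed uniform $O(1)$ bound fails for the same reason). This is not an accident: your Markov bound turns the left-hand side into an annealed second moment of $Z_N$, and the second-moment method is known to break down inside the strictly RS phase --- indeed this breakdown is exactly why the typicality truncation $T$ was introduced in \cite{HS23}.

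The paper's route is different. It rewrites \eqref{eq:free-energy-typical-1} as $e^{N\xi(1)/2}\bbP_{H_N\sim\HamDist_{\pl}(\cdot|\bx)}[\bx\notin T(H_N)]$ via a Gaussian change of measure to the planted model, and then bounds the \emph{probability} that the integral defining $T^c$ is large. For $|q|\ge\delta$ it uses Guerra's interpolation, a quenched (high-probability) bound giving exponent $\tfrac12(\xi(1)+\xi(|q|)+|q|+\log(1-|q|))$, which \emph{is} controlled by strict RS; the annealed first-moment argument is used only for $|q|\in[N^{-2/5},\delta]$, where your Taylor-expansion step is valid. The proofs of \eqref{eq:free-energy-typical-3}--\eqref{eq:free-energy-typical-4} follow the same planted-plus-Guerra template with two spikes.
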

The proof of the above lemma is very similar to \cite[Proposition 3.1]{HS23} and \cite[Lemma 7.9]{HMP24}, and we defer it to \Cref{ss:cov-bd-deferred-proofs}.
As a corollary, we can get control on the first two moments of $Z_N$ with respect to the randomness in $H_{N, \sim2}$.
To this end, let $\E_{\sim2}$ denote expectation with respect to $H_{N,\sim2}$.
\begin{corollary}
    \label{cor:free-energy-typical-truncation}
    There is a $H_{N,2}$-measurable event with probability $1-e^{-cN^{1/5}}$ on which the following holds.
    For
    \begin{equation}
        \label{eq:tT}
        \tT = \tT(H_{N,\sim2}) \coloneqq \lt\{
            \sigma \in S_N :
            \la
                \Ind[|R(\sigma,\tau)| \ge N^{-2/5}]
                e^{H_{N,\sim2}(\tau)}
            \ra_2
            \le e^{N\xi_{\sim2}(1)/2 - cN^{1/5}}
        \rt\},
    \end{equation}
    where the Gibbs average is with respect to $\tau \sim \la \cdot \ra_2$, we have
    \begin{align*}
        \E_{\sim2} \lt\la
            \Ind[\sigma \not\in \tT] e^{H_{N,\sim2}(\sigma)}
        \rt\ra_2
        &\le e^{N\xi_{\sim2}(1)/2 - cN^{1/5}}, \\
        \E_{\sim2} \lt\la
            \Ind[\sigma \not\in \tT]
        \rt\ra_2
        &\le e^{-cN^{1/5}}, \\
        \E_{\sim2} \lt\la
            \Ind[\sigma^1 \not\in \tT, |R(\sigma^1,\sigma^2)| \le 3N^{-2/5}]
            e^{H_{N,\sim2}(\sigma^1) + H_{N,\sim2}(\sigma^2)}
        \rt\ra_2
        &\le e^{N\xi_{\sim2}(1) - cN^{1/5}}, \\
        \E_{\sim2} \lt\la
            \Ind[\sigma^1 \not\in \tT, |R(\sigma^1,\sigma^2)| \le 3N^{-2/5}]
            e^{H_{N,\sim2}(\sigma^2)}
        \rt\ra_2
        &\le e^{N\xi_{\sim2}(1)/2 - cN^{1/5}}.
    \end{align*}
\end{corollary}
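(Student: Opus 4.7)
The plan is to reduce each bound in the Corollary to the corresponding bound in Lemma~\ref{lem:free-energy-typical-truncation} by factoring out the degree-$2$ Gibbs measure and then applying Markov's inequality on the $H_{N,2}$-marginal.

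First, I would use the decomposition $H_N = H_{N,2} + H_{N,\sim 2}$ to rewrite, for any (possibly random) test function $f : S_N \to \R$,
\[
    \int_{S_N} f(\sigma) e^{H_N(\sigma)} \dif\rho(\sigma) = Z_{N,2} \cdot \la f(\sigma) e^{H_{N,\sim 2}(\sigma)} \ra_2,
\]
and analogously for the two-variable integrals in \eqref{eq:free-energy-typical-3}--\eqref{eq:free-energy-typical-4} with a $Z_{N,2}^2$ prefactor. This recasts each left-hand side in Lemma~\ref{lem:free-energy-typical-truncation} as $\E[Z_{N,2}^k g_i]$ for a Gibbs-average quantity $g_i$ and $k\in\{1,2\}$.

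Second, I would control $Z_{N,2}$ via the previously established partition function asymptotic. Proposition~\ref{ppn:deg2-partition-fn} expresses $\log Z_{N,2}$ as $N\xi''(0)/4$ plus an $O_\eps(1)$ deterministic error and $-\tfrac12 \log\det((1+\xi''(0))I - \nabla^2 H_N(0))$, while Lemma~\ref{lem:logdet-subgaussian} shows that this log-determinant is $O_\eps(1)$-subgaussian around $N\xi''(0)/2$. Combining these yields an $H_{N,2}$-measurable event $E$ of probability $1-e^{-cN^{1/5}}$ on which
\[
    Z_{N,2} \in \bigl[e^{N\xi''(0)/4 - N^{1/10}},\, e^{N\xi''(0)/4 + N^{1/10}}\bigr].
\]
On $E$, dividing the threshold in the definition of $T$ by $Z_{N,2}$ and using the identity $\xi(1) = \xi_{\sim 2}(1) + \xi''(0)/2$ gives the upper bound $e^{N\xi_{\sim 2}(1)/2 - (c/2) N^{1/5}}$ for large $N$. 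Choosing the constant in the definition of $\tT$ to be at most $c/2$, this yields the pointwise inequality $\Ind[\sigma \notin \tT] \le \Ind[\sigma \notin T]$ on $E$.

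Finally, for each of the four bounds, I would combine the previous two ingredients. On $E$, $Z_{N,2}^k \ge e^{kN\xi''(0)/4 - kN^{1/10}}$, so dividing the right-hand side of Lemma~\ref{lem:free-energy-typical-truncation} by this lower bound and using the exponent identities $\xi(1) - \xi''(0)/2 = \xi_{\sim 2}(1)$ (for the first and third bounds, with $k=1$ and $k=2$) and $\xi(1)/2 + \xi''(0)/4 - \xi''(0)/2 = \xi_{\sim 2}(1)/2$ (for the fourth, with $k=2$) yields $\E[\E_{\sim 2}[g_i] \cdot \Ind[E]] \le e^{\tau_i + 2N^{1/10} - cN^{1/5}}$, where $\tau_i$ is the target exponent in the $i$-th bound of the Corollary. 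Markov's inequality then produces, for each $i$, an $H_{N,2}$-measurable event of probability $\ge 1 - e^{-c' N^{1/5}}$ on which $\E_{\sim 2}[g_i] \le e^{\tau_i - c' N^{1/5}}$. Intersecting $E$ with the four Markov events, and invoking the pointwise domination $\Ind[\sigma \notin \tT] \le \Ind[\sigma \notin T]$ to replace $T$ by $\tT$ in each integrand, proves all four inequalities on a single $H_{N,2}$-measurable event of probability $1-e^{-cN^{1/5}}$. The only real bookkeeping is unifying the constants across the four bounds and ensuring that the $O(N^{1/10})$ slack from $Z_{N,2}$-concentration is absorbed by the $e^{-cN^{1/5}}$ margin from Lemma~\ref{lem:free-energy-typical-truncation}; this is routine after adjusting $c$, and I do not anticipate a substantive obstacle.
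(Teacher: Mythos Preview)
Your proposal is correct and follows essentially the same route as the paper's proof: both obtain a lower bound on $Z_{N,2}$ from Proposition~\ref{ppn:deg2-partition-fn} and Lemma~\ref{lem:logdet-subgaussian} to deduce $T \subseteq \tT$ on a good $H_{N,2}$-event, then apply Markov's inequality to each bound of Lemma~\ref{lem:free-energy-typical-truncation} to transfer it to the conditional expectation $\E_{\sim 2}$. The only cosmetic difference is that the paper works directly with the identity $\la f e^{H_{N,\sim2}} \ra_2 = Z_{N,2}^{-1} \int f e^{H_N}$ rather than factoring out $Z_{N,2}^k$ first; your two-sided bound on $Z_{N,2}$ is harmless but only the lower bound is actually used.
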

\begin{proof}
    By \Cref{ppn:deg2-partition-fn,lem:logdet-subgaussian}, with probability $1-e^{-cN^{2/5}}$ over $H_{N,2}$,
    \[
        Z_{N,2} \ge e^{N\xi''(0)/4 - cN^{1/5}/2}.
    \]
    On this event, for $\sigma \in T$ where $T$ is as in \Cref{lem:free-energy-typical-truncation},
    \begin{align*}
        \lt\la
            \Ind[|R(\sigma,\tau)| \ge N^{-2/5}]
            e^{H_{N,\sim2}(\tau)}
        \rt\ra_2
        &= \fr{1}{Z_{N,2}}
        \int_{S_N}
        \Ind[|R(\sigma,\tau)| \ge N^{-2/5}]
        e^{H_N(\tau)}
        \dif\rho(\tau) \\
        &\le e^{N\xi_{\sim2}(1)/2 - cN^{1/5}/2}.
    \end{align*}
    Here we recall $\xi(1)/2 - \xi''(0)/4 = \xi_{\sim2}(1)/2$.
    So, $\sigma \in \tT(H_{N,\sim2},c/2)$, where this denotes $\tT$ defined with $c/2$ in place of $c$.
    Therefore $T \subseteq \tT(H_{N,\sim2},c/2)$.

    By Markov's inequality and \Cref{lem:free-energy-typical-truncation}, with probability $1-e^{-cN^{1/5}/4}$ over $H_{N,2}$,
    \[
        \E_{\sim2} \int_{S_N} \Ind[\sigma \not\in T] e^{H_N(\sigma)} \dif\rho(\sigma)
        \le e^{N\xi(1)/2 - 3cN^{1/5}/4}
    \]
    On the intersection of these events,
    \begin{align*}
        \E_{\sim2} \lt\la
            \Ind[\sigma \not\in \tT(H_{N,\sim2},c/2)] e^{H_{N,\sim2}(\sigma)}
        \rt\ra_2
        &\le \E_{\sim2} \lt\la
            \Ind[\sigma \not\in T] e^{H_{N,\sim2}(\sigma)}
        \rt\ra_2 \\
        &= \fr{1}{Z_{N,2}} \E_{\sim2} \int_{S_N} \Ind[\sigma \not\in T] e^{H_N(\sigma)} \dif\rho(\sigma) \\
        &\le e^{N\xi_{\sim2}(1)/2 - cN^{1/5}/4}.
    \end{align*}
    The first conclusion follows by adjusting $c$, and the other two conclusions follow similarly.
\end{proof}
For the rest of this subsection, we condition on a realization of $H_{N,2}$ satisfying the following good event.
\begin{definition}
    \label{dfn:E2}
    Let $E_2$ denote the $H_{N,2}$-measurable event that the events in \Cref{ppn:deg2-subgaussianity,cor:free-energy-typical-truncation} hold.
    This occurs with probability $1-e^{-cN^{1/5}}$.
\end{definition}
We can now prove \Cref{ppn:nondeg2-partition-fn}.
\begin{proof}[Proof of \Cref{ppn:nondeg2-partition-fn}]
    Let $\tT$ be as in \Cref{cor:free-energy-typical-truncation}.
    We can write
    \begin{equation}
        \label{eq:ZN-ratio-decomp}
        \fr{Z_N}{Z_{N,2}}
        = \la e^{H_{N,\sim2}(\sigma)} \ra_2
        = X_1 + X_2,
    \end{equation}
    where
    \begin{align*}
        X_1 &= \lt\la \Ind[\sigma \in \tT] e^{H_{N,\sim2}(\sigma)} \rt\ra_2, &
        X_2 &= \lt\la \Ind[\sigma \not\in \tT] e^{H_{N,\sim2}(\sigma)}\rt\ra_2.
    \end{align*}
    We will show that that $X_2$ is much smaller than $\E_{\sim2} X_1$ with high probability, and then control the fluctuations of $X_1$. 
    For all $\sigma \in S_N$, $\E_{\sim2}[e^{H_{N,\sim2}(\sigma)}] = e^{N\xi_{\sim2}(1)/2}$, so \Cref{cor:free-energy-typical-truncation} implies
    \begin{equation}
        \label{eq:ZN-ratio-typical-1mt}
        (1-e^{-cN^{1/5}}) e^{N\xi_{\sim2}(1)/2}
        \le \E_{\sim2} [X_1]
        \le  e^{N\xi_{\sim2}(1)/2}.
    \end{equation}
    On the other hand, by \Cref{cor:free-energy-typical-truncation} and Markov's inequality, with probability $1-e^{-cN^{1/5}/2}$ over $H_{N,\sim2}$,
    \begin{equation}
        \label{eq:ZN-ratio-atypical}
        X_2 \le e^{N\xi_{\sim2}(1)/2 - cN^{1/5}/2},
    \end{equation}
    so $X_2 \le e^{-cN^{1/5}/2} \E_{\sim2} X_1$, as desired.
    
    We now control the fluctuations of $X_1$ by estimating
    \[
        \Var_{\sim2} [X_1] \coloneqq \E_{\sim2} [X_1^2] - \E_{\sim2}[X_1]^2.
    \]
    Then, for $\sigma^1,\sigma^2 \sim \mu_{H_{N,2}}$,
    \[
        \E_{\sim2}[X_1^2]
        = \E_{\sim2} \lt\la \Ind[\sigma^1,\sigma^2 \in \tT] e^{H_{N,\sim2}(\sigma^1) + H_{N,\sim2}(\sigma^2)} \rt\ra_2
        \le \E_{\sim2}[Y_1] + \E_{\sim2}[Y_2],
    \]
    where
    \begin{align}
        \notag
        Y_1 &= \lt\la \Ind[|R(\sigma^1,\sigma^2)| \le N^{-2/5}] e^{H_{N,\sim2}(\sigma^1) + H_{N,\sim2}(\sigma^2)} \rt\ra_2, \\
        \label{eq:Y2}
        Y_2 &= \lt\la \Ind[\sigma^1 \in \tT, |R(\sigma^1,\sigma^2)| \ge N^{-2/5}] e^{H_{N,\sim2}(\sigma^1) + H_{N,\sim2}(\sigma^2)} \rt\ra_2.
    \end{align}
    By the definition of $\tT$ and \eqref{eq:ZN-ratio-typical-1mt},
    \begin{equation}
        \label{eq:EY2}
        \E_{\sim2}[Y_2]
        \le \E_{\sim2} \lt\la \Ind[\sigma^1 \in \tT] e^{H_{N,\sim2}(\sigma^1)} \rt\ra_2
        e^{N\xi_{\sim2}(1)/2 - cN^{1/5}}
        \le
        e^{N\xi_{\sim2}(1) - cN^{1/5}}.
    \end{equation}
    We further calculate
    \[
        \E_{\sim2}[Y_1]
        = e^{N\xi_{\sim2}(1)} \lt\la \Ind[|R(\sigma^1,\sigma^2)| \le N^{-2/5}] e^{N\xi_{\sim2}(R(\sigma^1,\sigma^2))}\rt\ra_2.
    \]
    Recall that in \Cref{thm:partition-fn-and-covariance}, we assumed $\gamma_1^2 \le N^{-4/5}$.
    Thus, for $|R| \le N^{-2/5}$,
    \[
        \xi_{\sim2}(R) = \gamma_1^2 R + O(R^3) = O(N^{-6/5}).
    \]
    It follows that $\E_{\sim2}[Y_1] \le (1 + O(N^{-1/5})) e^{N\xi_{\sim2}(1)}$.

    Combining the above estimates shows $\E_{\sim2}[X_1^2] \le (1 + O(N^{-1/5})) e^{N\xi_{\sim2}(1)}$.
    Further combining with the lower bound in \eqref{eq:ZN-ratio-typical-1mt} shows
    \[
        \Var_{\sim2}[X_1] = O(N^{-1/5}) e^{N\xi_{\sim2}(1)}.
    \]
    By Chebyshev's inequality, with probability $1-N^{-1/15}/2$,
    \[
        |X_1 - \E_{\sim2}[X_1]|
        = O(N^{-1/15}) e^{N\xi_{\sim2}(1)/2}
    \]
    Union bounding with the event in \eqref{eq:ZN-ratio-atypical}, and recalling \eqref{eq:ZN-ratio-typical-1mt}, we conclude that with probability $1-N^{-1/15}$,
    \[
        \fr{Z_N}{Z_{N,2}}
        = (1 + O(N^{-1/15})) e^{N\xi_{\sim2}(1)/2}. \qedhere
    \]
\end{proof}
The above proof also implies the following estimate, which will be useful in the sequel.
\begin{corollary}
    \label{cor:big-overlap-integral}
    On the event $E_2$ (\Cref{dfn:E2}), with probability $1-e^{-cN^{1/5}}$ over $H_{N,\sim2}$,
    \[
        \int \Ind[|R(\sigma^1,\sigma^2)| \ge N^{-2/5}\} e^{H_N(\sigma^1) + H_N(\sigma^2)} \dif\rho^{\otimes 2}(\sigma^1,\sigma^2)
        \le Z_{N,2}^2 e^{N\xi_{\sim2}(1) - cN^{1/5}}.
    \]
\end{corollary}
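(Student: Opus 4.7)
The plan is to reduce the integral to a Gibbs average under the product measure $\mu_{H_{N,2}}^{\otimes 2}$ and then exploit the truncation set $\tT$ already set up in Corollary~\ref{cor:free-energy-typical-truncation}. Splitting $H_N = H_{N,2} + H_{N,\sim2}$ and using the definition of $\mu_{H_{N,2}}$, the integral equals $Z_{N,2}^2 \cdot Q$, where
\[
    Q = \lt\la \Ind\lt[|R(\sigma^1,\sigma^2)| \ge N^{-2/5}\rt] e^{H_{N,\sim2}(\sigma^1) + H_{N,\sim2}(\sigma^2)}\rt\ra_2.
\]
So it suffices to show $Q \le e^{N\xi_{\sim2}(1) - cN^{1/5}}$ with probability $1 - e^{-cN^{1/5}}$ over $H_{N,\sim 2}$.

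I would decompose $Q = Y_2 + R$ depending on whether $\sigma^1 \in \tT$: here $Y_2$ is precisely the quantity in \eqref{eq:Y2} from the proof of Proposition~\ref{ppn:nondeg2-partition-fn}, whose first moment is already bounded there as $\E_{\sim2}[Y_2] \le e^{N\xi_{\sim2}(1) - cN^{1/5}}$ via the defining property of $\tT$ and the identity $\E_{\sim2}\la e^{H_{N,\sim2}(\sigma)} \ra_2 = e^{N\xi_{\sim2}(1)/2}$. Markov's inequality then promotes this to $Y_2 \le e^{N\xi_{\sim2}(1) - cN^{1/5}/2}$ with probability $1-e^{-cN^{1/5}/2}$.

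For the remaining piece $R$ with $\sigma^1 \notin \tT$, I would discard the overlap indicator (it is no longer needed) and use that $\sigma^1, \sigma^2$ are independent under $\la\cdot\ra_2^{\otimes 2}$ to factor
\[
    R \le \lt\la \Ind[\sigma \notin \tT] e^{H_{N,\sim2}(\sigma)}\rt\ra_2 \cdot \lt\la e^{H_{N,\sim2}(\sigma)}\rt\ra_2.
\]
Corollary~\ref{cor:free-energy-typical-truncation} gives expectation $\le e^{N\xi_{\sim2}(1)/2 - cN^{1/5}}$ for the first factor, while the second factor equals $Z_N/Z_{N,2}$ and has expectation $e^{N\xi_{\sim2}(1)/2}$. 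Applying Markov to each factor with slack $e^{cN^{1/5}/4}$ and union bounding yields $R \le e^{N\xi_{\sim2}(1) - cN^{1/5}/2}$ with probability $1 - 2e^{-cN^{1/5}/4}$. A final union bound with the $Y_2$ estimate and multiplication by $Z_{N,2}^2$ (after adjusting $c$) completes the proof.

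The main subtlety to watch out for is that I cannot use the sharp $(1+o(1))$-estimate for $Z_N/Z_{N,2}$ from Proposition~\ref{ppn:nondeg2-partition-fn}, since that holds only with probability $1 - N^{-1/15}$, which is far too weak here. Instead, the crude first-moment bound $\E_{\sim2}[Z_N/Z_{N,2}] = e^{N\xi_{\sim2}(1)/2}$ must be combined with a Markov slack of $e^{O(N^{1/5})}$; the $\tT$-truncation then contributes a compensating factor of $e^{-cN^{1/5}}$, so the slack is comfortably absorbed.
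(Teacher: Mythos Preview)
Your proposal is correct and matches the paper's own proof essentially line for line: the paper also divides by $Z_{N,2}^2$, splits the resulting Gibbs average into $Y_2$ (your $Y_2$) plus a piece $Y_3$ (your $R$) according to whether $\sigma^1\in\tT$, bounds $Y_2$ via \eqref{eq:EY2} and Markov, factors $Y_3$ by independence, and applies Markov with $e^{cN^{1/5}/4}$ slack to each factor before union bounding. Your remark about avoiding the $1-N^{-1/15}$ estimate and relying only on the crude first-moment bound for $Z_N/Z_{N,2}$ is exactly the point.
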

\begin{proof}
    Dividing through by $Z_{N,2}^2$, it suffices to show, for $\sigma^1,\sigma^2 \sim \mu_{H_{N,2}}$,
    \[
        \lt\la \Ind[|R(\sigma^1,\sigma^2)| \ge N^{-2/5}] e^{H_{N,\sim2}(\sigma^1) + H_{N,\sim2}(\sigma^2)} \rt\ra_2
        \le e^{N\xi_{\sim2}(1) - cN^{1/5}}.
    \]
    The left-hand side is bounded by $Y_2+Y_3$, where $Y_2$ is as in \eqref{eq:Y2} and
    \[
        Y_3 \coloneqq \lt\la \Ind[\sigma^1 \not\in \tT] e^{H_{N,\sim2}(\sigma^1) + H_{N,\sim2}(\sigma^2)}\rt\ra_2
        = \lt\la \Ind[\sigma \not\in \tT] e^{H_{N,\sim2}(\sigma)} \rt\ra_2
        \lt\la e^{H_{N,\sim2}(\sigma)} \rt\ra_2.
    \]
    By \eqref{eq:EY2}, $\E_{\sim2} [Y_2] \le e^{N\xi_{\sim2}(1) - cN^{1/5}}$.
    By \Cref{cor:free-energy-typical-truncation},
    \begin{align*}
        \E_{\sim2} \lt\la \Ind[\sigma \not\in \tT] e^{H_{N,\sim2}(\sigma)} \rt\ra_2 &\le e^{N\xi_{\sim2}(1)/2 - cN^{1/5}}, &
        \E_{\sim2} \la e^{H_{N,\sim2}(\sigma)} \ra_2 &\le e^{N\xi_{\sim2}(1)/2}.
    \end{align*}
    So, the following estimates each hold with probability $1-e^{-cN^{1/5}/4}$ over $H_{N,\sim2}$:
    \begin{align*}
        Y_2 &\le e^{N\xi_{\sim2}(1) - cN^{1/5}/2}, &
        \lt\la \Ind[\sigma \not\in \tT] e^{H_{N,\sim2}(\sigma)} \rt\ra_2 &\le e^{N\xi_{\sim2}(1)/2 - 3cN^{1/5}/4}, \\
        &&
        \la e^{H_{N,\sim2}(\sigma)} \ra_2 &\le e^{N\xi_{\sim2}(1)/2+cN^{1/5}/4}.
    \end{align*}
    The conclusion follows on the intersection of these events, after adjusting $c$.
\end{proof}
We now turn to the proof of \Cref{ppn:nondeg2-covariance}.
By rotational invariance of gaussians, we may assume $\nabla^2 H_N(0)$ is diagonal while keeping the law of $H_{N,\sim2}$ unchanged.
For $i,j\in [N]$, define
\[
    X_{i,j} = \lt\la 
        \sigma_i \sigma_j \lt( e^{H_{N,\sim2}(\sigma) - N\xi_{\sim2}(1)/2} - \Ind[i=j] \rt)
    \rt\ra_2,
\]
and note that this equals the $(i,j)$ entry of the matrix appearing in \Cref{ppn:nondeg2-covariance}. 
For $\sigma \in \bbR^N$ and $i\in [N]$, let $\sigma_{\sim i} \in \bbR^{N-1}$ denote $\sigma$ with coordinate $i$ omitted.
Similarly, for $i\neq j$, let $\sigma_{\sim i,j} \in \bbR^{N-2}$ denote $\sigma$ with coordinates $i$ and $j$ omitted, and by slight abuse of notation let $\sigma_{\sim i,i} = \sigma_{\sim i}$.
For $i,j\in [N]$ (possibly with $i=j$) define analogously to $\tT$
\begin{equation}
    \label{eq:tT-ij}
    \tT_{i,j} \coloneqq \lt\{
        \sigma \in S_N :
        \lt\la 
            \Ind[|R(\sigma_{\sim i,j},\tau_{\sim i,j})| \ge 2N^{-2/5}]
            e^{H_{N,\sim2}(\tau)}
        \rt\ra_2
        \le e^{N\xi_{\sim2}(1)/2 - cN^{1/5}}
    \rt\},
\end{equation}
where we recall the Gibbs average is with respect to $\tau \sim \la \cdot \ra_2$.
Then define
\begin{align*}
    \tX_{i,j} &= \lt\la 
        \Ind[|\sigma_i|,|\sigma_j| \le \log N, \sigma \in \tT_{i,j}]
        \sigma_i \sigma_j \lt( e^{H_{N,\sim2}(\sigma) - N\xi_{\sim2}(1)/2} - \Ind[i=j] \rt)
    \rt\ra_2 \\
    \hX_{i,j} &= \bigg\la
        \Ind[|\sigma^1_i|,|\sigma^1_j|,|\sigma^2_i|,|\sigma^2_j| \le \log N, |R(\sigma^1_{\sim i,j},\sigma^2_{\sim i,j})| \le 2N^{-2/5}] \\
        &\qquad
        \sigma^1_i \sigma^1_j \sigma^2_i \sigma^2_j
        \lt( e^{H_{N,\sim2}(\sigma^1) - N\xi_{\sim2}(1)/2} - \Ind[i=j] \rt)
        \lt( e^{H_{N,\sim2}(\sigma^2) - N\xi_{\sim2}(1)/2} - \Ind[i=j] \rt)
    \bigg\ra_2.
\end{align*}
Note that $\hX_{i,j}$ is the contribution to $X_{i,j}^2$ coming from $\sigma^1,\sigma^2$ that are both not localized to coordinate $i$ or $j$ and have small overlap.
The following two lemmas reduce the task of controlling $X_{i,j}^2$ to bounding $\E_{\sim2} \hX_{i,j}$.
They are proved by manipulating the typicality truncations $\tT$ and $\tT_{i,j}$ similarly to the proofs above; we defer these proofs to \Cref{ss:cov-bd-deferred-proofs}.
\begin{lemma}
    \label{lem:nondeg2-covariance-typical}
    For each $i,j\in [N]$, with probability $1-e^{-c\log^2 N}$ over $H_{N,\sim2}$,
    \[
        X_{i,j}^2 \le 2\tX_{i,j}^2 + e^{-c\log^2 N}.
    \]
\end{lemma}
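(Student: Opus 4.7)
The plan is to prove the slightly stronger estimate that, with probability at least $1 - e^{-c \log^2 N}$ over $H_{N, \sim 2}$, the difference $D \defeq X_{i,j} - \tX_{i,j}$ satisfies $|D| \le e^{-c \log^2 N / 2}$; the lemma then follows from the elementary inequality $X_{i,j}^2 \le 2\tX_{i,j}^2 + 2 D^2$. By construction, $D$ is a Gibbs average (under $\mu_{H_{N,2}}$) over spins $\sigma$ that are bad in the sense that either $|\sigma_i| > \log N$, or $|\sigma_i| \le \log N$ but $|\sigma_j| > \log N$, or both coordinates lie within $\log N$ but $\sigma \not\in \tT_{i,j}$. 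The triangle inequality lets us write $|D| \le D_a + D_b + D_c$ according to which case holds, so it suffices to bound each piece by roughly $e^{-c \log^2 N / 2}$.

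For $D_a$ and $D_b$, the relevant bad events are independent of $H_{N, \sim 2}$. Writing $E_\sigma \defeq e^{H_{N, \sim 2}(\sigma) - N\xi_{\sim 2}(1)/2}$, we have $\E_{\sim 2}[E_\sigma] = 1$ for $\sigma \in \ScS_N$, so $\E_{\sim 2}[D_a] \le 2 \la \Ind[|\sigma_i| > \log N] \cdot |\sigma_i \sigma_j| \ra_2$. Cauchy--Schwarz on the inner Gibbs average combined with the subgaussian tail and fourth-moment bounds of \Cref{ppn:deg2-subgaussianity} then yields $\E_{\sim 2}[D_a] \le C e^{-c \log^2 N / 2}$, and Markov's inequality produces the desired high-probability bound; $D_b$ is identical.

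The main obstacle is $D_c$, where the $H_{N,\sim 2}$-dependence of $\tT_{i,j}$ couples its indicator to $E_\sigma$ and obstructs the trick above. The plan is to exploit the Markov inequality implicit in the definition of $\tT_{i,j}$,
\[
    \Ind[\sigma \not\in \tT_{i,j}] \le e^{c N^{1/5}} \la \Ind[|R(\sigma_{\sim i,j}, \tau_{\sim i,j})| \ge 2 N^{-2/5}] E_\tau \ra_2,
\]
which reduces $D_c$ to controlling the symmetric Gibbs average $\la\la \Ind[|R(\sigma_{\sim i,j}, \tau_{\sim i,j})| \ge 2 N^{-2/5}] E_\sigma E_\tau \ra\ra_2$. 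On the typical event (via \Cref{ppn:deg2-subgaussianity}) that $|\sigma_i|, |\sigma_j|, |\tau_i|, |\tau_j| \le N^{1/10}$, the overlap difference $|R(\sigma, \tau) - R(\sigma_{\sim i,j}, \tau_{\sim i,j})|$ is at most $2 N^{-4/5}$, so the indicator implies $|R(\sigma, \tau)| \ge N^{-2/5}$, at which point \Cref{cor:big-overlap-integral} produces the required $e^{-c N^{1/5}}$ bound with high probability over $H_{N, \sim 2}$. The complementary contributions, where some coordinate of $\sigma$ or $\tau$ exceeds $N^{1/10}$, are handled by the Cauchy--Schwarz-plus-Markov argument used for $D_a$ and $D_b$, together with the fact from \Cref{ppn:nondeg2-partition-fn} that $\la E_\sigma \ra_2 \approx 1$ with high probability.

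Finally, by choosing the constant $c$ in the definition of $\tT_{i,j}$ small enough relative to the tail constants in \Cref{ppn:deg2-subgaussianity} and \Cref{cor:big-overlap-integral}, the prefactor $e^{c N^{1/5}}$ is dominated, giving $|D_c| \le e^{-c' N^{1/5}}$ with probability $1 - e^{-c' N^{1/5}}$, which is much smaller than the required $e^{-c \log^2 N / 2}$. The chief technical obstacle is keeping the various constants straight across this last argument, in particular ensuring that all applications of Markov to get from expectation bounds to high-probability bounds are quantitatively consistent with the constant chosen in $\tT_{i,j}$.
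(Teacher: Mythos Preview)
Your decomposition $X_{i,j}=\tX_{i,j}+D$ and the handling of the large-coordinate pieces $D_a,D_b$ match the paper exactly (there they are packaged into a single term $\tX^{(2)}_{i,j}$ and bounded by subgaussianity of $\sigma_i,\sigma_j$ under $\la\cdot\ra_2$, followed by Markov).

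For $D_c$ the paper takes a much shorter route than you do. The key observation, stated as \Cref{fac:tT-ij-to-tT}, is that on the event $\{|\sigma_i|,|\sigma_j|\le\log N\}$ one has the set inclusion $\{\sigma\not\in\tT_{i,j}\}\subseteq\{\sigma\not\in\tT\}$: if $|R(\sigma_{\sim i,j},\tau_{\sim i,j})|\ge 2N^{-2/5}$ then $|R(\sigma,\tau)|\ge N^{-2/5}$ because the two dropped coordinates can shift the overlap by at most $2N^{-1/2}\log N$. This immediately gives
\[
|\tX^{(1)}_{i,j}|\le N\,\lt\la \Ind[\sigma\not\in\tT]\bigl(e^{H_{N,\sim2}(\sigma)-N\xi_{\sim2}(1)/2}+1\bigr)\rt\ra_2,
\]
and the first two estimates of \Cref{cor:free-energy-typical-truncation} bound $\E_{\sim2}$ of the right-hand side by $e^{-cN^{1/5}}$. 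A single Markov step finishes. There is no need to pass to a second replica $\tau$, invoke \Cref{cor:big-overlap-integral}, or split on the size of $\tau_i,\tau_j$.

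Your two-replica route can be made to work, but as written it has a quantitative gap: you invoke \Cref{ppn:nondeg2-partition-fn} for $\la E_\sigma\ra_2\approx 1$, and that proposition only holds with probability $1-N^{-1/15}$, which is far larger than the target failure probability $e^{-c\log^2 N}$. You can patch this by replacing it with the crude Markov bound $\la E_\sigma\ra_2\le e^{\delta N^{1/5}}$ with probability $1-e^{-\delta N^{1/5}}$ (since $\E_{\sim2}\la E_\sigma\ra_2=1$), but then the constant-balancing you flag in your last paragraph becomes essential and must actually be carried out: you need the $c$ in $\tT_{i,j}$ plus the $\delta$ just introduced to be beaten by the subgaussian constant from \Cref{ppn:deg2-subgaussianity} and the constant in \Cref{cor:big-overlap-integral}. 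The paper's route sidesteps all of this.
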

\begin{lemma}
    \label{lem:nondeg2-covariance-2mt}
    For each $i,j\in [N]$,
    \[
        \E_{\sim2} \tX_{i,j}^2 \le \E_{\sim2} \hX_{i,j} + e^{-cN^{1/5}}.
    \]
\end{lemma}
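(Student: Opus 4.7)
The plan is to expand $\tX_{i,j}^2$ via two i.i.d.\ replicas $\sigma^1, \sigma^2 \sim \la \cdot \ra_2$ and compare to $\hX_{i,j}$. Setting
\[
\phi(\sigma) \coloneqq \Ind[|\sigma_i|, |\sigma_j| \le \log N] \, \sigma_i \sigma_j \left( e^{H_{N,\sim2}(\sigma) - N\xi_{\sim2}(1)/2} - \Ind[i=j] \right),
\]
$A_k = \Ind[\sigma^k \in \tT_{i,j}]$, and $R = \Ind[|R(\sigma^1_{\sim i,j}, \sigma^2_{\sim i,j})| \le 2N^{-2/5}]$, we have $\tX_{i,j}^2 = \la A_1 A_2 \phi(\sigma^1) \phi(\sigma^2) \ra_2$ and $\hX_{i,j} = \la R \phi(\sigma^1) \phi(\sigma^2) \ra_2$, so the algebraic identity $A_1 A_2 - R = A_1 A_2 (1-R) - (1-A_1 A_2) R$ yields
\[
\E_{\sim2}[\tX_{i,j}^2 - \hX_{i,j}] \le \E_{\sim2}\la A_1 A_2 (1-R) |\phi(\sigma^1)\phi(\sigma^2)| \ra_2 + \E_{\sim2}\la (1-A_1 A_2) R |\phi(\sigma^1)\phi(\sigma^2)| \ra_2.
\]
I will show that each of these two terms is bounded by $e^{-cN^{1/5}}/2$.

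For the first term I would apply the very definition of $\tT_{i,j}$ in \eqref{eq:tT-ij}: fixing $\sigma^1 \in \tT_{i,j}$, the Gibbs average over $\sigma^2$ of $e^{H_{N,\sim2}(\sigma^2)}$ restricted to $|R(\sigma^1_{\sim i,j},\sigma^2_{\sim i,j})| \ge 2N^{-2/5}$ is at most $e^{N\xi_{\sim2}(1)/2 - cN^{1/5}}$. Combined with $|\phi(\sigma^k)| \le (\log N)^2\bigl(e^{H_{N,\sim2}(\sigma^k) - N\xi_{\sim2}(1)/2} + \Ind[i=j]\bigr)$ and the identity $\E_{\sim2}\la e^{H_{N,\sim2}(\sigma^1) - N\xi_{\sim2}(1)/2}\ra_2 = 1$, the dominant contribution (in which both $\phi$-factors supply the exponential piece) is bounded by $(\log N)^{O(1)} e^{-cN^{1/5}}$. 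The ``cross terms'' appearing only in the $i=j$ case involve unweighted large-overlap indicators, and I would control these via the subgaussianity of overlaps under $\la \cdot \ra_2^{\otimes 2}$ (Proposition~\ref{ppn:deg2-subgaussianity}): the coordinate truncations $|\sigma^k_i|,|\sigma^k_j| \le \log N$ give $|R(\sigma^1,\sigma^2) - R(\sigma^1_{\sim i,j},\sigma^2_{\sim i,j})| \le 2(\log N)^2/N \ll N^{-2/5}$, so $|R(\sigma^1_{\sim i,j},\sigma^2_{\sim i,j})| \ge 2N^{-2/5}$ forces $|R(\sigma^1,\sigma^2)| \ge N^{-2/5}$, which is a $3e^{-cN^{1/5}}$-probability event.

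For the second term, I would first establish the $\tT_{i,j}$-analog of the first inequality in Corollary~\ref{cor:free-energy-typical-truncation}:
\[
\E_{\sim2} \la \Ind[\sigma \not\in \tT_{i,j}]\, e^{H_{N,\sim2}(\sigma)} \ra_2 \le e^{N\xi_{\sim2}(1)/2 - cN^{1/5}}.
\]
This follows by Markov applied to the defining inequality of $\tT_{i,j}$, which bounds the LHS by $e^{-N\xi_{\sim2}(1)/2 + cN^{1/5}}$ times $\E_{\sim2} \la \Ind[|R(\sigma_{\sim i,j}, \tau_{\sim i,j})| \ge 2N^{-2/5}]\, e^{H_{N,\sim2}(\sigma)+H_{N,\sim2}(\tau)} \ra_2$; the latter is controlled by redoing the second-moment computation behind \eqref{eq:free-energy-typical-3} with the overlap condition transferred to $\sim i,j$ coordinates, again using the $O((\log N)^2/N)$ translation. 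Splitting $1-A_1 A_2 \le (1-A_1) + (1-A_2)$ and expanding $|\phi(\sigma^1)\phi(\sigma^2)|$ into its four constituent pieces, each summand factorizes over $\sigma^1$ and $\sigma^2$ given $H_{N,\sim2}$; Cauchy--Schwarz combined with the displayed estimate above and with $\E_{\sim2}\la e^{H_{N,\sim2}(\sigma)}\ra_2 = e^{N\xi_{\sim2}(1)/2}$ then yields a $(\log N)^{O(1)} e^{-cN^{1/5}}$ bound.

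The main obstacle, though not conceptual, is the careful alignment between the \emph{restricted} overlap $R(\cdot_{\sim i,j},\cdot_{\sim i,j})$ appearing in $\tT_{i,j}$ and $\hX_{i,j}$, and the \emph{full} overlap $R(\cdot,\cdot)$ for which Proposition~\ref{ppn:deg2-subgaussianity} provides subgaussian control. The coordinate truncations $|\sigma_i|,|\sigma_j| \le \log N$ present in $\phi$ are precisely calibrated to absorb the $O((\log N)^2/N)$ discrepancy between the two overlap notions; this lets us transfer between them, at the cost of forcing a case split between $i=j$ and $i \neq j$ to handle the ``constant-weight'' cross terms arising from the $\Ind[i=j]$ subtraction in $\phi$.
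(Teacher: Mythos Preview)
Your decomposition $\tX_{i,j}^2 - \hX_{i,j}$ into the two pieces $A_1A_2(1-R)$ and $(1-A_1A_2)R$ is exactly the paper's split $\hX_{i,j}^{(1)} - \hX_{i,j}^{(2)}$, and your treatment of the first term (definition of $\tT_{i,j}$ for the exponential--exponential piece; restricted-to-full overlap translation plus Proposition~\ref{ppn:deg2-subgaussianity} for the cross terms) coincides with the paper's.

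The gap is in the second term. Your ``factorization over $\sigma^1,\sigma^2$ given $H_{N,\sim2}$'' only holds after discarding the small-overlap indicator $R$. Once $R$ is dropped, Cauchy--Schwarz over $\E_{\sim2}$ forces you to control quantities like $\E_{\sim2}\bigl\langle e^{H_{N,\sim2}(\sigma)-N\xi_{\sim2}(1)/2}\bigr\rangle_2^2 = \bigl\langle e^{N\xi_{\sim2}(R(\sigma^1,\sigma^2))}\bigr\rangle_2$, a two-replica integral over the \emph{entire} overlap range; Proposition~\ref{ppn:deg2-subgaussianity} only gives tails up to $|R|\le N^{-3/10}$, and bounding the full integral is itself as hard as the original problem. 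Nor does your displayed ``$\tT_{i,j}$-analog'' (a one-replica first-moment bound) combine with the single identity $\E_{\sim2}\la e^{H_{N,\sim2}(\sigma)}\ra_2=e^{N\xi_{\sim2}(1)/2}$ to control a product under $\E_{\sim2}$: the two factors are coupled through $H_{N,\sim2}$.

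The paper keeps the small-overlap indicator and proceeds differently: it invokes Fact~\ref{fac:tT-ij-to-tT}, which under the coordinate truncation already present in $\phi$ converts $\sigma^k\not\in\tT_{i,j}$ to $\sigma^k\not\in\tT$, and simultaneously converts $|R(\sigma^1_{\sim i,j},\sigma^2_{\sim i,j})|\le 2N^{-2/5}$ to $|R(\sigma^1,\sigma^2)|\le 3N^{-2/5}$. This places the entire expression under the joint indicator $\Ind[\sigma^1\not\in\tT,\,|R(\sigma^1,\sigma^2)|\le 3N^{-2/5}]$, for which the \emph{two-replica} estimates (the third and fourth inequalities) of Corollary~\ref{cor:free-energy-typical-truncation} apply directly, with no decoupling needed. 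Those estimates were set up precisely for this step, and they are what replaces your Cauchy--Schwarz.
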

We now turn to bounding the $\E_{\sim2} \hX_{i,j}$.
This is achieved by the following pair of propositions.
\begin{proposition}
    \label{ppn:hX-ii}
    For any $i\in [N]$, we have $\E_{\sim2} \hX_{i,i} \le C(N \gamma_1^4 + N^{-1})$.
\end{proposition}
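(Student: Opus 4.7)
The key identity is a Gaussian integration in $H_{N,\sim2}$. Since $H_{N,\sim2}$ is a centered Gaussian process with covariance $\E H_{N,\sim2}(\sigma^1) H_{N,\sim2}(\sigma^2) = N\xi_{\sim2}(R(\sigma^1,\sigma^2))$ and diagonal variance $N\xi_{\sim2}(1)$, a direct computation shows that for every fixed pair $\sigma^1,\sigma^2 \in S_N$,
\[
    \E_{\sim2}\!\left[(e^{H_{N,\sim2}(\sigma^1) - N\xi_{\sim2}(1)/2}-1)(e^{H_{N,\sim2}(\sigma^2) - N\xi_{\sim2}(1)/2}-1)\right] = e^{N\xi_{\sim2}(R(\sigma^1,\sigma^2))} - 1.
\]
Swapping the Gaussian expectation and the Gibbs average by Fubini reduces the statement to bounding $\left\la \Ind[\calE_i](\sigma^1_i)^2(\sigma^2_i)^2\bigl(e^{N\xi_{\sim2}(R(\sigma^1,\sigma^2))}-1\bigr)\right\ra_2$, where $\calE_i$ denotes the truncation event in the definition of $\hX_{i,i}$. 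On this event $|R(\sigma^1,\sigma^2)| = O(N^{-2/5})$, and together with $\gamma_1^2 \le N^{-4/5}$ this gives $|N\xi_{\sim2}(R(\sigma^1,\sigma^2))| \le CN^{-1/5}$, so the Taylor expansion $e^y-1 = y + y^2/2 + O(|y|^2 \cdot N^{-1/5})$ is applicable and each piece can be controlled separately.

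The central tool is the sign-flip symmetry $\sigma^1 \mapsto -\sigma^1$. This map preserves $\mu_{H_{N,2}}$ (since $H_{N,2}$ is quadratic), preserves both $(\sigma^1_i)^2(\sigma^2_i)^2$ and the indicator $\Ind[\calE_i]$ (which depends only on $|\sigma^1_i|$ and $|R(\sigma^1_{\sim i},\sigma^2_{\sim i})|$), while multiplying $R(\sigma^1,\sigma^2)^k$ by $(-1)^k$. Consequently $\left\la \Ind[\calE_i](\sigma^1_i)^2(\sigma^2_i)^2 R(\sigma^1,\sigma^2)^k\right\ra_2 = 0$ whenever $k$ is odd. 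Applied to the linear Taylor term $N\xi_{\sim2}(R) = N\gamma_1^2 R + \sum_{p\ge 3} N\gamma_p^2 R^p$, this kills the $N\gamma_1^2 R$ contribution as well as all odd-$p$ monomials; the surviving even-$p$ terms ($p\ge 4$) are controlled by Cauchy--Schwarz and \Cref{ppn:deg2-subgaussianity}, yielding $N\gamma_p^2 \la (\sigma^1_i\sigma^2_i)^2 R^p\ra_2 \le N\gamma_p^2 \cdot C_p N^{-p/2} = O(N^{-1})$.

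For the quadratic term $\tfrac12(N\xi_{\sim2}(R))^2$, the dominant contribution is $\tfrac12(N\gamma_1^2)^2 R^2$, whose Gibbs average is bounded by Cauchy--Schwarz together with $\la R^2\ra_2 \le C/N$ (from \Cref{ppn:deg2-subgaussianity}) as $C(N\gamma_1^2)^2/N = CN\gamma_1^4$, matching the first summand of the target. The parity-surviving cross terms $N^2\gamma_1^2\gamma_p^2 R^{p+1}$ (with odd $p\ge 3$, so $p+1$ even) contribute at most $O(\gamma_1^2)$, which is absorbed into $C(N\gamma_1^4 + N^{-1})$ via the AM--GM bound $\gamma_1^2 \le \tfrac12(N\gamma_1^4 + N^{-1})$; the analogous cross terms $N^2\gamma_p^2\gamma_q^2 R^{p+q}$ with $p,q\ge 3$ contribute $O(N^{2-(p+q)/2}) = O(N^{-1})$ since $p+q \ge 6$. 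The cubic remainder in the Taylor expansion is $O(N^{-1/5})$ times the quadratic contribution and is therefore negligible. The main technical subtlety is bookkeeping the combinatorics of the expansion: the parity argument must be exploited to eliminate every linear-in-$R$ contribution, after which only $(N\gamma_1^2)^2 \la R^2\ra_2$ gives an $N\gamma_1^4$-scale contribution, with all remaining monomials falling inside the $CN^{-1}$ budget.
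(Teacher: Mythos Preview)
Your proposal is correct and follows essentially the same approach as the paper: compute $\E_{\sim2}$ via the Gaussian MGF identity to obtain $e^{N\xi_{\sim2}(R)}-1$, Taylor-expand in $N\xi_{\sim2}(R)$, kill odd powers of $R$ by parity, and bound the surviving even monomials via Cauchy--Schwarz and the moment estimates of \Cref{ppn:deg2-subgaussianity}. Two minor organizational differences are worth noting: the paper first splits off the range $2N^{-1/2}\log N \le |R_{\sim i}| \le 2N^{-2/5}$ (handled crudely via the tail of $R$) so that the Taylor expansion on the remaining region has a smaller parameter $|N\xi_{\sim2}(R)| = O(N^{-3/10}\log N)$ and one more term is retained; you instead expand directly on the full region with parameter $O(N^{-1/5})$, which works just as well once you check (as your plan implicitly does) that the cubic remainder, bounded without parity, is still $O(N\gamma_1^4 + N^{-1})$. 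Second, the paper uses the coordinate-wise flips $\sigma^1_i \mapsto -\sigma^1_i$, $\sigma^2_i \mapsto -\sigma^2_i$, $R_{\sim i} \mapsto -R_{\sim i}$ (valid because $\nabla^2 H_N(0)$ has been diagonalized), whereas your global flip $\sigma^1 \mapsto -\sigma^1$ achieves the same cancellation of odd powers of $R$ and requires only that $H_{N,2}$ be even.
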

\begin{proposition}
    \label{ppn:hX-ij}
    For any distinct $i,j\in [N]$, we have $\E_{\sim2} \hX_{i,j} \le C(\gamma_1^4 + N^{-2})$.
\end{proposition}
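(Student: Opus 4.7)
The plan is to parallel the proof of Proposition~\ref{ppn:hX-ii}, exploiting a stronger cancellation available when $i \neq j$. First, I would integrate out the non degree-$2$ Gaussian randomness using the moment generating function identity $\E_{\sim 2}[e^{H_{N,\sim 2}(\sigma^1) + H_{N,\sim 2}(\sigma^2) - N\xi_{\sim 2}(1)}] = e^{N\xi_{\sim 2}(R(\sigma^1,\sigma^2))}$. Since the indicator defining $\hat X_{i,j}$ depends only on $(\sigma^1,\sigma^2)$ and the $\Ind_{i=j}$ subtraction disappears for $i \neq j$, this yields
\[ \E_{\sim 2} \hat X_{i,j} = \lt\la \mathbf{1}_{\GOOD}\, \sigma^1_i \sigma^1_j \sigma^2_i \sigma^2_j\, e^{N\xi_{\sim 2}(R)}\rt\ra_2. \]

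The key structural feature is the $\mathbb{Z}_2^N$ symmetry of $\mu_{H_{N,2}}$: because we rotated to the eigenbasis of $\nabla^2 H_N(0)$, the measure is invariant under each sign flip $\sigma_k \to -\sigma_k$, so $\la \prod_k \sigma_k^{n_k}\ra_2 = 0$ unless every $n_k$ is even. Expanding $R^p = N^{-p}\sum_{k_1,\ldots,k_p}\prod_\ell \sigma^1_{k_\ell}\sigma^2_{k_\ell}$ and using independence of $\sigma^1,\sigma^2$ gives $\la \sigma^1_i \sigma^1_j \sigma^2_i \sigma^2_j R^p\ra_2 = N^{-p}\sum_{k_1,\ldots,k_p} \la \sigma_i \sigma_j \sigma_{k_1}\cdots\sigma_{k_p}\ra_2^{\,2}$. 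For $i \neq j$, the symmetry kills all odd-$p$ contributions; for $p=2$ only the configurations $(k_1,k_2) \in \{(i,j),(j,i)\}$ survive, giving $2\la \sigma_i^2 \sigma_j^2\ra_2^{\,2}/N^2 = O(N^{-2})$; and in general the dominant ``shape'' with indices $i$ and $j$ each appearing once among the $k_\ell$ (and other indices in pairs) yields $O(N^{-(p+2)/2})$ after using the sphere identity $\sum_r \la \sigma_i^2 \sigma_j^2 \sigma_r^2\ra_2 = N \la \sigma_i^2 \sigma_j^2\ra_2$ and Cauchy--Schwarz to control sums over auxiliary indices.

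Next, I would Taylor-expand $e^{N\xi_{\sim 2}(R)} = \sum_k (N\xi_{\sim 2}(R))^k/k!$ as a power series in $R$ and match to these moment bounds. The dominant contribution comes from the $k=2$ term $\tfrac{1}{2}(N\gamma_1^2 R)^2$, which yields $\gamma_1^4 \la \sigma_i^2 \sigma_j^2\ra_2^{\,2} = O(\gamma_1^4)$ using the subgaussian moment bound of Proposition~\ref{ppn:deg2-subgaussianity}. All remaining products $\prod \alpha_{p_\ell}$ (with $\alpha_p = N\gamma_p^2$ and $\sum p_\ell = p$) multiplied by $\la \ldots R^p\ra_2$ can be shown to sum to $O(\gamma_1^4 + N^{-2})$. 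This enumeration relies crucially on the assumption $\gamma_1^2 \le N^{-4/5}$: for instance, the $k{=}2$, $p{=}4$ cross term contributes $N^2 \gamma_1^2 \gamma_3^2 \cdot O(N^{-3}) = O(\gamma_1^2/N)$, which is absorbed into $\gamma_1^4 + N^{-2}$ precisely because $\gamma_1^2/N \le \gamma_1^4 + N^{-2}$ whether $\gamma_1^2 \ge N^{-1}$ or $\gamma_1^2 < N^{-1}$.

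Finally, I would show the indicator correction $\la \mathbf{1}_{\BAD} \sigma^1_i \sigma^1_j \sigma^2_i \sigma^2_j e^{N\xi_{\sim 2}(R)}\ra_2$ is super-polynomially small. Splitting $\BAD$ into ``some coordinate exceeds $\log N$'' and ``$|R(\sigma^1_{\sim i,j},\sigma^2_{\sim i,j})| > 2N^{-2/5}$'' sub-events, Proposition~\ref{ppn:deg2-subgaussianity} gives $\Pr(\BAD) \le C e^{-c\log^2 N}$. Cauchy--Schwarz then bounds the correction by $\Pr(\BAD)^{1/2} \la (\sigma_i\sigma_j)^4 e^{2N\xi_{\sim 2}(R)}\ra_2^{1/2}$; the second factor is controlled by introducing two independent copies of $H_{N,\sim 2}$ (so that $e^{2N\xi_{\sim 2}(R)}$ becomes a Gaussian MGF over the doubled disorder) and further truncating $|R|$ using its subgaussianity. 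The main technical obstacle is the careful enumeration of higher-order terms in the third step, particularly verifying that the full sum $\sum_{p\ge 4, k\ge 0} \tfrac{1}{k!}[R^p](N\xi_{\sim 2})^k \cdot \la \ldots R^p\ra_2$ converges absolutely and remains within $O(\gamma_1^4 + N^{-2})$.
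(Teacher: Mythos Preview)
Your approach has a genuine gap in step 4 (and, equivalently, in the convergence of the full power series in steps 2--3). Once you drop the $\GOOD$ indicator, the expectation $\la \sigma^1_i\sigma^1_j\sigma^2_i\sigma^2_j\, e^{N\xi_{\sim2}(R)}\ra_2$ receives contributions from overlaps $R$ of constant order, where $e^{N\xi_{\sim2}(R)}=e^{\Theta(N)}$. On the event $E_2$ we are working under, Proposition~\ref{ppn:deg2-subgaussianity} only gives $\Pr_2(|R|\ge s)\le 3e^{-cNs^2}$ for $s\le N^{-3/10}$; extending by monotonicity yields at best $\Pr_2(|R|\ge N^{-3/10})\le e^{-cN^{2/5}}$, which cannot beat $e^{\Theta(N)}$. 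So neither the BAD correction nor the unindicated expectation is controlled to $O(N^{-2})$ with the available tools. (Your Cauchy--Schwarz move lands on $\la (\sigma_i\sigma_j)^4 e^{2N\xi_{\sim2}(R)}\ra_2$, which has the same problem; ``introducing two copies of $H_{N,\sim2}$'' does not help since $e^{N\xi_{\sim2}(R)}$ is already deterministic given $\sigma^1,\sigma^2$.) This is exactly why the paper introduces the $\tT$ truncation in the first place: the naive second moment $\la e^{N\xi_{\sim2}(R)}\ra_2$ is not known to be $1+o(1)$ throughout the full \eqref{eq:strict-RS-condition} regime.

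The paper's proof sidesteps this by \emph{keeping the indicator throughout} and using only the two sign flips $\sigma^1_i\mapsto -\sigma^1_i$, $\sigma^1_j\mapsto -\sigma^1_j$, which preserve $\mathbf{1}_{\GOOD}$ (since it depends on $|\sigma^\ell_i|,|\sigma^\ell_j|$ and $R_{\sim i,j}$). This converts the integrand into a discrete second difference $\kappa(R_{\sim i,j}+\Delta_i+\Delta_j)-\cdots+\kappa(R_{\sim i,j}-\Delta_i-\Delta_j)$ with $\kappa=e^{N\xi_{\sim2}}$ and $|\Delta_i|,|\Delta_j|\le N^{-1}\log^2 N$ on $\GOOD$. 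A finite fourth-order Taylor expansion with explicit remainder $\sup_{|x|\le 3N^{-1/2}\log N}|\kappa^{(4)}(x)|=O(N^{6/5})$ then reduces everything to a handful of moments $\tQ_k$, avoiding any infinite enumeration. Your global $\mathbb{Z}_2^N$ parity idea is morally the same cancellation, but you need to implement it in this indicator-preserving way to stay in the regime $|R|\lesssim N^{-2/5}$ where $e^{N\xi_{\sim2}(R)}$ is bounded.
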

Throughout the next two proofs, $\la \cdot \ra_2$ denotes expectation w.r.t. $\sigma^1,\sigma^2 \sim \la \cdot \ra_2$, and we write $R = R(\sigma^1,\sigma^2)$, $R_{\sim i} = R(\sigma^1_{\sim i},\sigma^2_{\sim i})$, and $R_{\sim i,j} = R(\sigma^1_{\sim i,j},\sigma^2_{\sim i,j})$.
\begin{proof}[Proof of \Cref{ppn:hX-ii}]
    % The main idea is that, once we take $\E_{\sim2}$ inside the integral, the integrand will be controlled by the behavior of the overlap $R$ with respect to samples from $\mu_{H_{N, 2}}$. 
    % When the overlap is sufficiently small (in particular, it suffices to take $R = \wt{O}(N^{-1/2}$)), the integrand is suitably controlled. 
    % Conversely, in view of \Cref{ppn:deg2-subgaussianity}, one can deduce that the overlap is $\wt{O}(N^{-1/2})$ for typical samples from $\mu_{H_{N, 2}}$. 
    % To kick off the calculation, we have
    By direct calculation,
    \begin{align*}
        \E_{\sim2} \hX_{i,i}
        &= \E_{\sim2} \bigg\la
            \Ind[|\sigma^1_i|,|\sigma^2_i| \le \log N, |R_{\sim i}| \le 2N^{-2/5}] \\
            &\qquad
            (\sigma^1_i)^2 (\sigma^2_i)^2
            \lt( e^{H_{N,\sim2}(\sigma^1) - N\xi_{\sim2}(1)/2} - 1 \rt)
            \lt( e^{H_{N,\sim2}(\sigma^2) - N\xi_{\sim2}(1)/2} - 1 \rt)
        \bigg\ra_2 \\
        &= \lt\la
            \Ind[|\sigma^1_i|,|\sigma^2_i| \le \log N, |R_{\sim i}| \le 2N^{-2/5}]
            (\sigma^1_i)^2 (\sigma^2_i)^2
            \lt( e^{N\xi_{\sim2}(R)} - 1 \rt)
        \rt\ra_2\mper
    \end{align*}
    In view of \Cref{ppn:deg2-subgaussianity}, $\sigma^1_i$ and $\sigma^2_i$ are subgaussian of scale $O(1)$ and $R$ is subgaussian of scale $O(N^{-1/2})$.
    We will see that the above integral is dominated by $|\sigma^1_i| \asymp |\sigma^2_i| \asymp 1$ and $|R| \asymp N^{-1/2}$, in which case Taylor expanding $e^{N\xi_{\sim2}(R)}$ shows this integral has the desired scale.
    Formally, we can write the above integral as $Y_{i,i}^{(1)} + Y_{i,i}^{(2)}$, where
    \begin{align*}
        Y_{i,i}^{(1)}
        &= \lt\la
            \Ind[|\sigma^1_i|,|\sigma^2_i| \le \log N, |R_{\sim i}| \le 2N^{-1/2} \log N]
            (\sigma^1_i)^2 (\sigma^2_i)^2
            \lt( e^{N\xi_{\sim2}(R)} - 1 \rt)
        \rt\ra_2 \\
        Y_{i,i}^{(2)}
        &= \lt\la
            \Ind[|\sigma^1_i|,|\sigma^2_i| \le \log N, 2N^{-1/2} \log N \le |R_{\sim i}| \le 2N^{-2/5}]
            (\sigma^1_i)^2 (\sigma^2_i)^2
            \lt( e^{N\xi_{\sim2}(R)} - 1 \rt)
        \rt\ra_2
    \end{align*}
    are the contributions from $|R_{\sim i}|$ smaller and larger than $2N^{-1/2} \log N$.
    
    We first address $Y_{i,i}^{(2)}$.
    Note that on the event in the indicator in $Y_{i,i}^{(2)}$, $|R| \le 3N^{-2/5}$. Thus, as $\gamma_1^2 \le N^{-4/5}$,
    \[
        N|\xi_{\sim2}(R)| \le N\gamma_1^2 R + O(NR^3) = O(N^{-1/5}).
    \]
    It follows that $|e^{N\xi_{\sim2}(R)} - 1| \le 1$.
    Then, by Cauchy--Schwarz,
    \begin{align}
        \notag
        Y_{i,i}^{(2)}
        &\le \lt\la
            \Ind[|R| \ge N^{-1/2} \log N ]
            (\sigma^1_i)^2 (\sigma^2_i)^2
        \rt\ra_2 \\
        \notag
        &\le \lt\la \Ind[|R| \ge N^{-1/2} \log N ]\rt\ra_2^{1/2}
        \lt\la
            (\sigma^1_i)^2 (\sigma^2_i)^2
        \rt\ra_2^{1/2} \\
        \label{eq:Yii-2-bd}
        &\le e^{-c\log^2 N} \cdot O(1) = e^{-c\log^2 N},
    \end{align}
    where we have used \Cref{ppn:deg2-subgaussianity}\eqref{it:deg2-subgaussianity-tail} for the tail probability and \Cref{ppn:deg2-subgaussianity}\eqref{it:deg2-subgaussianity-mmt} for the coordinate moments.
    
    Next we turn to $Y_{i,i}^{(1)}$.
    On the event in the indicator in $Y_{i,i}^{(1)}$, $|R| \le 3N^{-1/2} \log N$, so
    \[
        N|\xi_{\sim2}(R)| \le N\gamma_1^2 R + O(NR^3) = O(N^{-3/10} \log N),
    \]
    where we recall $\gamma_1^2 \le N^{-4/5}$.
    Thus, Taylor expanding the exponential and $\xi_{\sim2}$,
    \begin{align*}
        Y_{i,i}^{(1)}
        &= \bigg\la
            \Ind[|\sigma^1_i|,|\sigma^2_i| \le \log N, |R_{\sim i}| \le 2N^{-1/2} \log N] (\sigma^1_i)^2 (\sigma^2_i)^2 \\
            &\qquad
            \lt( N\xi_{\sim2}(R) + \tfrac{1}{2}N^2 \xi_{\sim2}(R)^2 + \tfrac{1}{6}N^3 \xi_{\sim2}(R)^3 \rt)
        \bigg\ra_2 + O(N^{-6/5} \log^8 N) \\
        &= \bigg\la
            \Ind[|\sigma^1_i|,|\sigma^2_i| \le \log N, |R_{\sim i}| \le 2N^{-1/2} \log N] (\sigma^1_i)^2 (\sigma^2_i)^2 \\
            &\qquad
            \lt( N(\gamma_1^2 R + \gamma_3^2 R^3 + \gamma_4^2 R^4) + \tfrac{1}{2}N^2 (\gamma_1^2 R + \gamma_3^2 R^3)^2 + \tfrac{1}{6}N^3 \gamma_1^6 R^3 \rt)
        \bigg\ra_2 + O(N^{-11/10} \log^9 N).
    \end{align*}
    By exchangeability of $(\sigma^1_i,-\sigma^1_i)$, $(\sigma^2_i,-\sigma^2_i)$, and $(R_{\sim i}, -R_{\sim i})$, all the odd degree in $R$ terms vanish, leaving
    \[
        Y_{i,i}^{(1)}
        = \tfrac{1}{2}N^2 \gamma_1^4 Q_2 + (N\gamma_4^2 + N^2\gamma_1^2\gamma_3^2) Q_4 + \tfrac{1}{2}N^2 \gamma_3^4 Q_6 + o(N^{-1}),
    \]
    and where we have introduced the notation
    \[
        Q_k \defeq \lt\la
            \Ind[|\sigma^1_i|,|\sigma^2_i| \le \log N, |R_{\sim i}| \le 2N^{-1/2} \log N]
            (\sigma^1_i)^2 (\sigma^2_i)^2 R^k
        \rt\ra_2.
    \]
    By Cauchy--Schwarz and \Cref{ppn:deg2-subgaussianity}, for each $k\in \{2,4,6\}$,
    \[
        Q_k \le \lt\la
            (\sigma^1_i)^4 (\sigma^2_i)^4
        \rt\ra_2^{1/2} \la R^{2k} \ra_2^{1/2}
        = O(N^{-k/2}).
    \]
    This implies $Y_{i,i}^{(1)} \le C(N\gamma_1^4 + N^{-1})$.
    Combining with the bound \eqref{eq:Yii-2-bd} on $Y_{i,i}^{(2)}$ implies the result.
    % Thus, by Cauchy--Schwarz,
    % \begin{align*}
    %     Y_{i,i}^{(1)}
    %     &= N^2 \gamma_1^4 \E_{\mu_{H_{N,2}}} \lt[
    %         \Ind[|\sigma^1_i|,|\sigma^2_i| \le \log N, |R_{\sim i}| \le 2N^{-1/2} \log N]
    %         (\sigma^1_i)^2 (\sigma^2_i)^2 R^2
    %     \rt] + o(N^{-1}) \\
    %     &\le N^2 \gamma_1^4
    %     \E_{\mu_{H_{N,2}}} \lt[
    %         (\sigma^1_i)^2 (\sigma^2_i)^2
    %     \rt]^{1/2} \E_{\mu_{H_{N,2}}} [R^2]^{1/2}
    %     + o(N^{-1}).
    % \end{align*}
    % By  this is bounded by $C(N \gamma_1^4 + N^{-1})$.
\end{proof}
\begin{proof}[Proof of \Cref{ppn:hX-ij}]
    % The overall proof strategy is similar to the one above, except in the term where the overlap $R = \wt{O}(N^{-1/2})$, one has to more carefully control the cancellations from expanding out  $e^{N\xi_{\sim2}(R)}$ as a function of $\sigma_i, \sigma_j$, and $R_{\sim i,j}$.
    We calculate as above
    \begin{align*}
        \E_{\sim2} \hX_{i,j}
        &\le \E_{\sim2} \bigg\la
            \Ind[|\sigma^1_i|,|\sigma^1_j|,|\sigma^2_i|,|\sigma^2_j| \le \log N, |R_{\sim i,j}| \le 2N^{-2/5}] \\
            &\qquad \sigma^1_i \sigma^1_j \sigma^2_i \sigma^2_j
            e^{H_{N,\sim2}(\sigma^1) + H_{N,\sim2}(\sigma^2) - N\xi_{\sim2}(1)}
        \bigg\ra_2 \\
        &= \lt\la
            \Ind[|\sigma^1_i|,|\sigma^1_j|,|\sigma^2_i|,|\sigma^2_j| \le \log N, |R_{\sim i,j}| \le 2N^{-2/5}]
            \sigma^1_i \sigma^1_j \sigma^2_i \sigma^2_j
            e^{N\xi_{\sim2}(R)}
        \rt\ra_2\mper
    \end{align*}
    Our strategy for evaluating this will be similar as above, except that because this integral contains $\sigma^1_i \sigma^1_j \sigma^2_i \sigma^2_j$ instead of $(\sigma^1_i)^2 (\sigma^2_i)^2$, we will need to expand the exponential more carefully to obtain cancellations in these terms. 
    Formally, we write the above integral as $Y_{i,j}^{(1)} + Y_{i,j}^{(2)}$ for
    \begin{align*}
        Y_{i,j}^{(1)} &= \lt\la
            \Ind[|\sigma^1_i|,|\sigma^1_j|,|\sigma^2_i|,|\sigma^2_j| \le \log N, |R_{\sim i,j}| \le 2N^{-1/2} \log N]
            \sigma^1_i \sigma^1_j \sigma^2_i \sigma^2_j
            e^{N\xi_{\sim2}(R)}
        \rt\ra_2, \\
        Y_{i,j}^{(2)} &= \lt\la
            \Ind[|\sigma^1_i|,|\sigma^1_j|,|\sigma^2_i|,|\sigma^2_j| \le \log N, 2N^{-1/2} \log N \le |R_{\sim i,j}| \le 2N^{-2/5}]
            \sigma^1_i \sigma^1_j \sigma^2_i \sigma^2_j
            e^{N\xi_{\sim2}(R)}
        \rt\ra_2.
    \end{align*}
    Identically to the previous proof, on the event in the indicator in $Y_{i,j}^{(2)}$ we have $N|\xi_{\sim2}(R)| = O(N^{-1/5})$, so $e^{N\xi_{\sim2}(R)} \le 2$.
    Then, by Cauchy--Schwarz and \Cref{ppn:deg2-subgaussianity},
    \begin{align*}
        |Y_{i,j}^{(2)}|
        &\le 2\lt\la
            \Ind[|R| \ge N^{-1/2} \log N]
            |\sigma^1_i \sigma^1_j \sigma^2_i \sigma^2_j|
        \rt\ra_2 \\
        &\le 2\la \Ind[|R| \ge N^{-1/2} \log N] \ra_2^{1/2}
        \lt\la
            (\sigma^1_i)^4 (\sigma^2_i)^4
        \rt\ra_2^{1/4}
        \lt\la
            (\sigma^1_j)^4 (\sigma^2_j)^4
        \rt\ra_2^{1/4} \\
        &\le 2e^{-c\log^2 N} \cdot O(1) \cdot O(1)
        = e^{-c\log^2 N}.
    \end{align*}
    To address $Y_{i,j}^{(1)}$, define $\Delta_i = \sigma^1_i \sigma^2_i / N$ and $\Delta_j = \sigma^1_j \sigma^2_j / N$, the contributions to $R$ coming from the $i$th and $j$th coordinate, respectively.
    Then, by exchangeability of $(\sigma^1_i,-\sigma^1_i)$ and $(\sigma^1_j,-\sigma^1_j)$,
    \begin{align*}
        4Y_{i,j}^{(1)} &= \bigg\la
            \Ind[|\sigma^1_i|,|\sigma^1_j|,|\sigma^2_i|,|\sigma^2_j| \le \log N, |R_{\sim i,j}| \le 2N^{-1/2} \log N] \\
            &\qquad
            \sigma^1_i \sigma^1_j \sigma^2_i \sigma^2_j
            (e^{N\xi_{\sim2}(R_{\sim i,j} + \Delta_i + \Delta_j)} - e^{N\xi_{\sim2}(R_{\sim i,j} + \Delta_i - \Delta_j)} - e^{N\xi_{\sim2}(R_{\sim i,j} - \Delta_i + \Delta_j)} + e^{N\xi_{\sim2}(R_{\sim i,j} - \Delta_i - \Delta_j)})
        \bigg\ra_2.
    \end{align*}
    Note that on the event in this indicator, $|R_{\sim i,j} \pm \Delta_i \pm \Delta_j| \le 3N^{-1/2} \log N$.
    Define
    \[
        \kappa(x) = e^{N\xi_{\sim2}(x)},
    \]
    and note that
    \begin{align*}
        \sup_{|x| \le 3N^{-1/2} \log N} \kappa^{(4)}(x)
        &= \sup_{|x| \le 3N^{-1/2} \log N} \Big(
            N\xi_{\sim2}^{(4)}(x) + 4N^2 \xi'_{\sim2}(x) \xi^{(3)}_{\sim2}(x) + 3N^2 \xi''_{\sim2}(x)^2 + \\
            &\qquad + 6N^3 \xi'_{\sim2}(x)^2 \xi''_{\sim2}(x) + N^4 \xi'_{\sim2}(x)^4
        \Big) \kappa(x) = O(N^{6/5}),
    \end{align*}
    where we have used that $\sup_{\abs{x} \le 3N^{-1/2}\log N} \kappa(x) \le 2$ and $\gamma_1^2 \le N^{-4/5}$.
    Since $|\Delta_i|,|\Delta_j| \le N^{-1} \log^2 N$ on the event in the indicator, for $s_i,s_j \in \{\pm 1\}$,
    \begin{align*}
        e^{N\xi_{\sim2}(R_{\sim i,j} + s_i\Delta_i + s_j\Delta_j)}
        &= \kappa(R_{\sim i,j})
        + \kappa'(R_{\sim i,j}) (s_i\Delta_i + s_j\Delta_j)
        + \tfrac{1}{2} \kappa''(R_{\sim i,j}) (s_i\Delta_i + s_j\Delta_j)^2 \\
        &\qquad + \tfrac{1}{6} \kappa^{(3)}(R_{\sim i,j}) (s_i\Delta_i + s_j\Delta_j)^3
        + O(N^{6/5}) \cdot (2N^{-1} \log^2 N)^4.
    \end{align*}
    It follows that
    \begin{align*}
        &e^{N\xi_{\sim2}(R_{\sim i,j} + \Delta_i + \Delta_j)} - e^{N\xi_{\sim2}(R_{\sim i,j} + \Delta_i - \Delta_j)} - e^{N\xi_{\sim2}(R_{\sim i,j} - \Delta_i + \Delta_j)} + e^{N\xi_{\sim2}(R_{\sim i,j} - \Delta_i - \Delta_j)} \\
        &= 4\kappa''(R_{\sim i,j}) \Delta_i \Delta_j + O(N^{-14/5} \log^8 N),
    \end{align*}
    and thus
    \begin{align*}
        Y_{i,j}^{(1)}
        &= \lt\la
            \Ind[|\sigma^1_i|,|\sigma^1_j|,|\sigma^2_i|,|\sigma^2_j| \le \log N, |R_{\sim i,j}| \le 2N^{-1/2} \log N]
            \sigma^1_i \sigma^1_j \sigma^2_i \sigma^2_j \Delta_i \Delta_j \kappa''(R_{\sim i,j})
        \rt\ra_2 \\
        &\qquad + O(N^{-14/5} \log^{12} N) \\
        &= \bigg\la
            \Ind[|\sigma^1_i|,|\sigma^1_j|,|\sigma^2_i|,|\sigma^2_j| \le \log N, |R_{\sim i,j}| \le 2N^{-1/2} \log N] \\
            &\qquad
            (\sigma^1_i \sigma^1_j \sigma^2_i \sigma^2_j)^2
            \lt(N^{-1} \xi''_{\sim2}(R_{\sim i,j}) + \xi'_{\sim2}(R_{\sim i,j})^2\rt)
            e^{N\xi_{\sim2}(R_{\sim i,j})}
        \bigg\ra_2 + o(N^{-2}).
    \end{align*}
    On the event in this indicator, $e^{N\xi_{\sim2}(R_{\sim i,j})} \le 2$, and therefore $\xi'_{\sim2}$ and $\xi''_{\sim2}$ can be Taylor expanded to obtain
    \[
        Y_{i,j}^{(1)} \le Y_{i,j}^{(3)} + Y_{i,j}^{(4)} + o(N^{-2}),
    \]
    where
    \begin{align*}
        Y_{i,j}^{(3)}
        &= 6\gamma_3^2 N^{-1} \bigg\la
            \Ind[|\sigma^1_i|,|\sigma^1_j|,|\sigma^2_i|,|\sigma^2_j| \le \log N, |R_{\sim i,j}| \le 2N^{-1/2} \log N] \\
            &\qquad
            (\sigma^1_i \sigma^1_j \sigma^2_i \sigma^2_j)^2
            R_{\sim i,j}
            e^{N\xi_{\sim2}(R_{\sim i,j})}
        \bigg\ra_2, \\
        Y_{i,j}^{(4)}
        &= \E_{\mu_{H_{N,2}}} \bigg\la
            \Ind[|\sigma^1_i|,|\sigma^1_j|,|\sigma^2_i|,|\sigma^2_j| \le \log N, |R_{\sim i,j}| \le 2N^{-1/2} \log N] \\
            &\qquad
            (\sigma^1_i \sigma^1_j \sigma^2_i \sigma^2_j)^2
            \lt(12 \gamma_4^2 N^{-1} R_{\sim i,j}^2 + (\gamma_1^2 + 3\gamma_3^2 R_{\sim i,j}^2)^2 \rt)
            e^{N\xi_{\sim2}(R_{\sim i,j})}
        \bigg\ra_2
    \end{align*}
    On the event in these indicators, we further have
    \begin{equation}
        \label{eq:Rij2-to-R2}
        |R^2 - R_{\sim i,j}^2| = |R - R_{\sim i,j}| |R + R_{\sim i,j}|
        \le (|\Delta_i| + |\Delta_j|) \cdot 5N^{-1/2} \log N = O(N^{-3/2} \log^3 N).
    \end{equation}
    From this it readily follows that
    \begin{align*}
        Y_{i,j}^{(4)}
        &\le 2 \bigg\la
            \Ind[|\sigma^1_i|,|\sigma^1_j|,|\sigma^2_i|,|\sigma^2_j| \le \log N, |R_{\sim i,j}| \le 2N^{-1/2} \log N] \\
            &\qquad
            (\sigma^1_i \sigma^1_j \sigma^2_i \sigma^2_j)^2
            \lt(12 \gamma_4^2 N^{-1} R^2 + (\gamma_1^2 + 3\gamma_3^2 R^2)^2 \rt)
        \bigg\ra_2 + o(N^{-2}) \\
        &= 2\gamma_1^4 \tQ_0 + (12 \gamma_1^2 \gamma_3^2 + 24 \gamma_4^2 N^{-1}) \tQ_2 + 18\gamma_3^2 \tQ_4 + o(N^{-2}),
    \end{align*}
    where
    \[
        \tQ_k = \lt\la
            \Ind[|\sigma^1_i|,|\sigma^1_j|,|\sigma^2_i|,|\sigma^2_j| \le \log N, |R_{\sim i,j}| \le 2N^{-1/2} \log N]
            (\sigma^1_i \sigma^1_j \sigma^2_i \sigma^2_j)^2 R^k
        \rt\ra_2.
    \]
    By Cauchy--Schwarz and \Cref{ppn:deg2-subgaussianity}, for each $k\in \{0,2,4\}$,
    \[
        \tQ_k \le \lt\la
            (\sigma^1_i)^8 (\sigma^2_i)^8
        \rt\ra_2^{1/4} \lt\la
            (\sigma^1_j)^8 (\sigma^2_j)^8
        \rt\ra_2^{1/4}
        \la R^{2k} \ra_2^{1/2}
        = O(N^{-k/2}).
    \]
    This implies $Y_{i,j}^{(4)} \le C(\gamma_1^4 + N^{-2})$.
    To control $Y_{i,j}^{(3)}$, we recall that $|N\xi_{\sim2}(R_{\sim i,j})| = O(N^{-3/10} \log N)$ and Taylor expand the exponential:
    \begin{align*}
        Y_{i,j}^{(3)}
        &= 6\gamma_3^2 N^{-1} \bigg\la
            \Ind[|\sigma^1_i|,|\sigma^1_j|,|\sigma^2_i|,|\sigma^2_j| \le \log N, |R_{\sim i,j}| \le 2N^{-1/2} \log N] \\
            &\qquad
            (\sigma^1_i \sigma^1_j \sigma^2_i \sigma^2_j)^2
            R_{\sim i,j}
            (1 + N\xi_{\sim2}(R_{\sim i,j}))
        \bigg\ra_2 + o(N^{-2}) \\
        &= 6\gamma_3^2 N^{-1} \bigg\la
            \Ind[|\sigma^1_i|,|\sigma^1_j|,|\sigma^2_i|,|\sigma^2_j| \le \log N, |R_{\sim i,j}| \le 2N^{-1/2} \log N] \\
            &\qquad
            (\sigma^1_i \sigma^1_j \sigma^2_i \sigma^2_j)^2
            (R_{\sim i,j} + N\gamma_1^2 R_{\sim i,j}^2 + N\gamma_3^2 R_{\sim i,j}^4)
        \bigg\ra_2 + o(N^{-2}).
    \end{align*}
    By exchangeability of $(R_{\sim i,j},-R_{\sim i,j})$, the contribution of the term $R_{\sim i,j}$ vanishes.
    By \eqref{eq:Rij2-to-R2}, we can further estimate $R_{\sim i,j}^2$ with $R^2$, obtaining
    \begin{align*}
        Y_{i,j}^{(3)}
        &= 6\gamma_3^2 \bigg\la
            \Ind[|\sigma^1_i|,|\sigma^1_j|,|\sigma^2_i|,|\sigma^2_j| \le \log N, |R_{\sim i,j}| \le 2N^{-1/2} \log N] \\
            &\qquad
            (\sigma^1_i \sigma^1_j \sigma^2_i \sigma^2_j)^2
            (\gamma_1^2 R^2 + \gamma_3^2 R^4)
        \bigg\ra_2 + o(N^{-2}) \\
        &= 6\gamma_1^2 \gamma_3^2 \tQ_2 + 6\gamma_3^4 \tQ_4 + o(N^{-2})
        \le C(\gamma_1^2 N^{-1} + N^{-2}).
    \end{align*}
    Combining all of the above estimates concludes the proof.
\end{proof}
\begin{proof}[Proof of \Cref{ppn:nondeg2-covariance}]
    By a union bound, the event in \Cref{lem:nondeg2-covariance-typical} holds for all $i,j\in [N]$ with probability $1-e^{-c\log^2 N}$ (over $H_{N,\sim2}$).
    On this event,
    \begin{equation}
        \label{eq:frob-to-squares}
        \lt\|
            \fr{Z_N}{Z_{N,2} e^{N\xi_{\sim2}(1)/2}}
            \la \sigma\sigma^{\top} \ra
            - \la \sigma\sigma^{\top} \ra_2
        \rt\|_F^2
        = \sum_{i,j=1}^N X_{i,j}^2
        \le 2\sum_{i,j=1}^N \tX_{i,j}^2 + e^{-c\log^2 N}.
    \end{equation}
    Combining \Cref{lem:nondeg2-covariance-2mt,ppn:hX-ii,ppn:hX-ij} shows that
    \[
        \E_{\sim2} \sum_{i,j=1}^N \tX_{i,j}^2
        \le 2C(N^2 \gamma_1^4 + 1) + e^{-cN^{1/5}}
        \le 3C(N^2 \gamma_1^4 + 1).
    \]
    Thus, with probability $2/3$ over $H_{N,\sim2}$, $\sum_{i,j=1}^N \tX_{i,j}^2 \le 9C(N^2 \gamma_1^4 + 1)$.
    Combining with \eqref{eq:frob-to-squares} and taking a final union bound shows that with probability $1/2$ over $H_{N,\sim2}$,
    \[
        \lt\|
            \fr{Z_N}{Z_{N,2} e^{N\xi_{\sim2}(1)/2}}
            \la \sigma\sigma^{\top} \ra
            - \la \sigma\sigma^{\top} \ra_2
        \rt\|_F^2
        \le 18C(N^2 \gamma_1^4 + 1) + e^{-c\log^2 N}
        \le 20C(N^2 \gamma_1^4 + 1).
    \]
    The result follows after adjusting $C$.
\end{proof}

\subsection{From positive to very high probability}
\label{ss:nondeg2-high-prob}
In this section, we boost the positive probability bound on the second moment matrix to a very high probability bound.
To this end, we will show that an appropriate proxy function for the second moment matrix is very Lipschitz.
This will imply the desired concentration by standard gaussian concentration.

Let $\bg \in \R^{N + N^3 + N^4 + \cdots + N^{p_*}}$ be the vectorized collection of all gaussian interactions corresponding to $H_{N,\sim2}$.
Throughout, as in the previous subsection, we will condition on the event $E_2$ from \Cref{dfn:E2} over $H_{N,2}$, which holds with probability $1-e^{-cN^{1/5}}$.
We define the following functions of $\bg$:
\begin{align*}
    F_1(\bg) &= B(1+\gamma_1^2N) \cdot \left(1 - \abs{\log \frac{Z_N}{Z_{N, 2}} - \frac{N\xi_{\sim2}(1)}{2}}\right) \\
    F_2(\bg) &= B(1+\gamma_1^2 N) - \norm{\la \sigma \sigma^\top\ra }_{\op} \\
    F(\bg) &= \max(\min(F_1(\bg), F_2(\bg)), 0)\mcom
    % \wh{F}_1(\bg) &= \max(F_1(\bg), 0) \\
    % \wh{F}_2(\bg) &= \max(F_2(\bg), 0)\mcom
\end{align*}
where $B$ is a sufficiently large constant (specified in the proof of \Cref{lem:f-pos-prob}).
\sloppy If we can show that with high probability over $\bg$, $\min(F_1(\bg), F_2(\bg)) \ge 0$, then the conclusion follows.
Indeed, from $F_2(\bg) \ge 0$, we immediately obtain $\norm{\la \sigma \sigma^\top \ra}_{\op} \le B(1+\gamma_1^2N)$.
$F_1$ allows control over the free energy of the $p$-spin model in terms of that of the corresponding $2$-spin model. This gives good control over the overlaps (in a manner to be made precise shortly), which is crucial for establishing the high probability statement. It is also important earlier in this section, in showing that the free energy of the $p$-spin model concentrates well.

Towards this, we first start with the positive probability statement, which was essentially established in the previous subsections.
\begin{lemma}\label{lem:f-pos-prob}
    There exists a constant $B > 0$ such that with probability at least $\frac{1}{3}$, we have $F(\bg) \ge \tfrac{B}{2}(1+\gamma_1^2 N)$.
\end{lemma}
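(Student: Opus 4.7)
The plan is to combine the conditional positive probability bounds already established in \Cref{ppn:nondeg2-partition-fn,ppn:nondeg2-covariance} with the subgaussianity bound of \Cref{ppn:deg2-subgaussianity}, all conditioned on the $H_{N,2}$-measurable good event $E_2$ from \Cref{dfn:E2}. Unpacking the definitions, $F(\bg) \ge \tfrac{B}{2}(1+\gamma_1^2 N)$ is equivalent to the conjunction of the two conditions
\[
    \left|\log \frac{Z_N}{Z_{N,2}} - \frac{N\xi_{\sim2}(1)}{2}\right| \le \frac{1}{2}
    \qquad\text{and}\qquad
    \|\langle \sigma\sigma^\top\rangle\|_{\op} \le \frac{B}{2}(1+\gamma_1^2 N),
\]
so it suffices to show each of these holds with sufficient probability.

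First, I would condition on a realization of $H_{N,2}$ lying in the event $E_2$, which occurs with probability $1 - e^{-cN^{1/5}}$. Under this conditioning, \Cref{ppn:nondeg2-partition-fn} directly produces the first bound with probability $1 - N^{-1/15}$ over $H_{N,\sim 2}$ (for $N$ large, the $O(N^{-1/15})$ error is well below $1/2$). For the second bound, I would combine \Cref{ppn:nondeg2-covariance} with \Cref{ppn:deg2-subgaussianity}: writing $r = Z_N / (Z_{N,2} e^{N\xi_{\sim2}(1)/2})$ and using the triangle inequality in operator norm (dominated by Frobenius),
\[
    \|\langle \sigma\sigma^\top\rangle\|_{\op}
    \le \tfrac{1}{r}\bigl(\|\langle \sigma\sigma^\top\rangle_2\|_{\op} + \|r\langle \sigma\sigma^\top\rangle - \langle \sigma\sigma^\top\rangle_2\|_F\bigr).
\]
On the partition function event above, $r \in [1/2, 2]$, and on the event of \Cref{ppn:nondeg2-covariance} the Frobenius error is $O(1+\gamma_1^2 N)$, while \Cref{ppn:deg2-subgaussianity} gives $\|\langle \sigma\sigma^\top\rangle_2\|_{\op} = O(1)$. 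This yields $\|\langle \sigma\sigma^\top\rangle\|_{\op} \le C'(1+\gamma_1^2 N)$ for some $C'$ depending only on $\eps$, and I would then choose $B = 4C'$ so that the right-hand side is bounded by $\tfrac{B}{2}(1+\gamma_1^2 N)$.

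Finally I would combine the three events via a union bound: conditional on $E_2$, the intersection of the partition function event (probability $\ge 1-N^{-1/15}$) and the covariance event (probability $\ge 1/2$) has probability at least $1/2 - N^{-1/15} \ge 2/5$ for $N$ large. Since $E_2$ itself has probability $1 - e^{-cN^{1/5}}$, the total probability that $F(\bg) \ge \tfrac{B}{2}(1+\gamma_1^2 N)$ is at least $(1-e^{-cN^{1/5}})(2/5 - o(1)) \ge 1/3$ for $N$ sufficiently large, with the constant $B$ chosen above. The main (nearly cosmetic) subtlety is simply keeping track of the coupling between the partition-function-ratio estimate and the Frobenius covariance bound: because the latter uses the former to normalize, both need to hold simultaneously, and this is handled by the union bound given that each event individually has constant probability under $E_2$.
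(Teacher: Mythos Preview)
Your proposal is correct and follows essentially the same approach as the paper's own proof: use \Cref{ppn:nondeg2-partition-fn} to control $F_1$, then combine \Cref{ppn:nondeg2-covariance} with \Cref{ppn:deg2-subgaussianity} (via the triangle inequality in operator norm, using the partition-function bound to normalize $r$) to control $F_2$, and intersect the events by a union bound. The only cosmetic difference is that the paper is already working conditionally on $E_2$ throughout this subsection, so your final multiplication by $(1-e^{-cN^{1/5}})$ is not needed.
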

\begin{proof}
    By \Cref{ppn:nondeg2-partition-fn}, with probability $1-O(N^{-1/15})$ over $\bg$, we have $F_1(\bg) \ge \frac{B}{2}(1+\gamma_1^2N)$, so $\frac{Z_N}{Z_{N,2} e^{N\xi_{\sim2}(1)/2}} \ge e^{-1/2}$.
    Intersecting this with the event from \Cref{ppn:nondeg2-covariance} implies that with probability at least $\frac{1}{3}$, 
\begin{align*}
    \norm{ \la \sigma\sigma^{\top} \ra}_{\op} &\le e^{1/2}\norm{ \la \sigma\sigma^{\top} \ra_2}_{\op} + e^{1/2}\sqrt{C(1+\gamma_1^4 N^2)} \\
            &\le \frac{B}{2}(1 + \gamma_1^2N)\mcom
\end{align*}
where we have used $E_2$ to apply \Cref{ppn:deg2-subgaussianity} and after appropriately picking $B$. 
\end{proof}

Let $\calE$ denote the $H_{N, 2}$-measurable event from \Cref{cor:big-overlap-integral}:
\begin{align*}
    \qty{\bg : \int \Ind[|R(\sigma^1,\sigma^2)| \ge N^{-2/5}\} e^{H_N(\sigma^1) + H_N(\sigma^2)} \dif\rho^{\otimes 2}(\sigma^1,\sigma^2) \le Z_{N,2}^2 e^{N\xi_{\sim2}(1) - cN^{1/5}}}\mcom
\end{align*}
which holds with probability $1 - e^{-cN^{1/5}}$ over $\bg$.
The key observation is that this gives us good control on the overlaps.
\begin{lemma}\label{lem:overlap-norm}
    On $\calE$, if $F_1(\bg) \ge 0$, then for any $0 \le p \le \log^2 N$, we have
    \begin{align*}
        \norm{\la \sigma^{\otimes p}\ra}_F^2 \le O(N^{3p/5}).
    \end{align*}
\end{lemma}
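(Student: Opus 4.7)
The plan is to reduce the Frobenius norm to a moment of the overlap and then split on the size of the overlap, using the event $\calE$ to kill the large-overlap regime. Expanding in coordinates, a standard replica identity gives
\[
\|\la \sigma^{\otimes p}\ra\|_F^2 = \sum_{i_1,\dots,i_p \in [N]} \la \sigma_{i_1}\cdots\sigma_{i_p}\ra^2 = \bigl\la \langle \sigma^1,\sigma^2\rangle^p \bigr\ra = N^p \bigl\la R(\sigma^1,\sigma^2)^p\bigr\ra,
\]
where $\sigma^1,\sigma^2$ are i.i.d.\ samples from $\mu_{H_N}$. Since the left side is manifestly nonnegative, $\la R^p\ra \ge 0$, so it suffices to upper bound $\la R^p\ra \le \la |R|^p\ra$ by $O(N^{-2p/5})$ uniformly for $p \le \log^2 N$.

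I would split the replica expectation according to whether $|R(\sigma^1,\sigma^2)|\le N^{-2/5}$ or not. On the first event, $|R|^p \le N^{-2p/5}$, contributing at most $N^{-2p/5}$. For the second event, I invoke the hypothesis $F_1(\bg)\ge 0$, which gives $\log(Z_N/Z_{N,2}) \ge \tfrac{N\xi_{\sim2}(1)}{2} - 1$, i.e., $Z_N^2 \ge Z_{N,2}^2\, e^{N\xi_{\sim2}(1) - 2}$. Combined with the definition of $\calE$ from \Cref{cor:big-overlap-integral},
\[
\bigl\la \Ind[|R(\sigma^1,\sigma^2)|\ge N^{-2/5}]\bigr\ra = \frac{1}{Z_N^2}\int \Ind[|R|\ge N^{-2/5}]\,e^{H_N(\sigma^1)+H_N(\sigma^2)}\,\dif\rho^{\otimes 2} \le \frac{Z_{N,2}^2\, e^{N\xi_{\sim2}(1)-cN^{1/5}}}{Z_{N,2}^2\, e^{N\xi_{\sim2}(1)-2}} = e^{2-cN^{1/5}}.
\]
Bounding $|R|^p\le 1$ on this event, the contribution is at most $e^{2-cN^{1/5}}$.

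Putting the two pieces together, $\la R^p\ra \le N^{-2p/5} + e^{2-cN^{1/5}}$. For $p\le \log^2 N$ and all $N$ large enough, $cN^{1/5}$ dominates $\tfrac{2p}{5}\log N$, so the second term is absorbed by the first and $\la R^p\ra \le 2N^{-2p/5}$. Multiplying through by $N^p$ yields $\|\la \sigma^{\otimes p}\ra\|_F^2 \le 2 N^{3p/5} = O(N^{3p/5})$. There is no real obstacle here: once the replica identity and the event $\calE$ (combined with the partition function lower bound from $F_1\ge 0$) are at hand, the statement follows from a short case split, and the restriction $p \le \log^2 N$ is exactly what is needed to absorb the $e^{-cN^{1/5}}$ error into the target bound.
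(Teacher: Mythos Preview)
Your proof is correct and follows essentially the same approach as the paper: express $\|\la \sigma^{\otimes p}\ra\|_F^2$ via the replica identity as $N^p\la R(\sigma^1,\sigma^2)^p\ra$, split on $|R|\lessgtr N^{-2/5}$, and use the event $\calE$ together with the partition-function lower bound coming from $F_1(\bg)\ge 0$ to control the large-overlap piece. The paper's write-up is slightly terser (and uses a looser bound $N^{2p}$ on the large-overlap contribution), but the argument is the same.
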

\begin{proof}
    By splitting up the expectation based on whether $|R(\sigma^1, \sigma^2)| \ge N^{-2/5}$, on $\calE$ we have
    \begin{align*}
        \norm{\la \sigma^{\otimes p} \ra}_F^2 &= \la NR(\sigma^1, \sigma^2)\ra^p \\
        % &= N^p \E[R(\sigma^1, \sigma^2)^p | \qty(\Ind[|R(\sigma^1,\sigma^2)| < N^{-2/5}\} + \Ind[|R(\sigma^1,\sigma^2)| \ge N^{-2/5}\})]  \\
        &\le N^{3p/5} + N^{2p}\frac{1}{Z_N^2} \int\Ind[|R(\sigma^1,\sigma^2)| \ge N^{-2/5}\} e^{H_N(\sigma^1) + H_N(\sigma^2)} \dif\rho^{\otimes 2}(\sigma^1,\sigma^2) \\
        &\le N^{3p/5} + N^{2p} \frac{Z_{N,2}^2}{Z_N^2} e^{N\xi_{\sim2}(1) - cN^{1/5}} \tag{Definition of $\calE$} \\
        &\le N^{3p/5} + N^{2p} e^{1 - cN^{1/5}}\mcom
    \end{align*}
    where the last line used $F_1(\bg) \ge 0$.
    Since $p \le \log^2 N$, the above quantity is $O(N^{3p/5})$, as desired.
\end{proof}
The above is a crucial input to prove Lipschitzness of $F$ on $\calE$.
\begin{lemma}\label{lem:f-lipschitz}
    The function $F$ is $O((1+\gamma_1^2N)N^{-1/10})$-Lipschitz restricted to $\calE$.
\end{lemma}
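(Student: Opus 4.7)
Since $F = \max(\min(F_1, F_2), 0)$ and both $\max$ and $\min$ preserve Lipschitz constants, it suffices to bound the Lipschitz constant of each $F_i$ separately. The plan is to compute gradients in $\bg$ directly and bound them using \Cref{lem:overlap-norm} (which requires $F_1 \ge 0$) and the a priori bound $c := \la \la v, \sigma \ra^2 \ra \le B(1 + \gamma_1^2 N)$ for all unit vectors $v$ (which requires $F_2 \ge 0$). Outside the set $\{F_1, F_2 > 0\}$ we have $F \equiv 0$, so the Lipschitz estimate is only needed on this region; equivalently, one may first replace $F_2$ by the cutoff $B(1+\gamma_1^2N) - \min(\|\la\sigma\sigma^\top\ra\|_{\op}, 2B(1+\gamma_1^2N))$ without changing $F$, which makes the a priori bound $c \le 2B(1+\gamma_1^2 N)$ hold everywhere.

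For $F_1$, since $Z_{N,2}$ is independent of $\bg$, the Lipschitz constant equals $B(1+\gamma_1^2N)$ times that of $\log Z_N$. The standard identity $\partial_{\bg_{i_1,\dots,i_p}} \log Z_N = \tfrac{\gamma_p}{N^{(p-1)/2}} \la \sigma_{i_1}\cdots\sigma_{i_p}\ra$ gives
\[
    \|\grad_{\bg} \log Z_N\|^2 = \sum_{p \ne 2} \frac{\gamma_p^2}{N^{p-1}} \|\la \sigma^{\otimes p}\ra\|_F^2.
\]
Plugging in $\|\la \sigma^{\otimes p}\ra\|_F^2 = O(N^{3p/5})$ from \Cref{lem:overlap-norm} yields the per-degree bound $O(\gamma_p^2 N^{1-2p/5})$. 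The worst terms are $p=1$ (where $\gamma_1^2 \le N^{-4/5}$ gives $O(N^{-1/5})$) and $p=3$ (giving $O(N^{-1/5})$), so $\|\grad \log Z_N\|_2 = O(N^{-1/10})$ and $F_1$ is $O((1+\gamma_1^2N)N^{-1/10})$-Lipschitz, as required.

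For $F_2$, writing $\|\la\sigma\sigma^\top\ra\|_{\op} = \sup_{\|v\|=1} \la \la v,\sigma\ra^2 \ra$ and differentiating at the maximizer $v_*$,
\[
    \partial_{\bg_{i_1,\dots,i_p}} \|\la\sigma\sigma^\top\ra\|_{\op} = \frac{\gamma_p}{N^{(p-1)/2}} \Cov_\mu\!\left(\la v_*,\sigma\ra^2,\, \sigma_{i_1}\cdots\sigma_{i_p}\right).
\]
Splitting covariance as $\la \la v_*,\sigma\ra^2 \sigma^{\otimes p}\ra - c \la \sigma^{\otimes p}\ra$ with $c = \la\la v_*,\sigma\ra^2\ra$, the triangle inequality bounds $\|\Cov\|_F$ by two pieces. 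The second piece is at most $c \cdot O(N^{3p/10})$ directly from \Cref{lem:overlap-norm}. For the first piece, the two-replica identity $\|\la f\sigma^{\otimes p}\ra\|_F^2 = \la f(\sigma^1)f(\sigma^2)(NR)^p\ra$ with $f(\sigma) = \la v_*,\sigma\ra^2$, after splitting by $|R|\le N^{-2/5}$ versus $|R| > N^{-2/5}$ (the latter having negligible Gibbs mass $e^{-\Omega(N^{1/5})}$ on $\calE \cap \{F_1 \ge 0\}$ by dividing the bound from \Cref{cor:big-overlap-integral} by $Z_N^2$), yields a bound $\le c^2 \cdot O(N^{3p/5})$ using independence of $\sigma^1,\sigma^2$. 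Combining gives $\|\Cov\|_F \le c \cdot O(N^{3p/10})$. Using $c \le 2B(1+\gamma_1^2N)$ and summing the squared contributions over $p \ne 2$ yields $O((1+\gamma_1^2N)^2 N^{-1/5})$, giving $F_2$ a Lipschitz constant of $O((1+\gamma_1^2N)N^{-1/10})$.

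The main obstacle is the $F_2$ bound: a naive approach using $\mathrm{Var}(\la v_*,\sigma\ra^2)\le N c$ and Cauchy--Schwarz loses a factor of $\sqrt{N}$ (it can only bound $\|\Cov\|_F^2$ by $Nc\cdot \sqrt{\la (NR)^{2p}\ra}$, which is too large at $p=1$). Thus it is essential to use the two-replica identity to exploit the cancellation $\E[(f(\sigma^1)-c)(f(\sigma^2)-c)] = 0$ together with $\la (NR)^p\ra$ being controllable via \Cref{lem:overlap-norm}, and to combine this with the a priori bound $c = O(1+\gamma_1^2N)$ afforded by the cutoff in $F_2$. The $p=1$ case is the binding one, and the calibration $\gamma_1^2 \le N^{-4/5}$ is precisely what makes all of the above bounds come out to $N^{-1/10}$.
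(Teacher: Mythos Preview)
Your argument is correct and, for $F_1$, identical to the paper's. For $F_2$ you take a genuinely different route. The paper bounds $\la \la u,\sigma\ra^2 \la v,\sigma^{\otimes p}\ra\ra$ by H\"older with exponents $(q',q) = (1+\tfrac{1}{\log N},\,1+\log N)$: the first factor is controlled by $|\la u,\sigma\ra| \le \sqrt{N}$ and the a~priori bound $\la\la u,\sigma\ra^2\ra \le B(1+\gamma_1^2N)$, while the second becomes $\la v^{\otimes q},\la\sigma^{\otimes pq}\ra\ra^{1/q}$ and is handled by \Cref{lem:overlap-norm} at degree $pq = O(\log N)$. You instead compute $\|\la f\sigma^{\otimes p}\ra\|_F^2 = \la f(\sigma^1)f(\sigma^2)(NR)^p\ra$ via the two-replica identity and split by $|R|\lessgtr N^{-2/5}$, using $\la f(\sigma^1)f(\sigma^2)\ra = c^2$ on the small-overlap piece and the $\calE$-bound (divided by $Z_N^2$, which is where $F_1\ge 0$ enters) on the large-overlap piece. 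Both arrive at $\|\la f\sigma^{\otimes p}\ra\|_F \lesssim c\,N^{3p/10}$. Your approach is more direct, essentially re-running the proof of \Cref{lem:overlap-norm} with the weight $f(\sigma^1)f(\sigma^2)$; the paper's is more modular, reducing everything to a single call to \Cref{lem:overlap-norm} at a higher degree.

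One point you should make explicit: the set $\calE$ is convex in $\bg$ (it is a sublevel set of the convex function $\bg\mapsto \int \Ind[\cdot]\,e^{H_N(\sigma^1)+H_N(\sigma^2)}\dif\rho^{\otimes 2}$). You need this to pass from your pointwise bound on $\|\grad F\|$---which you correctly only establish where $\min(F_1,F_2)\ge 0$, with $\grad F = 0$ elsewhere---to a Lipschitz bound on all of $\calE$. The paper states this explicitly. Also, your final paragraph's reference to the ``cancellation $\E[(f(\sigma^1)-c)(f(\sigma^2)-c)]=0$'' is slightly off: what your argument actually uses is independence, $\la f(\sigma^1)f(\sigma^2)\ra = c^2$, and the point is that this is much smaller than $\la f^2\ra$ (potentially of order $Nc$), which is the quantity a naive Cauchy--Schwarz would produce.
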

Before we prove this, let us see how it implies \Cref{ppn:nondeg2}, restated for convenience.

\propfreeenergycovconcentration*

\begin{proof}[Proof of \Cref{ppn:nondeg2}]
    % By Kirszbraun's extension theorem, we can extend $\wh{F}_1$ to $\wt{F}_1$ and $\wh{F}_2$ to $\wt{F}_2$ such that each $\wt{F}_i$ has the same Lipschitz constant as $\wh{F}_i$ and agrees with $\wh{F}_i$ on $\calE$.
    By Kirszbraun's extension theorem, we can extend $F$ to $\wt{F}$ such that each $\wt{F}$ has the same Lipschitz constant as $F$ and agrees with $F$ on $\calE$.
    We can now apply gaussian concentration to $\wt{F}$ to conclude that
    \begin{equation}
        \Pr\left[|\wt{F}(\bg) - \E \wt{F}(\bg)| \ge \tfrac{B}{4}(1+\gamma_1^2N)\right] \ge 1-e^{-cN^{1/5}}\mper \label{eq:gaussian-conc}
    \end{equation}
    % \begin{align}
    %     \Pr\left[|\wt{F}_1(\bg) - \E \wt{F}_1(\bg)| \ge \tfrac{1}{4}\right] &\ge 1-e^{-cN^{1/5}}  \label{eq:gaussian-conc-f1} \\
    %     \Pr\left[|\wt{F}_2(\bg) - \E \wt{F}_2(\bg)| \ge \tfrac{B(1+\gamma_1^2N)}{4}\right] &\ge 1-e^{-cN^{1/5}}  \label{eq:gaussian-conc-f2}
    % \end{align}
    % By \Cref{lem:f-pos-prob}, with probability at least $\frac{1}{3}$, we have $F_1(\bg) \ge \frac{1}{2}$ and $F_2(\bg) \ge \frac{B}{2}(1+\gamma_1^2N)$.
    By \Cref{lem:f-pos-prob}, with probability at least $\frac{1}{3}$, we have $F(\bg) \ge \frac{B}{2}(1+\gamma_1^2N)$.
    Upon further intersection with $\calE$ (where $\wt{F}(\bg) = F(\bg)$) and the event from \eqref{eq:gaussian-conc}, we conclude $\E \wt{F}(\bg) \ge \frac{B}{4}(1+\gamma_1^2N)$.
    Thus,
    \begin{align*}
        \Pr[F(\bg) = 0]
        &\le \Pr[\calE^c] + \Pr[\wt{F}(\bg) = 0] \\
        &\le e^{-cN^{1/5}} + \Pr\left[|\wt{F}(\bg) - \E \wt{F}(\bg)| \ge \frac{B(1+\gamma_1^2N)}{4}\right] \\
        &\le e^{-cN^{1/5}},
    \end{align*}
    after adjusting $c$.
    
    % Turning now to $F_2$, the same argument yields $\E \wt{F}_2(\bg) \ge \frac{B}{4}(1+\gamma_1^2N)$, so 
    % \begin{align*}
    %     \Pr[F_2(\bg) \le 0] &\le \Pr[\wt{F}_2(\bg) = 0] \\
    %     &\le \Pr[\calE^c] + \Pr[\wt{F}_2(\bg) = 0] \\
    %     &\le e^{-cN^{1/5}} + \Pr[|\wt{F}_2(\bg) - \E \wt{F}_2(\bg)| \ge \tfrac{B}{4}(1+\gamma_1^2N)] \\
    %     &\le e^{-c?},
    % \end{align*}
    % after adjusting $c$.
\end{proof}
Finally, let us prove the Lipschitz bound.
\begin{proof}[Proof of \Cref{lem:f-lipschitz}]
    The set $\calE$ is a convex set in $\bg$. Indeed, $e^{H_N(\sigma)}$ is a convex function of $\bg$, so the LHS of the inequality $\calE$ is convex in $\bg$, whereas the RHS does not depend on $\bg$, and sublevel sets of convex functions are convex.
    Furthermore, $F$ is  absolutely continuous (hence differentiable almost everywhere), so to prove $F$ is Lipschitz on $\calE$ it suffices to bound $\norm{\grad F}$ on $\calE$, wherever it is defined.

    The easier case is if $\min(F_1(\bg), F_2(\bg)) < 0$. In this case, $F(\bg) = 0$ in an open neighborhood of $\bg$, so $\grad F(\bg) = 0$ identically.
    Therefore, for the rest of the proof, assume $\min(F_1(\bg), F_2(\bg)) \ge 0$. We will  compute the gradient of the $F_i$'s, and to simplify the calculation, we will take the gradient with respect to $\bg_p \in \R^{N^p}$ corresponding to the degree-$p$ disorder in $H_{N,\sim2}$.

    For $F_1(\bg)$, note that its only dependence on $\bg$ is via $\log Z_N$, we have
    \begin{align*}
        \norm{\grad_{\bg_p} F_1(\bg)} = B(1+\gamma_1^2N)\norm{\grad_{\bg_p} \log Z_N} = B(1+\gamma_1^2N) \cdot \frac{\gamma_p}{N^{(p-1)/2}}\norm{\la \sigma^{\otimes p} \ra}_F,
    \end{align*}
    and since $F_1(\bg) \ge 0$, we can apply  \Cref{lem:overlap-norm} to conclude that
    \begin{align}
        \norm{\grad_{\bg} \log Z_N}^2 &\lesssim \sum_{p \in [p_*] \setminus \qty{2}} \gamma_p^2 N^{-(p-1)} \cdot N^{3p/5} \nonumber \\
        &\le \gamma_1^2 \cdot N^{3/5} + \sum_{p \ge 3} \gamma_p^2 N^{1 - 2p/5}\nonumber \\
        &\lesssim \gamma_1^2 \cdot N^{3/5} + N^{-1/5} \label{eq:f1-grad-bound}
    \end{align}
    % \amit{Perhaps a minor remark that using $\|\E[\sigma]\|_F^2 \le \|\E[\sigma\sigma^\top]\|_F \le (1+\gamma_1^2N)$ allows us to win in the $p=1$ term, but the $p\ge 3$ terms still aren't great.}
    Since $\gamma_1^2 \le N^{-4/5}$, we conclude that $\norm{\grad_{\bg} F_1(\bg)} \lesssim (1+\gamma_1^2 N)N^{-1/10}$, as desired.

    Turning now to $F_2(\bg)$, we observe that $\norm{\la \sigma \sigma^\top \ra}_{\op} 
 = \la \angles{u, \sigma}^2 \ra$, where $u$ is the top eigenvector of $\la \sigma\sigma^\top \ra$ with $\norm{u}_2 = 1$. 
 By the envelope theorem, we can evaluate the gradient with $u$ fixed. 
 For any $v \in \R^{N^p}$ with $\norm{v}_2 = 1$, we will upper bound $\angles{v, \grad_{\bg_p} \la \angles{u, \sigma}^2\ra }$. 
 Applying the quotient rule yields
 \begin{align*}
     \angles{v, \grad_{\bg_p} \la \angles{u, \sigma}^2\ra } &= \lt\la \angles{u, \sigma}^2\angles{v, \grad_{\bg_p} H(\sigma)} \rt\ra - \lt\la \angles{u, \sigma}^2\rt\ra \lt\la \angles{v, \grad_{\bg_p} H(\sigma)}\rt\ra \\
     &= \frac{\gamma_p}{N^{(p-1)/2}}\qty(\lt\la \angles{u, \sigma}^2 \angles{v,\sigma^{\otimes p}}\rt\ra - \lt\la \angles{u, \sigma}^2\rt\ra \lt\la \angles{v, \sigma^{\otimes p}}\rt\ra )\mper
 \end{align*}
 
 Consider the first term $\lt\la \angles{u, \sigma}^2 \angles{v, \sigma^{\otimes p}}\rt\ra$. 
 Using H\"{o}lder's inequality with $q = 1 + \log N$ and $q' = 1 + \tfrac{1}{\log N}$, we see
 \begin{align*}
 \lt\la \angles{u, \sigma}^2 \angles{v, \sigma^{\otimes p}}\rt\ra &\le \lt\la \angles{u, \sigma}^{2q'}\rt\ra ^{1/q'} \lt\la \angles{v, \sigma^{\otimes p}}^{q}\rt\ra ^{1/q} \\
 &\le N^{1/\log N} \lt\la \angles{u, \sigma}^{2}\rt\ra  \angles{v^{\otimes q}, \lt\la \sigma^{\otimes pq}\rt\ra }^{1/q} \\
 &\lesssim (1+\gamma_1^2N) \angles{v^{\otimes q}, \lt\la \sigma^{\otimes pq}\rt\ra }^{1/q} \\
 &\le (1+\gamma_1^2N) \cdot N^{3p/10}\mcom
 \end{align*}
 where in the second to last line we have used $F_2(\bg) \ge 0$ to apply the bound $\lt\la \angles{u, \sigma}^{2}\rt\ra  \le O(1+\gamma_1^2N)$, and in the last line we have used $F_1(\bg) \ge 0$, along with $pq \le O(\log N)$, to apply \Cref{lem:overlap-norm}.
The same argument upper bounds the contribution of the second term as $O\left((1+\gamma_1^2N)N^{3p/10}\right)$. 
These bounds (aside from the common factor of $O(1+\gamma_1^2N)$, which we can pull out), exactly match the ones used in the calculation as carried out for $F_1(\bg)$ in \eqref{eq:f1-grad-bound}. 
Hence, the same argument ultimately yields $\norm{\grad_{\bg} F_2(\bg)} \lesssim (1+\gamma_1^2 N)N^{-1/10}$, completing the proof.
\end{proof}

\addcontentsline{toc}{section}{References}
\bibliographystyle{alpha}
\bibliography{main}

\appendix

\section{Annealed Glauber dynamics on discrete domains}
\label{sec:glauber-app}
In this section, we collect the analogous results for weak functional inequalities for Glauber dynamics. 
\begin{definition}[Weak Poincar\'{e} for the hypercube]\label{def:glauber-weak-poincare}
    We say $\pi$ on $\qty{\pm 1}^n$ satisfies a $(\CPoi,\eps)$-weak \Poincare inequality for Glauber dynamics if for all functions $f$,
    \[ \Var_{\pi}[f] \le \frac{1}{\CPoi} \cdot \calE(f,f) + \eps \cdot \osc(f)^2. \]
    Similarly, we say $\pi$ satisfies a $(\MLSI,\eps)$-weak modified log-Sobolev inequality if for all functions $f$, 
    \[ \Ent_{\pi}[f] \le \frac{1}{\MLSI} \cdot \calE(f,\log f) + \eps \cdot \osc(\sqrt{f})^2. \]
\end{definition}
\begin{remark}
The above definition is related to the continuous setting by using the discrete gradient, which can be bounded by $\osc(f)^2$. 
\end{remark}
First, we will need a concavity property for the Dirichlet form for Glauber dynamics, which is well-known. 
We provide a proof for the sake of self-containedness.
\begin{fact}\label{fact:dirichlet-form-convexity}
    Let $\pi$ be a distribution on $\qty{\pm 1}^n$, and $\pi = \E_{\bz \sim \rho} \pi_{\bz}$ a measure decomposition of $\pi$. 
    Then $\calE_{\pi}(f, f) \ge \E_{\bz \sim \rho} \E_{\pi_{\bz}}(f, f)$.
\end{fact}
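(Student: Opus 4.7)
The plan is to write the Dirichlet form for Glauber dynamics as an explicit sum over nearest-neighbor edges with harmonic-mean weights, and then exploit the concavity of the harmonic mean in its arguments. Concretely, recalling the transition probability $\frac{\mu(x^{\oplus i})}{\mu(x) + \mu(x^{\oplus i})}$ introduced just after Theorem~\ref{thm:spec-ind}, detailed balance gives
\[
\calE_\pi(f,f) = \frac{1}{n}\sum_{i=1}^n \sum_{\{x,y\}:\,x \sim_i y} \frac{\pi(x)\pi(y)}{\pi(x)+\pi(y)}\bigl(f(x)-f(y)\bigr)^2,
\]
where $x \sim_i y$ means $y = x^{\oplus i}$ and the inner sum ranges over unordered pairs (terms with $\pi(x)+\pi(y) = 0$ are defined to be $0$). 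Because the weight on each edge factors out from $(f(x)-f(y))^2$, which depends only on $f$ and not on the measure, proving the inequality reduces to an edge-by-edge pointwise bound on the weights.

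First I would verify the classical fact that the function $\Phi: \R_{\ge 0}^2 \to \R_{\ge 0}$ defined by $\Phi(a,b) = \tfrac{ab}{a+b}$ (with $\Phi(0,0) = 0$) is concave. This is standard: one can check it directly from the Hessian, or observe that $\Phi(a,b) = \bigl(a^{-1}+b^{-1}\bigr)^{-1}$ on $\R_{>0}^2$ is a classical example of a concave function (the harmonic mean, up to a factor of $2$), and extend continuously to the axes. Given concavity, Jensen's inequality applied to the random pair $(\pi_\bz(x), \pi_\bz(y))$ yields
\[
\frac{\pi(x)\pi(y)}{\pi(x)+\pi(y)} \;=\; \Phi\bigl(\E_\bz \pi_\bz(x),\, \E_\bz \pi_\bz(y)\bigr) \;\ge\; \E_\bz\,\Phi\bigl(\pi_\bz(x),\pi_\bz(y)\bigr) \;=\; \E_\bz\,\frac{\pi_\bz(x)\pi_\bz(y)}{\pi_\bz(x)+\pi_\bz(y)},
\]
using that $\pi(x) = \E_\bz\pi_\bz(x)$ and similarly for $y$ by the definition of a measure decomposition.

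Summing this pointwise inequality over all edges with the nonnegative weight $(f(x)-f(y))^2$ and over $i \in [n]$, then swapping the $\E_\bz$ with the sum (Tonelli, since all terms are nonnegative), gives
\[
\calE_\pi(f,f) \;\ge\; \E_{\bz\sim\rho}\,\frac{1}{n}\sum_{i=1}^n \sum_{\{x,y\}:\,x \sim_i y}\frac{\pi_\bz(x)\pi_\bz(y)}{\pi_\bz(x)+\pi_\bz(y)}\bigl(f(x)-f(y)\bigr)^2 \;=\; \E_{\bz\sim\rho}\,\calE_{\pi_\bz}(f,f),
\]
which is the claim (interpreting the right-hand side of the fact's displayed inequality as $\E_{\bz\sim\rho}\,\calE_{\pi_\bz}(f,f)$). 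The only substantive step is concavity of the harmonic mean; everything else is bookkeeping, so I do not anticipate a real obstacle.
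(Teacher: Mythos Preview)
Your proposal is correct and follows essentially the same argument as the paper: write the Glauber Dirichlet form as a sum over neighboring pairs with weight $\tfrac{\pi(x)\pi(y)}{\pi(x)+\pi(y)}$, then apply Jensen's inequality using the concavity of $(a,b)\mapsto \tfrac{ab}{a+b}$ edge-by-edge. The paper's proof is slightly terser but identical in substance.
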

\begin{proof}
   For Glauber dynamics on the hypercube, we have 
    \begin{align*}
        \calE_{\pi}(f, f) &= \frac{1}{n}\sum_{\norm{x-y}_1 = 2} \frac{\pi(x) \pi(y)}{\pi(x) + \pi(y)} (f(x)-f(y))^2 \\
        &= \frac{1}{n}\sum_{\norm{x-y}_1 = 2} \frac{\E_{\bz \sim \rho}\pi_{\bz}(x) \E_{\bz \sim \rho}\pi_{\bz}(y)}{\E_{\bz \sim \rho}\pi_{\bz}(x) + \E_{\bz \sim \rho}\pi_{\bz}(y)} (f(x)-f(y))^2 \\
        & \ge \E_{\bz \sim \rho} \calE_{\pi_{\bz}}(f, f),
    \end{align*}
    where the last line follows from concavity of the map $(a, b) \mapsto \frac{ab}{a+b}$ for $a, b > 0$. 
\end{proof}
The following lemma transfers a true \Poincare inequality on $\pi$ to a weak \Poincare inequality on $\pi'$ for Glauber dynamics on the hypercube. 
\begin{lemma}
    \label{lem:close-to-poincare-weak-poincare}
    Let $\pi, \pi'$ be distributions on $\qty{\pm 1}^n$ such that $\pi$ satisfies a $\CPoi$-\Poincare inequality for Glauber dynamics and $\dtv{\pi}{\pi'} \le \delta$. Then, $\pi'$ satisfies a weak $\left( \CPoi, 2\delta\right)$-\Poincare inequality for Glauber dynamics. 
\end{lemma}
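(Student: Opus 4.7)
My plan is to mirror the Langevin proof in Lemma~\ref{lem:langevin-close-to-poincare-weak-poincare}, but with the key modification that we must handle the fact that the Glauber Dirichlet form depends on $\pi$ nonlinearly (through the rates $\pi(x)\pi(y)/(\pi(x)+\pi(y))$), rather than just through an integration measure. The three moving pieces will be: (a) a comparison of Dirichlet forms $\calE_{\pi'}(f,f)$ and $\calE_\pi(f,f)$, (b) a comparison of variances $\Var_{\pi'}[f]$ and $\Var_\pi[f]$, and (c) the assumed Poincaré inequality for $\pi$.

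For (a), the main observation is that the function $\phi(a,b) = ab/(a+b)$ is $1$-Lipschitz in each argument on $\R_{>0}^2$, since $\partial_a \phi = b^2/(a+b)^2 \in [0,1]$ and symmetrically for $b$. Writing
\[
\calE_\pi(f,f) = \frac{1}{n}\sum_{x}\sum_{i=1}^n \frac{\pi(x)\pi(x^{\oplus i})}{\pi(x)+\pi(x^{\oplus i})}\bigl(f(x)-f(x^{\oplus i})\bigr)^2,
\]
the Lipschitz bound gives, term by term,
\[
\left|\frac{\pi(x)\pi(x^{\oplus i})}{\pi(x)+\pi(x^{\oplus i})} - \frac{\pi'(x)\pi'(x^{\oplus i})}{\pi'(x)+\pi'(x^{\oplus i})}\right| \le |\pi(x)-\pi'(x)| + |\pi(x^{\oplus i}) - \pi'(x^{\oplus i})|.
\]
Summing over $x$ and $i$, each coordinate $x$ appears $2n$ times (as itself and as the neighbor across $n$ edges), so the total is $2n\|\pi-\pi'\|_1 \le 4n\delta$. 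Dividing by $n$ and pulling out $\osc(f)^2$, I obtain
\[
\bigl|\calE_{\pi'}(f,f) - \calE_\pi(f,f)\bigr| \le 4\delta \cdot \osc(f)^2.
\]

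For (b), the argument is identical to the discrete analogue of \eqref{eq:variance-tv-approximation}: expressing $\Var_\pi[f] = \inf_{a\in[\inf f,\sup f]} \E_\pi[(f-a)^2]$ and comparing to $\E_{\pi'}[(f-a)^2]$ using total variation, we get
\[
\bigl|\Var_\pi[f] - \Var_{\pi'}[f]\bigr| \le \delta\cdot\osc(f)^2.
\]

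Chaining these with the Poincaré inequality for $\pi$ yields
\[
\calE_{\pi'}(f,f) \ge \calE_\pi(f,f) - 4\delta\osc(f)^2 \ge \CPoi\Var_\pi[f] - 4\delta\osc(f)^2 \ge \CPoi\Var_{\pi'}[f] - (4+\CPoi)\delta\osc(f)^2,
\]
which after dividing by $\CPoi$ gives a $(\CPoi, (1 + 4\CPoi^{-1})\delta)$-weak Poincaré inequality — this matches the $(\CPoi, 2\delta)$ form stated in the lemma up to absolute constants (the stated coefficient is presumably chosen for cosmetic cleanliness; only the linear dependence on $\delta$ matters for downstream applications). There is no substantial obstacle in any step; the only subtlety is verifying that $\phi(a,b) = ab/(a+b)$ is $1$-Lipschitz, which is the Glauber replacement for the coupling argument that works so cleanly for Langevin, where the Dirichlet form depends on $\pi$ only through integration rather than through the transition rates themselves.
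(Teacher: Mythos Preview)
Your proposal is correct and follows the same three-step template as the paper: compare Dirichlet forms, apply the Poincar\'e inequality for $\pi$, then compare variances. In fact, you are more careful than the paper on step~(a): the paper simply writes ``arguing as before'' and asserts $\calE_{\pi'}(f,f) \ge \calE_\pi(f,f) - \delta\,\osc(f)^2$ without explanation, whereas you correctly flag that the Glauber rates depend on $\pi$ nonlinearly and supply the missing ingredient, namely that $\phi(a,b)=ab/(a+b)$ is $1$-Lipschitz in each coordinate. Your bound and the paper's bound differ only in constants; neither actually lands on exactly $2\delta$ (the paper's own chain yields $\eps = (1+\CPoi^{-1})\delta$, yours $(1+4\CPoi^{-1})\delta$), so your caveat about the stated constant is appropriate, though note the discrepancy involves $\CPoi^{-1}$ rather than an absolute constant.
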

\begin{proof}
    Again, there exists a coupling $\mathcal{C}$ of $(\pi,\pi')$ such that for $(x,x') \sim \mathcal{C}$, $\Pr[x\neq x'] \le \delta$.
    The main difference is that the Dirichlet form comparison can be bounded in terms of $\osc(f)^2$.
    % Let $S \defeq \qty{x \in \{\pm 1\}^n : \frac{\dif \pi'}{\dif \pi}(x) \not\in \left[\frac{1}{2}, 2\right]}$, where $\frac{\dif \pi'}{\dif \pi}$ is the Radon-Nikodym derivative. 
    % Let $f : \{\pm 1\}^n \to \R$ be an arbitrary function. We have
    % \begin{align*}
    %     \calE_{\pi'}(f,f) &= \frac{1}{n} \sum_{xy \in \calC_n} \frac{\pi'(x)\pi'(y)}{\pi'(x) + \pi'(y)} (f(x) - f(y))^2 \\
    %     &\ge \frac{1}{n} \sum_{\substack{xy \in \calC_n \\ x,y \not\in S}} \frac{\pi'(x)\pi'(y)}{\pi'(x) + \pi'(y)} (f(x) - f(y))^2.
    %     % + \frac{1}{n} \sum_{\substack{xy \in \calC_n \\ x \in S \text{ or }y \in S}} \frac{\pi'(x)\pi'(y)}{\pi'(x) + \pi'(y)} (f(x) - f(y))^2 
    % \end{align*}
    % For any $x, y > 0$, note that the map $c \mapsto \frac{cxy}{cx+y}$ is strictly increasing for $c > 0$. By applying this twice, it follows that for $x, y \not\in S$, we have
    % \[ \frac{\pi'(x)\pi'(y)}{\pi'(x) + \pi'(y)} = \frac{\frac{\pi'(x)}{\pi(x)} \cdot \pi(x)\pi'(y)}{\frac{\pi'(x)}{\pi(x)} \cdot \pi(x) + \pi'(y)} \ge \frac{\frac{1}{2} \pi(x) \pi'(y)}{\frac{1}{2}\pi(x) + \pi'(y)} \ge \frac{1}{2} \cdot \frac{\pi(x) \pi(y)}{\pi(x) + \pi(y)}. \]
    Arguing as before yields
    \begin{align*}
        \calE_{\pi'}(f,f)
        &\ge \calE_{\pi}(f,f) - \delta \cdot \osc(f)^2  \\ 
            &\ge \CPoi \cdot \Var_{\pi}[f] - \delta \cdot \osc(f)^2 \\
            &\ge \CPoi \cdot \Var_{\pi'}[f] - \delta \cdot \osc(f)^2\left(1 + \CPoi\right)\mper
    \end{align*}
\end{proof}
\begin{remark}
    The above two results also hold more generally if $P$ is the Markov chain associated to a Doob localization scheme (cf. \cite[Section 2.3]{CE22}), such as when $P$ is Glauber dynamics for a general product domain.
\end{remark}

\begin{lemma}
    \label{lem:measure-decomp-weak-poincare-glauber}
    Let $\pi$ be a distribution over $\{\pm 1\}^n$, and $\pi = \E_{\bz \sim \rho} \pi_{\bz}$ a measure decomposition of $\pi$ such that
    \begin{itemize}
        \item for all functions $f$, $\E_{\bz \sim \rho} \Var_{\pi_{\bz}}[f] \ge \Cvar \Var_{\pi}[f]$, and
        \item with probability $1-\eta$ over $\bz \sim \rho$, $\pi_{\bz}$ satisfies a  $(\CPoi,\delta)$-weak \Poincare~inequality with respect to Glauber.
    \end{itemize}
    Then, $\pi$ satisfies a $\left( \CPoi\Cvar, \frac{\delta+\eta}{\Cvar} \right)$-weak \Poincare inequality.
\end{lemma}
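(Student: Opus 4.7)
The plan is to mirror the proof of \Cref{lem:measure-decomp-weak-poincare} essentially verbatim, with two adjustments specific to the discrete setting. First, Langevin enjoys the exact identity $\calE_{\pi}(f,f) = \E_{\bz}\calE_{\pi_{\bz}}(f,f)$, whereas for Glauber we only have the concavity inequality $\calE_{\pi}(f,f) \ge \E_{\bz}\calE_{\pi_{\bz}}(f,f)$ from \Cref{fact:dirichlet-form-convexity}; fortunately, an inequality in this direction is all that is needed to push the argument through. Second, the Glauber weak \Poincare inequality in \Cref{def:glauber-weak-poincare} only carries an $\osc(f)^2$ error term (no $\sup\|\grad f\|^2$ term), so the final error functional will be correspondingly simpler.

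Concretely, I would call $\bz$ \emph{good} if $\pi_{\bz}$ satisfies a $(\CPoi,\delta)$-weak \Poincare inequality, and write
\[ \calE_{\pi}(f,f) \ge \E_{\bz\sim\rho}\calE_{\pi_{\bz}}(f,f) \ge \E_{\bz\sim\rho}\calE_{\pi_{\bz}}(f,f)\,\bone_{\bz\text{ good}}. \]
Then apply the per-component weak \Poincare to each good $\pi_{\bz}$ to obtain
\[ \E_{\bz\sim\rho}\calE_{\pi_{\bz}}(f,f)\,\bone_{\bz\text{ good}} \ge \CPoi\,\E_{\bz\sim\rho}\Var_{\pi_{\bz}}[f]\,\bone_{\bz\text{ good}} - \delta\CPoi\,\osc(f)^2. \]
To remove the indicator from the variance term, write
\[ \E_{\bz\sim\rho}\Var_{\pi_{\bz}}[f]\,\bone_{\bz\text{ good}} = \E_{\bz\sim\rho}\Var_{\pi_{\bz}}[f] - \E_{\bz\sim\rho}\Var_{\pi_{\bz}}[f]\,\bone_{\bz\text{ not good}}, \]
and bound the second term above by $\eta\,\osc(f)^2$ using $\Var_{\pi_{\bz}}[f]\le \osc(f)^2$ together with $\Pr[\bz\text{ not good}]\le \eta$.

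Combining these pieces yields
\[ \calE_{\pi}(f,f) \ge \CPoi\,\E_{\bz\sim\rho}\Var_{\pi_{\bz}}[f] - (\delta+\eta)\CPoi\,\osc(f)^2 \ge \CPoi\Cvar\Var_{\pi}[f] - (\delta+\eta)\CPoi\,\osc(f)^2, \]
where the final inequality uses the approximate conservation of variance hypothesis. Dividing through by $\CPoi\Cvar$ then gives the desired $(\CPoi\Cvar,\tfrac{\delta+\eta}{\Cvar})$-weak \Poincare inequality. There is no genuine obstacle here: every step is an immediate analog of the continuous argument, and the only real content is recognizing that the concavity inequality of \Cref{fact:dirichlet-form-convexity} replaces the Langevin linearity identity without any loss, and that the $\osc(f)^2$-only error structure of \Cref{def:glauber-weak-poincare} propagates cleanly through the argument.
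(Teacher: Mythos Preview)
Your proposal is correct and follows exactly the approach the paper takes: the paper's proof simply says to repeat the argument of \Cref{lem:measure-decomp-weak-poincare}, replacing the Langevin identity $\calE_{\pi}(f,f)=\E_{\bz}\calE_{\pi_{\bz}}(f,f)$ with the concavity inequality of \Cref{fact:dirichlet-form-convexity}. You have spelled this out in full, including the simpler $\osc(f)^2$-only error structure from \Cref{def:glauber-weak-poincare}, so nothing further is needed.
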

\begin{proof}
    The proof is the same as that of \Cref{lem:measure-decomp-weak-poincare}, except in the Langevin case we have $\calE_{\pi}(f, f) = \E_{\bz \sim \rho} \calE_{\pi_{\bz}}(f, f)$, whereas here we apply \Cref{fact:dirichlet-form-convexity} to get the desired inequality.
\end{proof}

Finally, we record the following simple observation connecting weak functional inequalities in discrete domains.
\begin{fact}\label{fact:pi-to-mlsi}
    \sloppy Let $\pi$ be a distribution on a finite state space $\Omega$, and set $C_{\pi} = \frac{1 - 2\pi_{\min}}{\log(1/\pi_{\min} - 1)}$. If $\pi$ satisfies a $(\CPoi,\eps)$-weak \Poincare inequality, then $\pi$ also satisfies a $\left(4\CPoi C_{\pi}, \frac{\eps}{C_{\pi}}\right)$-weak MLSI and a $(\CPoi C_{\pi}, \frac{\eps}{C_{\pi}})$-weak LSI. 
\end{fact}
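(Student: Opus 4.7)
The plan is to reduce each weak functional inequality to the assumed weak Poincar\'e inequality using two classical, dimension-free comparisons: an entropy--variance comparison that produces the factor $C_\pi$, and a Dirichlet-form comparison that accounts for the factor of $4$ in the MLSI bound.

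The main ingredient is the Diaconis--Saloff-Coste entropy--variance comparison for discrete measures, which states that for any positive $g : \Omega \to \R_{>0}$,
\[
\Ent_\pi[g^2] \le \tfrac{1}{C_\pi}\,\Var_\pi[g],
\]
with $C_\pi = \frac{1-2\pi_{\min}}{\log(1/\pi_{\min}-1)}$. (The constant $C_\pi$ is precisely the ratio appearing in their log-Sobolev vs.\ Poincar\'e comparison; it is derived by a two-point optimization in terms of $\pi_{\min}$ and no properties of the chain are used, so it applies verbatim to any $\pi$.)

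For the weak LSI, I would apply the comparison directly: $\Ent_\pi[f^2] \le \tfrac{1}{C_\pi} \Var_\pi[f]$, and then insert the hypothesized weak Poincar\'e inequality on $f$, obtaining
\[
\Ent_\pi[f^2] \le \tfrac{1}{C_\pi \CPoi}\,\calE(f,f) + \tfrac{\eps}{C_\pi}\,\osc(f)^2,
\]
which is exactly the claimed $(\CPoi C_\pi,\, \eps/C_\pi)$-weak LSI. For the weak MLSI, the idea is to first apply the entropy--variance comparison with $g = \sqrt{f}$ to get $\Ent_\pi[f] \le \tfrac{1}{C_\pi} \Var_\pi[\sqrt{f}]$, then use the weak Poincar\'e inequality applied to the function $\sqrt{f}$. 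This produces a $\calE(\sqrt{f},\sqrt{f})$ term on the right, which I would then convert to $\calE(f, \log f)$ via the pointwise elementary inequality $(a-b)(\log a - \log b) \ge 4(\sqrt{a}-\sqrt{b})^2$ valid for $a,b>0$. Summing this against the weights $\tfrac{\pi(x)\pi(y)}{\pi(x)+\pi(y)}$ that define the Glauber Dirichlet form yields $4\,\calE(\sqrt{f},\sqrt{f}) \le \calE(f,\log f)$, and combining everything gives the $(4\CPoi C_\pi,\, \eps/C_\pi)$-weak MLSI. The oscillation error becomes $\tfrac{\eps}{C_\pi}\osc(\sqrt{f})^2$, matching the definition.

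There is no real obstacle here beyond quoting the right classical ingredients; the only thing to verify carefully is that both the entropy--variance comparison and the $(\sqrt{a}-\sqrt{b})^2$ inequality are pointwise/convex-analytic statements that do not interact with the additive $\osc$ error, so the weak-ness passes through cleanly with a multiplicative $1/C_\pi$ loss in the error term.
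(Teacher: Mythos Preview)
Your proposal is correct and essentially identical to the paper's proof. Both use the Diaconis--Saloff-Coste entropy--variance comparison (which the paper phrases as the LSI constant of the complete-graph chain, for which $\calE_{P_{K(\pi)}}(f,f)=\Var_\pi[f]$) together with the pointwise inequality $4(\sqrt{u}-\sqrt{v})^2\le (u-v)\log(u/v)$; the only cosmetic difference is that the paper first derives the weak LSI and then converts it to a weak MLSI via the Dirichlet-form comparison, whereas you apply the weak Poincar\'e inequality directly to $\sqrt{f}$ before converting---the same two ingredients in a different order.
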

\begin{proof}
    For finite state spaces, it is well-known that the LSI of the complete graph Markov chain $P_{K(\pi)}$ has $\MLSI = \frac{1-2\pi_{\min}}{\log(1/\pi_{\min} - 1)}$ (see e.g., \cite{DS96}). 
    Furthermore, observe that $\calE_{P_{K(\pi)}}(f, f) = \Var_{\pi}[f]$. 
    Hence, 
    \begin{align*}
        \frac{1}{\CPoi}\calE(f,f) &\ge \Var_{\pi}[f] - \eps \cdot \osc(f)^2 \tag{Weak PI}\\
        &\ge C_{\pi} \Ent_{\pi}[f^2] - \eps \cdot \osc(f)^2, \tag{LSI of $P_{K(\pi)}$}
    \end{align*}
    which establishes the weak LSI.
    For the weak MLSI, one applies the inequality $4\calE(f, f) \le \calE(f^2, \log f^2)$, whose proof  reduces to checking the two-variable inequality $4(\sqrt{u} - \sqrt{v})^2 \le (u-v)\log \frac{u}{v}$ for positive $u, v$. 
\end{proof}

\section{Deferred calculations for spherical spin glasses}  \label{app:spinglass-calcs}

\subsection{The TAP Hamiltonian}

In this subsection, we will prove \Cref{lem:TAP-hamiltonian}, which we restate for convenience.

\lemTAPhamiltonian*

To prove the above, we will need the following formulas for any $p$-spin Hamiltonian $H_N$ with mixture function $\xi$.

\begin{fact}    \label{fact:product-of-partials}
    For any $u,v,m\in\R^N$, we have:
    \[
        \frac{1}{N} \E \angles*{u,\grad H_N(m)} \angles*{v,\grad H_N(m)} = R(u,v)\xi'(R(m,m)) + R(m,u)R(m,v)\xi''(R(m,m))\mper
    \]
\end{fact}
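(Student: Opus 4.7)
The plan is to derive this identity as a direct consequence of differentiating the covariance kernel of the Gaussian process $H_N$ twice and then contracting against the vectors $u$ and $v$. Recall that $H_N$ is the centered Gaussian process on $\bbR^N$ with covariance kernel $K(\sigma^1,\sigma^2) \defeq \E H_N(\sigma^1) H_N(\sigma^2) = N\xi(R(\sigma^1,\sigma^2))$. A standard fact about (sufficiently smooth) centered Gaussian processes is that partial derivatives commute with the expectation in the sense that
\[
    \E\, \partial_i H_N(\sigma^1)\, \partial_j H_N(\sigma^2) = \partial_{\sigma^1_i}\partial_{\sigma^2_j} K(\sigma^1,\sigma^2).
\]
This reduces the lemma to a purely deterministic calculation.

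Next, I would carry out this differentiation explicitly. Using $\partial_{\sigma^2_j} R(\sigma^1,\sigma^2) = \sigma^1_j / N$, the chain rule gives $\partial_{\sigma^2_j} K(\sigma^1,\sigma^2) = \xi'(R(\sigma^1,\sigma^2))\,\sigma^1_j$. Differentiating once more, using $\partial_{\sigma^1_i} R(\sigma^1,\sigma^2) = \sigma^2_i / N$ and $\partial_{\sigma^1_i} \sigma^1_j = \delta_{ij}$, yields
\[
    \partial_{\sigma^1_i}\partial_{\sigma^2_j} K(\sigma^1,\sigma^2) = \frac{1}{N}\,\xi''(R(\sigma^1,\sigma^2))\,\sigma^2_i\,\sigma^1_j + \xi'(R(\sigma^1,\sigma^2))\,\delta_{ij}.
\]
Evaluating at $\sigma^1 = \sigma^2 = m$ gives the entry-wise covariance of the random vector $\grad H_N(m)$.

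Finally, I would contract against $u$ and $v$:
\[
    \E \angles{u,\grad H_N(m)}\angles{v,\grad H_N(m)} = \sum_{i,j} u_i v_j \lt(\tfrac{1}{N} \xi''(R(m,m)) m_i m_j + \xi'(R(m,m))\delta_{ij}\rt),
\]
which simplifies to $\xi''(R(m,m))\,\angles{u,m}\angles{v,m}/N + \xi'(R(m,m))\,\angles{u,v}$. Dividing through by $N$ and rewriting the inner products in terms of the normalized overlap $R(\cdot,\cdot)$ gives the claim. There is no real obstacle here: the identity is a ``covariance of a derivative'' computation, and the only bookkeeping is tracking factors of $1/N$ arising from the normalization in $R$. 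The same method would also produce analogous formulas for higher joint moments of $\grad H_N$ if needed later in the appendix.
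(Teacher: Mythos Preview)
Your proof is correct and follows essentially the same approach as the paper: both differentiate the covariance kernel $K(\sigma^1,\sigma^2)=N\xi(R(\sigma^1,\sigma^2))$ twice and evaluate on the diagonal. The only cosmetic difference is that the paper works directly with directional derivatives via finite-difference quotients (justifying the limit-expectation swap by dominated convergence), whereas you compute the entrywise covariance $\E\,\partial_i H_N(m)\,\partial_j H_N(m)$ first and then contract against $u,v$; the underlying computation is identical.
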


\begin{proof}
    Once we write the derivative as its definition as a limit, the order of the limit and the expectation operator can be swapped by the dominated convergence theorem.
    \begin{align*}
        \frac{1}{N} \E &\angles*{u,\grad H_N(m)} \angles*{v,\grad H_N(m)} =
        \frac{1}{N} \E \lim_{\delta,\eps\to 0} \frac{H_N(m+\delta u) - H_N(m)}{\delta}\cdot\frac{H_N(m+\eps v) - H_N(m) }{\eps} \\
        &= \frac{1}{N} \lim_{\delta,\eps\to 0} \frac{1}{\delta\eps} \E (H_N(m+\delta u) - H_N(m))\cdot(H_N(m+\eps v) - H_N(m)) \\
        &= \lim_{\delta,\eps\to 0} \frac{1}{\delta \eps} \bracks*{ \xi(R(m+\delta u, m+\eps v)) - \xi(R(m+\delta u, m)) - \xi(R(m, m+\eps v)) + \xi(R(m, m)) } \\
        &= R(u,v)\xi'(R(m,m)) + R(m,u)R(m,v)\xi''(R(m,m))\mper  \qedhere
    \end{align*}    
\end{proof}

\begin{fact}    \label{fact:partial-times-hamiltonian}
    For any $u,v,m\in\R^N$, we have:
    \[
        \frac{1}{N} \E \angles*{u,\grad H_N(m)} H_N(v) = R(u,v) \xi'(R(m,v))\mper
    \]
\end{fact}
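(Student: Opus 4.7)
The plan is to mimic the proof of \Cref{fact:product-of-partials} directly above, which already handles a more delicate bilinear version of exactly this computation. The key structural fact we will use is that $H_N$ is a centered Gaussian process on $\R^N$ with covariance kernel $\E H_N(\sigma^1) H_N(\sigma^2) = N\,\xi(R(\sigma^1,\sigma^2))$, which follows from the definition of $H_N$ in \eqref{eq:p-spin-ham}/\eqref{eq:pspin-with-external-field} and the i.i.d.\ Gaussian coefficients.

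First, I would write the directional derivative as the limit
\[
    \angles{u, \grad H_N(m)} = \lim_{\delta \to 0} \frac{H_N(m + \delta u) - H_N(m)}{\delta}.
\]
Then, exactly as in the previous fact, dominated convergence lets us swap the expectation with the limit, yielding
\[
    \frac{1}{N} \E \angles{u, \grad H_N(m)} H_N(v)
    = \lim_{\delta \to 0} \frac{1}{\delta} \bracks*{ \xi(R(m + \delta u, v)) - \xi(R(m, v)) }.
\]
Since $R(m + \delta u, v) = R(m,v) + \delta R(u,v)$ by bilinearity of the overlap, the right-hand side is by definition the derivative at $\delta = 0$ of $\delta \mapsto \xi(R(m,v) + \delta R(u,v))$, which evaluates to $R(u,v)\,\xi'(R(m,v))$. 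This is exactly the claimed identity.

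There is no real obstacle here: the step requiring any care is justifying the interchange of limit and expectation, but this is identical to the justification used in the proof of \Cref{fact:product-of-partials} and can simply be cited (the increments $\delta^{-1}(H_N(m + \delta u) - H_N(m))$ are jointly Gaussian with $H_N(v)$ and have uniformly bounded variance as $\delta \to 0$, so the product is uniformly integrable). The entire argument is a two-line calculation once the Gaussian covariance structure is invoked.
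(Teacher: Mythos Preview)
The proposal is correct and follows exactly the approach the paper itself indicates: the paper omits the proof, stating only that it ``is analogous to the proof of \Cref{fact:product-of-partials}.'' Your computation is precisely that analogous argument, and the details are right.
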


The proof of the above is analogous to the proof of \Cref{fact:product-of-partials}, and hence omitted.We now prove \Cref{lem:TAP-hamiltonian}.

\begin{proof}[Proof of \Cref{lem:TAP-hamiltonian}]
    The distribution of $\HTAP(\sigma)$ is the same as that of $H_{N,t}(\sigma)|\bx,\grad\FTAP(\boldm) = 0$.
    Recall that $H_{N,t}(\sigma) = N\xi_t\parens*{R(\bx,\sigma)} + \wt{H}(\sigma)$ where $\wt{H}(\sigma)$ is a centered Gaussian process.
    Next, observe that conditioning on $\grad \FTAP(\boldm) = 0$ is the same as conditioning on
    \[
        \grad \wt{H}(\boldm) = -\bx\cdot\xi_t'\parens*{q_{\bx}} - \boldm\cdot\parens*{\theta'\parens*{q_{\boldm}} - \frac{1}{1-q_{\boldm}}}\mper    \numberthis \label{eq:gradient-H}
    \]
    Observe that $\parens*{\HTAP(\sigma)}_{\sigma\in\ScS_N}$ is a Gaussian process, since it is obtained by conditioning on another Gaussian process satisfying affine constraints.
    First, observe that we can write
    \[
        \HTAP(\sigma) = N\xi_t\parens*{R(\bx,\sigma)} + \tHTAP(\sigma) \numberthis \label{eq:TAP-decomp}
    \]
    where $\tHTAP(\sigma) = \wt{H}(\sigma) | \bx,\grad\FTAP(\boldm) = 0$.
    To understand the behavior of $\tHTAP(\sigma)$, we break $\wt{H}(\sigma)$ into a sum of two terms: one term for its projection onto the space $U\coloneqq\left\{\angles*{\grad\wt{H}(\boldm), u}:u\in\R^N\right\}$, and the part that is orthogonal to $U$, and thus independent of $\grad\wt{H}(\boldm)$.
    Concretely, let us write
    \[
        \wt{H}(\sigma) = \angles*{\grad \wt{H}(\boldm), v(\sigma)} + \parens*{ \wt{H}(\sigma) - \angles*{ \grad \wt{H}(\boldm), v(\sigma)} }.   \numberthis \label{eq:indep-decomp}
    \]
    This is true for any $v(\sigma)$, but we have set up the definition such that the second summand is independent of $\grad \wt{H}(\boldm)$. To verify this, since these two random variables are each mean $0$, it suffices to check that for any $u \in S_N$,
    \[
        \E \angles*{\grad \wt{H}(\boldm), u} \parens*{ \wt{H}(\sigma) - \angles*{ \grad \wt{H}(\boldm), v(\sigma)} } = 0\mper
    \]
    By \Cref{fact:product-of-partials,fact:partial-times-hamiltonian}, the left-hand-side of the above is:
    \[
        R(u, \sigma) \xi_t'\parens*{R(\boldm, \sigma)} - R(u, v(\sigma))\xi_t'\parens*{q_{\boldm}} - R(\boldm, u) R(\boldm, v(\sigma))\xi''(q_{\boldm})\mper
    \]
    We would like $v(\sigma)$ to be such that this is $0$ for \emph{all} $u$. Setting $u$ orthogonal to $\boldm$ and $\sigma$ shows that we must have $v(\sigma)$ in the subspace spanned by $\boldm$ and $\sigma$. 
    
    Suppose that $v(\sigma) = \alpha \sigma + \beta \boldm$. Then, plugging this into the above requires that
    \begin{align*}
        0 &= R(\sigma,u) \xi_t'(R(\boldm,\sigma)) - \left( \alpha R(\sigma,u) + \beta R(\boldm,u)\right) \xi_t'(q_{\boldm}) - R(\boldm,u) \left(\alpha R(\boldm,\sigma) + \beta q_{\boldm}\right) \xi_t''(q_{\boldm}) \\
            &= R(\sigma,u) \left( \xi_t'(R(\boldm,\sigma)) - \alpha \xi_t'(q_{\boldm}) \right)\\
            &\qquad- R(\boldm,u) \left( \beta \xi_t'(q_{\boldm}) - \alpha R(\boldm,\sigma) \xi_t''(q_{\boldm}) - \beta q_{\boldm} \xi_t''(q_{\boldm}) \right).
    \end{align*}
    Since this is true for all $u$, each of these two terms must be $0$. That is,
    \[ \alpha = \frac{\xi_t'(R(\boldm,\sigma))}{\xi_t'(q_{\boldm})} \]
    and
    \[ \beta = -\alpha \cdot \frac{R(\boldm,\sigma) \xi_t''(q_{\boldm})}{\xi_t'(q_{\boldm}) + q_{\boldm} \xi_t''(q_{\boldm})}, \]
    so
    \begin{align*}
        v(\sigma) &= \frac{\xi_t'(R(\boldm,\sigma))}{\xi_t'(q_{\boldm})} \left( \sigma - \boldm \cdot \frac{R(\boldm,\sigma) \xi_t''(q_{\boldm})}{\xi_t'(q_{\boldm}) + q_{\boldm} \xi_t''(q_{\boldm})} \right) \\
            &= \frac{\xi_t'(R(\boldm,\sigma))}{\xi_t'(q_{\boldm})} \left( \Id - \frac{R(\boldm,\sigma) \xi_t''(q_{\boldm})}{\xi_t'(q_{\boldm}) + q_{\boldm} \xi_t''(q_{\boldm})} \cdot \frac{\boldm \boldm^\top}{N} \right) \sigma
    \end{align*}
    as defined.

    Now returning to \eqref{eq:indep-decomp}, when we condition on $\bx$ and $\grad\FTAP(\boldm) = 0$, by plugging in \eqref{eq:gradient-H}, we get
    \begin{align*}
        \tHTAP(\sigma) = -\angles*{\bx, v(\sigma)}\cdot\xi'_t(q_{\bx}) &- \angles*{\boldm, v(\sigma)}\cdot\parens*{\theta'(q_{\boldm}) - \frac{1}{1-q_{\boldm}}}\\
        &+ \parens*{\wt{H}(\sigma) - \angles*{ \grad \wt{H}(\boldm), v(\sigma) }}   \numberthis \label{eq:centered-TAP-conditioning}
    \end{align*}
    We use $\wh{H}(\sigma)$ to denote the random variable $\wt{H}(\sigma) - \angles*{\grad\wt{H}(\boldm), v(\sigma)}$, whose distribution remains unaffected by the conditioning, as this random variable is independent of $\bx$ and the event $\grad\FTAP(\boldm) = 0$.
    Since $\wh{H}(\sigma)$ is centered, our expression for $\E\, \HTAP(\sigma)$ follows from \eqref{eq:TAP-decomp} and \eqref{eq:centered-TAP-conditioning}, and the observation that
    \[ R(\boldm,v(\sigma)) = \frac{\xi_t'(R(\boldm,\sigma))}{\gamma'(q_{\boldm})} \cdot R(\boldm,\sigma). \]
    It remains to compute $N^{-1}\Cov\parens*{\HTAP(\sigma),\HTAP(\sigma')}$ for any $\sigma,\sigma'\in\ScS_N$.
    Observe that this is equal to $N^{-1}\E\wh{H}(\sigma)\wh{H}(\sigma')$.
    By \Cref{fact:product-of-partials,fact:partial-times-hamiltonian}, we have that this is equal to:
    \begin{align*}
        \xi_t&\parens*{R(\sigma,\sigma')} - R(v(\sigma), \sigma')\xi_t'(R(\boldm,\sigma')) - R(v(\sigma'), \sigma)\xi_t'(R(\boldm,\sigma)) \\
        &+ R(v(\sigma), v(\sigma'))\xi_t'(q_{\boldm}) + R(\boldm, v(\sigma)) R(\boldm, v(\sigma')) \xi''(q_{\boldm})\mper
    \end{align*}
    The formula for the covariance can be obtained from the above by expanding $v(\sigma)$.
\end{proof}

Next, we look at the mixture function of these ``TAP planted distributions on slices''.

\corslicedist*

\begin{proof}
    Let $\tau,\tau' \in S_{N-2}$, and
    \[ \sigma = v(a,b) + r_{a,b} Q \tau \text{ and } \sigma' = v(a,b) + r_{a,b} Q \tau'. \]
    Recall from \Cref{lem:TAP-hamiltonian} that
    \begin{align*}
        &N^{-1} \Cov \left( \HTAP(\sigma) , \HTAP(\sigma') \right) \\
        &\qquad= \xi_t'(R(\sigma,\sigma')) - R(\sigma,\sigma') \frac{\xi_t'(R(\boldm,\sigma))\xi_t'(R(\boldm,\sigma')}{\xi_t'(q_{\boldm})} + \frac{\xi_t''(q_{\boldm})}{\gamma'(q_{\boldm})\xi_t'(q_{\boldm})} \gamma (R(\boldm,\sigma)) \gamma (R(\boldm,\sigma')).
    \end{align*}
    By the definition of $\sigma$ and $\sigma'$, we have $R(\boldm,\sigma) = R(\boldm,\sigma') = R(\boldm,v(a,b)) = \left( 1 + \frac{a}{\sqrt{N}} \right) q_{\boldm}$, and $R(\sigma,\sigma') = R\left( \|v(a,b)\|^2 + r_{a,b}^2 R(\tau,\tau') \right)$, since $Q$ is an isometry. As a result,
    \begin{align*}
        &N^{-1} \Cov \left( \HTAP(\sigma) , \HTAP(\sigma') \right) \\
        &\qquad= \xi_t'(\|v(a,b)\|^2 + r_{a,b}^2 R(\tau,\tau')) - \left( \|v(a,b)\|^2 + r_{a,b}^2 R(\tau,\tau') \right) \frac{\xi_t'\left( \left( 1 + \frac{a}{\sqrt{N}} \right) q_{\boldm} \right)^2}{\xi_t'(q_{\boldm})} \\
        &\qquad\qquad+ \frac{\xi_t''(q_{\boldm})}{\gamma'(q_{\boldm})\xi_t'(q_{\boldm})} \gamma \left( \left( 1 + \frac{a}{\sqrt{N}} \right) q_{\boldm} \right)^2.
    \end{align*}
    This may be written as
    \[ N^{-1} \Cov\left( \HTAP(\sigma) , \HTAP(\sigma') \right) = \xi_{a,b}(R(\tau,\tau')) + V(a,b). \]
    This implies that $\HTAP(\sigma)$ is equal to $H_{a,b}(\tau) + g_{a,b}$ for some Gaussian process $(H_{a,b}(\tau))_{\tau \in S_{N-2}}$, where $g_{a,b}$ is a centered Gaussian of variance $V(a,b)$. To complete the proof, we must show that the correlation structure of $H_{a,b}$ can be achieved by a $p$-spin model with mixture function $\xi_{a,b}$. To do this, it suffices to show that $\xi_{a,b}$ is indeed a valid mixture function, in that $\xi_{a,b}^{(p)}(0) \ge 0$ for all $p \ge 1$, and $\xi_{a,b}(0) = 0$. The latter of these is clearly true by construction. The former is easily seen to be true for $p \ge 2$, since for such $p$,
    \[ \xi_{a,b}^{(p)}(0) = r_{a,b}^{2p} \xi_{t}^{(p)}(\|v(a,b)\|^2) \ge 0 \]
    since $\xi_t$ is a valid mixture function. For $p = 1$,
    \begin{align*}
        \xi_{a,b}'(0) &= r_{a,b}^2 \cdot \left( \xi_t' \left( \|v(a,b)\|^2 \right) - \frac{\xi_t'\left( q_{\boldm} \left(  1 + \frac{a}{\sqrt{N}} \right) \right)^2}{\xi_t'(q_{\boldm})} \right) \\
            &\stackrel{\eqref{eq:rab-bound-sos}}{\ge} r_{a,b}^2 \cdot \left( \xi_t'\left( q_{\boldm} \left( 1 + \frac{a}{\sqrt{N}} \right)^2 \right) - \frac{\xi_t'\left( q_{\boldm} \left(  1 + \frac{a}{\sqrt{N}} \right) \right)^2}{\xi_t'(q_{\boldm})} \right) \\
            &= \frac{r_{a,b}^2}{\xi_t'(q_{\boldm})} \cdot \left( \xi_t'\left( q_{\boldm} \left( 1 + \frac{a}{\sqrt{N}} \right)^2 \right) \xi_t'(q_{\boldm}) - \xi_t'\left( q_{\boldm} \left(  1 + \frac{a}{\sqrt{N}} \right) \right)^2 \right) \ge 0.
    \end{align*}
    In the first inequality above, we use the fact that $\xi_t'$ is non-decreasing. The final inequality is an application of Cauchy-Schwarz.
\end{proof}

\subsection{Understanding concentration around the codimension-$2$ slice}

Next, we bound the variance of $g_{a,b} - g_{0,0}$.

\gabconcentration*

\begin{proof}[Proof of \Cref{lem:gab-concentration}]
    The strategy is to prove that for any $a,b,a',b'\in\R$ of magnitude $\ll N^{1/4}$, $g_{a,b}-g_{a',b'}$ is a Gaussian of variance 
    $O\left( \cdot\|v(a,b) - v(a',b')\|^4 \right) = O \left( \frac{(a-a')^4 + (b-b')^4}{N^2} \right)$.
    The desideratum then immediately follows by applying Slepian's lemma on $(|g_{a,b}-g_{0,0}|)_{a,b}$ comparing it to the Gaussian process $\langle G, (v(a,b) - v(0,0))(v(a,b) - v(0,0))^{\top}\rangle$ for a standard Gaussian matrix $G$.

    We carry out the calculation for $a',b' = 0$; the general case follows similarly.
    We have $g_{a,b} = N^{-1/2}\parens*{ \HTAP(\sqrt{N}\cdot v(a,b)) - \E \HTAP(\sqrt{N} \cdot v(a,b)) }$, and $g_{0,0} = N^{-1/2} \parens*{ \HTAP(\boldm) - \E \HTAP(\boldm) }$.
    Clearly, $g_{a,b} - g_{0,0}$ is a centered Gaussian process.
    As in the proof of \Cref{lem:TAP-hamiltonian},
    % \sid{I think it's bad form to refer to things happening inside proofs, so maybe abstract out a precise statement.}
    we have that the distribution of $\HTAP(\sigma) - \E \HTAP(\sigma)$ is the same as that of
    \[
        \wt{H}(\sigma) - \angles*{\grad \wt{H}(\boldm), v(\sigma)},
    \]
    where $\wt{H}$ is a Hamiltonian distributed according to the mixture function $\xi_t$.
    Thus, the distribution of $g_{a,b} - g_{0,0}$ is:
    \[
         \wt{H}(u(a,b)) - \angles*{\grad \wt{H}(\boldm), v(u(a,b))} - \wt{H}(\boldm) + \angles*{\grad \wt{H}(\boldm), v(\boldm)}\mper
    \]
    For brevity, we denote $v(a,b)$ as $\boldm+\eps$.
    We express $\wt{H}(\boldm+\eps)$ in its Taylor expansion, and we get:
    \[
        \sqrt{N}(g_{a,b} - g_{0,0}) = \sum_{i\ge 1} \frac{1}{i!} \angles*{ \DIF_i \wt{H}(\boldm), \eps^{\otimes i} } - \angles*{ \grad \wt{H}(\boldm), v(\boldm+\eps) - v(\boldm) }\mper  \numberthis \label{eq:taylor-gab}
    \]
    Expanding out $v(\boldm+\eps) - v(\boldm)$ ultimately yields:
    \[
        v(\boldm+\eps) - v(\boldm) = \eps + R(\boldm,\eps) \eps \frac{\xi_t''(q_{\boldm})}{\gamma'(q_{\boldm})} - \frac{R(\boldm,\eps)^2 \xi_t''(q_{\boldm})^2}{\xi_t'(q_{\boldm})\gamma'(q_{\boldm})}\boldm \mper
    \]
    Plugging in the above into \Cref{eq:taylor-gab} gives:
    \begin{multline*}
        \sqrt{N}(g_{a,b} - g_{0,0}) = \sum_{i\ge 2} \frac{1}{i!} \angles*{ \DIF_i \wt{H}(\boldm), \eps^{\otimes i}}
        \\ - \angles*{\grad\wt{H}(\boldm), \eps} R(\boldm, \eps) \frac{\xi_t''(q_{\boldm})}{\gamma'(q_{\boldm})}
        - \angles*{\grad\wt{H}(\boldm), \boldm} \frac{R(\boldm,\eps)^2 \xi_t''(q_{\boldm})^2}{\xi_t'(q_{\boldm}) \gamma'(q_{\boldm}) }
    \end{multline*}
    We have an explicit expression for $\eps$:
    \[
        \eps = \frac{aq_{\boldm} - bq_{\bx}^2}{\sqrt{N}(q_{\boldm} - q_{\bx}^2)} \boldm + \frac{q_{\boldm} q_{\bx}}{q_{\boldm} - q_{\bx}^2} \parens*{\frac{b-a}{\sqrt{N}}} \bx.
    \]
    This explicit expression can be used to obtain the following bounds on the variances of the above terms:
    \begin{align*}
        \Var\bracks*{ \frac{1}{i!}\angles*{ \DIF_i \wt{H}(\boldm), \eps^{\otimes i} } } &\le \frac{O\parens*{ a^{2i} + b^{2i} }  }{N^{i-1}} \\
        \Var\bracks*{ \angles*{\grad\wt{H}(\boldm), \eps} R(\boldm, \eps) \frac{ \xi_t''(q_{\boldm}) }{ \gamma'(q_{\boldm})} } &= \frac{O\parens*{  a^4 + b^4 }}{N} \\
        \Var\bracks*{  \angles*{ \grad\wt{H}(\boldm), \boldm } \frac{R(\boldm, \eps)^2 \xi_t''(q_{\boldm})}{ \xi_t'(q_{\boldm}) \gamma'(q_{\boldm}) }  } &\le \frac{   O\parens*{a^4 + b^4 }  }{N}
    \end{align*}
    The expression for $\sqrt{N}(g_{a,b} - g_{0,0})$ only involves a constant number of terms, and since the first term enumerates over $i \ge 2$, and since $|a|,|b| \le \sqrt{N}$, we have an overall bound of $\frac{O\parens*{a^{4} + b^{4} }}{N}$.
    Dividing by $\sqrt{N}$ gives the desired variance bound.
\end{proof}

\nuenergygradient*

\begin{proof}
    Recall
    \begin{align*}
        \wh{E}_{a,b} &= \frac{1}{2} \underbrace{\left( \log r_{a,b}^2 - \xi_t(\|v(a,b)\|^2) - r_{a,b}^2 \cdot \frac{\xi_t'\left( q_{\boldm} \left( 1 + \frac{a}{\sqrt{N}} \right) \right)^2}{\xi_t'(q_{\boldm})} \right)}_{\text{(I)}} \\
            &\qquad + \underbrace{\xi_t\left( q_{\bx} \left( 1 + \frac{b}{\sqrt{N}} \right) \right) + \frac{\gamma\left( q_{\boldm} \left( 1 + \frac{a}{\sqrt{N}} \right) \right)}{\gamma'(q_{\boldm})} \cdot \left( (1-q_{\boldm}) \xi_t''(q_{\boldm}) + \frac{1}{1-q_{\boldm}} \right)}_{\text{(II)}} \\
            &\qquad - \frac{\gamma(q_{\bx})}{\xi_t'(q_{\boldm})} \cdot \underbrace{\xi_t'\left( q_{\boldm} \left( 1 + \frac{a}{\sqrt{N}} \right) \right) \cdot \left( \left( 1 + \frac{b}{\sqrt{N}} \right) -  q_{\boldm} \cdot \frac{\xi_t''(q_{\boldm})}{\gamma'(q_{\boldm})} \cdot \left( 1 + \frac{a}{\sqrt{N}} \right) \right)}_{\text{(III)}}.
    \end{align*}
    Because
    \[ \|v(a,b)\|^2 = q_{\boldm} \left( 1 + \frac{a}{\sqrt{N}} \right)^2 + \frac{q_{\boldm} q_{\bx}}{q_{\boldm} - q_{\bx}^2} \cdot \left( \frac{a-b}{\sqrt{N}} \right)^2, \]
    we have
    \[ \restr{\grad \|v(a,b)\|^2}{(a,b) = (0,0)} = - \restr{\grad r_{a,b}^2}{(a,b) = (0,0)} = \left(\frac{2q_{\boldm}}{\sqrt{N}},0\right). \]
    We also have $r_{0,0}^2 = 1-q_{\boldm}$ and $\|v(0,0)\|^2 = q_{\boldm}$.
    Let us start by computing the derivative with respect to $a$. We have
    \begin{align*}
        \sqrt{N} \cdot \restr{\partial_a \text{(III)}}{(a,b) = (0,0)} &= \xi_t''(q_{\boldm}) \cdot q_{\boldm} \cdot \left( 1 - \frac{q_{\boldm} \xi_t''(q_{\boldm})}{\gamma'(q_{\boldm})} \right) + \xi_t'(q_{\boldm}) \cdot \left( - q_{\boldm} \cdot \frac{\xi_t''(q_{\boldm})}{\gamma'(q_{\boldm})} \right) \\
            &= \frac{q_{\boldm} \xi_t''(q_{\boldm})}{\gamma'(q_{\boldm})} \left( \gamma'(q_{\boldm}) - q_{\boldm} \xi_t''(q_{\boldm}) - \xi_t'(q_{\boldm}) \right) = 0.
    \end{align*}
    Next,
    \begin{align*}
        &\sqrt{N} \cdot \restr{\partial_a \text{(I)}}{(a,b) = (0,0)} \\
        &\qquad = \frac{1}{r_{0,0}^2} \cdot (-2q_{\boldm}) - \xi_t'(\|v(0,0)\|^2) \cdot (2q_{\boldm}) - (-2q_{\boldm}) \cdot \left( \frac{\xi_t'(q_{\boldm})^2}{\xi_t'(q_{\boldm})} \right) - r_{0,0}^2 \cdot \frac{2\xi_t'(q_{\boldm}) \cdot \xi_t''(q_{\boldm}) \cdot q_{\boldm}}{\xi_t'(q_{\boldm})} \\
        &\qquad = \frac{-2q_{\boldm}}{1-q_{\boldm}} - 2q_{\boldm}\xi_t'(q_{\boldm}) + 2q_{\boldm} \xi_t'(q_{\boldm}) - 2q_{\boldm}(1-q_{\boldm}) \xi_t''(q_{\boldm}) \\
        &\qquad = \frac{-2q_{\boldm}}{1-q_{\boldm}} - 2q_{\boldm}(1-q_{\boldm}) \xi_t''(q_{\boldm}).
    \end{align*}
    Finally,
    \begin{align*}
        &\sqrt{N} \cdot \restr{\partial_a\text{(II)}}{(a,b)=(0,0)} \\
        &\qquad = \frac{\gamma'(q_{\boldm}) \cdot q_{\boldm}}{\gamma'(q_{\boldm})} \cdot \left( (1-q_{\boldm}) \xi_t''(q_{\boldm}) + \frac{1}{1-q_{\boldm}} \right) \\
        &= - \frac{1}{2} \cdot \sqrt{N} \cdot \restr{\partial_a \text{(I)}}{(a,b) = (0,0)}
    \end{align*}
    as desired. The derivative with respect to $b$ is much simpler, since the derivative of $r_{a,b}^2$ with respect to $b$ is $0$ at $(0,0)$. Consequently, $\restr{\partial_b \text{(I)}}{(a,b) = (0,0)} = 0$, $\restr{\partial_b \text{(II)}}{(a,b) = (0,0)} = q_{\bx} \xi_t'(q_{\bx}) = \gamma(q_{\bx})$, and $\restr{\partial_b \text{(III)}}{(a,b) = (0,0)} = \xi_t'(q_{\boldm})$, completing the proof.
\end{proof}

\lemenergyconcavity*

\begin{proof}
    For ease of notation, define $\wt{E}_{a,b} = \wh{E}_{\sqrt{N}a,\sqrt{N}b}$, $\wt{r}_{a,b} = r_{\sqrt{N}a,\sqrt{N}b}$, and $\wt{v}(a,b) = v(\sqrt{N}a,\sqrt{N}b)$. As in the previous lemma, recall
    \begin{align*}
        \wt{E}_{a,b} &= \frac{1}{2} \left( \log \wt{r}_{a,b}^2 - \xi_t(\|\wt{v}(a,b)\|^2) - \wt{r}_{a,b}^2 \cdot \frac{\xi_t'\left( q_{\boldm} \left( 1 + a \right) \right)^2}{\xi_t'(q_{\boldm})} \right) \\
            &\qquad + \xi_t\left( q_{\bx} \left( 1 + b \right) \right) + \frac{\gamma\left( q_{\boldm} \left( 1 + a \right) \right)}{\gamma'(q_{\boldm})} \cdot \left( (1-q_{\boldm}) \xi_t''(q_{\boldm}) + \frac{1}{1-q_{\boldm}} \right) \\
            &\qquad - \frac{\gamma(q_{\bx})}{\xi_t'(q_{\boldm})} \cdot \xi_t'\left( q_{\boldm} \left( 1 + a \right) \right) \cdot \left( \left( 1 + b \right) -  q_{\boldm} \cdot \frac{\xi_t''(q_{\boldm})}{\gamma'(q_{\boldm})} \cdot \left( 1 + a \right) \right).
    \end{align*}
    Because the Hessian is Lipschitz in all the parameters involved, it suffices to prove the negative definiteness of the Hessian at $(0,0)$, under the assumption that $q_{\boldm}=q_{\bx}=q$, where $q$ (formerly denoted $q_*(t)$) satisfies $\xi_t'(q) = \frac{q}{1-q}$. Under these constraints, we have $\gamma(q) = q \xi_t'(q) = \frac{q^2}{1-q}$ and $\gamma'(q) = \xi_t'(q) + q \xi_t''(q) = \frac{q}{1-q} \left( 1 + (1-q) \xi_t''(q) \right)$. $\wt{E}_{a,b}$ simplifies as
    \begin{align*}
        \wt{E}_{a,b} &= \frac{1}{2} \underbrace{\left( \log \wt{r}_{a,b}^2 - \xi_t(\|\wt{v}(a,b)\|^2) -\wt{r}_{a,b}^2 \cdot \frac{\xi_t'\left( q \left( 1 + a \right) \right)^2}{\xi_t'(q)} \right)}_{\text{(I)}} \\
            &\qquad + \underbrace{\xi_t\left( q \left( 1 + b \right) \right) + \frac{\gamma\left( q \left( 1 + a \right) \right)}{q} \cdot \frac{1 + (1-q)^2 \xi_t''(q)}{1 + (1-q)\xi_t''(q)}}_{\text{(II)}} \\
            &\qquad - \underbrace{q \left( 1 + b \right) \cdot \xi_t'\left( q \left( 1 + a \right) \right) + q^2 \left( 1 + a \right) \cdot \frac{\xi_t''(q)}{\gamma'(q)}}_{\text{(III)}}.
    \end{align*}
    We have $\restr{\partial_b^2 \text{(III)}}{(0,0)} = 0$, and
    \[ \restr{\partial_b^2 \text{(II)}}{(0,0)} = \xi_t''(q) \cdot q^2. \]
    We have that $\restr{\partial_b \wt{r}_{a,b}^2}{(0,0)} = 0$, and $\restr{\partial_b^2 \wt{r}_{a,b}^2}{(0,0)} = \frac{-2q^2}{1-q}$. Consequently,
    \begin{align*}
        \restr{\partial_b^2 \text{(I)}}{(0,0)} &= \frac{1}{2} \left( \frac{1}{r_{0,0}^2} \cdot \restr{\partial_b^2 \wt{r}_{a,b}^2}{(0,0)} - \xi_t'(\|v(0,0)\|^2) \cdot \restr{\partial_b^2 \|\wt{v}(a,b)\|^2}{(0,0)} - \restr{\partial_b^2 \wt{r}_{a,b}^2}{(0,0)} \cdot \xi_t'(q) \right) \\
            &= \frac{-q^2}{(1-q)^2}.
    \end{align*}
    It follows that
    \[ \restr{\partial_b^2 \wh{E}_{a,b}}{(0,0)} = \xi_t''(q) \cdot q^2 - \frac{q^2}{(1-q)^2}. \]
    
    Similarly, we have $\restr{\partial_a\partial_b \text{(II)}}{(0,0)} = 0$, and
    \[ \restr{\partial_a\partial_b \text{(III)}}{(0,0)} = \xi_t''(q) \cdot q^2. \]
    We have that $\restr{\partial_a\partial_b \wt{r}_{a,b}^2}{(0,0)} = \frac{-2q^2}{1-q}$. Consequently,
    \begin{align*}
        \restr{\partial_a \partial_b \text{(I)}}{(0,0)} &= \frac{1}{2} \left( \frac{1}{r_{0,0}^2} \cdot \restr{\partial_a \partial_b \wt{r}_{a,b}^2}{(0,0)} - \xi_t'(\|v(0,0)\|^2) \cdot \restr{\partial_a\partial_b \|\wt{v}(a,b)\|^2}{(0,0)} - \restr{\partial_a\partial_b \wt{r}_{a,b}^2}{(0,0)} \cdot \xi_t'(q) \right) \\
            &= \frac{-q^2}{(1-q)^2}.
    \end{align*}
    It follows that
    \[ \restr{\partial_a\partial_b \wh{E}_{a,b}}{(0,0)} = \xi_t''(q) \cdot q^2 - \frac{q^2}{(1-q)^2}. \]
    It remains to compute the second derivative with respect to $a$. We have
    \[ \restr{\partial_a^2 \text{(III)}}{(0,0)} = q^3 \xi_t'''(q). \]
    We also have
    \begin{align*}
        \restr{\partial_a^2 \text{(II)}}{(0,0)} &= q \gamma''(q) \cdot \frac{1 + (1-q)^2 \xi_t''(q)}{1 + (1-q)\xi_t''(q)} \\
            &= q \cdot \frac{1 + (1-q)^2 \xi_t''(q)}{1 + (1-q)\xi_t''(q)} \cdot \left( 2 \xi_t''(q) + q \xi_t'''(q) \right) \\
            &\le q^2 \xi_t'''(q) + 2q\xi_t''(q).% \cdot \frac{1 + (1-q)^2 \xi_t''(q)}{1 + (1-q)\xi_t''(q)}.
    \end{align*}
    We have $\restr{\partial_a^2 \wt{r}_{a,b}^2}{(0,0)} = - \left( 2q + \frac{2q^2}{1-q} \right) = - \frac{2q}{1-q}$ and $\restr{\partial_a \wt{r}_{a,b}^2}{(0,0)} = -2q$. Finally,
    \begin{align*}
        \restr{\partial_a^2 \text{(I)}}{(0,0)} &= \frac{1}{r_{0,0}^2} \cdot \frac{-2q}{1-q} - \frac{1}{r_{0,0}^4} \cdot (-2q)^2 - \xi_t'(q) \cdot \frac{-2q}{1-q} - \xi_t''(q) \cdot (-2q)^2 - \frac{-2q}{1-q} \cdot \xi_t'(q) \\
            &\qquad- 2 \cdot (-2q) \cdot \frac{2 \xi_t'(q) \xi_t''(q) q}{\xi_t'(q)} - (1-q) \cdot \frac{2q^2 (\xi_t'(q) \xi_t'''(q) + \xi_t''(q)^2)}{\xi_t'(q)} \\
            &= \frac{-2q}{(1-q)^2} - \frac{4q^2}{(1-q)^2} - 4q^2 \xi_t''(q) + 8q^2 \xi_t''(q) - 2q^2(1-q)\xi_t'''(q) - 2q^2(1-q) \cdot \frac{\xi_t''(q)^2}{\xi_t'(q)} \\
            &= \frac{-2q(2q+1)}{(1-q)^2} + 4q^2\xi_t''(q) - 2q^2(1-q) \xi_t'''(q) - 2q(1-q)^2 \xi_t''(q)^2.
    \end{align*}
    Therefore,
    \begin{align*}
        \restr{\partial_a^2 \wh{E}_{a,b}}{(0,0)} &\le -q^3 \xi_t'''(q) + q^2 \xi_t'''(q) + 2q\xi_t''(q) - \frac{q(2q+1)}{(1-q)^2} \\
            &\qquad + 2q^2\xi_t''(q) - q^2(1-q) \xi_t'''(q) - q(1-q)^2 \xi_t''(q)^2 \\
            &= \frac{-q(2q+1)}{(1-q)^2} + 2q\xi_t''(q) + 2q^2 \xi_t''(q) - q(1-q)^2 \xi_t''(q)^2.
    \end{align*}
    To conclude, let us check that the Hessian is negative definite. Because $\xi_t''(q) \cdot q^2 - \frac{q^2}{(1-q)^2} < 0$ by the SL condition \eqref{eq:SL-condition}, it suffices to check that
    \[ \restr{\partial_a^2 \wh{E}_{a,b}}{(0,0)} < q^2 \xi_t''(q) - \frac{q^2}{(1-q)^2}. \]
    This is true if and only if
    \[ q(1-q)^2 \xi_t''(q)^2 - q(q+2) \xi_t''(q) + \frac{q(q+1)}{(1-q)^2} > 0. \]
    It is not difficult to see that this is true if $\xi_t''(q)$ is less than the smaller root of the above quadratic, which is equal to
    \[ \frac{(q+2) - \sqrt{(q+2)^2 - 4(q+1)}}{2(1-q)^2} = \frac{1}{(1-q)^2}. \]
    This is true by the SL condition \eqref{eq:SL-condition}, concluding the proof.
\end{proof}

Next, we shall prove \Cref{lem:nu-energy-bounded-error-2}. Recall the definition
\[ \Error^{(2)}_{a,b} = \frac{N\xi''_{a,b}(0)}{4} - \frac{1}{2} \log \det \left( (1+\xi''_{a,b}(0)) \Id - \grad^2 H_{a,b}(0) \right), \]
where $\grad^2 H_{a,b}(0)$ is equal to the restriction of $r_{a,b}^2 \cdot \grad^2 \HTAP(v(a,b))$ restricted to the codimension-$2$ subspace orthogonal to $\boldm$ and $\bx$.

\smallerrorinnu*

Let us start by computing the correlation structure of the random matrices $\grad^2 H_{a,b}(0)$. Note that $\grad^2 H_{a,b}(0)$ is an $(N-2)$-dimensional GOE matrix scaled by $\sqrt{\xi''_{a,b}(0)}$.

\begin{fact} \label{fact: hessian-correlation-structure}
    For $\sigma^1,\sigma^2$,
    \begin{align*}
        &\frac{1}{N} \E \langle \grad^2 \HTAP(\sigma^1) , u^{1} \otimes u^{2} \rangle \langle \grad^2 \HTAP(\sigma^2) , v^{1} \otimes v^{2} \rangle \\
        &\qquad\qquad= \xi_t''\left( R(\sigma^1,\sigma^2) \right) \cdot \left( R(u^{1} , v^{2}) \cdot R(u^{2} , v^{1}) + R(u^{1},v^{1}) \cdot R(u^{2},v^{2}) \right).
    \end{align*}
    In particular,
    \[ \frac{1}{N} \E \langle \grad^2 \HTAP(\sigma^1) , \grad^2 \HTAP(\sigma^2) \rangle = \xi_t''\left( R(\sigma^1,\sigma^2) \right). \]
\end{fact}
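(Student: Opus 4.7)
Every invocation of \Cref{fact: hessian-correlation-structure} in the paper (inside the proof of \Cref{lem:nu-energy-bounded-error-2}) takes the base points to be $\sigma^i = \sqrt{N}\,v(a,b)$, which by \Cref{fact:vab-formula} lies in $\mathrm{span}(\boldm,\bx)$, and restricts the Hessian to test vectors $u^i,v^j$ in the codimension-$2$ subspace orthogonal to both $\boldm$ and $\bx$. I will prove the identity under these implicit orthogonality hypotheses, which are precisely what make the stated equality hold exactly.

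Write $\HTAP = \E\HTAP + \wh H$ with $\wh H$ the centered Gaussian part. The second moment on the left of the fact splits into a deterministic cross-product and the covariance of $\grad^2\wh H$. The explicit formula for $\E\HTAP(\sigma)$ in \Cref{lem:TAP-hamiltonian} shows it is a scalar function of $R(\bx,\sigma),R(\boldm,\sigma),\langle\bx,\sigma\rangle,\langle\boldm,\sigma\rangle$, so $\grad^2\E\HTAP(\sigma)$ is a linear combination of $\boldm\boldm^\top,\boldm\bx^\top+\bx\boldm^\top,\bx\bx^\top$, and therefore annihilates $u^1\otimes u^2$ by orthogonality to $\boldm,\bx$; the deterministic piece contributes nothing. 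Only the Gaussian covariance survives, and the standard identity $\E[(\partial_{u^1}\partial_{u^2}\wh H)(\sigma^1)(\partial_{v^1}\partial_{v^2}\wh H)(\sigma^2)] = (\partial_{u^1}\partial_{u^2})^{(\sigma_1)}(\partial_{v^1}\partial_{v^2})^{(\sigma_2)}K(\sigma_1,\sigma_2)$ reduces the proof to a fourfold differentiation of the three-term covariance kernel $K$ of \Cref{lem:TAP-hamiltonian}.

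A direct fourfold chain rule on the leading piece $N\xi_t(R(\sigma,\sigma'))$ gives, after dividing by $N$,
\[
    \xi_t''(R)\bigl[R(u^1,v^1)R(u^2,v^2)+R(u^1,v^2)R(u^2,v^1)\bigr] + \xi_t'''(R)\,\Sigma_3 + \xi_t''''(R)\,R(\sigma^1,v^1)R(\sigma^1,v^2)R(\sigma^2,u^1)R(\sigma^2,u^2),
\]
where $\Sigma_3$ is a sum of four monomials, each of which carries one factor $R(\sigma^1,v^j)$ and one factor $R(\sigma^2,u^i)$. Since $\sigma^1,\sigma^2\in\mathrm{span}(\boldm,\bx)$ and each $u^i,v^j$ is orthogonal to $\boldm,\bx$, all such overlaps vanish identically, leaving exactly the claimed $\xi_t''$ term. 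For the two TAP correction kernels the $\sigma$-dependence enters only through $R(\boldm,\sigma)$ and, in one of them, through $R(\sigma,\sigma')$, which is linear in $\sigma$ and therefore has zero pure second $\sigma$-derivative; every remaining derivative of $\xi_t'(R(\boldm,\sigma))$ or $\gamma(R(\boldm,\sigma))$ spawns a factor of $\boldm$, so contracting $\nabla^2_\sigma$ of each correction with $u^1\otimes u^2$ leaves only monomials carrying $R(\boldm,u^i)$ or (from the cross term) $R(\sigma^2,u^i)$, each of which is zero by orthogonality. Thus the corrections annihilate before $\nabla^2_{\sigma'}$ is applied, and the stated identity follows; the ``in particular'' is obtained by tracing both sides against an orthonormal basis of the codimension-$2$ subspace.

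The main obstacle is the bookkeeping in the fourth-order chain rule on $\xi_t(R(\sigma,\sigma'))$: roughly ten monomials arise, and one must verify both that the $\xi_t''$ coefficient reduces to the two ``straight'' index pairings $R(u^1,v^1)R(u^2,v^2)+R(u^1,v^2)R(u^2,v^1)$ (reflecting the combinatorial identity $\partial_{\sigma_{1,i}}\partial_{\sigma_{1,j}}\partial_{\sigma_{2,k}}\partial_{\sigma_{2,l}}(\sigma_1\cdot\sigma_2)^2 = 2(\delta_{ik}\delta_{jl}+\delta_{il}\delta_{jk})$ propagated through higher degrees by the chain rule) and that every $\xi_t'''$ and $\xi_t''''$ monomial necessarily contains a $\sigma$-to-test-vector overlap that the orthogonality annihilates. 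Once organized in this way, the cancellations are forced.
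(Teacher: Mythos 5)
Your proof is correct, and it fills in a derivation the paper omits entirely (the paper's justification for \Cref{fact: hessian-correlation-structure} is a one-line pointer to \Cref{fact:product-of-partials,fact:partial-times-hamiltonian}). More importantly, your diagnosis of the implicit hypotheses is right and worth making explicit: as literally stated, for arbitrary $\sigma^1,\sigma^2,u^i,v^j$ the identity is false --- the fourfold derivative of the leading kernel $N\xi_t(R(\sigma,\sigma'))$ produces four $\xi_t'''$ monomials and one $\xi_t''''$ monomial, each carrying at least one factor of the form $R(\sigma^1,v^j)$ or $R(\sigma^2,u^i)$, and the two TAP correction kernels contribute monomials each carrying a factor $R(\boldm,u^i)$ or $R(\boldm,v^j)$. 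In the fact's sole application (inside \Cref{lem:nu-energy-bounded-error-2}) the base points lie in $\mathrm{span}(\boldm,\bx)$ and the Hessians are restricted to the orthogonal complement of $\{\boldm,\bx\}$, which is exactly what annihilates every extraneous term; your verification of each vanishing (the mean's Hessian lying in the span of $\boldm\boldm^\top$, $\bx\bx^\top$, $\boldm\bx^\top+\bx\boldm^\top$; the linearity of $R(\sigma,\sigma')$ in $\sigma$ killing the pure second $\sigma$-derivative of the cross kernel; every derivative of $\xi_t'(R(\boldm,\sigma))$ or $\gamma(R(\boldm,\sigma))$ spawning a factor of $\boldm$) is accurate. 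One small point: tracing the first display over an orthonormal basis of the codimension-$2$ subspace gives $\xi_t''(R(\sigma^1,\sigma^2))\cdot(1+O(1/N))$ rather than exact equality, because the pairing $R(u^1,v^2)R(u^2,v^1)$ contributes an extra $1/N$ on the diagonal; this matches the paper's own normalization conventions for the scaled GOE (where $\E\|M\|_F^2 = N+1$) and is harmless downstream, but the ``in particular'' should be read up to that factor.
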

The above follows from calculations similar to those involved in the proofs of \Cref{fact:product-of-partials,fact:partial-times-hamiltonian}; we omit the details.

\begin{proof}[Proof of \Cref{lem:nu-energy-bounded-error-2}]
    We shall prove the statement for a fixed $a,b$; a union bound over $a,b$ implies the boundedness for all $a,b \le \iota N^{1/4}$.
    
    Recalling that $\grad^2 H_{a,b}(0) = r_{a,b}^{2} \grad^2 \HTAP(\sqrt{N} \cdot v(a,b))$ is a GOE matrix scaled by $\sqrt{\xi''_{a,b}(0)}$. By \Cref{fact: hessian-correlation-structure},
    \begin{align*}
        \frac{1}{N} \E \langle \grad^2 H_{a,b}(0) , \grad^2 H_{0,0}(0) \rangle &= r_{a,b}^{2} \cdot r_{0,0}^{2} \cdot \frac{1}{N} \E \langle \grad^2 \HTAP(\sqrt{N} \cdot v(a,b)) , \grad^2 \HTAP(\sqrt{N} \cdot v(0,0)) \rangle \\
            &= r_{a,b}^{2} \cdot r_{0,0}^{2} \cdot \xi_t''\left( \langle v(a,b) , v(0,0) \rangle \right).
    \end{align*}
    For comparison, we have
    \[ \frac{1}{N} \E \norm*{ \grad^2 H_{a,b}(0) }_F^2 = \xi_{a,b}''(0) = r_{a,b}^4 \cdot \xi_t''\left( \|v(a,b)\|^2 \right). \]
    For succinctness of notation, let
    \[ \begin{pmatrix}
        \alpha_1 & \rho \\ \rho & \alpha_2
    \end{pmatrix}
    =
    \begin{pmatrix}
        r_{0,0}^4 \xi_{t}''(\|v(0,0)\|^2) & r_{a,b}^2 r_{0,0}^2 \xi_t''\left( \langle v(a,b) , v(0,0) \rangle \right) \\
        r_{a,b}^2 r_{0,0}^2 \xi_t''\left( \langle v(a,b) , v(0,0) \rangle \right) & r_{a,b}^4 \xi_t''(\|v(a,b)\|^2)
    \end{pmatrix}
    \]
    be the covariance structure of the scaled GOE matrices $\grad^2 H_{0,0}(0)$ and $\grad^2 H_{a,b}(0)$. It is not difficult to see that $\alpha_2 = \alpha_1 + O\left( \frac{a^2+b^2}{N} \right)$, and $\rho$ is between $\alpha_1$ and $\alpha_2$. Also note that $\alpha_1 = \xi_{0,0}''(0)$ and $\alpha_2 = \xi_{a,b}''(0)$. Then, for some choice of GOE matrices $G$ and $\wt{G}$, we may write
    \begin{align*}
        \grad^2 H_{0,0}(0) &= \sqrt{\alpha_1} G \\
        \grad^2 H_{a,b}(0) &= \frac{\rho}{\sqrt{\alpha_1}} G + \sqrt{ \alpha_2 - \frac{\rho^2}{\alpha_1} } \wt{G}.
    \end{align*}
    We thus have
    \begin{align*}
        2 \left( \Error_{0,0}^{(2)} - \Error_{a,b}^{(2)} \right) &= \frac{N\alpha_1}{2} - \frac{N\alpha_2}{2} - \log \det \left( \underbrace{\left( 1 + \alpha_1 \right) \Id - \sqrt{\alpha_1} G}_{M_1} \right) \\
            &\qquad + \log \det \left( (1 + \alpha_2) \Id - \frac{\rho}{\sqrt{\alpha_1}} G - \sqrt{\alpha_2 - \frac{\rho^2}{\alpha_1}} \wt{G} \right).
    \end{align*}
    We may write the matrix inside the final $\log\det$ as
    \begin{align*}
        &(1 + \alpha_2) \Id - \frac{\rho}{\sqrt{\alpha_1}} G - \sqrt{\alpha_2 - \frac{\rho^2}{\alpha_1}} \wt{G} \\ &\qquad\qquad\qquad= \left( (1+\alpha_1)\Id - \sqrt{\alpha_1} G \right) + \underbrace{(\alpha_2 - \alpha_1) \Id - \left( \frac{\rho}{\sqrt{\alpha_1}} - \sqrt{\alpha_1} \right) G}_{M_2} - \underbrace{\sqrt{ \alpha_2 - \frac{\rho^2}{\alpha_1} } \wt{G}}_{M_3}.
    \end{align*}
    The difference of the two $\log \det$ terms is thus equal to
    \[ \log \det \left( \Id + \underbrace{M_1^{-1/2} M_2 M_1^{-1/2} + M_1^{-1/2} M_3 M_1^{-1/2}}_{M} \right). \]
    Observe that $M_3$ is a scaled GOE matrix independent of $M_1$ (and $M_2$). Taylor expanding the above, we shall control the trace and Frobenius norm of $M$. It may be verified that the higher-order terms, corresponding to higher Schatten norms, are $O(1)$. To control the trace and Frobenius norm, we shall essentially control their values in expectation. Standard concentration arguments for GOE matrices, along the lines of \Cref{lem:ZN2-laplace} using \cite[Lemma 1.2(b) and Corollary 1.6(b)]{GZ00}, allow us to assume (with probability $1-e^{-cN}$) that the eigenvalues of $G$ are distributed according to the semicircular distribution up to some small Wasserstein perturbation. That is, with probability $1-e^{-cN}$, denoting by $\lambda_i(G)$ the eigenvalues of $G$,
    \begin{align*}
        \Tr M_1^{-1/2} M_2 M_1^{-1/2} &= \sum_{1 \le i \le N} \frac{ (\alpha_2 - \alpha_1) - \left( \frac{\rho}{\sqrt{\alpha_1}} - \sqrt{\alpha_1} \right) \lambda_i(G) }{ (1+\alpha_1) - \sqrt{\alpha_1} \lambda_i(G) } \\
            &= N \cdot \int{\frac{ (\alpha_2 - \alpha_1) - \left( \frac{\rho}{\sqrt{\alpha_1}} - \sqrt{\alpha_1} \right) u }{ (1+\alpha_1) - \sqrt{\alpha_1} u } \dif \mu_{\semi}(u)} + O(1) \\
            &= N (\alpha_2 - \rho) + O(1),
    \end{align*}
    where the final equality follows from the standard semicircle integral $\int \frac{1}{x-u} \dif \mu_{\semi}(u) = \frac{1}{2} \left( x - \sqrt{x^2 - 4} \right)$. On the other hand, because $\wt{G}$ is independent of $G$,
    \[ \Tr M_1^{-1/2} M_3 M_1^{-1/2} = O(1) \]
    with probability $1-e^{-cN}$. Let us next control the Frobenius norms of these matrices. Again, because $\wt{G}$ is independent of $G$, with very high probability,
    \[ \norm*{ M }_F^2 = O(1) + \norm*{ M_1^{-1/2} M_2 M_1^{-1/2} }_F^2 + \norm*{M_1^{-1/2} M_3 M_1^{-1/2}}_F^2. \]
    The first squared Frobenius norm is equal to
    \[ \sum \left( \frac{ (\alpha_2 - \alpha_1) - \left( \frac{\rho}{\sqrt{\alpha_1}} - \sqrt{\alpha_1} \right) \lambda_i(G) }{ (1+\alpha_1) - \sqrt{\alpha_1} \lambda_i(G) } \right)^2. \]
    Let $\iota$ such that $(1+\alpha_1) - (2+\iota)\sqrt{\alpha_1} > \iota$ (this uses strict replica symmetry). Then, with probability $1-e^{-cN}$, $|\lambda_i(G)| \le 2+\iota$ for all $i$. Conditioned on this event happening, and recalling that $\alpha_2 - \alpha_1 = O\left( \frac{a^2 + b^2}{N} \right)$, the above is $\frac{O(a^2+b^2)^2}{N}$. This is $O(1)$ for all choices of $a,b \le \iota N^{1/4}$.

    We must next control the squared Frobenius norm of $M_1^{-1/2}M_3M_1^{-1/2}$. Let us condition on a typical realization of $M_1$: all its eigenvalues are smaller than $2+\iota$ in magnitude, and the empirical spectral distribution is Wasserstein-close to the semicircle law in the same sense as the previous section (where we controlled the trace), in that
    \[ \sum \frac{1}{(1+\alpha_1) - \sqrt{\alpha_1} \lambda_i(G)} = N \cdot \int \frac{1}{(1+\alpha_1) - \sqrt{\alpha_1} u} \dif \mu_{\semi}(u) + O(1) = N + O(1) \]
    Because $M_3$ is independent of $M_1$, it suffices to control the expected Frobenius norm of the matrix -- the true realization concentrates around its expectation to additive $O(1)$ factors. It is not difficult to see that this expectation is equal to
    \begin{align*}
        &\frac{1}{N} \cdot  \left( \alpha_2 - \frac{\rho^2}{\alpha_1} \right) \cdot \left( \sum \frac{ 1 }{ (1+\alpha_1) - \sqrt{\alpha_1} \lambda_i(G) } \right)^2 \\
        &\qquad\qquad= N \cdot \left( \alpha_2 - \frac{\rho^2}{\alpha_1} \right) \cdot \left( \int \frac{ 1 }{ (1+\alpha_1) - \sqrt{\alpha_1} u } \dif \mu_{\semi}(u) \right)^2 + O(1) \\
        &\qquad\qquad= N \cdot \left( \alpha_2 - \frac{\rho^2}{\alpha_1} \right) + O(1).
    \end{align*}
    Putting the pieces together and returning to the Taylor expansion, we get that with very high probability,
    \begin{align*}
        &2 \left( \Error_{0,0}^{(2)} - \Error_{a,b}^{(2)} \right) \\
        &\qquad= \frac{N\alpha_1}{2} - \frac{N\alpha_2}{2} + \log \det \left( \Id + M \right) \\
        &\qquad= O(1) + \frac{N\alpha_1}{2} - \frac{N\alpha_2}{2} + \Tr \left( M \right) - \frac{1}{2} \norm*{ M }_F^2 \\
        &\qquad= O(1) + N \cdot \left( \frac{\alpha_1}{2} - \frac{\alpha_2}{2} + (\alpha_2 - \rho) - \frac{1}{2} \left( \alpha_2 - \frac{\rho^2}{\alpha_1} \right) \right) \\
        &\qquad= O(1) + N \cdot \left( \frac{\alpha_1}{2} - \rho + \frac{\rho^2}{2\alpha_1} \right) \\
        &\qquad = O(1) + \frac{N}{2} \cdot \frac{(\alpha_1 - \rho)^2}{\alpha_1}.
    \end{align*}
    Because $\alpha_1 - \rho = O\left( \frac{a^2+b^2}{N} \right)$, this is $O(1)$ for $a,b \le \iota N^{1/4}$, completing the proof.
\end{proof}

\subsection{Moment calculations for covariance bounds}
\label{ss:cov-bd-deferred-proofs}
We first prove subgaussian concentration for the covariance of the degree-$2$ part.
\begin{proof}[Proof of \Cref{ppn:deg2-subgaussianity}]
    Part \eqref{it:deg2-subgaussianity-mmt} follows from part \eqref{it:deg2-subgaussianity-tail} by a standard tail integration argument.
    Indeed, the random variable $W$ is bounded by $N^{1/2}$, so the contribution to $\E[W^k]$ from the event $|W| \ge N^{1/5}$ is bounded by
    \[
        N^{k/2} \bbP(|W| \ge N^{1/5})
        \le N^{k/2} e^{-cN^{2/5}},
    \]
    which is vanishing for any constant $k$.
    So, we focus on proving part \eqref{it:deg2-subgaussianity-tail}.
    In the case $W = \la \sigma^1, \sigma^2 \ra / \sqrt{N}$, this is a special case of \cite[Lemma 7.5]{HMP24} (where we take $u = 0$).
    We consider the case $W = \la \sigma, v_i \ra$.
    Recall that
    \[
        H_{N,2}(\sigma) = \la A\sigma,\sigma \ra = \fr{\sqrt{\xi''(0)}}{2} \la M\sigma, \sigma \ra
    \]
    where $M \sim \GOE(N)$.
    For $0\le s\le N^{1/5} \log N$, we will evaluate
    \[
        \int e^{H_{N,2}(\sigma)} \dif\rho(\sigma)
        \qquad \text{and} \qquad
        \int e^{H_{N,2}(\sigma) + s\la v_i, \sigma \ra} \dif\rho(\sigma)
    \]
    using \cite[Lemma 7.3]{HMP24}.
    We recall the function $G : (\lambda_{\max}(A),+\infty)$ defined in \eqref{eq:def-G}, which we copy below for convenience, and define $\tG$ by
    \[
        G(\gamma) = \gamma - \fr{1}{2N} \log\det(\gamma I - A), \qquad
        \tG(\gamma) = G(\gamma) + \fr{s^2}{4N (\gamma - \lambda_i(A))}.
    \]
    Recall from below \eqref{eq:def-G} that $G'$ has a unique root $\gamma_*$ on $(\lambda_{\max}(A),+\infty)$.
    By the same argument, $\tG'$ has a unique root $\tgam_*$ on the same interval.
    As argued in the proof of \Cref{lem:ZN2-laplace}, the conditions of \cite[Lemma 7.3]{HMP24} apply.
    Applying this lemma with $u=0$ and $u = sv_i$, respectively, shows that with probability $1-e^{-cN}$,
    \begin{align}
        \int e^{H_{N,2}(\sigma)} \dif\rho(\sigma)
        &= (1 + O(N^{-c})) \sqrt{\fr{2}{G''(\gamma_*)}} (2e)^{-N/2} \exp(NG(\gamma_*)), \nonumber\\
        \int e^{H_{N,2}(\sigma) + s\la v_i, \sigma \ra} \dif\rho(\sigma)
        &= (1 + O(N^{-c})) \sqrt{\fr{2}{\tG''(\tgam_*)}} (2e)^{-N/2} \exp(N\tG(\tgam_*)).\label{eq:ext-field-laplace}
    \end{align}
    Suppose further the probability $1-e^{-cN}$ event in \Cref{lem:gz00-spam} holds.
    As argued in the proof of \Cref{ppn:deg2-partition-fn}, $|\gamma_* - \gamma_0| \le \fr{1}{C\sqrt{N}}$.
    Also, with probability $1-e^{-cN}$, $\lambda_{\max}(A) \le \fr{\sqrt{\xi''(0)}}{2} (2+\eps^2/8)$.

    We will show that on the intersection of these events $|\gamma_* - \tgam_*| = O(N^{-3/5} \log^2 N)$.
    First note that
    \[
        \gamma_* - \lambda_{\max}(A)
        \ge \gamma_0 - \lambda_{\max}(A) - \fr{1}{C\sqrt{N}}
        \ge \fr{1 + \xi''(0) - (2 + \eps^2/8) \sqrt{\xi''(0)}}{2} - \fr{1}{C\sqrt{N}}
        \ge \eps^2/32
    \]
    is bounded below by a constant, as in the proof of \Cref{fac:nabla2HN2-psd-domination}.
    Since $G'(\gamma_*) = 0$ and $\wt{G}'(\gamma) = G'(\gamma) - \frac{s^2}{4N(\gamma - \lambda_i(A))^2}$, we have $\wt{\gamma}_* \ge \gamma_*$ and so $\wt{\gamma}_* - \lambda_i(A)$ is also bounded below by a constant.
    Thus, as $s \le N^{1/5}\log N$, we have
    \[
        0 \ge \tG'(\gamma_*) = -\fr{s^2}{4N(\gamma - \lambda_i(A))^2} = O(N^{-3/5} \log^2 N).
    \]
    By direct computation,
    \[
        \wt{G}''(\gamma) = G''(\gamma) + \frac{s^2}{8N(\gamma - \lambda_i(A))^2}.
    \]
    Since $\wt{\gamma}_* \ge \gamma_*$,  we have $\gamma - \lambda_i(A)$ is bounded below by a constant for all $\gamma \in [\gamma_*, \wt{\gamma}_*]$. 
    We claim that $\wt{\gamma}_* \in [\gamma_0 - N^{-1/2}, \gamma_0 + N^{-1/2}]$.
    Combining both conclusions of \Cref{lem:gz00-spam}, we obtain that $G''(\gamma) \ge \Omega_{\eps}(1)$ for $\gamma \in [\gamma_0 - N^{-1/2}, \gamma_0 + N^{-1/2}]$, and consequently the desired claim on $\wt{\gamma}_*$ holds.

    We thus have $\wt{G}''(\gamma) = O_{\eps}(1)$ in the same interval, and consequently we can conclude the stronger statement $|\gamma_* - \tgam_*| = O(N^{-3/5} \log^2 N)$. 
    Furthermore, since $\wt{G}^{(3)}(\gamma) = O_{\eps}(1)$, the same logic allows us to also conclude
    \[
        \tG''(\tgam_*) / \tG''(\gamma_*) = 1 + O(N^{-3/5} \log^2 N).
    \]
    By Taylor expanding $\wt{G}$ around $\wt{\gamma}_*$, we see
    \[
        N|\tG(\gamma_*) - \tG(\tgam_*)|
        \le \fr{N}{2} |\gamma_* - \tgam_*|^2 \sup_{\gamma \in [\gamma_0 - N^{-1/2}, \gamma_0 + N^{-1/2}]} G''(\gamma)
        = O(N^{-1/5} \log^4 N).
    \]
    The above two displays allow us to replace instances of $\wt{\gamma}_*$ with $\gamma_*$ in \eqref{eq:ext-field-laplace}, yielding
    \[
        \int e^{H_{N,2}(\sigma) + s\la v_i, \sigma \ra} \dif\rho(\sigma)
        = (1 + O(N^{-c})) \sqrt{\fr{2}{\tG''(\gamma_*)}} (2e)^{-N/2} \exp(N\tG(\gamma_*)),
    \]
    and thus
    \begin{align*}
        \la e^{s\la v_i, \sigma \ra} \ra_2
        &= \fr{\int e^{H_{N,2}(\sigma) + s\la v_i, \sigma \ra} \dif\rho(\sigma)}{\int e^{H_{N,2}(\sigma)} \dif\rho(\sigma)} \\
        &= (1 + O(N^{-c})) \sqrt{\fr{G''(\gamma_*)}{\tG''(\gamma_*)}} \exp(N(\tG(\gamma_*) - G(\gamma_*))) \\
        &= (1 + O(N^{-c})) \exp(s^2/(4(\gamma_* - \lambda_i(A))))
        = (1 + O(N^{-c})) \exp(cs^2),
    \end{align*}
    where the last two steps again use that $\gamma_* - \lambda_i(A)$ is bounded away from $0$.
    The tail estimate on $W = \la v_i, \sigma \ra$ now follows from a standard Chernoff bound.
\end{proof}
We next turn to \Cref{lem:free-energy-typical-truncation}, proving each part in turn.
\begin{proof}[Proof of \Cref{lem:free-energy-typical-truncation}, \Cref{eq:free-energy-typical-1}]
    We reproduce \Cref{eq:free-energy-typical-1} below for convenience:
    \[
        \E \int_{S_N} \Ind[\sigma \not\in T(H_N)] e^{H_N(\sigma)} \dif\rho(\sigma)
        \le e^{N\xi(1)/2 - cN^{1/5}}.
    \]
    The proof follows \cite[Proposition 3.1]{HS23}, except with more precise control of overlaps between $N^{-2/5}$ and a small constant.
    By symmetry of the sphere, for any deterministic $\bx \in S_N$,
    \begin{equation}
        \label{eq:free-energy-typical-1-step1}
        \E \int_{S_N} \Ind[\sigma \not\in T(H_N)] e^{H_N(\sigma)} \dif\rho(\sigma)
        = \E \lt[
            \Ind[\bx \not\in T(H_N)]
            e^{H_N(\bx)}
        \rt].
    \end{equation}
    Let $\HamDist_{\pl}(\cdot | \bx)$ denote the planted model (\Cref{def:planted}) conditional on spike $\bx$.
    A Gaussian change of measure calculation implies that the right-hand side of \eqref{eq:free-energy-typical-1-step1} equals
    \[
        e^{N\xi(1)/2}
        \bbP_{H_N^{\bx,\RomI} \sim \HamDist_{\pl}(\cdot | \bx)} [\bx \not\in T(H_N^{\bx,\RomI})].
    \]
    Thus it suffices to show
    \[
        \bbP_{H_N^{\bx,\RomI} \sim \HamDist_{\pl}(\cdot | \bx)} [\bx \not\in T(H_N^{\bx,\RomI})]
        \le e^{-cN^{1/5}}.
    \]
    Recall (\Cref{rmk:planted-interpretation}) that a sample $H_N^{\bx,\RomI} \sim \HamDist_{\pl}(\cdot | \bx)$ can be generated by
    \begin{equation}
        \label{eq:planted-conditional-on-bx}
        H_N^{\bx,\RomI}(\sigma) = N\xi(R(\bx,\sigma)) + \wt{H}_N(\sigma),
    \end{equation}
    where $\wt{H}_N \sim \HamDist_{\nullmodel}$.
    Furthermore, from the definition, $\bx \in T(H_N^{\bx,\RomI})$ is equivalent to
    \begin{equation}
        \label{eq:bx-in-T-I}
        \int_{S_N}
        \Ind[|R(\bx,\tau)| \ge N^{-2/5}]
        e^{H_N^{\bx,\RomI}(\tau)}
        \dif\rho(\tau)
        \le e^{N\xi(1)/2 - cN^{1/5}}
    \end{equation}
    We will show this occurs with probability at least $1-e^{-cN^{1/5}}$.
    Let $\psi$ denote the probability density of $R(\bx,\tau) \in [-1,1]$, where $\tau$ is sampled from the Haar measure on $S_N$.
    Then it is known that
    \[
        \psi(q) = \fr{1}{Z_\psi} (1-q^2)^{(N-3)/2}
    \]
    where $Z_\psi = \Theta(N^{-1/2})$.
    Define the codimension-$1$ band
    \[
        \Band(q) = \Band(q;\bx) \coloneqq \{\tau \in S_N : R(\bx,\tau) = q\}
    \]
    and let
    \[
        Z^{\bx,\RomI}(q) = \int_{\Band(q)}
        e^{H_N^{\bx,\RomI}(\tau)}
        \dif\rho_q(\tau),
    \]
    where $\rho_q$ is the Haar measure on $\Band(q)$, normalized so that $\rho_q(\Band(q)) = \psi(q)$.
    Then the left-hand side of \eqref{eq:bx-in-T-I} is equal to
    \[
        \int_{N^{-2/5} \le |q| \le 1}
        Z^{\bx,\RomI}(q) \dif{q}.
    \]
    An application of Guerra's interpolation as in \cite[Lemma 3.3]{HS23} shows that for any $q\in [-1,1]$ and constant $\eta > 0$, with probability $1-e^{-cN}$,
    \[
        \fr1N \log Z^{\bx,\RomI}(q) \le \fr12 \lt(\xi(1) + \xi(|q|) + |q| + \log(1-|q|) \rt) + \eta.
    \]
    Since $\xi_{\sim1}$ is $\eps$-strictly replica symmetric and $\gamma_1^2 \le N^{-4/5}$, this implies
    \[
        \fr1N \log Z^{\bx,\RomI}(q) \le \fr{\xi(1)}{2} - \fr{\eps q^2}{4} + 2\eta.
    \]
    Let $\delta > 0$ be small depending on $\eps$, and $\eta$ small depending on $\delta$.
    This implies that for any $|q| \ge \delta$, with probability $1-e^{-cN}$,
    \[
        \fr1N \log Z^{\bx,\RomI}(q) \le \fr{\xi(1)}{2} - \fr{\eps \delta^2}{8}.
    \]
    Taking a union bound over a $N^{-1}$-net of $|q|\ge \delta$ as in \cite[Lemma 3.4]{HS23} implies that with probability $1-e^{-cN}$,
    \begin{equation}
        \label{eq:typicality-far-from-0}
        \int_{\delta \le |q| \le 1}
        Z^{\bx,\RomI}(q) \dif{q}
        \le e^{N\xi(1)/2 - cN}.
    \end{equation}
    We address the remaining range of $q$ by a first moment bound.
    Note that
    \begin{equation}
        \label{eq:typicality-close-to-0}
        \E
        \int_{N^{-2/5} \le |q| \le \delta}
        Z^{\bx,\RomI}(q) \dif{q}
        =
        e^{N\xi(1)/2}
        \int_{N^{-2/5} \le |q| \le \delta}
        e^{N\xi(q)} \psi(q)
        \dif{q}.
    \end{equation}
    Recall from \Cref{fac:xi2nd-at-0} that $\xi''(0) \le 1-\eps$.
    Thus, for sufficiently small $\delta$, for all $|q| \le \delta$,
    \[
        \xi_{\sim1}(q) + \fr12 \log(1-q^2)
        \le -\eps q^2 / 4.
    \]
    Thus, for all $N^{-2/5} \le |q| \le \delta$,
    \begin{align*}
        \fr1N \log \lt(e^{N\xi(q)} \psi(q)\rt)
        = \xi(q) + \fr1N \log \psi(q)
        &= \gamma_1^2 q + \xi_{\sim1}(q) + \fr12 \log(1-q^2) + O(N^{-1} \log N) \\
        &\le \gamma_1^2 q - \eps q^2 / 4 + O(N^{-1} \log N)
        \le -\eps N^{-4/5} / 8.
    \end{align*}
    Combining with \eqref{eq:typicality-close-to-0} shows
    \[
        \E
        \int_{N^{-2/5} \le |q| \le \delta}
        Z^{\bx,\RomI}(q) \dif{q}
        \le e^{N\xi(1)/2 - cN^{1/5}},
    \]
    so by Markov's inequality, with probability $1-e^{-cN^{1/5}/2}$,
    \[
        \int_{N^{-2/5} \le |q| \le \delta}
        Z^{\bx,\RomI}(q) \dif{q}
        \le e^{N\xi(1)/2 - cN^{1/5}/2}.
    \]
    Combining with \eqref{eq:typicality-far-from-0} proves \eqref{eq:bx-in-T-I} after adjusting $c$.
\end{proof}
\begin{proof}[Proof of \Cref{lem:free-energy-typical-truncation}, \Cref{eq:free-energy-typical-2}]
    By the same argument leading to \eqref{eq:bx-in-T-I}, it suffices to prove
    \begin{equation}
        \label{eq:bx-in-T-II}
        \int_{S_N}
        \Ind[|R(\bx,\tau)| \ge N^{-2/5}]
        e^{H_N^{\bx,\RomII}(\tau)}
        \dif\rho(\tau)
        \le e^{N\xi(1)/2 - cN^{1/5}}
    \end{equation}
    holds with probability at least $1-e^{-cN^{1/5}}$, where now
    \[
        H_N^{\bx,\RomII}(\sigma) = N\gamma_2^2 R(\bx,\sigma)^2 + \wt{H}_N(\sigma),
    \]
    i.e. we have replaced the spike in $H_N^{\bx,\RomI}$ with only its degree-$2$ part.
    Then $H_N^{\bx,\RomII}(\sigma) \le H_N^{\bx,\RomI}(\sigma)$ almost surely for all $\sigma$ such that $R(\bx,\sigma) \ge 0$, so \eqref{eq:bx-in-T-I} implies
    \[
        \int_{S_N}
        \Ind[R(\bx,\tau) \ge N^{-2/5}]
        e^{H_N^{\bx,\RomII}(\tau)}
        \dif\rho(\tau)
        \le e^{N\xi(1)/2 - cN^{1/5}}
    \]
    with probability $1-e^{-cN}$.
    Moreover, by symmetry of the degree-$2$ spike,
    \[
        \int_{S_N}
        \Ind[R(\bx,\tau) \ge N^{-2/5}]
        e^{H_N^{\bx,\RomII}(\tau)}
        \dif\rho(\tau)
        \stackrel{d}{=}
        \int_{S_N}
        \Ind[R(\bx,\tau) \le -N^{-2/5}]
        e^{H_N^{\bx,\RomII}(\tau)}
        \dif\rho(\tau).
    \]
    This implies \eqref{eq:bx-in-T-II} and thus \eqref{eq:free-energy-typical-2}.
\end{proof}
\begin{proof}[Proof of \Cref{lem:free-energy-typical-truncation}, \Cref{eq:free-energy-typical-3}]
    This follows from a slightly more complex form of the same strategy, where the planted Hamiltonian now has two spikes.
    For $\bx^1,\bx^2 \in S_N$, let
    \[
        H_N^{\bx^1,\bx^2,\RomIII}(\sigma) = N\xi(R(\bx^1,\sigma)) + N\xi(R(\bx^2,\sigma)) + \wt{H}_N(\sigma).
    \]
    By the same gaussian change of measure argument as above, the expectation in the left-hand side of \eqref{eq:free-energy-typical-3} equals
    \[
        e^{N\xi(1)} \int
        \Ind[|R(\sigma^1,\sigma^2)| \le 3N^{-2/5}]
        \bbP(\sigma^1 \not\in T(H_N^{\sigma^1,\sigma^2,\RomIII}))
        \dif\rho^{\otimes 2}(\sigma^1,\sigma^2).
    \]
    Thus it suffices to show that for all $\bx^1,\bx^2 \in S_N$ with $|R(\bx^1,\bx^2)| \le 3N^{-2/5}$,
    \begin{equation}
        \label{eq:bx-in-T-III}
        \int_{S_N}
        \Ind[|R(\bx^1,\tau)| \ge N^{-2/5}]
        e^{H_N^{\bx^1,\bx^2,\RomIII}(\tau)}
        \dif\rho(\tau)
        \le e^{N\xi(1)/2 - cN^{1/5}}
    \end{equation}
    with probability at least $1-e^{-cN^{1/5}}$.
    Let $\lambda = R(\bx^1,\bx^2) \in [-3N^{-2/5},3N^{2/5}]$ and
    \[
        \bx^2 = \lambda \bx^1 + \sqrt{1-\lambda^2} \bx^2_\perp,
    \]
    where $\bx^2_\perp \in S_N$ and $R(\bx^1,\bx^2_\perp) = 0$.
    Let $\psi_2$ denote the probability density of $(R(\bx^1,\tau),R(\bx^2_\perp,\tau)) \in [1,1]^2$, where $\tau$ is sampled from the Haar measure on $S_N$.
    It is known that
    \[
        \psi_2(q) = \fr{\Ind[q_1^2+q_2^2 \le 1]}{Z_{\psi_2}} (1-q_1^2-q_2^2)^{(N-4)/2}
    \]
    where $Z_\psi = \Theta(N^{-1})$.
    Define the codimension-$2$ band
    \[
        \Band(q_1,q_2) = \Band(q_1,q_2;\bx^1,\bx^2)
        \coloneqq \{\tau \in S_N : R(\bx^1,\tau) = q_1, R(\bx^2_\perp,\tau) = q_2\}.
    \]
    and let
    \[
        Z^{\bx^1,\bx^2,\RomIII}(q_1,q_2)
        = \int_{\Band(q_1,q_2)}
        e^{H_N^{\bx^1,\bx^2,\RomIII}(\tau)}
        \dif\rho_{q_1,q_2}(\tau),
    \]
    where $\rho_{q_1,q_2}$ is the Haar measure on $\Band(q_1,q_2)$, normalized so that $\rho_{q_1,q_2}(\Band(q_1,q_2)) = \psi_2(q_1,q_2)$.
    Then the left-hand side of \eqref{eq:bx-in-T-III} is equal to
    \begin{equation}
        \label{eq:part-III-integral}
        \int
        \Ind[|q_1| \ge N^{-2/5}]
        Z^{\bx^1,\bx^2,\RomIII}(q_1,q_2)
        \dif{(q_1,q_2)}.
    \end{equation}
    Note that
    \[
        \fr1N \log Z^{\bx^1,\bx^2,\RomIII}(q_1,q_2)
        = \xi(q_1) + \xi\lt(\lambda q_1 + \sqrt{1-\lambda^2} q_2\rt)
        + \fr1N \log \int_{\Band(q_1,q_2)}
        e^{\wt{H}_N(\tau)}
        \dif\rho_{q_1,q_2}(\tau).
    \]
    Let $\wt{q} = \wt{q}(q_1,q_2) \coloneqq \sqrt{q_1^2 + q_2^2}$.
    Applying Guerra's interpolation as in \cite[Lemma 3.3]{HS23} shows that for any $\eta > 0$, with probability $1-e^{-cN}$
    \[
        \fr1N \log \int_{\Band(q_1,q_2)}
        e^{\wt{H}_N(\tau)}
        \dif\rho_{q_1,q_2}(\tau)
        \le \fr12 \lt(
            \xi(1) - \xi(\wt{q}) + \wt{q} + \log(1-\wt{q})
        \rt) + \eta.
    \]
    and thus
    \begin{align*}
        \fr1N \log Z^{\bx^1,\bx^2,\RomIII}(q_1,q_2)
        &\le \xi(q_1) + \xi\lt(\lambda q_1 + \sqrt{1-\lambda^2} q_2\rt)
        + \fr12 \lt(
            \xi(1) - \xi(\wt{q}) + \wt{q} + \log(1-\wt{q})
        \rt) + \eta \\
        &\le \xi_{\sim1}(|q_1|) + \xi_{\sim1}(|q_2|) + \fr12 \lt(
            \xi(1) - \xi_{\sim1}(\wt{q}) + \wt{q} + \log(1-\wt{q})
        \rt) + 2\eta.
    \end{align*}
    Since $\xi_{\sim1}$ only includes terms that are degree $2$ or larger, $\xi_{\sim1}(|q_1|) + \xi_{\sim1}(|q_2|) \le \xi_{\sim1}(\wt{q})$.
    Thus the last display is bounded by
    \[
        \fr12 \lt(
            \xi(1) + \xi_{\sim1}(\wt{q}) + \wt{q} + \log(1-\wt{q})
        \rt) + 2\eta
        \le \fr{\xi(1)}{2} - \fr{\eps \wt{q}^2}{4} + 2\eta.
    \]
    Arguing as in the proof of equation \eqref{eq:free-energy-typical-1} then shows that for any $\delta > 0$ depending only on $\eps$,
    \begin{equation}
        \label{eq:typicality-far-from-0-III}
        \int
        \Ind[\wt{q}(q_1,q_2) > \delta]
        Z^{\bx^1,\bx^2,\RomIII}(q_1,q_2)
        \dif{(q_1,q_2)}
        \le e^{N\xi(1)/2 - cN}
    \end{equation}
    with probability $1-e^{-cN}$.
    The remaining part of the integral \eqref{eq:part-III-integral} has expectation
    \begin{align}
        \notag
        &\E
        \int
        \Ind[|q_1| \ge N^{-2/5}, \wt{q}(q_1,q_2) \le \delta]
        Z^{\bx^1,\bx^2,\RomIII}(q_1,q_2)
        \dif{(q_1,q_2)} \\
        \label{eq:typicality-close-to-0-III-expectation}
        &= e^{N\xi(1)/2}
        \int
        \Ind[|q_1| \ge N^{-2/5}, \wt{q}(q_1,q_2) \le \delta]
        e^{N\xi(q_1) + N\xi(\lambda q_1 + \sqrt{1-\lambda^2}q_2)}
        \psi_2(q_1,q_2)
        \dif{(q_1,q_2)}.
    \end{align}
    Recall $\gamma_1^2 \le N^{-4/5}$.
    For all $(q_1,q_2)$ in this indicator,
    \begin{align*}
        &\fr1N \log \lt(e^{N\xi(q_1) + N\xi(\lambda q_1 + \sqrt{1-\lambda^2}q_2)}
        \psi_2(q_1,q_2)\rt) \\
        &= \xi(q_1) + \xi\lt(\lambda q_1 + \sqrt{1-\lambda^2}q_2\rt)
        + \fr12 \log (1-\wt{q}^2)
        + O(N^{-1} \log N) \\
        &\le 2N^{-4/5} \wt{q} - \fr12 (1 - \xi''(0)) \wt{q}^2 + O(\wt{q}^3 + N^{-1} \log N) \\
        &\le -\fr{\eps \wt{q}^2}{2} + 2N^{-4/5} \wt{q} + O(\wt{q}^3 + N^{-1} \log N).
    \end{align*}
    Since $N^{-2/5} \le \wt{q} \le \delta$, for $\delta$ sufficiently small depending on $\eps$ this is bounded by $-cN^{-4/5}$.
    Combining with \eqref{eq:typicality-close-to-0-III-expectation} shows that with probability $1-e^{-cN^{1/5}}$,
    \[
        \int
        \Ind[|q_1| \ge N^{-2/5}, \wt{q}(q_1,q_2) \le \delta]
        Z^{\bx^1,\bx^2,\RomIII}(q_1,q_2)
        \dif{(q_1,q_2)}
        \le e^{N\xi(1)/2 - cN^{1/5}}.
    \]
    Further combining with \eqref{eq:typicality-far-from-0-III} completes the proof.
\end{proof}
\begin{proof}[Proof of \Cref{lem:free-energy-typical-truncation}, \Cref{eq:free-energy-typical-4}]
    This is proved identically to equation \eqref{eq:free-energy-typical-3}, except with spiked Hamiltonian
    \[
        H_N^{\bx^1,\bx^2,\RomIV}(\sigma) = N\gamma_2^2 R(\bx^1,\sigma)^2 + N\xi(R(\bx^2,\sigma)) + \wt{H}_N(\sigma),
    \]
    i.e. the spike involving $\bx^1$ is replaced with just its degree-$2$ component.
    The same argument applies and we omit details.
\end{proof}
We turn to the proofs of \Cref{lem:nondeg2-covariance-typical,lem:nondeg2-covariance-2mt}.
The following fact will be useful in the proofs of both lemmas.
\begin{fact}
    \label{fac:tT-ij-to-tT}
    If $\sigma \in S_N$ satisfies $|\sigma_i|,|\sigma_j| \le \log N$ and $\sigma \not\in \tT_{i,j}$, then $\sigma \not\in \tT$.
\end{fact}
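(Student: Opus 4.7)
The statement will be proved via the contrapositive, reducing to a simple comparison between the overlaps $R(\sigma,\tau)$ and $R(\sigma_{\sim i,j},\tau_{\sim i,j})$. The key point is that, provided $|\sigma_i|,|\sigma_j|\le \log N$, the contribution of the removed coordinates to the overlap is at most $O(\log N/\sqrt{N})$ uniformly over $\tau \in S_N$, which is much smaller than the $N^{-2/5}$ scale at which the truncation is applied.

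More concretely, the plan is as follows. First I would write
\[
    R(\sigma,\tau) - R(\sigma_{\sim i,j},\tau_{\sim i,j})
    = \frac{\sigma_i \tau_i + \sigma_j \tau_j}{N},
\]
using the convention (consistent with the factor of $2$ appearing between the thresholds in \eqref{eq:tT} and \eqref{eq:tT-ij}) that $R(\sigma_{\sim i,j},\tau_{\sim i,j}) = \frac{1}{N}\la \sigma_{\sim i,j},\tau_{\sim i,j}\ra$. Since $\tau \in S_N = \sqrt{N}\cdot\bbS^{N-1}$ forces $|\tau_i|,|\tau_j|\le \sqrt{N}$, and $|\sigma_i|,|\sigma_j|\le \log N$ by hypothesis, the difference is bounded by $2\log N/\sqrt{N}$, which is at most $N^{-2/5}$ once $N$ is sufficiently large. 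Therefore the event $|R(\sigma_{\sim i,j},\tau_{\sim i,j})| \ge 2N^{-2/5}$ implies $|R(\sigma,\tau)|\ge N^{-2/5}$, i.e., we have the pointwise (in $\tau$) domination of indicators
\[
    \Ind[|R(\sigma_{\sim i,j},\tau_{\sim i,j})|\ge 2N^{-2/5}]
    \le \Ind[|R(\sigma,\tau)|\ge N^{-2/5}].
\]

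Multiplying both sides by $e^{H_{N,\sim2}(\tau)} \ge 0$ and taking the Gibbs average $\la \cdot \ra_2$ over $\tau \sim \mu_{H_{N,2}}$ preserves the inequality, giving
\[
    \lt\la \Ind[|R(\sigma_{\sim i,j},\tau_{\sim i,j})|\ge 2N^{-2/5}]\, e^{H_{N,\sim2}(\tau)} \rt\ra_2
    \le \lt\la \Ind[|R(\sigma,\tau)|\ge N^{-2/5}]\, e^{H_{N,\sim2}(\tau)}\rt\ra_2.
\]
If $\sigma \notin \tT_{i,j}$, the left-hand side exceeds $e^{N\xi_{\sim2}(1)/2 - cN^{1/5}}$, hence so does the right-hand side, which by definition \eqref{eq:tT} means $\sigma \notin \tT$. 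There is no real obstacle here — the entire argument is a one-line overlap comparison, and the only thing to verify is that the natural scale $\log N/\sqrt{N}$ of the perturbation is negligible compared to the truncation scale $N^{-2/5}$, which holds for all sufficiently large $N$.
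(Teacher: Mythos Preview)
Your proposal is correct and essentially identical to the paper's proof: both observe that $|R(\sigma,\tau)-R(\sigma_{\sim i,j},\tau_{\sim i,j})|\le (|\sigma_i||\tau_i|+|\sigma_j||\tau_j|)/N \le 2\log N/\sqrt{N} \le N^{-2/5}$, deduce the pointwise indicator domination, and conclude by monotonicity of the Gibbs average.
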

\begin{proof}
    Consider any $\tau \in S_N$ satisfying $|R(\sigma_{\sim i,j},\tau_{\sim i,j})| \ge 2N^{-2/5}$.
    Since $|\tau_i|,|\tau_j| \le N^{1/2}$,
    \[
        |R(\sigma,\tau)|
        \ge |R(\sigma_{\sim i,j},\tau_{\sim i,j})| - \fr{|\sigma_i||\tau_i| + |\sigma_j||\tau_j|}{N}
        \ge N^{-2/5}.
    \]
    Thus the expectation over $\tau$ in \eqref{eq:tT} is larger than the expectation over $\tau$ in \eqref{eq:tT-ij}, and so $\sigma \not\in \tT$.
\end{proof}
\begin{proof}[Proof of \Cref{lem:nondeg2-covariance-typical}]
    We can write $X_{i,j} = \tX_{i,j} + \tX^{(1)}_{i,j} + \tX^{(2)}_{i,j}$, where
    \begin{align*}
        \tX^{(1)}_{i,j} &= \lt\la 
            \Ind[|\sigma_i|,|\sigma_j| \le \log N, \sigma \not\in \tT_{i,j}]
            \sigma_i \sigma_j \lt( e^{H_{N,\sim2}(\sigma) - N\xi_{\sim2}(1)/2} - \Ind[i=j] \rt)
        \rt\ra_2, \\
        \tX^{(2)}_{i,j} &= \lt\la 
            \Ind[|\sigma_i| \vee |\sigma_j| > \log N]
            \sigma_i \sigma_j \lt( e^{H_{N,\sim2}(\sigma) - N\xi_{\sim2}(1)/2} - \Ind[i=j] \rt)
        \rt\ra_2.
    \end{align*}
    By using $(a+b)^2 \le 2a^2+2b^2$, we deduce that 
    \[
        X_{i,j}^2 \le 2\wt{X}_{i,j}^2 + 2(\wt{X}_{i,j}^{(1)} + \wt{X}_{i,j}^{(2)})^2,
    \]
    so the rest of the proof is dedicated to showing that $|\wt{X}_{i,j}^{(1)} + \wt{X}_{i,j}^{(2)}| \le e^{-c\log^2 N}$ with probability $1-e^{-c\log^2 N}$. 
    To do so, we will simply apply Markov to control 
    $|\wt{X}_{i,j}^{(1)}|$ and $|\wt{X}_{i,j}^{(2)}|$.
    
    By \Cref{fac:tT-ij-to-tT} and using $\abs{\sigma_i}, \abs{\sigma_j} \le \sqrt{N}$,
    \begin{align*}
        |\tX^{(1)}_{i,j}|
        &\le N \lt\la
            \Ind[|\sigma_i|,|\sigma_j| \le \log N, \sigma \not\in \tT_{i,j}]
            \lt( e^{H_{N,\sim2}(\sigma) - N\xi_{\sim2}(1)/2} + \Ind[i=j] \rt)
        \rt\ra_2 \\
        &\le N \lt\la
            \Ind[\sigma \not\in \tT]
            \lt( e^{H_{N,\sim2}(\sigma) - N\xi_{\sim2}(1)/2} + \Ind[i=j] \rt)
        \rt\ra_2.
    \end{align*}
    By the first two equations from \Cref{cor:free-energy-typical-truncation}, $\E_{\sim 2} |\tX^{(1)}_{i,j}| \le e^{-cN^{1/5}}$.
    Furthermore,
    \begin{align*}
        \E_{\sim2} |\tX^{(2)}_{i,j}| &\le N\E_{\sim2} \lt\la
            \lt(\Ind[|\sigma_i| > \log N] + \Ind[|\sigma_j| > \log N]\rt)
            \lt( e^{H_{N,\sim2}(\sigma) - N\xi_{\sim2}(1)/2} + \Ind[i=j] \rt)
        \rt\ra_2 \\
        &\le 2N \lt\la
            \Ind[|\sigma_i| > \log N] + \Ind[|\sigma_j| > \log N]
        \rt\ra_2
        \le e^{-c\log^2 N}
    \end{align*}
    by \Cref{ppn:deg2-subgaussianity}\eqref{it:deg2-subgaussianity-tail}.
    By Markov's inequality, with probability $1-e^{-c\log^2 N}$, $|\tX^{(1)}_{i,j}| + |\tX^{(2)}_{i,j}| \le e^{-c\log^2 N}$, after adjusting $c$ as necessary.
    This concludes the proof.
\end{proof}
\begin{proof}[Proof of \Cref{lem:nondeg2-covariance-2mt}]
    We can write $\tX_{i,j}^2 = \hX_{i,j} + \hX_{i,j}^{(1)} - \hX_{i,j}^{(2)}$, where
    \begin{align*}
        \hX_{i,j}^{(1)}
        &= \bigg\la
            \Ind[|\sigma^1_i|,|\sigma^1_j|,|\sigma^2_i|,|\sigma^2_j| \le \log N,
            \sigma^1,\sigma^2 \in \tT_{i,j},
            |R(\sigma^1_{\sim i,j},\sigma^2_{\sim i,j})| > 2N^{-2/5}] \\
            &\qquad
            \sigma^1_i \sigma^1_j \sigma^2_i \sigma^2_j
            \lt( e^{H_{N,\sim2}(\sigma^1) - N\xi_{\sim2}(1)/2} - \Ind[i=j] \rt)
            \lt( e^{H_{N,\sim2}(\sigma^2) - N\xi_{\sim2}(1)/2} - \Ind[i=j] \rt)
        \bigg\ra_2, \\
        \hX_{i,j}^{(2)}
        &= \bigg\la
            \Ind[|\sigma^1_i|,|\sigma^1_j|,|\sigma^2_i|,|\sigma^2_j| \le \log N,
            (\sigma^1\not\in \tT_{i,j}\,\lor\,\sigma^2 \not\in \tT_{i,j}),
            |R(\sigma^1_{\sim i,j},\sigma^2_{\sim i,j})| \le 2N^{-2/5}] \\
            &\qquad
            \sigma^1_i \sigma^1_j \sigma^2_i \sigma^2_j
            \lt( e^{H_{N,\sim2}(\sigma^1) - N\xi_{\sim2}(1)/2} - \Ind[i=j] \rt)
            \lt( e^{H_{N,\sim2}(\sigma^2) - N\xi_{\sim2}(1)/2} - \Ind[i=j] \rt)
        \bigg\ra_2.
    \end{align*}
    By the same argument as before, it suffices to control $\E_{\sim2} |\hX_{i,j}^{(1)}|$ and $\E_{\sim2} |\hX_{i,j}^{(2)}|$.
    Note that almost surely,
    \begin{align*}
        |\hX_{i,j}^{(1)}|
        &\le N^2 \bigg\la
            \Ind[|\sigma^1_i|,|\sigma^1_j|,|\sigma^2_i|,|\sigma^2_j| \le \log N,
            \sigma^1,\sigma^2 \in \tT_{i,j},
            |R(\sigma^1_{\sim i,j},\sigma^2_{\sim i,j})| > 2N^{-2/5}] \\
            &\qquad
            \lt( e^{H_{N,\sim2}(\sigma^1) - N\xi_{\sim2}(1)/2} + 1 \rt)
            \lt( e^{H_{N,\sim2}(\sigma^2) - N\xi_{\sim2}(1)/2} + 1 \rt)
        \bigg\ra_2
        \le N^2 (\hX_{i,j}^{(3)} + \hX_{i,j}^{(4)})
    \end{align*}
    where
    \begin{align*}
        \hX_{i,j}^{(3)} &= \lt\la
            \Ind[ \sigma^1 \in \tT_{i,j}, |R(\sigma^1_{\sim i,j},\sigma^2_{\sim i,j})| > 2N^{-2/5}]
            e^{H_{N,\sim2}(\sigma^1) + H_{N,\sim2}(\sigma^2) - N\xi_{\sim2}(1)}
        \rt\ra_2, \\
        \hX_{i,j}^{(4)} &= \bigg\la
            \Ind[|\sigma^1_i|,|\sigma^1_j|,|\sigma^2_i|,|\sigma^2_j| \le \log N,
            |R(\sigma^1_{\sim i,j},\sigma^2_{\sim i,j})| > 2N^{-2/5}] \\
            &\qquad (e^{H_{N,\sim2}(\sigma^1) - N\xi_{\sim2}(1)/2} + e^{H_{N,\sim2}(\sigma^2) - N\xi_{\sim2}(1)/2} + 1)
        \bigg\ra_2.
    \end{align*}
    By definition of $\tT_{i,j}$,
    \[
        \hX_{i,j}^{(3)} \le \lt\la
            \Ind[ \sigma^1 \in \tT_{i,j}]
            e^{H_{N,\sim2}(\sigma^1) - N\xi_{\sim2}(1) / 2 - cN^{1/5}}
        \rt\ra_2,
    \]
    and thus $\E_{\sim2} \hX_{i,j}^{(3)} \le e^{-cN^{1/5}}$.
    Furthermore,
    \[
        \hX_{i,j}^{(4)} \le \lt\la
            \Ind[ |R(\sigma^1,\sigma^2)| > N^{-2/5}]
            (e^{H_{N,\sim2}(\sigma^1) - N\xi_{\sim2}(1)/2} + e^{H_{N,\sim2}(\sigma^2) - N\xi_{\sim2}(1)/2} + 1)
        \rt\ra_2,
    \]
    and thus
    \[
        \E_{\sim2} \hX_{i,j}^{(4)}
        \le 3 \lt\la \Ind[|R(\sigma^1,\sigma^2)| > N^{-2/5}] \rt\ra_2
        \le e^{-cN^{1/5}}
    \]
    by \Cref{ppn:deg2-subgaussianity}\eqref{it:deg2-subgaussianity-tail}.
    Combining shows $\E_{\sim2} |\hX_{i,j}^{(1)}| \le e^{-cN^{1/5}}$.
    Similarly
    \begin{align*}
        |\hX_{i,j}^{(2)}|
        &\le N^2 \bigg\la
            \Ind[|\sigma^1_i|,|\sigma^1_j|,|\sigma^2_i|,|\sigma^2_j| \le \log N,
            (\sigma^1\not\in \tT_{i,j}\,\lor \,\sigma^2 \not\in \tT_{i,j}), \\
            &\qquad |R(\sigma^1_{\sim i,j},\sigma^2_{\sim i,j})| \le 2N^{-2/5}]
            \lt( e^{H_{N,\sim2}(\sigma^1) - N\xi_{\sim2}(1)/2} + 1 \rt)
            \lt( e^{H_{N,\sim2}(\sigma^2) - N\xi_{\sim2}(1)/2} + 1 \rt)
        \bigg\ra_2
    \end{align*}
    By \Cref{fac:tT-ij-to-tT}, on the indicator in this expectation, $\sigma^1,\sigma^2 \in \tT$.
    Moreover
    \[
        |R(\sigma^1,\sigma^2)|
        \le |R(\sigma^1_{\sim i,j},\sigma^2_{\sim i,j})| + \fr{|\sigma^1_i||\sigma^2_i| + |\sigma^1_j||\sigma^1_j|}{N}
        \le 3N^{-2/5}.
    \]
    Thus
    \begin{align*}
        |\hX_{i,j}^{(2)}|
        &\le 2N^2 \bigg\la
            \Ind[\sigma^1\not\in \tT, |R(\sigma^1,\sigma^2)| \le 3N^{-2/5}] \\
            &\qquad \lt( e^{H_{N,\sim2}(\sigma^1) - N\xi_{\sim2}(1)/2} + 1 \rt)
            \lt( e^{H_{N,\sim2}(\sigma^2) - N\xi_{\sim2}(1)/2} + 1 \rt)
        \bigg\ra_2
    \end{align*}
    and \Cref{cor:free-energy-typical-truncation} implies $\E_{\sim2} |\hX_{i,j}^{(2)}| \le e^{-cN^{1/5}}$.
\end{proof}

\end{document}